\documentclass[12pt]{amsart}
\pagestyle{myheadings}
\usepackage{amsmath}
\usepackage{amssymb}
\usepackage{amsthm}
\usepackage{amscd}
\usepackage{enumerate}
\usepackage{enumitem}
\usepackage{verbatim}
\usepackage[all]{xy}
\usepackage{mathrsfs}
\usepackage{mathabx}
\usepackage{url}

\theoremstyle{plain}
\newtheorem{thm}{Theorem}[section]
\newtheorem{prop}[thm]{Proposition}
\newtheorem{lem}[thm]{Lemma}
\newtheorem{cor}[thm]{Corollary}

\theoremstyle{definition}
\newtheorem{dfn}[thm]{Definition}
\newtheorem{rmk}[thm]{Remark}

\newcommand{\alg}{\mathrm{alg}}
\newcommand{\Ann}{\mathrm{Ann}}

\newcommand{\Aut}{\mathrm{Aut}}
\newcommand{\can}{\mathrm{can}}
\newcommand{\Car}{\mathrm{Car}}
\newcommand{\Coker}{\mathrm{Coker}}
\newcommand{\Cot}{\mathrm{Cot}}
\newcommand{\Cusps}{\mathrm{Cusps}}

\newcommand{\Der}{\mathrm{Der}}
\newcommand{\DM}{\mathrm{DM}}
\newcommand{\DR}{\mathrm{DR}}

\newcommand{\End}{\mathrm{End}}
\newcommand{\et}{\mathrm{et}}
\newcommand{\ev}{\mathrm{ev}}
\newcommand{\Ext}{\mathrm{Ext}}

\newcommand{\Frac}{\mathrm{Frac}}
\newcommand{\Gal}{\mathrm{Gal}}
\newcommand{\Fix}{\mathrm{Fix}}

\newcommand{\Hom}{\mathrm{Hom}}

\newcommand{\HTT}{\mathrm{HTT}}

\newcommand{\id}{\mathrm{id}}
\newcommand{\Img}{\mathrm{Im}}
\newcommand{\Isom}{\mathrm{Isom}}
\newcommand{\KS}{\mathrm{KS}}
\newcommand{\Ker}{\mathrm{Ker}}
\newcommand{\Lie}{\mathrm{Lie}}

\newcommand{\ord}{\mathrm{ord}}

\newcommand{\rank}{\mathrm{rank}}
\newcommand{\red}{\mathrm{red}}
\newcommand{\Rep}{\mathrm{Rep}}

\newcommand{\sep}{\mathrm{sep}}
\newcommand{\Sh}{\mathrm{Sh}}

\newcommand{\Shv}{\mathrm{Shv}}

\newcommand{\Spec}{\mathrm{Spec}}

\newcommand{\Sym}{\mathrm{Sym}}

\newcommand{\TD}{\mathrm{TD}}
\newcommand{\TDt}{\mathrm{TD}^{\triangledown}}

\newcommand{\trd}{\triangledown}
\newcommand{\univ}{\mathrm{un}}

\newcommand{\sHom}{\mathscr{H}\!\!\mathit{om}}
\newcommand{\sIsom}{\mathscr{I}\!\!\mathit{som}}
\newcommand{\sExt}{\mathscr{E}\!\mathit{xt}}

\newcommand{\sAut}{\mathscr{A}\!\!\mathit{ut}}

\newcommand{\okey}{\mathcal{O}_K}

\newcommand{\cA}{\mathcal{A}}

\newcommand{\cC}{\mathcal{C}}
\newcommand{\cD}{\mathcal{D}}
\newcommand{\cE}{\mathcal{E}}

\newcommand{\cG}{\mathcal{G}}
\newcommand{\cH}{\mathcal{H}}

\newcommand{\cL}{\mathcal{L}}
\newcommand{\cM}{\mathcal{M}}

\newcommand{\cO}{\mathcal{O}}
\newcommand{\cP}{\mathcal{P}}
\newcommand{\cS}{\mathcal{S}}
\newcommand{\cT}{\mathcal{T}}
\newcommand{\cU}{\mathcal{U}}

\newcommand{\cZ}{\mathcal{Z}}

\newcommand{\Ga}{\mathbb{G}_\mathrm{a}}
\newcommand{\Gm}{\mathbb{G}_\mathrm{m}}

\newcommand{\Crys}{\mathrm{Crys}}

\newcommand{\Gr}{\mathrm{Gr}}

\newcommand{\phiMod}{\varphi\text{-}\mathrm{Mod}}
\newcommand{\phiShv}{\varphi\text{-}\mathrm{Shv}}
\newcommand{\vMod}{v\text{-}\mathrm{Mod}}
\newcommand{\vShv}{v\text{-}\mathrm{Shv}}

\newcommand{\bA}{\mathbb{A}}

\newcommand{\bC}{\mathbb{C}}

\newcommand{\bE}{\mathbb{E}}
\newcommand{\bF}{\mathbb{F}}

\newcommand{\bP}{\mathbb{P}}

\newcommand{\bR}{\mathbb{R}}
\newcommand{\bS}{\mathbb{S}}

\newcommand{\bV}{\mathbb{V}}
\newcommand{\bZ}{\mathbb{Z}}

\newcommand{\sH}{\mathscr{H}}

\newcommand{\frem}{\mathfrak{m}}
\newcommand{\frn}{\mathfrak{n}}

\newcommand{\frq}{\mathfrak{q}}

\newcommand{\frX}{\mathfrak{X}}

\newcommand{\Derin}{\mathrm{Der}_{\mathrm{in}}}
\newcommand{\Dersi}{\mathrm{Der}_{\mathrm{si}}}

\makeatletter
    
    \@addtoreset{equation}{section}
\makeatother

\makeatletter
\renewcommand{\p@enumii}{}
\makeatother

\begin{document}

\title[$\wp$-adic properties of Drinfeld modular forms]{Duality of Drinfeld modules and $\wp$-adic properties of Drinfeld modular forms}
\author{Shin Hattori}
\address[Shin Hattori]{Faculty of Mathematics, Kyushu University}

\date{\today}


\begin{abstract}
Let $p$ be a rational prime and $q$ a power of $p$. Let $\wp$ be a monic irreducible polynomial of degree $d$ in 
$\bF_q[t]$. In this paper, we define an analogue of the Hodge-Tate map which is suitable for the study of Drinfeld 
modules over $\bF_q[t]$ and, using it, develop a geometric theory of $\wp$-adic Drinfeld modular forms similar to 
Katz's theory in the case of elliptic modular forms. In particular, we show that for Drinfeld modular forms with 
congruent Fourier coefficients at $\infty$ modulo $\wp^n$, their weights are also congruent modulo $(q^d-1)p^{\lceil 
\log_p(n)\rceil}$, 
and that Drinfeld modular forms of level $\Gamma_1(\frn)\cap \Gamma_0(\wp)$, weight $k$ and type $m$ are $\wp$-adic 
Drinfeld modular forms for any tame level $\frn$ with a prime factor of degree prime to $q-1$. 
\end{abstract}

\maketitle
\tableofcontents



\section{Introduction}

Let $p$ be a rational prime and $q$ a power of $p$. The theory of $p$-adic modular forms, which originated from the 
work of Serre \cite{Ser_P}, has been highly developed, and now we have various $p$-adic families of eigenforms which 
play important roles in modern number theory. At the early stage of its development, Katz \cite{Katz_p} initiated a 
geometric treatment of $p$-adic modular forms, and from the work of Katz to recent works on geometric study of $p$-adic 
modular forms including \cite{AIS,AIP-SMF,Pil_oc}, one of the key ingredients is the theory of canonical subgroups of 
abelian varieties and Hodge-Tate maps for finite locally free (commutative) group schemes. 

Let us briefly recall the definition. For a finite locally free group scheme $\cG$ over a scheme $S$, we denote by $\omega_{\cG}$ the sheaf of invariant differentials of $\cG$ and by $\Car(\cG)$ the Cartier dual of $\cG$. Then the Hodge-Tate map for $\cG$ is by definition
\[
\Car(\cG)=\sHom_S(\cG,\Gm)\to \omega_{\cG},\quad x\mapsto x^*\left(\frac{d T}{T} \right).
\]
It can be considered as a comparison map between the etale side and the de Rham side; in fact, for any abelian scheme $\cA$ with ordinary reduction over a complete discrete valuation ring $\cO$ of mixed characteristic $(0,p)$, the Cartier dual $\Car(\cA[p^n]^0)$ of the unit component of $\cA[p^n]^0$ is etale, and the Hodge-Tate map gives an isomorphism of $\cO/(p^n)$-modules
\[
\Car(\cA[p^n]^0)\otimes_{\bZ} \cO \to \omega_{\cA} \otimes_{\cO}\Spec(\cO/(p^n)).
\]
Moreover, if $\cA$ is close enough to having ordinary reduction, then there exists a canonical subgroup of $\cA$ which has a similar comparison property via the Hodge-Tate map, instead of $\cA[p^n]^0$.

On the other hand, an analogue of the theory of $p$-adic modular forms in the function field case---the theory of $v$-adic modular forms---has also been actively investigated in this decade (see for example \cite{Goss_v,Petrov,Vincent}). A Drinfeld modular form is a rigid analytic function on the Drinfeld upper half plane over $\bF_q((1/t))$, and it can be viewed as a section of an automorphic line bundle over a Drinfeld modular curve. The latter is a moduli space over $\bF_q(t)$ classifying Drinfeld modules (of rank two), which are analogues of elliptic curves. 
It is widely believed that, for each finite place $v$ of $\bF_q(t)$, Drinfeld modular forms have deep $v$-adic structures comparable to the $p$-adic theory of modular forms. However, we do not fully understand what it is like yet.

What is lacking is a geometric description of $v$-adic modular forms as in \cite{Katz_p}. For this, the problem is that the usual Cartier duality does not work in the Drinfeld case: Since Drinfeld modules are additive group schemes, the Cartier dual of any non-trivial finite locally free closed subgroup scheme of a Drinfeld module is never etale and we cannot obtain an etale-to-de Rham comparison isomorphism via the Hodge-Tate map. 

In this paper, we resolve this and develop a geometric theory of $v$-adic Drinfeld modular forms. In particular, we show the following theorems.

\begin{thm}[Corollary to Theorem \ref{WeightCongr}]\label{MainWeight}
Let $\frn$ be a monic polynomial in $A=\bF_q[t]$ and $\wp$ a monic irreducible polynomial in $A$ which is prime to $\frn$.
For $i=1,2$, let $f_i$ be a Drinfeld modular form of level $\Gamma_1(\frn)$, weight $k_i$ and type $m_i$. Suppose that their Fourier expansions $(f_i)_\infty(x)$ at $\infty$ in the sense of Gekeler \cite{Gek_Coeff} have coefficients in the localization $A_{(\wp)}$ of $A$ at $(\wp)$ and satisfy the congruence
\[
(f_1)_\infty(x)\equiv (f_2)_\infty(x)\notequiv 0 \bmod \wp^n.
\]
Then we have 
\[
k_1\equiv k_2 \bmod (q^d-1)p^{l_p(n)},\quad l_p(n)=\min\{N\in \bZ\mid p^N\geq n\}.
\]
\end{thm}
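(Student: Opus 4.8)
The plan is to transplant the Serre--Katz argument for $p$-adic elliptic modular forms into the geometric theory of $v$-adic Drinfeld modular forms set up in this paper, so that the stated congruence becomes the level $\Gamma_1(\frn)$ specialization of Theorem~\ref{WeightCongr}. Throughout, I read a Drinfeld modular form of weight $k$ as a rule on suitable test objects $(\phi,\omega)$ that scales by a power of $\lambda$ when the invariant differential $\omega$ is replaced by $\lambda\omega$, and I aim to recover the weight modulo $\wp^n$ from the way this scaling is forced once $\omega$ has been rigidified $\wp$-adically.

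First I would work at the cusp $\infty$. Gekeler's Fourier expansion $(f_i)_\infty(x)$ is the value of $f_i$ on the Tate--Drinfeld object $(\Tt,\omega_{\can})$ over a base carrying the Fourier parameter $x$, with $\omega_{\can}$ its canonical invariant differential. Since the Fourier-expansion-at-$\infty$ map is injective on the geometric component through the cusp, the hypothesis $(f_1)_\infty\equiv(f_2)_\infty\not\equiv 0\bmod\wp^n$ is promoted from a congruence of power series to a congruence $f_1\equiv f_2\bmod\wp^n$ of the associated $v$-adic Drinfeld modular forms, that is, of their values on \emph{all} ordinary test objects and not merely on $(\Tt,\omega_{\can})$.

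The crux is the equivariance under the unit group $G=(A/(\wp^n))^{*}\cong(\bF_{q^d}[u]/(u^n))^{*}$, and this is exactly where the new Hodge-Tate map enters. The analogue of the canonical subgroup is a $\wp$-power torsion object, and a choice of generator of it trivializes $\omega$ modulo $\wp^n$; replacing that generator by $\lambda\in G$ multiplies the value of a weight-$k$ form by $\lambda^{k}$. Evaluating $f_1\equiv f_2\bmod\wp^n$ on the $\lambda$-twist of the Tate object gives $\lambda^{k_1}(f_1)_\infty\equiv\lambda^{k_2}(f_2)_\infty\bmod\wp^n$; using the nonvanishing hypothesis to locate a Fourier coefficient that is a $\wp$-adic unit, one cancels the common expansion and obtains $\lambda^{k_1}\equiv\lambda^{k_2}\bmod\wp^n$ for every $\lambda\in G$. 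Equivalently, the power characters $\lambda\mapsto\lambda^{k_1}$ and $\lambda\mapsto\lambda^{k_2}$ of $G$ coincide, so $\lambda^{k_1-k_2}=1$ for all $\lambda\in G$, i.e.\ $k_1-k_2$ is divisible by the exponent of $G$. Note that the residue field at $\wp$ is $\bF_{q^d}$ rather than $\bF_p$, which is precisely what yields the factor $q^d-1$ in place of the classical $p-1$.

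It remains to compute the exponent of $G$. Writing $G\cong\bF_{q^d}^{*}\times\bigl(1+u\,\bF_{q^d}[u]/(u^n)\bigr)$, the first factor is cyclic of order $q^d-1$ and prime to $p$, while in the $p$-group $1+u\,\bF_{q^d}[u]/(u^n)$ the characteristic-$p$ identity $(1+ux)^{p^N}=1+u^{p^N}x^{p^N}$ shows that every element is killed by $p^{l_p(n)}$ and that $1+u$ has exact order $p^{l_p(n)}$. Hence the exponent of $G$ is $(q^d-1)p^{l_p(n)}$ and $k_1\equiv k_2\bmod(q^d-1)p^{l_p(n)}$, as claimed. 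I expect the genuinely hard part to be the third step, and it is what the earlier part of the paper is built for: because the Cartier dual of the canonical subgroup of a Drinfeld module is never etale, the classical Hodge-Tate comparison degenerates, so the entire equivariance must instead be supplied by the replacement duality and Hodge-Tate map for Drinfeld modules constructed above.
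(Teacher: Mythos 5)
Your skeleton is the right one---it is Katz's weight-congruence argument transported along the Hodge--Tate--Taguchi map, and up to packaging it is the same circle of ideas as the paper's proof of Theorem \ref{WeightCongr}---but there is a genuine gap at the step you treat as routine. When you promote the congruence of $x$-expansions to a congruence of values ``on all ordinary test objects'', and again when you evaluate at the $\lambda$-twist of the Tate--Drinfeld object, you are implicitly assuming that the Igusa-type tower classifying generators of $\cC_n(E)^D$ is geometrically irreducible, equivalently that the monodromy character $r_{n,n}\colon \pi_1^{\et}(X_{\univ,n}^{\ord})\to (A/(\wp^n))^\times$ attached to $\bar{\cC}_{n,n}^D$ is surjective. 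The $x$-expansion principle only gives $\tilde{f}_1=\tilde{f}_2$ on the component of that tower through the $\infty$-cusp; the $\lambda$-twist moves the cusp to its $\lambda$-translate, which a priori lies on a \emph{different} component, so your argument as written proves $\lambda^{k_1-k_2}\equiv 1$ only for $\lambda$ in the image of $r_{n,n}$. The duality and HTT machinery (Lemma \ref{DualEtale}, Proposition \ref{HTTIsom}) supply exactly the equivariance and rigidification you describe---which is the formal part---but not this surjectivity, which is the genuinely hard input. In the paper it is Lemma \ref{MonodromySurj}, proved by passing to a supersingular point (which exists by Gekeler), identifying $\cC_n(\bE|_{k((u))})^D$ with $\Ker(V^n_{d,\bE^D|_{k((u))}})$ via Theorem \ref{DualDM} (\ref{DualDMIsog}), Lemma \ref{DualFV} and Proposition \ref{DualOrd}, and computing a Newton polygon to show the corresponding extension of $k((u))$ is totally ramified of degree $\sharp(A/(\wp^n))^\times$. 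Your proposal never mentions supersingular points, and without this lemma the conclusion is unreachable; note also that the paper reaches Theorem \ref{WeightCongr} by a repackaged form of your tower argument, via the Riemann--Hilbert correspondence of Lemma \ref{RHCorr}: $f_1/f_2$ is a Frobenius-fixed trivialization of $(\bar{\omega}_{\univ,n}^{\ord})^{\otimes k_1-k_2}$ near the cusp, hence on all of $X_{\univ,n}^{\ord}$ by Corollary \ref{ResOpenFF}, hence $r_{n,n}^{k_1-k_2}$ is trivial by Proposition \ref{HTTExtIsom}---and the crux is still Lemma \ref{MonodromySurj}.

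Three smaller points. First, a congruence between $f_1$ and $f_2$ ``as rules on pairs $(\phi,\omega)$'' is ill-formed when $k_1\neq k_2$, since the two sides scale differently under $\omega\mapsto\lambda\omega$; it only makes sense for the induced functions on the tower where $\omega$ has been rigidified by HTT, which is exactly where the component issue above bites. Second, $\not\equiv 0\bmod \wp^n$ produces a coefficient of valuation $<n$, not a $\wp$-adic unit; cancelling such a coefficient degrades $\lambda^{k_1-k_2}\equiv 1 \bmod \wp^n$ to a congruence modulo a smaller power of $\wp$, so a normalization (dividing both forms by the common minimal power of $\wp$) must be made explicit---the paper's proof makes the same implicit normalization, since it needs $f_1,f_2$ to be nonvanishing, i.e.\ nonzero modulo $\wp$, on an open set containing the cusp. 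Third, the statement concerns level $\Gamma_1(\frn)$ for arbitrary monic $\frn$, whereas the geometric argument lives on the fine moduli $X_1^\Delta(\frn)$, which requires $\frn$ to have a prime factor of degree prime to $q-1$; the reduction by adding an auxiliary level of degree prime to $q-1$ and invoking the $x$-expansion principle is absent from your write-up.
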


\begin{thm}[Theorem \ref{GammaWp}]\label{MainV}
Suppose that $\frn$ has a prime factor of degree prime to $q-1$. Let $f$ be a Drinfeld modular form of level $\Gamma_1(\frn)\cap \Gamma_0(\wp)$, weight $k$ and type $m$ such that Gekeler's Fourier expansion $f_\infty(x)$ at $\infty$ has coefficients in $A_{(\wp)}$. Then $f$ is a $\wp$-adic Drinfeld modular form. Namely, $f_\infty(x)$ is the $\wp$-adic limit of Fourier expansions of Drinfeld modular forms of level $\Gamma_1(\frn)$, type $m$ and some weights.
\end{thm}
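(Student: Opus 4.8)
The plan is to imitate, in the Drinfeld setting, Katz's geometric proof \cite{Katz_p} that classical $\Gamma_0(p)$-forms are $p$-adic, using the canonical subgroup and the Hodge-Tate map analogue constructed above in place of their elliptic counterparts, together with the weight congruence of Theorem~\ref{WeightCongr}. First I would view $f$ as a section of $\oubar^{\otimes k}$ carrying the prescribed type $m$ on the Drinfeld modular curve of level $\Gamma_1(\frn)\cap\Gamma_0(\wp)$ and restrict it to the locus on which the universal Drinfeld module admits a canonical subgroup. On that locus the canonical subgroup $C$ is a distinguished $\Gamma_0(\wp)$-structure, so it defines a section of the forgetful projection to the curve $Y$ of level $\Gamma_1(\frn)$; pulling $f$ back along this section---equivalently, evaluating $f$ on Drinfeld modules equipped with their canonical subgroups, with $\oubar$ trivialized through the Hodge-Tate map---yields a $\wp$-adic Drinfeld modular form $\tilde f$ of level $\Gamma_1(\frn)$, weight $k$ and type $m$.

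Next I would show $\tilde f_\infty(x)=f_\infty(x)$. This comes down to identifying the canonical subgroup of the Tate--Drinfeld module at the cusp $\infty$ with the $\Gamma_0(\wp)$-structure used in Gekeler's definition of $f_\infty(x)$: at the cusp the Drinfeld module degenerates, its canonical subgroup is the $\wp$-torsion coming from the Tate uniformization, and one must verify that this is the cusp of $\Gamma_1(\frn)\cap\Gamma_0(\wp)$ at which $f_\infty(x)$ is taken, rather than one of the other cusps lying over $\infty$.

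Finally, to realize $f_\infty(x)$ as a $\wp$-adic limit of Fourier expansions of genuine level-$\Gamma_1(\frn)$ forms, I would use a lift $E$ of the Hasse invariant: a classical Drinfeld modular form of level $\Gamma_1(\frn)$, weight $q^d-1$ and type $0$ whose Fourier expansion satisfies $E_\infty(x)\equiv 1 \bmod \wp$. Since the ordinary locus is the non-vanishing locus of the Hasse invariant and $\tilde f$ has poles of bounded order along the supersingular points, the products $g_N:=\tilde f\cdot E^{p^N}$ are, for $N$ large, honest holomorphic forms of level $\Gamma_1(\frn)$, type $m$ and weight $k+(q^d-1)p^N$; and because $E_\infty(x)^{p^N}\equiv 1 \bmod \wp^{p^N}$ one obtains $(g_N)_\infty(x)\equiv f_\infty(x)\bmod \wp^{p^N}$, so $(g_N)_\infty(x)\to f_\infty(x)$ $\wp$-adically, with the weight shifts $(q^d-1)p^N$ exactly matching the modulus of Theorem~\ref{WeightCongr}.

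This last step is where the hypothesis that $\frn$ has a prime factor of degree prime to $q-1$ should enter, as it is precisely what I expect to guarantee the existence of the Hasse invariant lift $E$ of level $\Gamma_1(\frn)$ with $E_\infty(x)\equiv 1 \bmod \wp$. I expect constructing this lift---and controlling the pole orders of $\tilde f$ along the supersingular locus so that the $g_N$ are genuinely classical---to be the main obstacle, with the cusp-matching in the second step a close second.
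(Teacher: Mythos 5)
Your first two steps coincide with the paper's actual argument: after using the $x$-expansion principle (Proposition \ref{QexpBC} (\ref{QexpBC-Q})) to put $f$ in $M_k(\Gamma_1^\Delta(\frn,\wp))$ and clearing a power of $\wp$, the paper pulls $f$ back along the section $Y_{\univ,n}^\ord\to Y^\wp_{\univ,n}$ defined by the canonical subgroup $\cC_{1,n}$, and checks holomorphy at the cusps by matching, at \emph{every} cusp $\Xi=(a,b)$ (not only at $\infty$), the canonical subgroup of the Tate--Drinfeld module with the $\Gamma_0(\wp)$-structure at the unramified cusp over $\Xi$. But your third step contains a genuine gap. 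In the paper's framework a $\wp$-adic form such as your $\tilde f$ is only a compatible system of sections $(g_n)_n$ of $(\bar{\omega}_{\univ,n}^{\ord})^{\otimes k}$ over the mod-$\wp^n$ ordinary loci $X_{\univ,n}^\ord$; no rigid-analytic or overconvergent theory is developed (canonical subgroups are constructed in Lemma \ref{ExistCanSub} only over $\wp$-adic rings with ordinary reduction), so ``poles of bounded order along the supersingular points'' is not a defined notion here, and nothing in the setup yields such a bound. Consequently the claim that $g_N:=\tilde f\cdot E^{p^N}$ is an \emph{honest classical} form of weight $k+(q^d-1)p^N$ for large $N$ is unjustified; even in the classical elliptic setting this would amount to an overconvergence-plus-meromorphy statement requiring a separate argument. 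The paper circumvents this entirely via Proposition \ref{VsGoss}: using the description (\ref{EqnOrdSym}) of the ordinary locus as $\Spec_{X_\univ}(\Sym((\bar{\omega}^\Delta_\univ)^{\otimes q^d-1})/(g_d-1))$ together with Corollary \ref{Ample} and the base-change statement of Proposition \ref{QexpBC} (\ref{QexpBC-BC}), each $g_n$ is represented mod $\wp^n$ by a classical form $h_n$ of \emph{some} weight $k'_n\equiv k \bmod (q^d-1)p^{l_p(n)}$, with no uniform control on $k'_n$; the approximating weights are not the single arithmetic progression $k+(q^d-1)p^N$ you propose, and no pole-order control is ever needed.

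A secondary misidentification: the hypothesis that $\frn$ has a prime factor of degree prime to $q-1$ is \emph{not} what guarantees the Hasse invariant lift. Gekeler's form $g_d\in M_{q^d-1,0}(\Gamma_1(\frn))_{A_\frn}$ with $(g_d)_\infty(x)\equiv 1\bmod \wp$ exists for every $\frn$ (equation (\ref{EqnGekHasseInv}), from \cite{Gek_Coeff} plus the $x$-expansion principle). The hypothesis is used to choose the subgroup $\Delta\subseteq(A/(\frn))^\times$ splitting off $\bF_q^\times$, which is what makes $Y_1^\Delta(\frn)$ a fine moduli scheme; the entire geometric apparatus (the compactification with its sheaf $\bar{\omega}^\Delta_\univ$, the canonical subgroup over $Y_\univ^\ord$, and Proposition \ref{VsGoss}) lives on this fine moduli curve. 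Incidentally, your congruence $E_\infty(x)^{p^N}\equiv 1\bmod \wp^{p^N}$ is correct — indeed exact, since $\cO_K$ has equal characteristic $p$ — but it is the classicality of $g_N$, not this congruence, on which your argument founders.
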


Note that Theorem \ref{MainWeight} generalizes \cite[Corollary (12.5)]{Gek_Coeff} of the case $n=1$, and Theorem \ref{MainV} is a variant of \cite[Theorem 4.1]{Vincent} with non-trivial tame level $\frn$.

The novelty of this paper lies in the systematic use of the duality theory of Taguchi \cite{Tag} for Drinfeld modules and a certain class of finite locally free group schemes called finite $v$-modules. Using Taguchi's duality, we define an analogue of the Hodge-Tate map, which we refer to as the Hodge-Tate-Taguchi map. For a Drinfeld module $E$ with ordinary reduction, we construct canonical subgroups of $E$ such that their Taguchi duals are etale and the Hodge-Tate-Taguchi maps for them give isomorphisms between the etale and de Rham sides similar to the case of elliptic curves. This enables us to prove the above theorems by almost verbatim arguments as in \cite{Katz_p}.

The organization of this paper is as follows. In \S\ref{Sec_Taguchi}, we review Taguchi's duality theory. Here we need a description of the duality for Drinfeld modules in terms of biderivations \cite{Gek_dR}, which is done by Papanikolas-Ramachandran \cite{PapRam} in the case over fields. For this reason, we follow the exposition of \cite{PapRam} and generalize their results to general bases. 

In \S\ref{Sec_Cansub}, we develop the theory of canonical subgroups of Drinfeld modules with ordinary reduction and Hodge-Tate-Taguchi maps. In our case, the role of $\mu_{p^n}$ for elliptic curves is played by the $\wp^n$-torsion part $C[\wp^n]$ of the Carlitz module $C$, where the dual of $C[\wp^n]$ in the sense of Taguchi is the constant $A$-module scheme $\underline{A/(\wp^n)}$.

\S\ref{Sec_DMC} is devoted to a study of Drinfeld modular curves and their compactifications via Tate-Drinfeld modules in a similar way to \cite{KM}. Though it may be classical, we give necessary details due to the lack of appropriate references. The main differences from \cite{KM} are threefold: First, the $j$-invariant of the usual Tate-Drinfeld module does not give (the inverse of) a uniformizer of the $j$-line at the infinity, contrary to the case of the Tate curve. For this, we use a descent of the Tate-Drinfeld module by an $\bF_q^\times$-action on the coefficients to obtain a right $j$-invariant (see (\ref{EqnTDdj})). The author learned this idea from a work of Armana \cite{Armana}. Second, a Drinfeld module is not dual to itself in general, while every elliptic curve has autoduality. Instead, we have a weak version of autoduality for Drinfeld modules (Remark \ref{RmkAutodual}), which is enough to show that the square of the Hodge bundle in our case is base point free. Third, since we are in the positive characteristic situation, we cannot use Abhyankar's lemma to study the structure of Drinfeld modular curves around cusps. This is bypassed by a direct computation of the formal completion along each cusp (Corollary \ref{GammaDeltaCusp}). 

Then in \S\ref{Sec_DMF} we prove the main theorems in a similar way to \cite[Chapter 4]{Katz_p}, the point being the 
fact that the Riemann-Hilbert correspondence of Katz over the truncated Witt ring $W_n(\bF_q)$ \cite[Proposition 
4.1.1]{Katz_p} can be suitably generalized to the case over $A/(\wp^n)$.

\subsection*{Acknowledgments} The author would like to thank Yuichiro Taguchi for directing the author's attention to arithmetic of function fields, and also for answering many questions on his duality theory. The author also would like to thank Gebhard B\"{o}ckle and Rudolph Perkins for enlightening discussions on Drinfeld modules and Drinfeld modular forms. A part of this work was done during the author's visit to Interdisciplinary Center for Scientific Computing, Heidelberg University. He is grateful to its hospitality. This work was supported by JSPS KAKENHI Grant Numbers JP26400016, JP17K05177.



\section{Taguchi duality}\label{Sec_Taguchi}

In this section, we review the duality theory for Drinfeld modules of rank two and an analogue of Cartier duality for 
this context, which are both due to Taguchi \cite{Tag}. Let $p$ be a rational prime, $q$ a $p$-power and $\bF_q$ the 
finite field with $q$ elements. We put $A=\bF_q[t]$. For any scheme $S$ over $\bF_q$, we denote the $q$-th power 
Frobenius map on $S$ by $F_S:S\to S$.
For any $S$-scheme $T$ and $\cO_S$-module $\cL$, we put $T^{(q)}=T\times_{S,F_S}S$ and $\cL^{(q)}=F_S^*(\cL)$. 
Note that for any $\cO_S$-algebra $\cA$, the $q$-th power Frobenius map induces an $\cO_S$-algebra homomorphism 
$f_{\cA}:\cA^{(q)}\to \cA$. 
For any $A$-scheme $S$, the image of $t\in A$ by the structure map $A\to \cO_S(S)$ is denoted by $\theta$.

\subsection{Line bundles and Drinfeld modules}

For any scheme $S$ over $\bF_q$ and any invertible $\cO_S$-module $\cL$, we write the associated covariant and contravariant line bundles to $\cL$ as
\[
\bV_*(\cL)=\Spec_S(\Sym_{\cO_S}(\cL^{\otimes -1})), \quad \bV^*(\cL)=\Spec_S(\Sym_{\cO_S}(\cL))
\]
with $\cL^{\otimes -1}:=\cL^\vee=\sHom_{\cO_S}(\cL,\cO_S)$. Note that they represent functors over $S$ defined by
\[
T\mapsto \cL|_T(T),\quad T\mapsto \cL^{\otimes -1}|_T(T),
\]
where $\cL|_T$ and $\cL^{\otimes -1}|_T$ denote the pull-backs to $T$. The additive group $\Ga$ acts on the group schemes $\bV_*(\cL)$ and $\bV^*(\cL)$ through the natural actions of $\cO_T(T)$ on $\cL|_T(T)$ and $\cL^{\otimes -1}|_T(T)$, respectively. We often identify $\cL$ with $\bV_*(\cL)$. We have the $q$-th power Frobenius map
\[
\tau:\cL\to \cL^{\otimes q},\quad l\mapsto l^{\otimes q}.
\]
This map induces a homomorphism of group schemes over $S$
\[
\tau: \bV_*(\cL)\to \bV_*(\cL^{\otimes q}).
\]
Note that $\tau$ also induces an $\cO_S$-linear isomorphism $\cL^{(q)}\to \cL^{\otimes q}$, by which we identify $\bV_*(\cL^{\otimes q})$ with $\bV_*(\cL)^{(q)}$. Then the relative $q$-th Frobenius map $\bV_*(\cL)\to \bV_*(\cL)^{(q)}=\bV_*(\cL^{\otimes q})$ is induced by the natural inclusion
\begin{equation}\label{EqnVectFrob}
\Sym(\cL^{\otimes -q})\to \Sym(\cL^{\otimes -1}).
\end{equation}

For $S=\Spec(B)$ and $\cL=\cO_S$, we have $\bV_*(\cO_S)=\Ga$ and $\tau$ induces the endomorphism of $\Ga=\Spec(B[X])$ over $B$ defined by $X\mapsto X^q$. This gives the equality
\[
\End_{\bF_q,S}(\Ga)= B\{\tau\},
\]
where $B\{\tau\}$ is the skew polynomial ring over $B$ whose multiplication is defined by $a\tau^i\cdot b\tau^j=a b^{q^i}\tau^{i+j}$ for any $a,b\in B$.

\begin{dfn}[\cite{Laumon}, Remark (1.2.2)]
Let $S$ be a scheme over $A$ and $r$ a positive integer. A (standard) Drinfeld ($A$-)module of rank $r$ over $S$ is a pair $E=(\cL,\Phi^E)$ of an invertible sheaf $\cL$ on $S$ and an $\bF_q$-algebra homomorphism 
\[
\Phi^E:A\to \End_S(\bV_*(\cL))
\]
satisfying the following conditions for any $a\in A\setminus \{0\}$:
\begin{itemize}
\item the image $\Phi^E_a$ of $a$ by $\Phi^E$ is written as
\[
\Phi^E_a=\sum_{i=0}^{r\deg(a)} \alpha_i(a)\tau^i,\quad \alpha_i(a)\in \cL^{\otimes 1-q^i}(S)
\]
with $\alpha_{r\deg(a)}(a)$ nowhere vanishing.
\item $\alpha_0(a)$ is equal to the image of $a$ by the structure map $A\to \cO_S(S)$.
\end{itemize}
We often refer to the underlying $A$-module scheme $\bV_*(\cL)$ as $E$.
A morphism $(\cL,\Phi)\to (\cL',\Phi')$ of Drinfeld modules over $S$ is defined to be a morphism of $A$-module schemes $\bV_*(\cL)\to \bV_*(\cL')$ over $S$. The category of Drinfeld modules over $S$ is denoted by $\DM_S$.
\end{dfn}

We denote the Carlitz module over $S$ by $C$: it is the Drinfeld module $(\cO_S,\Phi^C)$ of rank one over $S$ defined by $\Phi^C_t=\theta +\tau$.
We identify the underlying group scheme of $C$ with $\Ga=\Spec_S(\cO_S[Z])$ using $1\in \cO_S(S)$.



\subsection{$\varphi$-modules and $v$-modules}

Let $S$ be a scheme over $A$. Let $\cG$ be an $\bF_q$-module scheme $\cG$ over $S$ whose structure map $\pi:\cG\to S$ is affine. Note that the additive group $\Ga$ over $S$ is endowed with a natural action of $\bF_q$. Put 
\[
\cE_\cG=\sHom_{\bF_q,S}(\cG,\Ga),
\]
the $\cO_S$-module of $\bF_q$-linear homomorphisms $\cG\to \Ga$ over $S$. The Zariski sheaf $\cE_\cG$ is naturally considered as an $\cO_S$-submodule of $\pi_*(\cO_\cG)$.

On the other hand, if the formation of $\cE_\cG$ commutes with any base change, then the relative $q$-th Frobenius map $F_{\cG/S}:\cG\to \cG^{(q)}$ defines an $\cO_S$-linear map
\[
\varphi_\cG:\cE_{\cG^{(q)}}=\cE_{\cG}^{(q)}\to \cE_\cG
\]
which commutes with $\bF_q$-actions.

\begin{dfn}\label{DfnPhiMod}
We say an $\bF_q$-module scheme $\cG$ over $S$ is a $\varphi$-module over $S$ if the following conditions hold:
\begin{itemize}
\item the structure morphism $\pi:\cG\to S$ is affine,
\item the $\cO_S$-module $\cE_\cG$ is locally free (not necessarily of finite rank) and its formation commutes with any base change,
\item the induced $\bF_q$-action on the sheaf of invariant differentials $\omega_\cG$ agrees with the action via the structure map $\bF_q\to \cO_S(S)$,
\item the natural $\cO_S$-algebra homomorphism $\cS:=\Sym_{\cO_S}(\cE_\cG)\to \pi_*(\cO_\cG)$ induces an isomorphism
\[
\cS/((f_{\cS}\otimes 1-\varphi_{\cG})(\cE_{\cG}^{(q)}))\to \pi_*(\cO_\cG).
\]
\end{itemize}
A morphism of $\varphi$-modules over $S$ is defined as a morphism of $\bF_q$-module schemes over $S$. The category of $\varphi$-modules over $S$ is denoted by $\phiMod_S$.
\end{dfn}
The last condition of Definition \ref{DfnPhiMod} yields a natural isomorphism
\[
\Coker(\varphi_{\cG})\to \omega_\cG.
\]
We also note that for any $\varphi$-module $\cG$ over $S$, the natural map $\Sym_{\cO_S}(\cE_\cG)\to \pi_*(\cO_{\cG})$ defines a closed immersion of $\bF_q$-module schemes
\[
i_\cG:\cG\to \bV^*(\cE_\cG).
\]

\begin{dfn}
A $\varphi$-sheaf over $S$ is a pair $(\cE,\varphi_{\cE})$ of a locally free $\cO_S$-module $\cE$ and an $\cO_S$-linear 
homomorphism $\varphi_{\cE}:\cE^{(q)}\to \cE$. We abusively denote the pair $(\cE,\varphi_\cE)$ by $\cE$. A morphism 
of $\varphi$-sheaves is defined as a morphism of $\cO_S$-modules compatible with $\varphi_{\cE}$'s. A sequence of 
$\varphi$-sheaves is said to be exact if the underlying sequence of $\cO_S$-modules is exact. The exact category of 
$\varphi$-sheaves over $S$ is denoted by $\phiShv_S$.
\end{dfn}
We have a contravariant functor
\[
\Sh: \phiMod_S\to \phiShv_S,\quad \cG\mapsto (\cE_{\cG},\varphi_{\cG}).
\]
On the other hand, for any object $(\cE,\varphi_{\cE})$ of the category $\phiShv_S$, put $\cS_{\cE}=\Sym_{\cO_S}(\cE)$ and 
\[
\Gr(\cE)=\Spec_S(\cS_\cE/((f_{\cS_\cE}\otimes 1-\varphi_\cE)(\cE^{(q)}))).
\]
Then the diagonal map $\cE\to \cE\oplus\cE$ and the natural $\bF_q$-action on $\cE$ define on $\Gr(\cE)$ a structure of an affine $\bF_q$-module scheme over $S$. The formation of $\Gr(\cE)$ is compatible with any base change. We also have a natural identification
\begin{equation}\label{GrDef}
\Gr(\cE)(T)=\Hom_{\varphi, \cO_S}(\cE,\pi_*(\cO_T))
\end{equation}
for any morphism $\pi:T\to S$, where we consider on $\pi_*(\cO_T)$ the natural $\varphi$-structure induced by the $q$-th power Frobenius map \cite[Proposition (1.8)]{Tag}.
Since we have a natural isomorphism $\cE\to\cE_{\Gr(\cE)}$, we obtain a contravariant functor
\[
\Gr: \phiShv_S\to \phiMod_S,\quad \cE\mapsto \Gr(\cE),
\]
which gives an anti-equivalence of categories with quasi-inverse $\Sh$. 

A sequence of $\varphi$-modules is said to be $\Shv$-exact if the corresponding sequence in the category $\phiShv_S$ via the functor $\Sh$ is exact. We consider $\phiMod_S$ as an exact category by this notion of exactness. The author does not know if it is equivalent to the exactness as group schemes.

The commutativity of $\cE_\cG$ with any base change in Definition \ref{DfnPhiMod} holds in the case where $\cG$ is a line bundle over $S$. From this we can show that any Drinfeld module is a $\varphi$-module. Another case it holds is that of finite $\varphi$-modules, which is defined as follows.

\begin{dfn}[\cite{Tag}, Definition (1.3)] 
We say an $\bF_q$-module scheme $\cG$ over $S$ is a finite $\varphi$-module over $S$ if the following conditions hold:
\begin{itemize}
\item the structure morphism $\pi:\cG\to S$ is affine,
\item the induced $\bF_q$-action on $\omega_\cG$ agrees with the action via the structure map $\bF_q\to \cO_S(S)$,
\item the $\cO_S$-modules $\pi_*(\cO_\cG)$ and $\cE_\cG$ are locally free of finite rank with
\[
\rank_{\cO_S}(\pi_*(\cO_\cG))=q^{\rank_{\cO_S}(\cE_\cG)},
\]
\item $\cE_\cG$ generates the $\cO_S$-algebra $\pi_*(\cO_\cG)$.
\end{itemize}
A morphism of finite $\varphi$-modules over $S$ is defined as a morphism of $\bF_q$-module schemes over $S$. 
\end{dfn}

\begin{dfn}
A finite $\varphi$-sheaf over $S$ is a $\varphi$-sheaf such that its underlying $\cO_S$-module is locally free of finite rank. The full subcategory of $\phiShv_S$ consisting of finite $\varphi$-sheaves is denoted by $\phiShv_S^f$.
\end{dfn}

Let $\cG$ be a finite $\varphi$-module over $S$. Then we also have the natural closed immersion $i_\cG:\cG\to \bV^*(\cE_\cG)$, which implies that the Cartier dual $\Car(\cG)$ of $\cG$ is of height $\leq 1$ in the sense of \cite[\S4.1.3]{SGA3-7A}. Then, by \cite[Th\'{e}or\`{e}me 7.4, footnote]{SGA3-7A}, the sheaf of invariant differentials $\omega_{\Car(\cG)}$ is a locally free $\cO_S$-module of finite rank, and thus the formation of the Lie algebra
\[
\Lie(\Car(\cG))\simeq \sHom_S(\cG,\Ga)
\]
commutes with any base change. Since $q-1$ is invertible in $\cO_S(S)$, the $\cO_S$-module $\cE_\cG$ is the image of the projector 
\[
\Lie(\Car(\cG))\to \Lie(\Car(\cG)),\quad x\mapsto \frac{1}{q-1}\sum_{a\in \bF_q^\times}\alpha(a)^{-1}\psi_a(x),
\]
where $\alpha:A\to \cO_S(S)$ is the structure map and $\psi_a$ is the action of $a$ on $\Lie(\Car(\cG))$ induced by the $\bF_q$-action on $\cG$. Since the formation of this projector commutes with any base change, so does that of $\cE_\cG$. From this we see that any finite $\varphi$-module is a $\varphi$-module. We denote by $\phiMod_S^f$ the full subcategory of $\phiMod_S$ consisting of finite $\varphi$-modules. Then the functor $\Gr$ gives an anti-equivalence of categories $\phiShv^f_S\to \phiMod_S^f$ with quasi-inverse given by $\Sh$.

On the category $\phiMod_S^f$, the $\Shv$-exactness agrees with the usual exactness of group schemes. Indeed, from (\ref{GrDef}) and comparing ranks we see that the $\Shv$-exactness implies the usual exactness, and the converse also follows by using the compatibility of $\Sh$ with any base change and reducing to the case over a field by Nakayama's lemma.

\begin{lem}\label{DMFF}
Let $E$ be a line bundle over $S$.
Let $\cG$ be a finite locally free closed $\bF_q$-submodule scheme of $E$ over $S$. Suppose that the rank of $\cG$ is a $q$-power. Then $\cG$ is a finite $\varphi$-module.
\end{lem}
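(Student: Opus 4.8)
The plan is to argue locally on $S$. Since every condition in the definition of a finite $\varphi$-module is local, I may assume $S=\Spec(B)$, that $\cL$ is trivial, and (after shrinking $\Spec(B)$) that $E=\Ga=\Spec(B[X])$ and $R:=\pi_*\cO_{\cG}=B[X]/I$ is finite free over $B$ of rank $q^h$ for some $h\geq 0$. The first step is to show that $I$ is generated by a single monic polynomial. Multiplication by $x:=X\bmod I$ is a $B$-linear endomorphism of the finite free module $R$, so by Cayley--Hamilton its characteristic polynomial $P$ is monic of degree $q^h$ and satisfies $P(x)=0$, i.e. $P\in I$. The resulting surjection $B[X]/(P)\twoheadrightarrow R$ is a surjection of finite free $B$-modules of the same rank $q^h$, hence an isomorphism; thus $I=(P)$ and $R$ is free with basis $1,x,\dots,x^{q^h-1}$.

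The heart of the argument is to prove that $P$ is additive and $\bF_q$-linear, i.e. $P=X^{q^h}+\sum_{i=0}^{h-1}c_iX^{q^i}$. That $\cG$ is a closed $\bF_q$-submodule scheme of $\Ga$ means precisely that $P(X+Y)\in(P(X),P(Y))$ in $B[X,Y]$ and $P(aX)\in(P(X))$ for $a\in\bF_q$. The key observation is that $q^h$ is a power of $p$, so $(X+Y)^{q^h}=X^{q^h}+Y^{q^h}$ and therefore $Q:=P(X+Y)-P(X)-P(Y)$ has degree strictly less than $q^h$ in each of $X$ and $Y$. Since $\{X^iY^j\}_{0\le i,j<q^h}$ is a $B$-basis of $B[X,Y]/(P(X),P(Y))$, any element of the ideal $(P(X),P(Y))$ whose bidegree is bounded by $(q^h-1,q^h-1)$ must vanish; as $Q$ lies in this ideal, we get $Q=0$, so $P$ is additive. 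Comparing leading coefficients in $P(aX)=\lambda P(X)$ forces $\lambda=a$, so $P$ is $\bF_q$-homogeneous as well; an additive $\bF_q$-homogeneous polynomial over a ring of characteristic $p$ is necessarily of the form $\sum_i c_iX^{q^i}$ (the binomial coefficients $\binom{n}{k}$ surviving mod $p$ kill every non-$p$-power exponent, and $\bF_q$-homogeneity then selects the $q$-power exponents). I expect this additivity over a general, possibly non-reduced, base $B$ to be the main obstacle, and the degree count just described is exactly what removes it.

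Granting this, the remaining steps are formal. I would next identify $\cE_{\cG}$ explicitly as $\bigoplus_{i=0}^{h-1}Bx^{q^i}$: each $x^{q^i}$ is visibly additive and $\bF_q$-linear, giving one inclusion, while for the reverse inclusion I lift any $f\in\cE_{\cG}$ to $\tilde f\in B[X]$ of degree $<q^h$ and rerun the same degree-count and basis argument to conclude that $\tilde f$ is itself additive and $\bF_q$-linear as a polynomial, hence $\tilde f=\sum_{i<h}c_iX^{q^i}$. This shows $\cE_{\cG}$ is finite free of rank $h$, so $\rank_{\cO_S}(\pi_*\cO_{\cG})=q^h=q^{\rank_{\cO_S}(\cE_{\cG})}$. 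Finally I verify the remaining axioms: $\pi$ is affine since $\cG$ is closed in $\Ga$; the coordinate $x$ lies in $\cE_{\cG}$ and generates $R=B[x]$ as a $B$-algebra, giving the generation condition; and the $\bF_q$-action on $\omega_{\cG}$ agrees with the structure map because $\omega_{\cG}$ is a quotient of $\omega_{E}=\cO_S\,dX$ through the conormal sequence of $\cG\hookrightarrow E$, compatibly with the $\bF_q$-action under which $a$ sends $dX\mapsto d(aX)=a\,dX$. Since each condition checked is local and independent of the chosen trivialization of $\cL$, these conclusions glue to the desired statement over $S$.
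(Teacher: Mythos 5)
Your proposal is correct and follows essentially the same route as the paper: reduce to $E=\Spec(B[X])$, use Cayley--Hamilton to identify $B_\cG\simeq B[X]/(P(X))$ with $P$ the characteristic polynomial of multiplication by $X$, show $P$ is additive and $\bF_q$-linear, and conclude $\cE_\cG=\bigoplus_{i=0}^{h-1}BX^{q^i}$. The only difference is that where the paper cites \cite[\S8, Exercise 7]{Waterhouse} for additivity of the monic $P$, you prove it inline via the degree bound on $P(X+Y)-P(X)-P(Y)$ and the monomial basis of $B[X,Y]/(P(X),P(Y))$ --- which is precisely the standard solution to that exercise, so the arguments coincide in substance.
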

\begin{proof}
We may assume that $S=\Spec(B)$ is affine, the underlying invertible sheaf of $E$ is trivial and $\cG=\Spec(B_\cG)$ is free of rank $q^n$ over $S$. We write as $E=\Spec(B[X])$. We have a surjection $B[X]\to B_\cG$ of Hopf algebras over $B$. Let $P(X)\in B[X]$ be the characteristic polynomial of the action of $X$ on $B_\cG$. Since $\deg(P(X))=q^n$, the Cayley-Hamilton theorem implies that this surjection induces an isomorphism $B[X]/(P(X))\simeq B_\cG$. 

Since $P(X)$ is monic, we can see that $P(X)$ is an additive polynomial as in \cite[\S8, Exercise 7]{Waterhouse}. Since $\cG$ is stable under the $\bF_q$-action on $\Ga$, we have the equality of ideals $(P(\lambda X))=(P(X))$ of $B[X]$ for any $\lambda\in\bF_q^\times$. 
From this we see that $P(X)$ is $\bF_q$-linear and
\[
\cE_{\cG}=\bigoplus_{i=0}^{n-1}BX^{q^i},
\]
from which the lemma follows.
\end{proof}

\begin{cor}\label{DMIFF}
Let $\pi: E\to F$ be an $\bF_q$-linear isogeny of line bundles over $S$. Then the group scheme $\cG=\Ker(\pi)$ is a finite $\varphi$-module over $S$, and we have a natural exact sequence of $\varphi$-sheaves
\begin{equation}\label{EqnExactShvIsog}
\xymatrix{
0 \ar[r] & \cE_{F} \ar[r] & \cE_{E} \ar[r] & \cE_{\cG} \ar[r] & 0.
}
\end{equation}
\end{cor}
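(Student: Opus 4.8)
The plan is to realize $\cG$ as a finite locally free $\bF_q$-submodule scheme of $E$ of $q$-power rank and invoke Lemma~\ref{DMFF}, and then to build the sequence (\ref{EqnExactShvIsog}) by combining the left-exactness of $\sHom_{\bF_q,S}(-,\Ga)$ with an explicit local computation for surjectivity. First I would reduce to the case where $S=\Spec(B)$ is affine and both $E$ and $F$ have trivial underlying invertible sheaf, say $E=\Spec(B[X])$ and $F=\Spec(B[Y])$, since both the conclusion that $\cG$ is a finite $\varphi$-module and the exactness of a sequence of $\cO_S$-modules can be checked Zariski-locally. In this model $\pi$ is given by $Y\mapsto P(X)$ for an additive ($\bF_q$-linear) polynomial $P(X)=\sum_{i=0}^{n}a_iX^{q^i}$, and the isogeny hypothesis forces the leading coefficient $a_n$ to be nowhere vanishing, so that $B[X]$ is free of rank $q^n$ over $B[Y]$ via $Y\mapsto P(X)$. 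Hence $\cG=\Ker(\pi)=\Spec(B[X]/(P(X)))$ is a finite locally free closed $\bF_q$-submodule scheme of $E$ of rank $q^n$, a $q$-power, and Lemma~\ref{DMFF} applies to show $\cG$ is a finite $\varphi$-module; its proof moreover identifies $\cE_{\cG}=\bigoplus_{i=0}^{n-1}B\,\bar X^{q^i}$, where $\bar X$ is the image of $X$.

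Next I would define the two arrows of the sequence functorially: the closed immersion $\cG\hookrightarrow E$ induces the restriction map $\cE_E\to\cE_{\cG}$, and the isogeny $\pi$ induces $\pi^{*}:\cE_F\to\cE_E$, $g\mapsto g\circ\pi$. Both are $\cO_S$-linear, and since they are induced by morphisms of $\bF_q$-module schemes that commute with the relative $q$-th Frobenius, they are automatically compatible with the $\varphi_\cG$-structures; thus they are morphisms of $\varphi$-sheaves, and it only remains to verify exactness of the underlying sequence of $\cO_S$-modules. For the left and middle terms I would argue conceptually: as $\pi$ is faithfully flat with kernel $\cG$, it exhibits $F$ as the fppf quotient $E/\cG$, so for the additive target $\Ga$ one has, Zariski-locally on $S$,
\[
\sHom_{\bF_q,S}(F,\Ga)=\Ker\bigl(\sHom_{\bF_q,S}(E,\Ga)\to\sHom_{\bF_q,S}(\cG,\Ga)\bigr).
\]
This is exactly the injectivity of $\pi^{*}$ together with $\Img(\pi^{*})=\Ker(\cE_E\to\cE_{\cG})$, i.e. exactness at $\cE_F$ and at $\cE_E$.

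It then remains to prove surjectivity of $\cE_E\to\cE_{\cG}$, which I would read off from the local model. The restriction map sends the basis element $X^{q^i}\in\cE_E$ to $\bar X^{q^i}$, and for $0\le i\le n-1$ these are precisely the basis elements of $\cE_{\cG}$ furnished by Lemma~\ref{DMFF}; hence the map is surjective, completing the proof after checking that the naturally defined arrows glue over $S$. I expect the main obstacle to be right-exactness rather than anything in the left-exact part: surjectivity of $\cE_E\to\cE_{\cG}$ is \emph{not} formal, since $\sHom_{\bF_q,S}(-,\Ga)$ is only left exact and the relevant $\sExt^{1}(F,\Ga)$ need not vanish, so it must be extracted from the explicit additive-polynomial structure. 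This is precisely where the $q$-power rank hypothesis is essential and where Lemma~\ref{DMFF} does the real work; once the local description of $\cE_{\cG}$ is in hand the surjectivity is immediate, and the remaining care is only to confirm that $\pi$ being an isogeny genuinely supplies the nowhere-vanishing leading coefficient (equivalently, the faithful flatness underlying $F\cong E/\cG$).
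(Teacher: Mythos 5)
Your proposal is correct in substance and follows the paper's strategy: the first assertion comes from Lemma \ref{DMFF}, the two arrows and left-exactness are the formal part, and the real content is the surjectivity of $\cE_E\to\cE_\cG$, which you extract from the explicit description $\cE_\cG=\bigoplus_{i=0}^{n-1}B\bar{X}^{q^i}$ furnished by the proof of Lemma \ref{DMFF}. The only difference in execution is that the paper first reduces surjectivity to $S=\Spec(k)$, $k$ a field, by Nakayama's lemma and then identifies $i^*$ with reduction modulo $\bigoplus_{l\geq 0}kP(X)^{q^l}$, whereas you read surjectivity off directly over the affine base from that basis; your variant is fine, and indeed slightly more direct, since Lemma \ref{DMFF}'s proof produces the basis over an arbitrary base ring. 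Making the left-exactness explicit via the fppf quotient $F\simeq E/\cG$ is also correct (the paper treats this as implicit, saying only that surjectivity remains to be shown).

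One assertion of yours is false as literally stated, though the proof survives its repair: over a non-reduced base the isogeny hypothesis does \emph{not} force the visible leading coefficient of $P$ to be nowhere vanishing. For instance, over $B=k[\varepsilon]/(\varepsilon^2)$ the additive polynomial $P(X)=X^q+\varepsilon X^{q^2}$ defines a finite locally free $\bF_q$-linear isogeny $\Ga\to\Ga$ of degree $q$ (with $Y=P(X)$ one computes $X^q=Y-\varepsilon Y^q$ in $B[X]$, so $B[X]$ is free of rank $q$ over $B[Y]$), yet the coefficient $\varepsilon$ of the top term vanishes at every point of $\Spec(B)$. The correct route is the one Lemma \ref{DMFF} already supplies: $\cG=\Ker(\pi)$ is the base change of the finite locally free map $\pi$ along the zero section of $F$, hence finite locally free, and its rank is fiberwise a $q$-power; the Cayley--Hamilton argument in the proof of Lemma \ref{DMFF} then replaces the given $P$ by the \emph{monic} characteristic polynomial cutting out $\cG$ (in the example above $(P(X))=(X^q)$ as ideals) and yields $\cE_\cG=\bigoplus_{i=0}^{n-1}B\bar{X}^{q^i}$, after which your surjectivity argument goes through verbatim. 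So the gap is local to that one sentence and costs nothing once you route the freeness and rank claims through the definition of isogeny and Lemma \ref{DMFF} rather than through a unit leading coefficient.
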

\begin{proof}
The first assertion follows from Lemma \ref{DMFF}. For the second one, it is enough to show the surjectivity of the natural map $i^*:\cE_E\to \cE_{\cG}$. By Nakayama's lemma, we may assume $S=\Spec(k)$ for some field $k$. Then $\pi$ is defined by an $\bF_q$-linear additive polynomial as
\[
X\mapsto P(X)=a_0 X+a_1 X^q+\cdots + a_n X^{q^n},\quad a_n\neq 0
\]
and the map $i^*$ is identified with the natural map
\[
\cE_E=\bigoplus_{i\in\bZ_{\geq 0}} k X^{q^i}\to \cE_{\cG}=\bigoplus_{i=0}^{n-1} k X^{q^i}
\]
of taking modulo $\bigoplus_{l\geq 0} k P(X)^{q^l}$. Hence $i^*$ is surjective.
\end{proof}

\begin{lem}\label{DMQFF}
\begin{enumerate}
\item\label{DMQFF-Phi} Let $E$ be a line bundle over $S$. Let $\cG$ and $\cH$ be finite locally free closed $\bF_q$-submodule schemes of $E$ over $S$ satisfying $\cH\subseteq \cG$. Suppose that the ranks of $\cG$ and $\cH$ are constant $q$-powers. Then $E/\cH$ is a line bundle over $S$ and $\cG/\cH$ is a finite $\varphi$-module over $S$. 

\item\label{DMQFF-Dr} Let $E$ be a Drinfeld module of rank $r$. Let $\cH$ be a finite locally free closed $A$-submodule scheme of $E$ of constant $q$-power rank over $S$. Suppose either 
\begin{itemize}
	\item $\cH$ is etale over $S$, or
	\item $S$ is reduced and for any maximal point $\eta$ of $S$, the fiber $\cH_\eta$ of $\cH$ over $\eta$ is etale. 
	\end{itemize}
Then $E/\cH$ is a Drinfeld module of rank $r$ with the induced $A$-action. 
\end{enumerate}
\end{lem}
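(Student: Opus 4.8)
The plan is to treat \ref{DMQFF-Phi} by reducing to an explicit local model and then reassembling, and to deduce \ref{DMQFF-Dr} from it by descending the $A$-action and checking the two axioms of a Drinfeld module one at a time. For \ref{DMQFF-Phi} I would first work Zariski-locally, assuming $S=\Spec(B)$, the invertible sheaf underlying $E$ trivial so that $E=\Ga=\Spec(B[X])$, and $\cH$ free of rank $q^m$. By Lemma \ref{DMFF} and its proof, $\cH$ is cut out by a monic $\bF_q$-linear polynomial $P(X)$ of degree $q^m$, so $\cH=\Ker(P)$ for the $\bF_q$-linear isogeny $P\colon\Ga\to\Ga$. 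Since $P$ is monic, $B[X]$ is free of rank $q^m$ over $B[P(X)]$, so $P$ is faithfully flat with kernel exactly $\cH$; hence the fppf quotient $E/\cH$ exists and is represented locally by $\Ga$. Tracking the leading term $X^{q^m}$, which is the relative $m$-fold Frobenius $\tau^m\colon\bV_*(\cL)\to\bV_*(\cL^{\otimes q^m})$ with nowhere vanishing coefficient, these local isomorphisms glue to $E/\cH\cong\bV_*(\cL^{\otimes q^m})$; in particular $E/\cH$ is a line bundle and the quotient isogeny $\pi\colon E\to E/\cH$ has $\tau$-degree $m$ with nowhere vanishing leading coefficient. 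Finally $\cG/\cH$, being the image of $\cG$ in $E/\cH$, is a finite locally free closed $\bF_q$-submodule scheme of the line bundle $E/\cH$ whose rank $\rank(\cG)/\rank(\cH)$ is again a $q$-power, so Lemma \ref{DMFF} shows it is a finite $\varphi$-module.

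For \ref{DMQFF-Dr}, Part \ref{DMQFF-Phi} (applied with $\cG=\cH$) already gives that the underlying $A$-module scheme $E/\cH$ is a line bundle, and since $\cH$ is stable under $\Phi^E$ the action descends to an $\bF_q$-algebra homomorphism $\Phi^{E/\cH}\colon A\to\End_S(E/\cH)$ characterized by
\[
\Phi^{E/\cH}_a\circ\pi=\pi\circ\Phi^E_a.
\]
I would read both Drinfeld-module axioms for $\Phi^{E/\cH}$ off this relation in the twisted polynomial ring, writing $\pi=\sum_i a_i\tau^i$ (with $a_0$ its constant term and $a_m$ its nowhere vanishing leading coefficient) and using $\beta\tau^i\cdot\gamma\tau^j=\beta\gamma^{q^i}\tau^{i+j}$. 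Comparing the top terms shows that $\Phi^{E/\cH}_a$ has $\tau$-degree $r\deg(a)$ and that its leading coefficient is a unit multiple of powers of the leading coefficients of $\pi$ and of $\Phi^E_a$, all of which are nowhere vanishing; hence the leading coefficient of $\Phi^{E/\cH}_a$ is nowhere vanishing. This step is uniform in both cases of the hypothesis and does not use the \'etale assumption.

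The constant-term axiom is where the hypothesis genuinely enters, and I expect it to be the main obstacle. Comparing $\tau^0$-coefficients gives $\beta_0\cdot a_0=a_0\cdot(\text{image of }a)$, where $\beta_0$ is the constant term of $\Phi^{E/\cH}_a$. When $\cH$ is \'etale the isogeny $\pi$ is separable, so $a_0$ is a unit and may be cancelled, yielding $\beta_0=\text{image of }a$ as required. When instead $S$ is reduced and only the fibers $\cH_\eta$ over the maximal points $\eta$ are \'etale, I would run the \'etale argument over each residue field $\kappa(\eta)$ to obtain $\beta_0=\text{image of }a$ at every maximal point, and then invoke reducedness of $S$---so that $\cO_S(S)$ injects into the product of the $\kappa(\eta)$---to promote this to the desired equality of global sections. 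Both $\beta_0$ and the image of $a$ lie in $\cO_S(S)$ (as $\beta_0\in\cL^{\otimes 0}(S)=\cO_S(S)$), so this comparison is legitimate.

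I stress that the \'etale (respectively generically \'etale on a reduced base) hypothesis is essential and not merely technical: for the Frobenius kernel $\cH=\Ker(\tau)\subset\Ga$, which is a non-\'etale $A$-submodule scheme of the Carlitz module $C$, the descended action is $\Phi^{E/\cH}_t=\theta^q+\tau$, whose constant term $\theta^q$ differs from $\theta$, so $E/\cH$ fails to be a standard Drinfeld module. The role of the hypothesis is exactly to force $a_0$ to be invertible where it is needed, ruling out such Frobenius-twisted quotients.
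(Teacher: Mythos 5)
Your proposal is correct and takes essentially the same route as the paper: your explicit local model for part (\ref{DMQFF-Phi}) (the monic $\bF_q$-linear polynomial $P$ from Lemma \ref{DMFF}, faithful flatness of $P^*:\Ga\to\Ga$, and gluing via the monic normalization to get $E/\cH\cong\bV_*(\cL^{\otimes q^m})$) is exactly the content of the result of Lehmkuhl that the paper cites instead of reproving. For part (\ref{DMQFF-Dr}) your coefficient comparison in $B\{\tau\}$ is the paper's argument verbatim --- the relation $a_0(\beta_0-\alpha_0(a))=0$ is its $p_1(b_0-\theta)=0$, with the same case split: cancel the unit $a_0=p_1$ when $\cH$ is etale, and otherwise pass to the maximal points and use reducedness via the injection of $\cO_S(S)$ into the product over minimal primes.
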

\begin{proof}
	For (\ref{DMQFF-Phi}), \cite[Ch.~1, Proposition 3.2]{Lehmkuhl} implies that $E/\cH$ is a line bundle 
	over $S$. Moreover, applying Lemma \ref{DMFF} to the natural closed immersion $\cG/\cH\to E/\cH$, we see that
	$\cG/\cH$ is a finite $\varphi$-module over $S$.

For (\ref{DMQFF-Dr}), we may assume that $S=\Spec(B)$ is affine, the underlying invertible sheaves of $E$ and $E/\cH$ 
are trivial and 
$\cH$ is free of rank $q^n$ over $S$. We write the $t$-multiplication maps of $E$ and $E/\cH$ as 
\[
\Phi^E_t(X)=\theta X+a_1 X^q+\cdots+a_r X^{q^r},\quad \Phi^{E/\cH}_t(X)=b_0 X+b_1 X^q+\cdots + b_s X^{q^s}
\]
with $b_s\neq 0$. From the proof of \cite[Ch.~1, Proposition 3.2]{Lehmkuhl}, we may also assume that the map $E\to 
E/\cH$ is 
defined by an $\bF_q$-linear monic additive polynomial
\[
X\mapsto P(X)=p_1 X+\cdots+ p_{n-1} X^{q^{n-1}}+X^{q^n}.
\]
From the equality $\Phi^{E/\cH}_t(P(X))=P(\Phi^E_t(X))$, we obtain $r=s$, $b_r=a_r^{q^n}$ and $p_1(b_0-\theta)=0$. If $\cH$ is etale over $B$, then we have $p_1\in B^\times$ and thus $b_0=\theta$. If the latter assumption in the lemma holds, then $p_1\in B$ is a non-zero divisor in the ring $B/\mathfrak{p}$ for any minimal prime ideal $\mathfrak{p}$. Since $B$ is reduced, it is a subring of $\prod B/\mathfrak{p}$,
where the product is taken over the set of minimal prime ideals $\mathfrak{p}$ of $B$. This also yields $b_0=\theta$, and thus $E/\cH$ is a Drinfeld module of rank $r$ in both cases. 
\end{proof}

\begin{dfn}[\cite{Tag}, Definition (2.1)]\label{DfnTMod} We say an $A$-module scheme $\cG$ over $S$ is a $t$-module 
over $S$ if the following conditions hold:
\begin{itemize}
\item the induced $A$-action on $\omega_\cG$ agrees with the action via the structure map $A\to \cO_S(S)$, 
\item the underlying $\bF_q$-module scheme of $\cG$ is a $\varphi$-module over $S$. 
\end{itemize}
We say $\cG$ is a finite $t$-module if in addition the underlying $\bF_q$-module scheme of $\cG$ is a finite $\varphi$-module over $S$.
\end{dfn}
Note that the former condition in Definition \ref{DfnTMod} is automatically satisfied if $\cG$ is etale.

\begin{lem}\label{DMQFt}
Let $E$ be a line bundle over $S$. Let $\cG$ and $\cH$ be finite locally free closed $\bF_q$-submodule schemes of $E$ over $S$ satisfying $\cH\subseteq \cG$. Suppose that $\cG$ is endowed with a $t$-action which makes it a finite $t$-module, $\cH$ is stable under the $A$-action on $\cG$ and the ranks of $\cG$ and $\cH$ are constant $q$-powers. 

\begin{enumerate}
	\item\label{DMQFt-Sub}
	The $A$-module scheme $\cH$ is a finite $t$-module over $S$.
	\item\label{DMQFt-Quot}
	Suppose moreover that $a\cG=0$ for some $\cO_S$-regular element $a\in A$. Then the $A$-module scheme $\cG/\cH$ is a finite $t$-module over $S$.
\end{enumerate}
\end{lem}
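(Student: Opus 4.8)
The plan is to handle both parts by the same two-move strategy: first identify the underlying $\bF_q$-module scheme as a finite $\varphi$-module using the lemmas already proved, and then verify the remaining condition of Definition \ref{DfnTMod}---that the $A$-action on the sheaf of invariant differentials is via the structure map $\alpha\colon A\to \cO_S(S)$---by transporting this property along a canonical $A$-equivariant map of invariant differentials. Since every assertion is local on $S$ and concerns local freeness and (in)jectivity of maps of quasi-coherent sheaves, I would reduce throughout to the case $S=\Spec(B)$ affine with $E$ a trivial line bundle.

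For (\ref{DMQFt-Sub}), the scheme $\cH$ is a finite locally free closed $\bF_q$-submodule scheme of the line bundle $E$ of constant $q$-power rank, so Lemma \ref{DMFF} shows its underlying $\bF_q$-module scheme is a finite $\varphi$-module. As $\cH$ is stable under the $A$-action on $\cG$, the closed immersion $i\colon \cH\hookrightarrow \cG$ is a morphism of $A$-module schemes and thus induces an $A$-equivariant $\cO_S$-linear \emph{surjection} $i^*\colon \omega_\cG\to \omega_\cH$, obtained by pulling back the surjection $i^*\Omega_{\cG/S}\to \Omega_{\cH/S}$ along the unit sections. Since $\cG$ is a finite $t$-module, $A$ acts on $\omega_\cG$ by multiplication through $\alpha$; surjectivity together with $A$-equivariance then forces the same on $\omega_\cH$, because every element of $\omega_\cH$ is a value $i^*(x)$ and $b\cdot i^*(x)=i^*(b\cdot x)=i^*(\alpha(b)x)=\alpha(b)i^*(x)$. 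This is exactly the $\omega$-condition, proving (\ref{DMQFt-Sub}).

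For (\ref{DMQFt-Quot}), Lemma \ref{DMQFF}(\ref{DMQFF-Phi}) already gives that $E/\cH$ is a line bundle and $\cG/\cH$ a finite $\varphi$-module, so again only the $\omega$-condition remains. The quotient isogeny $\pi\colon \cG\to \cG/\cH$ is $A$-equivariant (as $\cH$ is $A$-stable), inducing an $A$-equivariant $\cO_S$-linear map $\pi^*\colon \omega_{\cG/\cH}\to \omega_\cG$. Dually to part (\ref{DMQFt-Sub}), it now suffices to show $\pi^*$ is \emph{injective}: then from $\pi^*(b\cdot y)=b\cdot\pi^*(y)=\alpha(b)\pi^*(y)=\pi^*(\alpha(b)y)$ one concludes $b\cdot y=\alpha(b)y$ for all $y$ and all $b\in A$. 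To establish injectivity I would trivialize $E/\cH$ and recall, from the proof of Lemma \ref{DMQFF}(\ref{DMQFF-Dr}) via \cite[Ch.~1, Proposition 3.2]{Lehmkuhl}, that $\cH=\Spec(B[X]/(P_\cH))$ and $\cG=\Spec(B[X]/(P_\cG))$ for monic $\bF_q$-linear additive polynomials with $P_\cG=R\circ P_\cH$, while $\pi$ is given by $Y\mapsto P_\cH(X)$. A direct computation of invariant differentials identifies $\pi^*$ with the map $B/(r_0)\to B/(p_0)$, $\bar b\mapsto \overline{b h_0}$, where $h_0,r_0,p_0=r_0h_0$ are the linear coefficients of $P_\cH,R,P_\cG$; this is injective as soon as $h_0$ is a non-zero-divisor in $B$.

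The crux, and the place where the hypothesis $a\cG=0$ is essential, is precisely the regularity of $h_0$. Since $\Phi^\cG_a=0$ as an endomorphism of the group scheme $\cG$, it induces the zero map on $\omega_\cG$; but $\cG$ being a finite $t$-module, this induced map is multiplication by $\alpha(a)$, so $\alpha(a)\cdot\omega_\cG=0$, i.e.\ $\alpha(a)\in \Ann_B(B/(p_0))=(p_0)$ and hence $p_0\mid \alpha(a)$. Because $a$ is $\cO_S$-regular, $\alpha(a)$ is a non-zero-divisor, and therefore so is any factor of it---in particular $p_0$, and then $h_0$, since $p_0=r_0h_0$. Injectivity of $\pi^*$ follows, completing (\ref{DMQFt-Quot}). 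I expect this chain (regularity of $a$ $\Rightarrow$ $p_0\mid\alpha(a)$ $\Rightarrow$ regularity of $h_0$ $\Rightarrow$ injectivity of $\pi^*$) to be the only genuinely delicate point; the remaining manipulations with conormal and differential sequences are formal and run parallel to part (\ref{DMQFt-Sub}).
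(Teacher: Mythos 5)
Your proposal is correct and follows essentially the same route as the paper: both parts hinge on the $A$-equivariant sequence $\omega_{\cG/\cH}\xrightarrow{\pi^*}\omega_\cG\to\omega_\cH\to 0$, with part (1) read off from the surjection onto $\omega_\cH$ and part (2) reduced to injectivity of $\pi^*$ under the hypothesis $a\cG=0$. Where the paper only says that ``using co-Lie complexes'' one deduces the injectivity of $\pi^*$, your explicit computation---writing $\cH=\Spec(B[X]/(P_\cH))$, $P_\cG=R\circ P_\cH$, identifying $\pi^*$ with $B/(r_0)\xrightarrow{\cdot h_0}B/(p_0)$, and running the regularity chain $\alpha(a)\in(p_0)$, $p_0=r_0h_0$---is precisely what that appeal unwinds to, since the co-Lie complexes in question are the two-term complexes $[B\xrightarrow{h_0}B]$, $[B\xrightarrow{r_0}B]$ and $[B\xrightarrow{p_0}B]$, so you have supplied the detail the paper leaves to the reader rather than a different argument.
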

\begin{proof}
From Lemma \ref{DMFF} and Lemma \ref{DMQFF} (\ref{DMQFF-Phi}), we see that $\cH$ and $\cG/\cH$ are finite $\varphi$-modules. We have an exact sequence of $\cO_S$-modules
\[
\xymatrix{
 \omega_{\cG/\cH} \ar[r]^-{\pi^*} & \omega_\cG \ar[r] & \omega_\cH \ar[r] & 0
}
\]
which is compatible with $A$-actions. Since the $t$-action on $\omega_\cG$ is equal to the multiplication by $\theta$, 
so is that on $\omega_\cH$ and (\ref{DMQFt-Sub}) follows. For (\ref{DMQFt-Quot}), using co-Lie complexes we can deduce from 
the assumption that the map $\pi^*$ is injective. This yields (\ref{DMQFt-Quot}).
\end{proof}

\begin{dfn}[\cite{Tag}, Definition (3.1)] A $v$-module over $S$ is a pair $(\cG,v_\cG)$ of a $t$-module $\cG$ and an $\cO_S$-linear map $v_\cG:\cE_{\cG}\to \cE^{(q)}_{\cG}$ such that the map $\psi^{\cG}_t: \cE_{\cG}\to \cE_{\cG}$ induced by the $t$-action on $\cG$ satisfies 
\[
\psi_t^\cG=\theta+\varphi_{\cG}\circ v_\cG,\quad (\psi_t^\cG\otimes 1)\circ v_\cG=v_\cG\circ \psi_t^\cG.
\]
We refer to such $v_\cG$ as a $v$-structure on $\cG$ and denote the pair $(\cG,v_\cG)$ abusively by $\cG$. A morphism $g:\cG\to \cH$ of $v$-modules over $S$ is defined as a morphism of $A$-module schemes over $S$ which commutes with $v$-structures, in the sense that the following diagram is commutative.
\[
\xymatrix{
\cE_{\cH}\ar[r]^{v_{\cH}}\ar[d]_{g^*} & \cE_{\cH}^{(q)}\ar[d]^{g^* \otimes 1}\\
\cE_{\cG}\ar[r]_{v_\cG} & \cE_{\cG}^{(q)}
}
\]
A sequence of $v$-modules over $S$ is said to be exact if the underlying sequence of $\varphi$-modules is $\Shv$-exact. The category of $v$-modules over $S$ is denoted by $\vMod_S$. 

A $v$-module over $S$ is said to be a finite $v$-module if the underlying $\varphi$-module is a finite $\varphi$-module. The full subcategory of $\vMod_S$ consisting of finite $v$-modules is denoted by $\vMod^f_S$.
\end{dfn}

\begin{dfn}[\cite{Tag}, Definition (3.2)]
A $v$-sheaf over $S$ is a quadruple $(\cE,\varphi_{\cE},\psi_{\cE,t},v_\cE)$, which we abusively write as $\cE$, consisting of the following data:
\begin{itemize}
\item $(\cE,\varphi_{\cE})$ is a $\varphi$-sheaf over $S$,
\item $\psi_{\cE,t}:\cE\to \cE$ is an $\cO_S$-linear map which commutes with $\varphi_\cE$,
\item $v_\cE:\cE\to \cE^{(q)}$ is an $\cO_S$-linear map which commutes with $\psi_{\cE,t}$ and satisfies $\psi_{\cE,t}=\theta+\varphi_{\cE}\circ v_\cE$.
\end{itemize}
A morphism of $v$-sheaves is defined as a morphism of underlying $\cO_S$-modules which is compatible with the other data, and we say that a sequence of $v$-sheaves is exact if the underlying sequence of $\cO_S$-modules is exact. The exact category of $v$-sheaves over $S$ is denoted by $\vShv_S$.

A $v$-sheaf is said to be a finite $v$-sheaf if the underlying $\cO_S$-module is locally free of finite rank. The full subcategory of $\vShv_S$ consisting of finite $v$-sheaves is denoted by $\vShv_S^f$.
\end{dfn}
Then the functor $\Gr$ induces anti-equivalences of categories
\[
\vShv_S\to \vMod_S ,\quad \vShv^f_S\to \vMod^f_S
\]
with quasi-inverses given by $\Sh$.

Note that for any $v$-module (\textit{resp.} finite $v$-module) $\cG$ over $S$ and any $S$-scheme $T$, the base change $\cG|_T=\cG\times_S T$ has a natural structure of a $v$-module (\textit{resp.} finite $v$-module) over $T$. For any Drinfeld module $E$ over $S$, the map $\varphi_E: \cE_E^{(q)}\to \cE_E$ is injective and $\Coker(\varphi_E)$ is killed by $\psi^E_t-\theta$. Then $E$ has a unique $v$-structure 
\[
v_E=\varphi^{-1}_E\circ (\psi^E_t-\theta)
\]
and any morphism of Drinfeld modules is compatible with the unique $v$-structures. Thus we may consider the category $\DM_S$ as a full subcategory of $\vMod_S$. Moreover, for any isogeny $\pi:E\to F$ of Drinfeld modules over $S$, Corollary \ref{DMIFF} implies that $\Ker(\pi)$ has a unique structure of a finite $v$-module such that the exact sequence (\ref{EqnExactShvIsog}) is compatible with $v$-structures. Note that a $v$-structure of $\Ker(\pi)$ is not necessarily unique without this compatibility condition.
On the other hand, in some cases a finite $t$-module over $S$ has a unique $v$-structure, as follows.

\begin{lem}[\cite{Tag}, Proposition 3.5]\label{SredV}
Let $\cG$ be a finite $t$-module over $S$. Suppose either
\begin{enumerate}
\item\label{SredV-Et} $\cG$ is etale over $S$, or
\item\label{SredV-Red} $S$ is reduced and for any maximal point $\eta$ of $S$, the fiber $\cG_\eta$ of $\cG$ over $\eta$ is etale. 
\end{enumerate}
Then the map $\varphi_\cG:\cE_\cG^{(q)}\to \cE_\cG$ is injective. In particular, there exists a unique $v$-structure on $\cG$, and for any $v$-module $\cH$, any morphism $\cG\to \cH$ of $t$-modules over $S$ is compatible with $v$-structures.
\end{lem}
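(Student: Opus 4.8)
The plan is to isolate the injectivity of $\varphi_\cG:\cE_\cG^{(q)}\to\cE_\cG$ as the single substantial point; once it is in hand, both the construction of the $v$-structure and the compatibility assertion follow by purely formal manipulations that exploit the fact that an injective map may be cancelled on the left. For the injectivity I would treat case (\ref{SredV-Et}) first. If $\cG$ is \'etale over $S$, then $\omega_\cG=e^*\Omega_{\cG/S}=0$, so the natural isomorphism $\Coker(\varphi_\cG)\simeq\omega_\cG$ recorded after Definition \ref{DfnPhiMod} shows that $\varphi_\cG$ is surjective. Since $\cE_\cG$ is locally free of finite rank and $\cE_\cG^{(q)}=F_S^*\cE_\cG$ has the same rank, a surjection between locally free sheaves of equal rank is an isomorphism, hence injective.

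For case (\ref{SredV-Red}) I would reduce to $S=\Spec(B)$ with $B$ reduced and show $\Ker(\varphi_\cG)=0$. Because the formations of $\cE_\cG$ and of $\varphi_\cG$ commute with base change, at each minimal prime $\frp$ the localization of $\varphi_\cG$ is identified with $\varphi_{\cG_\eta}$ for the generic fiber $\cG_\eta$ over $\eta=\Spec(\kappa(\frp))$, which is \'etale by hypothesis and so injective by the previous case. As $B$ is reduced it embeds into $\prod_\frp B_\frp$ with each $B_\frp$ a field, so the locally free module $\cE_\cG^{(q)}$ injects into the product of its fibers at the generic points. A local section killed by $\varphi_\cG$ therefore vanishes at every generic point and hence is zero, giving injectivity in case (\ref{SredV-Red}).

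Granting injectivity, I would build the $v$-structure exactly as is done for Drinfeld modules before the lemma. The $t$-module condition forces the $t$-action on $\omega_\cG\simeq\Coker(\varphi_\cG)$ to be multiplication by $\theta$, so $(\psi_t^\cG-\theta)(\cE_\cG)\subseteq\Img(\varphi_\cG)$; injectivity of $\varphi_\cG$ then makes $v_\cG:=\varphi_\cG^{-1}\circ(\psi_t^\cG-\theta)$ a well-defined $\cO_S$-linear map $\cE_\cG\to\cE_\cG^{(q)}$ satisfying $\psi_t^\cG=\theta+\varphi_\cG\circ v_\cG$ by construction, and it is the only such map by left-cancellation of $\varphi_\cG$. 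To check the remaining axiom $(\psi_t^\cG\otimes 1)\circ v_\cG=v_\cG\circ\psi_t^\cG$, I would compose both sides on the left with $\varphi_\cG$: using that $\psi_t^\cG$ commutes with $\varphi_\cG$ (the $t$-action is $\bF_q$-linear, hence commutes with the relative Frobenius inducing $\varphi_\cG$), both sides become $(\psi_t^\cG-\theta)\circ\psi_t^\cG$, and injectivity gives the equality. The compatibility of an arbitrary morphism $g:\cG\to\cH$ of $t$-modules with a $v$-structure on $\cH$ is handled identically: composing $(g^*\otimes 1)\circ v_\cH$ and $v_\cG\circ g^*$ with $\varphi_\cG$ and using that $g^*$ commutes with both $\varphi$ and $\psi_t$ reduces each to $(\psi_t^\cG-\theta)\circ g^*$, so they agree by injectivity.

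The main obstacle is the injectivity in case (\ref{SredV-Red}): one must upgrade the purely fibral \'etale hypothesis at the generic points to a global statement, and the only available leverage is the reducedness of $S$ together with the base-change compatibility of $\varphi_\cG$, which lets the generic-point computation detect the vanishing of the kernel. Everything past injectivity is formal, resting repeatedly on the single fact that $\varphi_\cG$ can be cancelled on the left.
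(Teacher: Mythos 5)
Your proof is correct. Note that the paper itself gives no argument for this lemma, deferring entirely to \cite[Proposition 3.5]{Tag}; your reasoning is essentially the standard one found there, and it is also the same mechanism the paper uses in the surrounding text for Drinfeld modules (étaleness kills $\omega_\cG\simeq\Coker(\varphi_\cG)$, so $\varphi_\cG$ is a surjection of locally free sheaves of equal rank and hence injective; reducedness plus base-change compatibility of $\cE_\cG$ lets the generic fibers detect the kernel; and once $\varphi_\cG$ is injective, $v_\cG=\varphi_\cG^{-1}\circ(\psi_t^\cG-\theta)$ exists, is unique, and all compatibilities follow by left-cancellation of $\varphi_\cG$, exactly as in the paper's construction of $v_E$ for a Drinfeld module $E$).
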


\begin{cor}\label{DMFV}
Let $S$ be a reduced scheme which is flat over $A$ and $E$ a Drinfeld module of rank $r$ over $S$. Let $a\in A$ be a non-zero element and $\cG$ a finite locally free closed $A$-submodule scheme of the $a$-torsion part $E[a]$ of $E$ over $S$ of constant $q$-power rank. 
Then $E/\cG$ has a natural structure of a Drinfeld module of rank $r$. Moreover, $\cG$ has a unique structure of a finite $v$-module induced from that of $E$ and, for any $v$-module $\cH$, any morphism $\cG\to \cH$ of $t$-modules over $S$ is compatible with $v$-structures.
\end{cor}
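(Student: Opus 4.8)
The plan is to assemble the corollary from the structural results already proved, the single new input being that the hypotheses ``$S$ reduced and flat over $A$'' force the $a$-torsion to be étale at the maximal points of $S$; this is exactly the fiberwise étaleness needed to invoke both Lemma~\ref{DMQFF}~(\ref{DMQFF-Dr}) and Lemma~\ref{SredV}~(\ref{SredV-Red}). So first I would establish this étaleness. Let $\eta$ be a maximal point of $S$. Since $S$ is reduced, $\cO_{S,\eta}=\kappa(\eta)$ is a field, and since $S$ is flat over $A$ this field is flat, hence torsion-free, over the domain $A$; therefore the structure map $A\to\kappa(\eta)$ is injective. Thus $E_\eta$ has generic characteristic: the image of the nonzero element $a$ in $\kappa(\eta)$ is nonzero, and by the Drinfeld module axioms this image is precisely the linear coefficient $\alpha_0(a)$ of $\Phi^E_a$. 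Hence $\Phi^E_a$ is a separable additive polynomial and $E[a]_\eta=\Ker(\Phi^E_a)$ is étale; as $\cG_\eta$ is a closed subgroup scheme of the finite étale group scheme $E[a]_\eta$, it is étale as well.

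With $S$ reduced and $\cG_\eta$ étale at every maximal point $\eta$, Lemma~\ref{DMQFF}~(\ref{DMQFF-Dr}) shows that $E/\cG$ is a Drinfeld module of rank $r$. The quotient map $\pi\colon E\to E/\cG$ is then an isogeny of Drinfeld modules with kernel $\cG$, so Corollary~\ref{DMIFF} together with the remark on isogenies following the definition of $v$-modules endows $\cG=\Ker(\pi)$ with a finite $v$-module structure induced from those of $E$ and $E/\cG$, compatible with the exact sequence~(\ref{EqnExactShvIsog}). This settles existence of the induced $v$-structure.

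For uniqueness and the morphism assertion I would appeal to Lemma~\ref{SredV}. The scheme $\cG$ is a finite $t$-module: it is a finite $\varphi$-module by Lemma~\ref{DMFF}, it carries the $A$-action restricted from $E$, and the $t$-action on $\omega_\cG$ is multiplication by $\theta$ because the surjection $\omega_E\to\omega_\cG$ is $A$-equivariant and $t$ acts by $\theta$ on $\omega_E$ (alternatively, apply Lemma~\ref{DMQFt}~(\ref{DMQFt-Sub}) to the inclusion $\cG\subseteq E[a]$). Since $S$ is reduced and $\cG_\eta$ is étale at every maximal point, hypothesis~(\ref{SredV-Red}) of Lemma~\ref{SredV} holds; therefore $\varphi_\cG$ is injective, the $v$-structure on $\cG$ is the unique one, and every morphism $\cG\to\cH$ of $t$-modules into a $v$-module $\cH$ is automatically compatible with $v$-structures. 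In particular the induced structure of the previous paragraph is this unique $v$-structure.

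The only genuine obstacle is the first step: recognizing that flatness over $A$ together with reducedness pins the characteristic at each maximal point to be generic, which forces the $a$-torsion to be étale there. Once that fiberwise étaleness is in hand, the remaining assertions are immediate applications of the lemmas already established.
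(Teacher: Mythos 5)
Your proof is correct and follows essentially the same route as the paper's: étaleness of $\cG_\eta$ at maximal points (the paper gets the invertibility of $a$ in $\kappa(\eta)$ via going-down for the flat map $S\to\Spec(A)$, you via flatness/torsion-freeness of the localization $\cO_{S,\eta}=\kappa(\eta)$ over $A$ — the same content), then Lemma~\ref{DMQFF}~(\ref{DMQFF-Dr}) for $E/\cG$, the induced $v$-structure on $\cG=\Ker(E\to E/\cG)$ via Corollary~\ref{DMIFF}, the finite $t$-module property via Lemma~\ref{DMQFt}~(\ref{DMQFt-Sub}), and Lemma~\ref{SredV}~(\ref{SredV-Red}) for uniqueness and the morphism assertion.
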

\begin{proof}
The going-down theorem implies that $a$ is invertible in the residue field of every maximal point $\eta$ of $S$, 
and thus $E[a]$ is etale over $\eta$. Then the first assertion follows from Lemma \ref{DMQFF} (\ref{DMQFF-Dr}). 
Moreover, Lemma \ref{DMQFt} (\ref{DMQFt-Sub}) implies that $\cG$ is a finite $t$-module. Since $\cG$ is the kernel of an isogeny of Drinfeld modules, the $v$-structure on $E$ induces that on $\cG$. The other assertions follow from Lemma \ref{SredV} (\ref{SredV-Red}). 
\end{proof}

\begin{rmk}
The notation here is slightly different from the literature including \cite{Tag}. Finite $\varphi$-sheaves are usually referred to as $\varphi$-sheaves. In \cite{Tag}, finite $t$-modules, finite $v$-modules and finite $v$-sheaves are assumed to be killed by some nonzero element of $A$.
\end{rmk}



\subsection{Duality for finite $v$-modules}

Let $S$ be a scheme over $A$. We denote by $C$ the Carlitz module over $S$, as before. We have
\[
\cE_C=\sHom_{\bF_q,S}(C,\Ga)=\bigoplus_{i\in \bZ_{\geq 0}}\cO_S Z^{q^i}
\]
with its unique $v$-structure given by
\[
v_C: \cE_C\to \cE_C^{(q)},\quad Z^{q^i}\mapsto Z^{q^{i-1}}\otimes (\theta^{q^i}-\theta)+ Z^{q^i}\otimes 1.
\]
Note that $v_C$ is surjective. We have 
\[
\psi^C_{t^i}(Z):=(\psi^C_t)^i(Z)=\theta^{i}Z+\cdots+Z^{q^{i}} 
\]
and thus the set $\{\psi^C_{t^i}(Z)\}_{i\geq 0}$ forms a basis of $\cE_C$. For any scheme $T$ over $S$ and any $v$-module $\cH$ over $T$, we denote by $\Hom_{v,T}(\cH,C|_T)$ the $A$-module of morphisms $\cH\to C|_T$ in the category $\vMod_T$.

The following theorem, due to Taguchi, gives a duality for finite $v$-modules over $S$ which is more suitable to analyze Drinfeld modules and Drinfeld modular forms than usual Cartier duality for finite locally free group schemes.

\begin{thm}[\cite{Tag}, \S4]\label{ThmFVDual}
\begin{enumerate}
\item Let $\cG$ be a finite $v$-module over $S$. Then the big Zariski sheaf
\[
\sHom_{v,S}(\cG,C): (S\text{-schemes})\to (A\text{-modules})
\]
given by $T\mapsto \Hom_{v,T}(\cG|_T,C|_T)$
is represented by a finite $v$-module $\cG^D$ over $S$. We refer to $\cG^D$ as the Taguchi dual of $\cG$.
\item $\rank(\cG)=\rank(\cG^D)$.
\item The functor 
\[
\vMod_S^f\to \vMod^f_S,\quad \cG\mapsto \cG^D
\]
is exact (in the usual sense) and commutes with any base change.
\item There exists a natural isomorphism of $v$-modules $\cG\to (\cG^D)^D$.
\end{enumerate}
\end{thm}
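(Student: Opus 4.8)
The plan is to transport the whole question to the category of finite $v$-sheaves via the anti-equivalence $\Gr:\vShv^f_S\to\vMod^f_S$ with quasi-inverse $\Sh$, to construct the dual there by $\cO_S$-linear transposition, and then to identify the resulting object with the Hom-functor into the Carlitz module $C$.

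First I would define a contravariant involution on $\vShv^f_S$. Given a finite $v$-sheaf $(\cE,\varphi_\cE,\psi_{\cE,t},v_\cE)$, I set $\cE^\vee=\sHom_{\cO_S}(\cE,\cO_S)$ and, using the canonical identification $(\cE^{(q)})^\vee=(\cE^\vee)^{(q)}$, equip it with
\[
\varphi_{\cE^\vee}=v_\cE^\vee,\quad v_{\cE^\vee}=\varphi_\cE^\vee,\quad \psi_{\cE^\vee,t}=\psi_{\cE,t}^\vee.
\]
Transposing the relations on $\cE$ gives the commutations of $\psi_{\cE^\vee,t}$ with $\varphi_{\cE^\vee}$ and $v_{\cE^\vee}$, and the identity $\psi_{\cE^\vee,t}=\theta+\varphi_{\cE^\vee}\circ v_{\cE^\vee}$ is the transpose of $\psi_{\cE,t}=\theta+\varphi_\cE\circ v_\cE$, so $\cE^\vee$ is again a finite $v$-sheaf. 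Since $\cO_S$-linear dualization of finite locally free sheaves is exact, commutes with base change, and is canonically bi-dual, this functor is an exact, base-change-compatible involution. I then set $\cG^D:=\Gr((\Sh\cG)^\vee)$; by the anti-equivalence it is a finite $v$-module, and parts (2), (3), (4) follow immediately. Indeed $\rank$ is preserved because $\rank_{\cO_S}(\Sh\cG)^\vee=\rank_{\cO_S}\Sh\cG$; exactness and base change pass through $\Sh$, $\Gr$ and dualization, using that on $\vMod^f_S$ the $\Shv$-exactness coincides with exactness of group schemes; and biduality is the canonical isomorphism $(\cE^\vee)^\vee\simeq\cE$ together with the fact that the transposed structures return to the originals.

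The crux is part (1), identifying $\cG^D$ with the object representing $T\mapsto\Hom_{v,T}(\cG|_T,C|_T)$. By the anti-equivalence a morphism $\cG|_T\to C|_T$ is the same as a morphism $\cE_C|_T\to\Sh(\cG)|_T$ of $v$-sheaves, so writing $\cM=\Sh(\cG)$ it suffices to show $\Hom_{\vShv_T}(\cE_C|_T,\cM|_T)$ is represented by $\Gr(\cM^\vee)$, compatibly with $A$-actions. Here the explicit $v$-structure of $C$ does the work. Since $\{\psi^C_{t^i}(Z)\}_{i\ge 0}$ is an $\cO_S$-basis of $\cE_C$ and $\psi_t$-compatibility forces $g(\psi^C_{t^i}(Z))=\psi_{\cM,t}^i(m)$ for $m:=g(Z)$, any morphism $g$ is determined by $m$, and conversely any $m$ defines an $\cO_S$-linear $\psi_t$-compatible map. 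Because $v_C(Z)=Z\otimes 1$ and both $v$ and $\varphi$ commute with $\psi_t$ on each side, checking the $v$- and $\varphi$-compatibilities on the single generator $Z$ and propagating along $\psi_t$ shows that they reduce to the one equation $v_\cM(m)=m\otimes 1$ in $\cM^{(q)}$, the $\varphi$-compatibility being automatic from $\psi_{\cM,t}=\theta+\varphi_\cM\circ v_\cM$. I then match this with $\Gr(\cM^\vee)(T)=\Hom_{\varphi,\cO_S}(\cM^\vee,\pi_*\cO_T)$: under $\Hom_{\cO_S}(\cM^\vee,\pi_*\cO_T)=\cM\otimes_{\cO_S}\cO_T$ a functional $h\leftrightarrow m$ is $\varphi$-compatible for $\varphi_{\cM^\vee}=v_\cM^\vee$ precisely when $v_\cM(m)=m\otimes 1$, and the $A$-action from $\psi_{\cM^\vee,t}=\psi_{\cM,t}^\vee$, namely $m\mapsto\psi_{\cM,t}(m)$, corresponds on the Hom side to post-composition with $\Phi^C_t$, which also sends $m=g(Z)$ to $\psi_{\cM,t}(m)$. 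This gives a functorial isomorphism of $A$-modules $\Hom_{v,T}(\cG|_T,C|_T)\simeq\Gr(\cM^\vee)(T)$, establishing representability by $\cG^D$.

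The main obstacle I anticipate is exactly this last identification: one must verify that the several $v$-sheaf compatibility conditions for a map out of $\cE_C$ genuinely collapse to the single Frobenius-linear equation $v_\cM(m)=m\otimes 1$, that this equation is literally the one cutting out $\Gr(\cM^\vee)$ inside $\bV_*(\cM)$, and that the $A$-module structures on the two sides agree, all while keeping careful track of the Frobenius twists $(-)^{(q)}$.
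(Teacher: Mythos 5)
Your proposal is correct and takes essentially the same approach as the paper: the paper likewise defines $\cG^D=\Gr(\cE_\cG^\vee,v_\cG^\vee)$ with $t$-action $(\psi_t^\cG)^\vee$ and $v$-structure $\varphi_\cG^\vee$, and proves representability by using the basis $\{\psi^C_{t^i}(Z)\}$ of $\cE_C$ together with the surjectivity of $v_C$ to reduce a $v$-sheaf morphism $\cE_C|_T\to\cE_\cG|_T$ to a single element $x=g(Z)$ subject to the one equation $v_\cG(x)=x\otimes 1$, which it then matches with the description (\ref{GrDef}) of $\Gr(\cE_\cG^\vee,v_\cG^\vee)(T)$ exactly as you do. Parts (2)--(4) are likewise read off from the construction and from the agreement of $\Shv$-exactness with usual exactness on $\vMod^f_S$, just as in your argument.
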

\begin{proof}
For the convenience of the reader, we give a simpler proof than in \cite[\S4]{Tag}. Consider the linear dual
\[
\cE_\cG^\vee=\sHom_{\cO_S}(\cE_\cG,\cO_S)
\]
and the dual maps
\[
(\psi_t^\cG)^\vee: \cE_\cG^\vee\to \cE_\cG^\vee,\quad  v_\cG^\vee: (\cE_\cG^\vee)^{(q)}\to \cE_\cG^\vee,\quad 
\varphi_\cG^\vee:  \cE_\cG^\vee \to (\cE_\cG^\vee)^{(q)}
\]
of $\psi_t^\cG$, $v_\cG$ and $\varphi_\cG$, respectively. We define a finite $v$-module $\cG^D$ over $S$ by $\cG^D=\Gr(\cE_\cG^\vee, v_\cG^\vee)$ with $t$-action $(\psi_t^\cG)^\vee$ and $v$-structure $\varphi_\cG^\vee$.

To see that it represents the functor in the theorem, let $\pi:T\to S$ be any morphism. Since $v_C$ is surjective, to give a map of $v$-sheaves $g:\cE_C|_T\to \cE_{\cG}|_T$ is the same as to give an $\cO_T$-linear map which is compatible with $t$-actions and $v$-structures. Since $\psi^C_{t^{i+1}}(Z)=\psi_t^C(\psi^C_{t^{i}}(Z))$, to give an $\cO_T$-linear map $g:\cE_C|_T\to \cE_{\cG}|_T$ compatible with $t$-actions is the same as to give an element $x=g(Z)$ of $\cE_{\cG}|_T(T)$. As for the compatibility with $v$-structures, we see that if $g$ is compatible with $t$-actions, then the relation $(g\otimes 1)(v_C(\psi^C_{t^{i}}(Z)))=v_\cG(g(\psi^C_{t^{i}}(Z)))$ implies a similar relation for $\psi^C_{t^{i+1}}(Z)=\psi_t^C(\psi^C_{t^{i}}(Z))$. 
Thus we only need to impose on $x$ the condition for $i=0$. Namely, we have
\begin{equation}\label{setGT}
\Hom_{v,T}(\cG|_T,C|_T)=\{x\in \cE_\cG|_T(T) \mid x\otimes 1=v_\cG(x)\},
\end{equation}
where $x\otimes 1 \in (\cE_\cG|_T)^{(q)}(T)$ is the pull-back of $x$ by the Frobenius map $F_T$. On the other hand, by 
(\ref{GrDef}) the set $\Gr(\cE_\cG^\vee,v_\cG^\vee)(T)$ can be identified with the set of $\cO_T$-linear homomorphisms 
$\chi: \cE_\cG^\vee|_T \to\cO_T$ satisfying 
\[
\chi\circ v_\cG^\vee=f_{\cO_T}\circ (F_T^*(\chi)).
\]
Via the natural isomorphisms
\[
\sHom_{\cO_T}(\cE_\cG^\vee|_T, \cO_T)\simeq \cE_\cG|_T,\quad f_{\cO_T}:\cO_T^{(q)}=F_T^*(\cO_T)\simeq \cO_T,
\]
we can easily show that it agrees with (\ref{setGT}). Thus we obtain a natural isomorphism
\[
\Hom_{v,T}(\cG|_T,C|_T)\simeq \Gr(\cE_\cG^\vee,v_\cG^\vee)(T)
\]
and we can check that it is compatible with $A$-actions. The assertion on the exactness follows from the agreement of the exactness and the $\Shv$-exactness for the category $\vMod_S^f$. The other assertions follow from the construction. 
\end{proof}

\begin{lem}\label{ConstDual}
Let $S$ be any scheme over $A$. Let $a\in A$ be any monic polynomial. Consider the finite $t$-module $C[a]$ endowed with the natural $v$-structure as the kernel of the isogeny $a:C\to C$. Then the Taguchi dual $C[a]^D$ of $C[a]$ is isomorphic as a $v$-module to the constant $A$-module scheme $\underline{A/(a)}$ endowed with the unique $v$-structure of Lemma \ref{SredV} (\ref{SredV-Et}).
\end{lem}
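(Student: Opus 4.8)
The plan is to write down the natural candidate morphism and reduce everything to a computation over a field. By Theorem \ref{ThmFVDual}, the Taguchi dual $C[a]^D$ represents the functor $T\mapsto \Hom_{v,T}(C[a]|_T,C|_T)$, so I would define $\lambda\colon \underline{A/(a)}\to C[a]^D$ by sending the section corresponding to $b\in A$ to the composite $C[a]\hookrightarrow C\xrightarrow{\Phi^C_b}C$. Here the inclusion $C[a]\hookrightarrow C$ is a morphism of $v$-modules by Corollary \ref{DMIFF} together with the uniqueness of the induced $v$-structure on kernels of isogenies recalled before Lemma \ref{SredV}, and $\Phi^C_b$ is a morphism of Drinfeld modules, hence of $v$-modules; thus $\lambda(\bar b)\in C[a]^D(S)$. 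Since $a\cdot C[a]=0$ and the $A$-action on $C[a]^D$ is the one induced by the target $C$, the assignment $\bar b\mapsto \lambda(\bar b)$ is a well-defined homomorphism of $A$-module schemes out of the constant scheme $\underline{A/(a)}$. Because $\underline{A/(a)}$ is étale over $S$, Lemma \ref{SredV} (\ref{SredV-Et}) guarantees that this morphism of $t$-modules is automatically compatible with the $v$-structures, so $\lambda$ is a morphism of $v$-modules; it then suffices to prove that it is an isomorphism of the underlying group schemes.

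Both $\underline{A/(a)}$ and $C[a]^D$ are finite locally free over $S$ with $\rank_{\cO_S}(\cE)=\deg(a)$: this is clear for $\cE_{\underline{A/(a)}}=\Hom_{\bF_q}(A/(a),\cO_S)$, and for $C[a]$ one computes $\rank_{\cO_S}(\cE_{C[a]})=\deg(a)$ directly from the exact sequence of Corollary \ref{DMIFF}, whence $\rank(C[a]^D)=\deg(a)$ by Theorem \ref{ThmFVDual} (2). In particular their coordinate rings are locally free of the same rank $q^{\deg(a)}$. The formation of $C[a]^D$ commutes with base change by Theorem \ref{ThmFVDual} (3), and so do $\underline{A/(a)}$ and the definition of $\lambda$ (which involves only $\Phi^C_b$); hence $\lambda$ is compatible with arbitrary base change. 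A homomorphism between finite locally free $S$-schemes whose coordinate rings have the same rank is an isomorphism once it is so on every fiber, so by Nakayama's lemma I may and do assume $S=\Spec(k)$ for a field $k$.

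Over $k$ the constant scheme $\underline{A/(a)}$ has $k$-points $A/(a)$, and I would first check that $\lambda$ is injective on $k$-points. If $\lambda(\bar b)=0$, then $\Phi^C_b$ vanishes on $C[a]=\Spec(k[X]/(\Phi^C_a(X)))$, i.e.\ $\Phi^C_a$ right-divides $\Phi^C_b$ in the skew polynomial ring $k\{\tau\}$; writing $b=ca+r$ with $\deg(r)<\deg(a)$ gives $\Phi^C_b\equiv\Phi^C_r$ modulo $\Phi^C_a k\{\tau\}$, and since $\Phi^C_a$ is monic of $\tau$-degree $q^{\deg(a)}$ strictly larger than that of $\Phi^C_r$, this forces $\Phi^C_r=0$, hence $r=0$ and $\bar b=0$. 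A homomorphism out of a constant group scheme that is injective on $k$-points is a closed immersion, and a closed immersion between finite group schemes whose coordinate rings have equal $k$-dimension is an isomorphism. Thus $\lambda$ is an isomorphism of group schemes over $k$, and therefore over $S$; being in addition a morphism of $v$-modules, it is an isomorphism in $\vMod_S$ since $\Sh$ is an anti-equivalence of categories.

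The main obstacle is this last step over a field, and the reason the argument must be organized in this direction is that $C[a]$ need not be étale—precisely when $a$ fails to be invertible at the point in question—so one cannot compare the two sides by counting separable points of $C[a]$, nor assume a priori that $C[a]^D$ is reduced. Working instead from the constant, hence étale, source $\underline{A/(a)}$, for which injectivity on $k$-points is controlled by the division algorithm in $k\{\tau\}$, and then closing the argument through the equality of coordinate-ring ranks, avoids any reducedness hypothesis on the target. The remaining points are routine bookkeeping: the identification $\cE_{\underline{A/(a)}}=\Hom_{\bF_q}(A/(a),\cO_S)$ and the base-change compatibilities of $\lambda$, of $C[a]^D$, and of the constant scheme needed to legitimize the reduction to fibers.
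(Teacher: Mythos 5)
Your proof is correct and follows essentially the same route as the paper's: the same natural map $\bar b\mapsto \Phi^C_b\circ\iota$, the same reduction to a field via base-change compatibility and Nakayama, the same division $b=sa+r$ with the degree comparison $q^{\deg(r)}<q^{\deg(a)}$ to get a closed immersion, the conclusion by equality of ranks, and the same appeal to Lemma \ref{SredV} (\ref{SredV-Et}) for $v$-structure compatibility (which you invoke at the outset for the \'etale source, while the paper checks it at the end once both sides are known to be \'etale). The only blemish is a wording slip: $\Phi^C_a$ has $\tau$-degree $\deg(a)$ and $X$-degree $q^{\deg(a)}$, but your degree comparison is correct either way.
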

\begin{proof}
Let $\iota:C[a]\to C$ be the natural closed immersion. From the definition of the $v$-structure on $C[a]$, it is 
compatible with $v$-structures. Thus we have a morphism of $t$-modules over $S$
\[
\underline{A/(a)}\to C[a]^D=\sHom_{v,S}(C[a],C),\quad 1\mapsto \iota.
\]
We claim that it is a closed immersion. Indeed, by Nakayama's lemma we may assume $S=\Spec(k)$ for some field $k$. Suppose that $b\iota=0$ 
for some $b\in A$. Write as $b=sa+r$ with $s,r\in A$ satisfying $\deg(r)<\deg(a)$. Then we have $\Phi^C_a(Z)\mid 
\Phi^C_r(Z)$, which is a contradiction unless $r=0$. This implies that the kernel of the above morphism is zero and the claim follows. Since both sides have the same rank over 
$S$, it is an isomorphism. Since both sides are 
etale, it is compatible with unique $v$-structures. 
\end{proof}



\subsection{Duality for Drinfeld modules of rank two}

Let $S$ be a scheme over $A$. 
Recall that for any $S$-scheme $T$, both of the categories $\DM_T$ of Drinfeld modules over $T$ and $\vMod_T^f$ of finite $v$-modules over $T$ are full subcategories of $\vMod_T$, and $\vMod_T$ is anti-equivalent to $\vShv_T$.
For any $v$-modules $\cH,\cH'$ over $T$, we denote by $\Ext^1_{v,T}(\cH,\cH')$ the $A$-module of isomorphism classes of Yoneda extensions of $\cH$ by $\cH'$ in the category $\vMod_T$ with $\Shv$-exactness. We identify this $A$-module with the $A$-module $\Ext^1_{v,T}(\cE_{\cH'},\cE_{\cH})$ of isomorphism classes of Yoneda extensions of $\cE_{\cH'}=\Sh(\cH')$ by $\cE_{\cH}=\Sh(\cH)$ in the exact category $\vShv_T$. We also define a big Zariski sheaf $\sExt^1_{v,S}(\cH,\cH')$ as the sheafification of $T\mapsto \Ext^1_{v,T}(\cH|_T,\cH'|_T)$.

Let $E$ be a Drinfeld module over $S$ and put $G=\Ga$ or $C$ over $S$. We write as $G=\Spec_S(\cO_S[Z])$. Let us describe the isomorphism class of any extension
\[
\xymatrix{
0 \ar[r] & G \ar[r] & L \ar[r] & E \ar[r] & 0
}
\]
in the category $\vMod_S$. Consider the associated exact sequence  
\[
\xymatrix{
0 \ar[r] & \cE_E \ar[r] & \cE_L \ar[r] & \cE_{G} \ar[r] & 0
}
\]
in the category $\vShv_S$. 
Since $\cE_{G}$ is a free $\cO_S$-module, this sequence splits as $\cO_S$-modules if $S$ is affine. In this case, using $\varphi_G(Z^{q^i})=Z^{q^{i+1}}$, we can show that there exists a splitting $s:\cE_G\to \cE_L$ of the above sequence which is compatible with $\varphi$-structures. 

We assume that $S$ is affine and fix such a $\varphi$-compatible splitting $s$ for a while. Then the $a$-action on $L$ for any $a\in A$ is given by
\[
\Phi_{a}^L=(\Phi_{a}^G,\Phi_{a}^E+\delta_a)
\]
with some $\bF_q$-linear homomorphism
\[
\delta:A\to \Hom_{\bF_q,S}(E,G),\ a\mapsto \delta_a.
\]
Here $\delta_a$ is associated to the map $\psi^L_a\circ s-s\circ \psi_a^G:\cE_G\to \cE_E$ and satisfies
\begin{enumerate}[label=(\roman{enumi}), ref=(\roman{enumi})]
\item\label{CondBiderFq} $\delta_{\lambda}=0$ for any $\lambda\in \bF_q$,
\item\label{CondBiderProd} $\delta_{ab}=\Phi_{a}^G \circ \delta_b+\delta_a\circ \Phi_{b}^E$ for any $a,b\in A$. 
\end{enumerate}
Since $\varphi_L:\cE_L^{(q)}\to \cE_L$ is injective and by Definition \ref{DfnTMod} the map $\psi^L_{t}-\theta$ kills $\Coker(\varphi_L)$, the $v$-structure on $L$ is uniquely determined by the data $\delta_t$.

\begin{dfn}[\cite{Gek_dR}, \S3 and \cite{PapRam}, \S2]
Let $S$ be an affine scheme over $A$, $E$ a Drinfeld module over $S$ and $G=\Ga$ or $C$ as above.
\begin{enumerate}
\item An $(E,G)$-biderivation is an $\bF_q$-linear homomorphism $\delta:A\to \Hom_{\bF_q,S}(E,G),\ a\mapsto \delta_a$ satisfying the above conditions \ref{CondBiderFq} and \ref{CondBiderProd}.
The module of $(E,G)$-biderivations is denoted by $\Der(E,G)$, which admits two natural $A$-module structures defined by
\[
(\delta*c)_a=\delta_a\circ \Phi_{c}^E,\quad (c*\delta)_a=\Phi_{c}^G\circ \delta_a \text{ for any }c\in A.
\]
Note that we have a natural isomorphism
\begin{equation}\label{EqnEvalDer}
\ev_t: \Der(E,G)\to \Hom_{\bF_q,S}(E,G),\quad \delta\mapsto \delta_t.
\end{equation}
\item An $(E,G)$-biderivation $\delta$ is said to be inner if there exists $f\in \Hom_{\bF_q,S}(E,G)$ satisfying $\delta=\delta_f$, where the $(E,G)$-biderivation $\delta_f$ is defined by
\[
\delta_{f,a}=f\circ \Phi_{a}^E-\Phi_{a}^G\circ f \text{ for any }a\in A.
\] 
The submodule of $\Der(E,G)$ consisting of inner $(E,G)$-biderivations is denoted by $\Derin(E,G)$, which is stable under two natural $A$-actions.

\item We denote by $\Der_0(E,G)$ the submodule of $\Der(E,G)$ consisting of $(E,G)$-biderivations $\delta$ such that the induced map on sheaves of invariant differentials
\[
\Cot(\delta_t):\omega_G\to \omega_E
\]
is the zero map. We have $\Derin(E,G)\subseteq \Der_0(E,G)$.

\item An inner $(E,G)$-biderivation $\delta_f$ is said to be strictly inner if $\Cot(f)=0$. We denote by $\Dersi(E,G)$ the submodule of $\Der(E,G)$ consisting of strictly inner $(E,G)$-biderivations.
\end{enumerate}
\end{dfn}
Then the two natural $A$-actions on $\Der(E,G)$ agree with each other on the quotient $\Der(E,G)/\Derin(E,G)$ \cite[p. 412]{PapRam} and we have natural isomorphisms of $A$-modules
\begin{equation}\label{EqnDefExtDerMap}
\begin{aligned}
\Ext^1_{v,S}(E,G)&\to \Der(E,G)/\Derin(E,G)\\
&\overset{\ev_t}{\to} \Hom_{\bF_q,S}(E,G)/\ev_t(\Derin(E,G)).
\end{aligned}
\end{equation}
We define an $A$-submodule 
\[
\Ext^1_{v,S}(E,G)^0 
\]
of $\Ext^1_{v,S}(E,G)$ as the inverse image of $\Der_0(E,G)/\Derin(E,G)$ by the above isomorphism. Since another choice of a $\varphi$-compatible splitting gives the same biderivation modulo inner ones, the first map of (\ref{EqnDefExtDerMap}) is independent of the choice of a $\varphi$-compatible splitting, and so is the the $A$-submodule $\Ext^1_{v,S}(E,G)^0$.

Suppose that $E=\bV_*(\cL)$ is a Drinfeld module of rank two over the affine scheme $S$. We have a natural isomorphism
\[
\bigoplus_{m\geq 0} \cL^{\otimes -q^m}\to \sHom_{\bF_q,S}(E,G),\quad b\mapsto (Z\mapsto b),
\]
by which we identify both sides.
Then $\Der(E,G)$, $\Der(E,G)^0$ and $\Derin(E,G)$ are locally free $\cO_S(S)$-modules, and we can show that 
\[
T\mapsto \Ext^1_{v,T}(E|_T,G|_T) ,\quad T\mapsto \Ext^1_{v,T}(E|_T,G|_T)^0 
\]
satisfy the axiom of sheaves on affine open subsets of $S$. This implies that, for the case where $S$ is not necessarily affine, we have a subsheaf of $A$-modules
\[
\sExt^1_{v,S}(E,G)^0\subseteq \sExt^1_{v,S}(E,G)
\]
such that, for any affine scheme $T$ over $S$ and $\bullet\in\{\emptyset,0\}$, we have
\[
\sExt^1_{v,S}(E,G)^\bullet(T)=\Ext^1_{v,T}(E|_T,G|_T)^\bullet.
\]
Moreover, we have a natural isomorphism of big Zariski sheaves
\begin{equation}\label{EqnDualBdl}
\cL^{\otimes -q} \to \sExt^1_{v,S}(E,G)^0
\end{equation}
sending, for any affine scheme $T$ over $S$, any element $b\in \cL^{\otimes -q}(T)$ to the unique extension class such 
that, for the associated $(E|_T,G|_T)$-biderivation $\delta$, the map $\delta_t:E|_T\to G|_T$ is given by 
\[
\delta_t^*:\cO_T[Z]\to \Sym(\cL^{\otimes -1}|_T),\quad Z\mapsto b.
\]
Thus, taking $G=C$, we have the following theorem, which is due to Taguchi \cite[\S5]{Tag}. The interpretation of his duality using biderivations obtained here is a generalization of \cite[Theorem 1.1]{PapRam} to general base schemes.

\begin{thm}\label{DualDM}
	Let $S$ be any scheme over $A$.
\begin{enumerate}
\item\label{DualDMRep} Let $E=(\cL,\Phi^E)$ be any Drinfeld module of rank two over $S$ with
\[
\Phi_{t}^E=\theta +a_1 \tau +a_2 \tau^2,\quad a_i\in \cL^{\otimes 1-q^i}(S).
\]
Then the functor
\[
\sExt^1_{v,S}(E,C)^0: (S\text{-schemes})\to (A\text{-modules})
\] 
is represented by a Drinfeld module $E^D$ of rank two over $S$ defined by
\[
E^D=\bV_*(\cL^{\otimes -q}),\quad \Phi_{t}^{E^D}=\theta -a_1 \otimes a_2^{\otimes -1} \tau +a_2^{\otimes -q} \tau^2.
\]
\item\label{DualDMBC} The formation of $E^D$ commutes with any base change.

\item\label{DualDMLin} Let $F=(\cM,\Phi^F)$ be any Drinfeld module of rank two over $S$. Then any morphism $f:E\to F$ of the category $\DM_S$ induces a morphism $f^D:F^D\to E^D$ of this category. If $f$ is induced by an $\cO_S$-linear map $f:\cL\to \cM$, then the dual map $f^D: F^D\to E^D$ is given by the $q$-th tensor power $(f^\vee)^{\otimes q}$ of the linear dual $f^\vee: \cM^\vee\to \cL^\vee$.
\item\label{DualDMIsog} If $f$ is an isogeny, then $f^D$ is also an isogeny of the same degree as $f$, $\Ker(f)$ has a 
natural structure of a finite $v$-module over $S$ and there exists a natural isomorphism of $A$-module schemes over $S$
\[
(\Ker(f))^D\to \Ker(f^D).
\]
\end{enumerate}
\end{thm}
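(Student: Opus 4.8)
The plan is to build everything on the isomorphism (\ref{EqnDualBdl}), which already identifies the underlying $\bF_q$-module scheme of the functor $\sExt^1_{v,S}(E,C)^0$ with $\bV_*(\cL^{\otimes -q})$; the real content of (\ref{DualDMRep}) is to compute the $A$-module structure on this functor, i.e.\ the $t$-action, and to read off that the result is a rank-two Drinfeld module. I would compute this $t$-action through the isomorphisms (\ref{EqnDefExtDerMap}), using that the two natural $A$-actions on $\Der(E,C)$ agree modulo $\Derin(E,C)$, so either may be used and the answer is the reduction modulo inner biderivations of the transformed class.

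The key local computation runs as follows. Work over affine $S=\Spec(B)$ with $\cL$ trivialized, write $\Phi_t^E=\theta+a_1\tau+a_2\tau^2$ with $a_2\in B^\times$, and represent a class of $\sExt^1_{v,S}(E,C)^0$ by the $\Der_0$-biderivation with $\delta_t=b\tau$, where $b\in B\cong\cL^{\otimes -q}$ corresponds under (\ref{EqnDualBdl}) to $Z\mapsto b$. Applying the $t$-action coming from $\Phi_t^C\circ(-)$ gives
\[
\Phi_t^C\circ\delta_t=(\theta+\tau)\circ b\tau=\theta b\,\tau+b^q\tau^2,
\]
and I then reduce modulo the inner biderivation $\delta_f$ attached to the linear homomorphism $f\in\Hom_{\bF_q,S}(E,C)$ given in coordinates by multiplication by $c=a_2^{-1}b^q$. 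A direct computation yields $\delta_{f,t}=(a_1 a_2^{-1}b^q-a_2^{-q}b^{q^2})\tau+b^q\tau^2$, with no terms of degree $\geq 3$, so subtracting it cancels the $\tau^2$-term and leaves the $\Der_0$-representative $(\theta b-a_1 a_2^{-1}b^q+a_2^{-q}b^{q^2})\tau$. Thus $t$ acts on $b$ by $\theta-a_1 a_2^{-1}\tau+a_2^{-q}\tau^2$, which is precisely $\Phi_t^{E^D}$; the constant term $\theta$ and the nowhere-vanishing leading coefficient $a_2^{\otimes -q}$ show that $E^D$ is a Drinfeld module of rank two. The same mechanism, using that $a_2$ is a unit to cancel top-degree terms, shows any class reduces to such a representative, which is exactly the content underlying (\ref{EqnDualBdl}); the only care needed is tracking the line-bundle twists so that the coordinate formula descends to the stated intrinsic one, and recalling that a different $\varphi$-compatible splitting changes $\delta$ only by an inner biderivation. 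Assertion (\ref{DualDMBC}) is then immediate, since $\cE_E$, the biderivation description, and (\ref{EqnDualBdl}) all commute with base change.

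For (\ref{DualDMLin}), a morphism $f\colon E\to F$ in $\DM_S$ induces by contravariant pullback on $\sExt^1_{v,S}(-,C)^0$ a morphism $f^D\colon F^D\to E^D$, and naturality of the $A$-action on Ext makes $f^D$ a morphism of Drinfeld modules. When $f$ comes from an $\cO_S$-linear map $f\colon\cL\to\cM$, I would compute the pullback directly on biderivations: precomposing a representative $\delta_t\colon F\to C$ with $f$ multiplies it by $f^{\otimes q}$ on the degree-$q$ part, so $f^D$ is multiplication by $f^{\otimes q}$, i.e.\ the map $(f^\vee)^{\otimes q}\colon\cM^{\otimes -q}\to\cL^{\otimes -q}$.

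Finally (\ref{DualDMIsog}): the kernel $\cG=\Ker(f)$ of an isogeny is a finite $v$-module fitting into a $v$-compatible sequence $0\to\cG\to E\to F\to 0$ (Corollary \ref{DMIFF} and the discussion after Lemma \ref{SredV}). Applying the contravariant $\delta$-functor $\sHom_{v,S}(-,C)$, and using $\sHom_{v,S}(E,C)=\sHom_{v,S}(F,C)=0$ (there is no nonzero homomorphism from a rank-two Drinfeld module to the rank-one $C$, by a fiberwise rank argument), I obtain an exact sequence $0\to\cG^D\to F^D\xrightarrow{f^D}E^D$ identifying $\cG^D$ with $\Ker(f^D)$; since $f^D$ is a nonzero morphism of rank-two Drinfeld modules it is an isogeny, and Theorem \ref{ThmFVDual} gives $\deg(f^D)=\rank(\cG^D)=\rank(\cG)=\deg(f)$. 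The main obstacle is the globalization in (\ref{DualDMRep})---tracking the twists by powers of $\cL$ so that the clean trivialized computation assembles into the intrinsic formula for $\Phi_t^{E^D}$---together with the bookkeeping in (\ref{DualDMIsog}) that the connecting map $\cG^D=\sHom_{v,S}(\cG,C)\to\sExt^1_{v,S}(F,C)$ indeed lands in the $(-)^0$-part, which should hold because $\cG$ is finite (annihilated by some $a\in A$) and the resulting extension therefore has vanishing cotangent map.
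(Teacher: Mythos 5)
Your overall architecture is the same as the paper's: you take the isomorphism (\ref{EqnDualBdl}) as the underlying identification, transport the $t$-action through (\ref{EqnDefExtDerMap}), and your local computation is correct — composing with $\Phi^C_t$ gives $\theta b\,\tau+b^q\tau^2$, and subtracting the inner biderivation of the linear map $c=a_2^{-1}b^q$ (note $\delta_{f,t}=(ca_1-c^q)\tau+ca_2\tau^2$ has no $\tau^0$-term, so it lies in $\Der_0$) yields exactly $\Phi^{E^D}_t=\theta-a_1\otimes a_2^{\otimes-1}\tau+a_2^{\otimes-q}\tau^2$, with the twists $a_1\otimes a_2^{\otimes -1}\in (\cL^{\otimes-q})^{\otimes 1-q}$ and $a_2^{\otimes -q}\in(\cL^{\otimes -q})^{\otimes 1-q^2}$ coming out right. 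This is precisely the computation the paper compresses into ``follows easily from the construction,'' and your treatments of (\ref{DualDMBC}), (\ref{DualDMLin}) and the skeleton of (\ref{DualDMIsog}) (the $\Shv$-exact sequence from Corollary \ref{DMIFF}, vanishing of $\sHom_{v,S}(E,C)$ and $\sHom_{v,S}(F,C)$ by comparing top $\tau$-degrees against the nowhere-vanishing $a_2$ — which in fact works over any base, not just fiberwise — and the rank count $\deg(f^D)=\rank(\cG^D)=\rank(\cG)$) all match the paper's proof.

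There is, however, one genuine flaw: your justification that the connecting map $\cG^D\to\sExt^1_{v,S}(F,C)$ lands in the subsheaf $\sExt^1_{v,S}(F,C)^0$. Knowing that $\cG$ is killed by some $a\in A$ shows at best that the resulting extension class is $a$-torsion; but the obstruction to lying in the $0$-part lives (Zariski-locally) in the quotient $\Der(F,C)/\Der_0(F,C)\simeq\cM^{\otimes-1}$, on which $A$ acts through the structure map $A\to\cO_S(S)$, so $a$-torsion gives no information whenever the image of $a$ in $\cO_S$ is a zero divisor — in particular it gives nothing over bases of characteristic $\wp$ with $a=\wp$, which is exactly where the theorem gets used later (Lemma \ref{DualFV}, Proposition \ref{DualOrd}, Lemma \ref{MonodromySurj}). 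The correct argument, which the paper alludes to, is a direct Yoneda computation: the connecting map sends $g\colon\cG\to C$ to the pushout of $0\to\cG\to E\to F\to 0$ along $g$; on $\varphi$-sheaves this is the pullback $\cE_L=\cE_E\times_{\cE_\cG}\cE_C$, and choosing the $\varphi$-compatible splitting $s$ with $s(Z)=(e_0,Z)$ for a lift $e_0\in\cE_E$ of $g^*(Z)$, one finds $\delta_t=\psi^E_t(e_0)-\theta e_0-\varphi_E(e_0\otimes 1)$, whose $\tau^0$-coefficient is $\theta b_0-\theta b_0=0$ identically (with $b_0$ the linear coefficient of $e_0$). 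So $\delta\in\Der_0$ over an arbitrary base, with no finiteness or flatness hypothesis needed. Relatedly, your assertion that a nonzero morphism of rank-two Drinfeld modules is automatically an isogeny is not literally enough over a base with nilpotents: you should instead use that $\Ker(f^D)=\cG^D$ is finite locally free, so $f^D$ is nonzero on every fiber, and conclude flatness by the fibral criterion — this is the content of the paper's ``by a base change we may assume $S=\Spec(k)$.''
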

\begin{proof}
The assertions (\ref{DualDMRep}) and (\ref{DualDMBC}) follow easily from the construction. 
The assertion (\ref{DualDMLin}) follows from the functoriality of $\sExt^1_{v,S}(-,C)^0$ and the isomorphism (\ref{EqnDualBdl}).

Let us show the assertion (\ref{DualDMIsog}). Put $\cG=\Ker(f)$. Corollary \ref{DMIFF} implies that the exact sequence of group schemes
\[
\xymatrix{
	0 \ar[r] & \cG\ar[r] &E\ar[r]^{f}& F\ar[r] & 0
}
\]
is also $\Shv$-exact and thus $\cG$ has a natural structure of a finite $v$-module such that this sequence is compatible with $v$-structures.
Since $E$ and $C$ have different ranks, the long exact sequence of $\sHom_{v,S}$ yields an exact sequence
\[
\xymatrix{
0 \ar[r] & \cG^D\ar[r] &\sExt^1_{v,S}(F, C)\ar[r]&\sExt^1_{v,S}(E, C).
}
\]
From a description of the connecting homomorphism using Yoneda extension, we can show that it factors through the subsheaf $\sExt^1_{v,S}(F, C)^0$.
Therefore we have an exact sequence of $A$-module schemes over $S$
\[
\xymatrix{
0 \ar[r] & \cG^D\ar[r] &F^D\ar[r]^{f^D}&E^D,
}
\]
from which we obtain a natural isomorphism $\cG^D\to \Ker(f^D)$. 
To see that $f^D$ is faithfully flat, by a base change we may assume $S=\Spec(k)$ for some field $k$. Then the group schemes $F^D$ and $E^D$ are isomorphic to $\Ga$ and $f^D$ is defined by an additive polynomial. Since $\Ker(f^D)=\cG^D$ is finite over $S$, this polynomial is non-zero and thus $f^D$ is faithfully flat. Since the ranks of $\cG$ and $\cG^D$ are the same, the assertion on $\deg(f^D)$ also follows.
\end{proof}

\begin{rmk}\label{RmkAutodual}
Suppose that there exists a section $h\in \cL^{\otimes -(q+1)}(S)$ satisfying $h^{\otimes q-1}=-a_2$. Then the map $h:\cL\to \cL^{\otimes -q}$ gives an autoduality for Drinfeld modules of rank two. In the classical setting on the Drinfeld upper half plane, this is the case because of the existence of Gekeler's $h$-function \cite[Theorem 9.1 (c)]{Gek_Coeff}. In general, we only have a weaker version of autoduality: the map 
\[
\cL^{\otimes q-1}\to \cL^{\otimes -q(q-1)},\quad l \mapsto l\otimes a_2
\]
is an isomorphism of invertible sheaves. This is enough for our purpose.
\end{rmk}

For a Drinfeld module $E$ over $S$, we have analogues of the first de Rham cohomology group and the Hodge filtration for an abelian variety \cite[\S5]{Gek_dR}. First we show the following lemma.

\begin{lem}\label{DualLieCot}
For any Drinfeld module $E$ of rank two over an affine scheme $S$, we have natural isomorphisms
\[
\Lie(E^D)\to \Ext^1_{v,S}(E,\Ga)^0,\quad\Derin(E,\Ga)/\Dersi(E,\Ga)\to \Lie(E)^\vee.
\]
\end{lem}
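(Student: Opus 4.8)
The plan is to establish the two isomorphisms independently, in each case through the biderivation dictionary set up in (\ref{EqnEvalDer})--(\ref{EqnDualBdl}). For the first, I would begin from Theorem \ref{DualDM} (\ref{DualDMRep}), which gives $E^D=\bV_*(\cL^{\otimes -q})$; since the Lie algebra of a vector group is its defining module, $\Lie(E^D)=\cL^{\otimes -q}$. The isomorphism (\ref{EqnDualBdl}) with $G=\Ga$ furnishes a natural identification $\cL^{\otimes -q}\xrightarrow{\sim}\sExt^1_{v,S}(E,\Ga)^0$ sending a section $b$ to the extension class whose biderivation $\delta$ satisfies $\delta_t^*(Z)=b$, and evaluation at the affine base $S$ gives $\Ext^1_{v,S}(E,\Ga)^0$. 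Composing these yields the desired natural map. Conceptually this is forced: $\Lie(E^D)$ is the tangent space at the origin of the functor $\sExt^1_{v,S}(E,C)^0$, and passing to the $\epsilon$-direction replaces the Carlitz module $C$ by its Lie algebra $\Lie(C)=\Ga$ (with $t$ acting as $\theta$); the two descriptions agree because (\ref{EqnDualBdl}) is uniform in $G\in\{\Ga,C\}$.

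For the second isomorphism the plan is to understand the inner biderivations explicitly. The assignment $f\mapsto\delta_f$ is by definition a surjection $\Hom_{\bF_q,S}(E,\Ga)\to\Derin(E,\Ga)$, and its kernel is exactly $\Hom_{A,S}(E,\Ga)$, the $A$-linear homomorphisms. I would prove this kernel vanishes: writing $f=\sum_{i=0}^n b_i\tau^i$ with $b_n\neq 0$ under the identification $\Hom_{\bF_q,S}(E,\Ga)=\bigoplus_{m\geq 0}\cL^{\otimes -q^m}$, and using that $t$ acts on $\Ga$ by $\theta$ together with $\Phi_t^E=\theta+a_1\tau+a_2\tau^2$, the coefficient of $\tau^{n+2}$ in $\delta_{f,t}=f\circ\Phi_t^E-\theta f$ equals $b_n a_2^{q^n}$; as $a_2$ is nowhere vanishing this forces $b_n=0$, a contradiction, so $\Hom_{A,S}(E,\Ga)=0$. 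Hence $f\mapsto\delta_f$ is an isomorphism, under which $\Dersi(E,\Ga)$ corresponds precisely to $\Ker\bigl(\Cot\colon\Hom_{\bF_q,S}(E,\Ga)\to\omega_E\bigr)$, where $\Cot(f)$ is the linear part $b_0\in\cL^{\otimes -1}=\omega_E$. Therefore $\Cot$ induces $\Derin(E,\Ga)/\Dersi(E,\Ga)\cong\Hom_{\bF_q,S}(E,\Ga)/\Ker(\Cot)\xrightarrow{\sim}\Img(\Cot)=\omega_E=\Lie(E)^\vee$, the final equality holding because $f=b_0$ already realizes every section $b_0\in\cL^{\otimes -1}$.

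The only genuinely substantive step is the vanishing $\Hom_{A,S}(E,\Ga)=0$, the statement that a rank-two Drinfeld module carries no nonzero $A$-linear map to $\Ga$. This is precisely where the rank-two hypothesis enters, through the nowhere-vanishing of the leading coefficient $a_2$; everything else is formal manipulation within the anti-equivalence $\Sh$/$\Gr$ and the biderivation description of $\Ext^1_{v,S}$. I would carry out the top-coefficient computation directly in the skew-polynomial ring, tracking the sections $b_i\in\cL^{\otimes -q^i}$ and $a_j\in\cL^{\otimes 1-q^j}$, so that no trivialization of $\cL$ is required and the argument applies over the general affine base $S$.
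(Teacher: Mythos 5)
Your proposal is correct, and both halves deserve comparison with the paper's own (terser) proof. For the first isomorphism the paper does not go through Theorem \ref{DualDM}: it computes $\Lie(E^D)=\Ker(E^D(S_\varepsilon)\to E^D(S))$ with $S_\varepsilon=\Spec_S(\cO_S[\varepsilon]/(\varepsilon^2))$ directly on the biderivation description, writing $\delta_a=\delta^0_a+\varepsilon\delta^1_a$ and using that $(\varepsilon x)^q=0$ degenerates the Carlitz action to the $\Ga$-action in the $\varepsilon$-direction, so $\delta\mapsto[\delta^1]$ gives the map --- exactly the heuristic you state at the end of your first paragraph, promoted to the actual proof. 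Your route instead composes $E^D=\bV_*(\cL^{\otimes-q})$ from Theorem \ref{DualDM} (\ref{DualDMRep}) with the isomorphism (\ref{EqnDualBdl}) taken for $G=\Ga$; this is legitimate, since (\ref{EqnDualBdl}) is established uniformly for $G\in\{\Ga,C\}$ and Theorem \ref{DualDM} precedes the lemma, and a one-line $\varepsilon$-expansion (a kernel class $\varepsilon b$ corresponds under (\ref{EqnDualBdl}) over $S_\varepsilon$ to the biderivation $\varepsilon\delta^1$ with $\delta^1_t:Z\mapsto b$) shows your composite agrees with the paper's map --- worth noting, since the lemma feeds into the Hodge filtration (\ref{HodgeFil}) and the Kodaira--Spencer computation of Lemma \ref{TDKS}, where the specific map is used. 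For the second isomorphism the paper merely exhibits $\Derin(E,\Ga)\to\Hom_{\cO_S}(\Lie(E),\Lie(\Ga))$, $\delta_f\mapsto\Lie(f)$, leaving well-definedness and bijectivity implicit; you supply the missing substance by proving $\Hom_{A,S}(E,\Ga)=0$ via the top-coefficient argument, where $b_na_2^{q^n}=0$ forces $b_n=0$ because multiplication by $a_2^{\otimes q^n}$ is an isomorphism of invertible sheaves (so mere injectivity suffices; nowhere-vanishing of $a_2$ is used correctly even though $b_n$ need not be a unit section). That vanishing is precisely what makes $f$ uniquely determined by $\delta_f$, hence $\Lie(f)$ well-defined, $\Dersi(E,\Ga)=\Ker(\Cot)$ unambiguous as a subset of $\Derin(E,\Ga)$, and surjectivity onto $\omega_E=\cL^{\otimes-1}$ clear from constant-term sections. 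In sum: your first half trades the paper's self-contained tangent-space computation for prepared machinery, and your second half is more complete than the paper's, which glosses over the only genuinely substantive point.
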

\begin{proof}
For the former one, we put $S_\varepsilon=\Spec_S(\cO_S[\varepsilon]/(\varepsilon^2))$. Then we have
\[
\Lie(E^D)=\Ker(E^D(S_\varepsilon)\to E^D(S)).
\]
For any $\bF_q$-linear homomorphism $\delta:A\to \Hom_{\bF_q,S_\varepsilon}(E|_{S_\varepsilon},C|_{S_\varepsilon})$, we can write as
\[
\delta_a=\delta^0_a+\varepsilon \delta^1_a,\quad \delta^i_a\in\Hom_{\bF_q,S}(E,C).
\]
Then $\delta\in \Der_0(E|_{S_\varepsilon},C|_{S_\varepsilon})$ if and only if
\[
\delta^0\in\Der_0(E,C),\quad \delta^1 \in \Der_0(E,\Ga).
\]
On the other hand, for any $g=g^0+\varepsilon g^1\in \Hom_{\bF_q,S_\varepsilon}(E|_{S_\varepsilon},C|_{S_\varepsilon})$, the associated inner biderivation $\delta_g$ is written as
\[
\delta_g=\delta_{g^0}+\varepsilon(g^1\circ \Phi^E-\Phi^{\Ga}\circ g^1).
\]
From this, we see that the map sending $\delta$ to the class of $\delta^1$ gives a natural isomorphism $\Lie(E^D)\to 
\Ext^1_{v,S}(E,\Ga)^0$. The latter one is given by the natural map
\[
\Derin(E,\Ga)\to \Hom_{\cO_S}(\Lie(E),\Lie(\Ga)),\quad \delta_f\mapsto \Lie(f).
\]
\end{proof}

For any Drinfeld module $E$ over an affine scheme $S$, we put
\[
\DR(E,\Ga)=\Der_0(E,\Ga)/\Dersi(E,\Ga).
\]
From the proof of \cite[p. 412]{PapRam}, we see that the two natural $A$-actions on $\Der_0(E,\Ga)$ define the same $A$-action on $\DR(E,\Ga)$.
If $E$ is of rank two, then Lemma \ref{DualLieCot} yields an exact sequence of $A$-modules
\begin{equation}\label{HodgeFil}
\xymatrix{
0 \ar[r] & \Lie(E)^\vee \ar[r] & \DR(E,\Ga) \ar[r] & \Lie(E^D) \ar[r] & 0,
}
\end{equation}
which is functorial on $E$.

Finally, we recall the construction of the Kodaira-Spencer map for a Drinfeld module $E$ over an $A$-scheme $S$ 
\cite[\S 6]{Gek_dR}. We only treat the case where $S=\Spec(B)$ is affine and the underlying invertible sheaf of $E$ is 
trivial. Write as $E=\Spec(B[X])$ so that we identify as $\Hom_{\bF_q,S}(E,\Ga)=B\{\tau\}$. We define an action of 
$D\in \Der_A(B)$ on $B\{\tau\}$ by acting on coefficients. Then, via the isomorphism (\ref{EqnEvalDer}), the derivation 
$D$ induces a map $\nabla_D:\Der_0(E,\Ga)\to \Der_0(E,\Ga)$, which in turn defines
\[
\pi_D: \Lie(E)^\vee\to \DR(E,\Ga)\overset{\nabla_D}{\to}\DR(E,\Ga) \to \Lie(E^D),
\]
where the first and the last arrows are those of (\ref{HodgeFil}). Then the Kodaira-Spencer map for $E$ over $S$ is by definition
\[
\KS: \Der_A(B)\to \Hom_B(\Lie(E)^\vee,\Lie(E^D)),\quad D\mapsto \pi_D.
\]
Hence we also have the dual map 
\[
\KS^\vee: \omega_E\otimes_{\cO_S} \omega_{E^D}\to \Omega^1_{S/A}.
\]



\section{Canonical subgroups of ordinary Drinfeld modules}\label{Sec_Cansub}

Let $\wp$ be a monic irreducible polynomial of degree $d$ in $A=\bF_q[t]$. We denote by $\cO_K$ the complete local ring of $A$ at the prime ideal $(\wp)$, which is a complete discrete valuation ring with uniformizer $\wp$. We consider $\cO_K$ naturally as an $A$-algebra. The fraction field and the residue field of $\cO_K$ are denoted by $K$ and $k(\wp)=\bF_{q^d}$, respectively. We denote by $v_\wp$ the $\wp$-adic (additive) valuation on $K$ normalized as $v_\wp(\wp)=1$. For any $\cO_K$-algebra $B$ and any scheme $X$ over $B$, we put $\bar{B}=B/\wp B$ and $\bar{X}=X\times_B \Spec(\bar{B})$. 

We say an $\cO_K$-algebra $B$ is a $\wp$-adic ring if it is complete with respect to the $\wp$-adic topology. A $\wp$-adic ring $B$ is said to be flat if it is flat over $\cO_K$.



\subsection{Ordinary Drinfeld modules}

Let $\bar{S}$ be an $A$-scheme of characteristic $\wp$. Let $\bar{E}=(\bar{\cL},\Phi^{\bar{E}})$ be a Drinfeld module of rank two over 
$\bar{S}$. By \cite[Proposition 2.7]{Shastry}, we can write as
\begin{equation}\label{EqnWpAlpha}
\Phi^{\bar{E}}_{\wp}=(\alpha_d+\cdots+\alpha_{2d}\tau^d)\tau^d,\quad \alpha_i\in \bar{\cL}^{\otimes 1-q^i}(\bar{S}).
\end{equation}
We put 
\[
F_{d,\bar{E}}=\tau^d:\bar{E}\to \bar{E}^{(q^d)},\quad V_{d,\bar{E}}=\alpha_d+\cdots+\alpha_{2d}\tau^d:  \bar{E}^{(q^d)}\to \bar{E}.
\]
We also denote them by $F_d$ and $V_d$ if no confusion may occur.
We also define a homomorphism $F_d^n: \bar{E}\to \bar{E}^{(q^{dn})}$ by
\[
F_d^{1}=F_d,\quad F_d^{n}=(F_d^{n-1})^{(q^d)} \circ F_d.
\]
We define $V_d^n: \bar{E}^{(q^{dn})} \to \bar{E}$ similarly. They are isogenies of Drinfeld modules satisfying $V_d^n\circ F_d^n=\Phi_{\wp^n}^{\bar{E}}$ and $F_d^n\circ V_d^n=\Phi^{\bar{E}^{(q^{dn})}}_{\wp^n}$ \cite[\S2.8]{Shastry}. 
We also have exact sequences of $A$-module schemes over $\bar{S}$
\begin{gather*}
\xymatrix{
0 \ar[r] & \Ker(F_d^n) \ar[r] & \bar{E}[\wp^n] \ar[r] & \Ker(V_d^n) \ar[r] & 0,
}\\
\xymatrix{
0 \ar[r] & \Ker(F_d) \ar[r] & \Ker(F_d^n) \ar[r] & \Ker(F_d^{n-1})^{(q^d)} \ar[r] & 0,
}\\
\xymatrix{
0 \ar[r] & \Ker(V_d^{n-1})^{(q^d)} \ar[r] & \Ker(V_d^n) \ar[r] & \Ker(V_d) \ar[r] & 0.
}
\end{gather*}

\begin{dfn}
We say $\bar{E}$ is ordinary if $\alpha_d \in \bar{\cL}^{\otimes 1-q^d}(\bar{S})$ of (\ref{EqnWpAlpha}) is nowhere 
vanishing, and supersingular if $\alpha_d=0$.
\end{dfn}
By \cite[Proposition 2.14]{Shastry}, $\bar{E}$ is ordinary if and only if $\Ker(V_d)$ is etale if and only if $\Ker(V_d^n)$ is etale for any $n$.

We need a relation of the isogenies $F_d$ and $V_d$ with duality. For this, we first prove the following lemma.

\begin{lem}\label{CarlitzRed}
	Let $C$ be the Carlitz module over $A$. Then the polynomial $\Phi^C_\wp(Z)$ is a monic Eisenstein polynomial in $\cO_K[Z]$. In particular, we have
	\begin{equation}\label{CarlitzModP}
	\Phi^C_\wp(Z)\equiv Z^{q^d}\bmod \wp.
	\end{equation}
	\end{lem}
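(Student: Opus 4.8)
The plan is to compute the nonzero coefficients of the $\bF_q$-linear polynomial $\Phi^C_\wp(Z)=\sum_{i=0}^d c_i Z^{q^i}$, where $c_i\in A\subseteq\cO_K$, and to verify the three Eisenstein conditions $c_d=1$, $c_0=\wp$ and $c_i\in\wp\cO_K$ for $0<i<d$. Since $\Phi^C$ has rank one, $\Phi^C_\wp$ has $\tau$-degree $\deg(\wp)=d$, and because $\wp$ is monic and $\Phi^C_t=\theta+\tau$ has leading coefficient $1$, an easy induction on the degree gives $c_d=1$; thus $\Phi^C_\wp(Z)$ is monic of degree $q^d$ in $Z$. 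The lowest ($\tau^0$) coefficient $c_0$ is by definition the image of $\wp$ under the structure map $A\to\cO_K$, namely the uniformizer $\wp$, so $v_\wp(c_0)=1$. Consequently $\Phi^C_\wp(Z)/Z$ is monic of degree $q^d-1$ with constant term $\wp$, and everything reduces to the single claim that $c_i\in\wp\cO_K$ for $1\le i\le d-1$; granting this, the Eisenstein property and the congruence (\ref{CarlitzModP}) both follow at once.

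To prove that the intermediate coefficients vanish modulo $\wp$, I would reduce to the residue field and establish the equivalent statement $\Phi^{\bar C}_\wp=\tau^d$ in $\bF_{q^d}\{\tau\}$, where $\bar C$ is the reduction of $C$ over $\bar A=A/(\wp)=\bF_{q^d}$ and $\Phi^{\bar C}_t=\bar\theta+\tau$ with $\bar\theta$ the image of $t$. The tempting shortcut---factoring $\wp$ into linear factors over $\bF_{q^d}$ and substituting $\bar\theta+\tau$---does not work directly, since $\bar\theta+\tau$ does not commute with the scalars in $\bF_{q^d}$; this noncommutativity is the main obstacle. Instead I would argue through the torsion scheme $\bar C[\wp]=\Ker(\Phi^{\bar C}_\wp)$, an $\bF_q$-group scheme of order $q^d$.

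The argument then runs as follows. Writing $m$ for the least index with $\bar c_m\neq 0$, the additive polynomial $\Phi^{\bar C}_\wp$ factors as a separable part of $\tau$-degree $d-m$ precomposed with $\tau^m$, so the number of distinct geometric roots, i.e. the order of the \'etale quotient of $\bar C[\wp]$, is $q^{d-m}$. Since $\bar\theta$ is a root of $\wp$, the linear coefficient $\bar c_0=\wp(\bar\theta)$ vanishes, forcing $m\ge 1$, so $\bar C[\wp]$ is inseparable. On the other hand the geometric points of $\bar C[\wp]$ form a module over $A/(\wp)=\bF_{q^d}$, hence an $\bF_{q^d}$-vector space, so their number $q^{d-m}$ must be a power of $q^d$; together with $1\le m\le d$ this forces $m=d$. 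Therefore $\Phi^{\bar C}_\wp=\bar c_d\tau^d=\tau^d$, which is precisely the assertions $c_i\in\wp\cO_K$ for $i<d$ and (\ref{CarlitzModP}). Alternatively one may simply invoke that a rank-one Drinfeld module in $A$-characteristic $(\wp)$ has connected $\wp$-torsion. I expect the genuinely load-bearing point to be the observation that the \'etale part has order a power of $q^d$, coming from the $\bF_{q^d}$-module structure on the $\wp$-torsion; the remainder is bookkeeping with $\tau$-degrees.
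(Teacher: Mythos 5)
Your proof is correct, but it takes a genuinely different route from the paper's. The paper works on the generic fiber: it passes to a splitting field $L/K$ of $\Phi^C_\wp(Z)$, observes that $C[\wp](L)$ is a one-dimensional $A/(\wp)$-vector space so that $A$ acts transitively on the nonzero roots, deduces that all nonzero roots have the common valuation $1/(q^d-1)$, and reads off the Eisenstein property (for $\Phi^C_\wp(Z)/Z$, as you correctly interpret the statement) from the Newton polygon; the congruence (\ref{CarlitzModP}) then falls out as the ``in particular''. You argue instead on the special fiber and in the reverse logical order: you prove the congruence first, by showing $\bar{C}[\wp]$ over $\bF_{q^d}$ is connected --- the \'etale quotient has order $q^{d-m}$, where $m\geq 1$ is the lowest $\tau$-degree with nonvanishing coefficient, while the $A/(\wp)=\bF_{q^d}$-module structure on geometric points forces that order to be a power of $q^d$, whence $m=d$ --- and then assemble the Eisenstein property from this together with the exact computations $c_d=1$ and $c_0=\wp$. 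Both arguments hinge on the same structural input, namely that the $\wp$-torsion is a module over the degree-$d$ residue field $A/(\wp)$, but they deploy it on opposite fibers: the paper's version is shorter and directly records the valuation $1/(q^d-1)$ of the torsion points, which is implicitly reused later at (\ref{EqnValBeta}), whereas yours is self-contained group-scheme bookkeeping in characteristic $\wp$ that makes the connectedness of $\bar{C}[\wp]$ explicit (your ``alternative'' of citing connectedness of $\wp$-torsion for rank-one Drinfeld modules in characteristic $\wp$ is exactly what your main argument proves, so it is not an independent shortcut). Your individual steps all check out: the factorization of $\Phi^{\bar{C}}_\wp$ as a separable additive polynomial composed with $\tau^m$, the count of $q^{d-m}$ distinct geometric roots using bijectivity of Frobenius on an algebraic closure, the vanishing $\bar{c}_0=\wp(\bar{\theta})=0$ forcing $m\geq 1$, and the divisibility $d\mid(d-m)$ with $0\leq d-m\leq d-1$ forcing $m=d$.
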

\begin{proof}
	Let $L$ be a splitting field of the polynomial $\Phi^C_\wp(Z)$ over $K$. Since the ring $A$ acts on $C[\wp](L)$ transitively, any non-zero root $\beta\in L$ of $\Phi^C_\wp(Z)$ satisfies $v_\wp(\beta)=1/(q^d-1)$ and thus the monic polynomial $\Phi^C_\wp(Z)$ is Eisenstein over $\cO_K$.
	\end{proof}

\begin{lem}\label{DualFV}
\[
F_{d,\bar{E}}^D=V_{d,\bar{E}^D},\quad V_{d,\bar{E}}^D=F_{d,\bar{E}^D}.
\]
\end{lem}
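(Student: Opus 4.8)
The plan is to deduce both equalities from a single relation coming from duality applied to the factorization of $\wp$-multiplication, and then to settle the one remaining identity by a direct computation of the dual isogeny. First I would reduce to the case where $\bar{S}=\Spec(\bar{B})$ is affine with trivialized line bundles, and record that the formation of the dual commutes with base change (Theorem \ref{DualDM} (\ref{DualDMBC})), so that the Frobenius twist $(\bar{E}^{(q^d)})^D$ is canonically identified with $(\bar{E}^D)^{(q^d)}$; this makes the sources and targets of $F_{d,\bar{E}}^D$ and $V_{d,\bar{E}}^D$ agree with those of $V_{d,\bar{E}^D}$ and $F_{d,\bar{E}^D}$ (the latter defined from the rank-two Drinfeld module $\bar{E}^D$ in characteristic $\wp$). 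Since the dual is a contravariant functor on $\DM_{\bar{S}}$ whose $A$-linear structure forces $(\Phi^{\bar{E}}_\wp)^D=\Phi^{\bar{E}^D}_\wp$, applying it to $\Phi^{\bar{E}}_\wp=V_{d,\bar{E}}\circ F_{d,\bar{E}}$ yields
\[
F_{d,\bar{E}}^D\circ V_{d,\bar{E}}^D=\Phi^{\bar{E}^D}_\wp=V_{d,\bar{E}^D}\circ F_{d,\bar{E}^D}.
\]
Thus it suffices to prove one of the two identities, say $V_{d,\bar{E}}^D=F_{d,\bar{E}^D}$: the other then follows by cancelling the faithfully flat relative Frobenius $F_{d,\bar{E}^D}=\tau^d$ on the right of the displayed equation.

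To prove $V_{d,\bar{E}}^D=F_{d,\bar{E}^D}$ I would work with the biderivation model of the dual. A point of $\bar{E}^D=\sExt^1_{v,\bar{S}}(\bar{E},C)^0$ is an extension class represented by an $(\bar{E},C)$-biderivation $\delta$ with coordinate $\delta_t$, and under the identification (\ref{EqnDualBdl}) the relative Frobenius $F_{d,\bar{E}^D}=\tau^d$ is precisely the $q^d$-power map on this coordinate. On the other hand, by functoriality of $\sExt^1_{v,\bar{S}}(-,C)^0$ the dual $V_{d,\bar{E}}^D$ is the pullback of extension classes along $V_{d,\bar{E}}$, namely $\delta\mapsto(a\mapsto \delta_a\circ V_{d,\bar{E}})$. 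The key input is Lemma \ref{CarlitzRed}: in characteristic $\wp$ one has $\Phi^C_\wp=\tau^d$, so $\wp$ acts on the dualizing object $C$ as its relative $q^d$-Frobenius. Feeding this, together with $\Phi^{\bar{E}}_\wp=V_{d,\bar{E}}\circ F_{d,\bar{E}}$, into the cocycle relation (\ref{CondBiderProd}) for $\delta$, I expect the class of $\delta\circ V_{d,\bar{E}}$ to reduce, modulo inner biderivations, to the $q^d$-Frobenius twist of $\delta$, which is exactly the image under $F_{d,\bar{E}^D}$.

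The main obstacle is this last comparison. Because neither $F_{d,\bar{E}}=\tau^d$ nor $V_{d,\bar{E}}$ is $\cO_{\bar{S}}$-linear, the explicit formula of Theorem \ref{DualDM} (\ref{DualDMLin}) for the dual of a morphism does not apply, and one must instead manipulate the full cocycle while keeping careful track of the $\Der_0$-condition and of inner biderivations when converting the $V_{d,\bar{E}}$-pullback into a pure Frobenius twist via the Carlitz relation. As a cross-check and a possible alternative route, I would keep in reserve the finite-module picture: dualizing the exact sequence $0\to\Ker(F_{d,\bar{E}})\to\bar{E}[\wp]\to\Ker(V_{d,\bar{E}})\to0$ by means of Theorem \ref{ThmFVDual}, where $\Sh$ realizes the Taguchi dual as the linear dual with $\varphi$ and $v$ interchanged, should match the kernels of the two candidate maps (the \'etale $\Ker(V_{d,\bar{E}})$ dualizing to the infinitesimal Frobenius kernel of $\bar{E}^D$). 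The only residual ambiguity in this structural approach is the leading coefficient of the isogeny, and it is precisely this coefficient that the biderivation computation is designed to pin down.
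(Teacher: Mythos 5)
There is a genuine gap: the one identity carrying all the content is never proved. Your framework---dualize $\Phi^{\bar{E}}_\wp=V_{d,\bar{E}}\circ F_{d,\bar{E}}$ to get $F_{d,\bar{E}}^D\circ V_{d,\bar{E}}^D=\Phi^{\bar{E}^D}_\wp=V_{d,\bar{E}^D}\circ F_{d,\bar{E}^D}$, then cancel a faithfully flat isogeny once one of the two equalities is known---is sound, and it is exactly the cancellation mechanism the paper uses. But your chosen target, proving $V_{d,\bar{E}}^D=F_{d,\bar{E}^D}$ by showing that the class of $\delta_t\circ V_{d,\bar{E}}$ is the $q^d$-Frobenius twist of $\delta$ modulo inner biderivations, is stated only as an expectation, and you yourself flag it as the main obstacle. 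Concretely, with a trivialized bundle and the normal form $\delta_t=b\tau$ from (\ref{EqnDualBdl}), you would have to exhibit $f\in\Hom_{\bF_q,\bar{S}}(\bar{E}^{(q^d)},C)$ with
\[
f\circ\Phi^{\bar{E}^{(q^d)}}_t-\Phi^C_t\circ f\;=\;b\alpha_d^{q}\tau+\cdots+b\alpha_{2d}^{q}\tau^{d+1}-b^{q^d}\tau,
\]
where $\alpha_d,\dots,\alpha_{2d}$ are the essentially arbitrary coefficients of $V_{d,\bar{E}}$; no recipe for such an $f$ is given, and there is no evident one that does not amount to re-proving the lemma. Your fallback via kernels has the same hole, as you concede: $\Ker(V_{d,\bar{E}})^D\simeq\Ker(V_{d,\bar{E}}^D)$ (Theorem \ref{DualDM}) determines $V_{d,\bar{E}}^D$ only up to post-composition with an automorphism, i.e.\ a unit, and that unit is precisely what is at stake.

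The repair is to attack the \emph{other} composite, which is what the paper does: prove $F_{d,\bar{E}}^D\circ F_{d,\bar{E}^D}=\Phi^{\bar{E}^D}_\wp$ first. There everything is explicit and no modulo-inner bookkeeping is needed: $F_{d,\bar{E}^D}$ sends a section $l$ of $\bar{\cL}^{\otimes-q}$ to $l^{\otimes q^d}$, and $F_{d,\bar{E}}^D$, being pullback along $\tau^d$, merely re-reads the cocycle $Z\mapsto l^{\otimes q^d}$ through the inclusion (\ref{EqnVectFrob}); by (\ref{CarlitzModP}) this cocycle is literally $\Phi^C_\wp\circ(Z\mapsto l)$, hence $\wp$ times the class of $l$ under (\ref{EqnDualBdl})---the two sides are given by the \emph{same} representative, so no inner biderivation ever enters. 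Cancelling the isogeny $F_{d,\bar{E}^D}$ gives $F_{d,\bar{E}}^D=V_{d,\bar{E}^D}$, and then
\[
V_{d,\bar{E}}^D\circ V_{d,\bar{E}^D}=V_{d,\bar{E}}^D\circ F_{d,\bar{E}}^D=(F_{d,\bar{E}}\circ V_{d,\bar{E}})^D=(\Phi^{\bar{E}^{(q^d)}}_\wp)^D=\Phi^{(\bar{E}^D)^{(q^d)}}_\wp
\]
yields the second identity by cancelling $V_{d,\bar{E}^D}$---the mirror image of your own reduction. In short: right skeleton, correct functoriality and cancellation steps, but the load-bearing computation is missing, and it becomes trivial only when aimed at $F_{d,\bar{E}}^D$ rather than $V_{d,\bar{E}}^D$.
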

\begin{proof}
	First we prove the former equality. Since $F_{d,\bar{E}^D}$ is an isogeny, it is enough to show 
	$F_{d,\bar{E}}^D\circ F_{d,\bar{E}^D}=\Phi^{\bar{E}^D}_\wp$. Let $\bar{\cL}$ be the underlying invertible sheaf of 
	$\bar{E}$. Take any section $l$ of $\bar{\cL}^{\otimes -q}$. We have $F_{d,\bar{E}^D}(l)=l^{\otimes q^d}$. From 
	(\ref{EqnVectFrob}), we see that the map $F_{d,\bar{E}}^D$ sends it to the class of the biderivation $\delta$ 
	such that $\delta_t$ agrees with the homomorphism
	\[
	\bar{E}\to C=\Spec_{\bar{S}}(\cO_{\bar{S}}[Z]),\quad Z\mapsto l^{\otimes q^d} \in 
	\Sym(\bar{\cL}^{\otimes -1}).
	\]
	By (\ref{CarlitzModP}), this is equal to the class of
	$\wp\cdot (Z\mapsto l)$
	with respect to the $A$-module structure of $\sExt^1_{v,\bar{S}}(\bar{E},C)^0$. Since $l$ is a 
	section of $\bar{\cL}^{\otimes -q}$, the isomorphism (\ref{EqnDualBdl}) implies the 
	assertion. 
	
	For the latter equality, it is enough to show 
	$V_{d,\bar{E}}^D\circ V_{d,\bar{E}^D}=\Phi^{(\bar{E}^D)^{(q^d)}}_\wp$. By the former equality of the lemma, we have
	\[
	V_{d,\bar{E}}^D\circ V_{d,\bar{E}^D}=V_{d,\bar{E}}^D\circ F_{d,\bar{E}}^D=(F_{d,\bar{E}}\circ V_{d,\bar{E}})^D=(\Phi^{\bar{E}^{(q^d)}}_\wp)^D.
	\]
	By the definition of the $A$-module structure on $\sExt^1_{v,\bar{S}}(\bar{E}^{(q^d)},C)$, it is equal to 
	$\Phi_\wp^{(\bar{E}^D)^{(q^d)}}$ and we obtain the latter equality of the lemma.
\end{proof}

\begin{prop}\label{DualOrd}
Let $\bar{S}$ be an $A$-scheme of characteristic $\wp$ and $\bar{E}$ a Drinfeld module of rank two over $\bar{S}$. Consider the maps
\[
\Lie(V_{d,\bar{E}}): \Lie(\bar{E}^{(q^d)})\to \Lie(\bar{E}),\quad \Lie(V_{d,\bar{E}^D}): \Lie((\bar{E}^{D})^{(q^d)})\to \Lie(\bar{E}^D)
\]
and the linear dual $\Lie(V_{d,\bar{E}})^\vee$ of the former map. Then we have a natural isomorphism of $\cO_{\bar{S}}$-modules 
\[
\Coker(\Lie(V_{d,\bar{E}})^\vee)\simeq \Coker(\Lie(V_{d,\bar{E}^D})).
\]
In particular, $\bar{E}$ is ordinary if and only if $\bar{E}^D$ is ordinary.
\end{prop}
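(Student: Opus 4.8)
The plan is to realize both cokernels as one and the same explicit subquotient of $\DR(\bar E,\Ga)$, using the functoriality of the exact sequence (\ref{HodgeFil}) together with Lemma \ref{DualFV}. This makes the isomorphism manifestly canonical, so that no twisting line bundle has to be matched up by hand. I would begin by recording two elementary facts: the maps $\Lie(V_{d,\bar E})$, $\Lie(V_{d,\bar E^D})$ and the cotangent maps see only the linear ($\tau^0$) coefficients $\alpha_d$, $\alpha_d^D$ of $V_{d,\bar E}$, $V_{d,\bar E^D}$, so they are multiplications by sections of invertible sheaves and $\Lie(V_{d,\bar E})^\vee=\Cot(V_{d,\bar E})$; and the relative Frobenius $F_d=\tau^d$, being locally $Z\mapsto Z^{q^d}$, induces the zero map on both $\omega_{(-)}$ and $\Lie(-)$ in characteristic $\wp$.

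Applying the functoriality of (\ref{HodgeFil}) to $F_{d,\bar E}\colon\bar E\to\bar E^{(q^d)}$, the vanishing of $\Cot(F_{d,\bar E})$ forces $\DR(F_{d,\bar E})$ to kill $\omega_{\bar E^{(q^d)}}$, hence to factor through $\Lie((\bar E^{(q^d)})^D)$; since the right-hand vertical map is $\Lie(F_{d,\bar E}^D)=\Lie(V_{d,\bar E^D})$ by Lemma \ref{DualFV}, a diagram chase yields the canonical identification
\[
\Coker(\Lie(V_{d,\bar E^D}))\simeq \DR(\bar E,\Ga)/(\omega_{\bar E}+\Img(\DR(F_{d,\bar E}))).
\]
Applying the same functoriality to $V_{d,\bar E}\colon\bar E^{(q^d)}\to\bar E$, Lemma \ref{DualFV} gives $V_{d,\bar E}^D=F_{d,\bar E^D}$, so the right-hand vertical map $\Lie(V_{d,\bar E}^D)$ now vanishes; therefore $\DR(V_{d,\bar E})$ takes values in $\omega_{\bar E^{(q^d)}}$ and defines a map $\rho\colon\DR(\bar E,\Ga)\to\omega_{\bar E^{(q^d)}}$ whose restriction to $\omega_{\bar E}$ is $\Cot(V_{d,\bar E})=\Lie(V_{d,\bar E})^\vee$. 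The crux is to prove that $\rho$ is surjective with $\Ker(\rho)=\Img(\DR(F_{d,\bar E}))$; equivalently, that the Frobenius--Verschiebung complex
\[
\DR(\bar E^{(q^d)},\Ga)\xrightarrow{\DR(F_{d,\bar E})}\DR(\bar E,\Ga)\xrightarrow{\DR(V_{d,\bar E})}\DR(\bar E^{(q^d)},\Ga)
\]
(whose two composites vanish, being $\DR$ of multiplication by $\wp=0$) is exact at the middle term and surjects onto the Hodge line $\omega_{\bar E^{(q^d)}}$.

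Granting this, $\rho$ identifies $\omega_{\bar E^{(q^d)}}$ with $\DR(\bar E,\Ga)/\Img(\DR(F_{d,\bar E}))$ and carries $\Cot(V_{d,\bar E})(\omega_{\bar E})$ onto the image of $\omega_{\bar E}$, so that $\Coker(\Lie(V_{d,\bar E})^\vee)$ is identified with the very same quotient $\DR(\bar E,\Ga)/(\omega_{\bar E}+\Img(\DR(F_{d,\bar E})))$; composing the two identifications gives the natural isomorphism. The final assertion then drops out: this common cokernel vanishes exactly when $\Cot(V_{d,\bar E})$ (equivalently $\Lie(V_{d,\bar E^D})$) is an isomorphism of invertible sheaves, i.e. when $\alpha_d$ (equivalently $\alpha_d^D$) is nowhere vanishing, which is the definition of ordinarity of $\bar E$ (resp.\ $\bar E^D$).

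I expect the exactness of the Frobenius--Verschiebung complex on $\DR$ to be the main obstacle. I would settle it fibrewise over $\bar S$: at an ordinary point $\Cot(V_{d,\bar E})$ is already an isomorphism, so $\rho|_{\omega_{\bar E}}$ is surjective; at a supersingular point one checks $\DR(V_{d,\bar E})\neq 0$, so that its rank-one image fills the Hodge line, and the equality $\Img(\DR(F_{d,\bar E}))=\Ker(\DR(V_{d,\bar E}))$ of rank-one subspaces follows by a rank count from $\DR(V_{d,\bar E})\circ\DR(F_{d,\bar E})=0$. Surjectivity of $\rho$ and the equality of the two subsheaves then globalize over $\bar S$ by Nakayama's lemma, the relevant cokernel and the quotient $\Ker(\rho)/\Img(\DR(F_{d,\bar E}))$ being coherent and fibrewise zero.
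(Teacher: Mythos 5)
Your overall strategy is the paper's own: the paper (following \cite[Theorem 2.3.6]{Con}) also runs the Hodge filtration (\ref{HodgeFil}) functorially through $F_{d,\bar E}$ and $V_{d,\bar E}$, uses Lemma \ref{DualFV} to rewrite the outer vertical maps, and reduces everything to the exactness of the induced complex
\[
\xymatrix{
0 \ar[r] & \Lie((\bar{E}^{D})^{(q^d)}) \ar[r]^-{F_{d,\bar{E}}^*} &  \DR(\bar{E},\Ga) \ar[r]^-{V_{d,\bar{E}}^*} & \Lie(\bar{E}^{(q^d)})^\vee  \ar[r] & 0,
}
\]
which is precisely your ``Frobenius--Verschiebung complex'' claim (the paper closes the diagram chase by citing \cite[Lemma 2.3.7]{Con}, you do it by hand; your justification that the two composites vanish is also correct, since the two $A$-actions agree on $\DR$ and $\wp*\delta=\Phi^{\Ga}_\wp\circ\delta=0$). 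The genuine gap is in your fibrewise verification of that exactness. Over a field, the rank count from $\DR(V_{d,\bar E})\circ\DR(F_{d,\bar E})=0$ yields only the inclusion $\Img(\DR(F_{d,\bar E}))\subseteq\Ker(\DR(V_{d,\bar E}))$; equality requires $\DR(F_{d,\bar E})\neq 0$, which you nowhere establish --- your phrase ``equality of rank-one subspaces'' silently presupposes it. You cannot obtain it from $\Lie(F_{d,\bar E}^D)=\Lie(V_{d,\bar E^D})\neq 0$, since that is exactly ordinarity of $\bar E^D$, the statement being proved. The same omission bites at ordinary points, where you verify only surjectivity of $\rho$ and never middle exactness, so you cannot even conclude the direction ``$\bar E$ ordinary $\Rightarrow$ $\bar E^D$ ordinary''. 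Finally, at supersingular points your ``one checks $\DR(V_{d,\bar E})\neq 0$'' is asserted, not proved, and it is not a formality: pure linear algebra is consistent with $F^*=0$ and $V^*$ of rank one.

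These two non-vanishing statements are where the actual work in the paper's proof lies, and you must import them (or an equivalent) to close your argument. The paper shows $F^*_{d,\bar E}\neq 0$ on $\Der_0(\bar E^{(q^d)},\Ga)/\Derin$ by taking $\delta$ with $\delta_t=\tau$ and supposing $F^*\delta=\delta_f$ is strictly inner, i.e.\ $\tau^{d+1}=f\circ\Phi^{\bar E}_t-\Phi^{\Ga}_t\circ f$ with $\Cot(f)=0$; writing $f=b_r\tau^r+\cdots+b_s\tau^s$, the coefficient of $\tau^r$ on the right is $(\theta^{q^r}-\theta)b_r$, which is nonzero for $1\leq r\leq d-1$ because $\theta$ generates $k(\wp)=\bF_{q^d}$ over $\bF_q$; this forces $r=d+1$, a contradiction. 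For $V^*_{d,\bar E}\neq 0$ in the supersingular case one first needs \cite[Lemma 2.5]{Shastry} to write $\Phi^{\bar E}_\wp=\alpha_{2d}\tau^{2d}$ and then runs the same coefficient comparison. So your skeleton is sound and matches the paper's, but as written the crux is circular or unproven at every fibre; supplying these explicit biderivation computations is not optional.
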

\begin{proof}
We follow the proof of \cite[Theorem 2.3.6]{Con}.
By gluing, we may assume that $\bar{S}$ is affine. By the exact sequence (\ref{HodgeFil}), we have a commutative diagram of $A$-modules
\[
\xymatrix{
0 \ar[r] & \Lie(\bar{E}^{(q^d)})^\vee \ar[r]\ar[d]_{\Lie(F_{d,\bar{E}})^\vee} & \DR(\bar{E}^{(q^d)},\Ga)\ar[r]\ar[d]^{F_{d,\bar{E}}^*} & \Lie((\bar{E}^{D})^{(q^d)}) \ar[r]\ar[d]^{\Lie(F^D_{d,\bar{E}})} & 0 \\
0 \ar[r] & \Lie(\bar{E})^\vee \ar[r]\ar[d]_{\Lie(V_{d,\bar{E}})^\vee} & \DR(\bar{E},\Ga)\ar[r]\ar[d]^{V_{d,\bar{E}}^*}  & \Lie(\bar{E}^{D}) \ar[r]\ar[d]^{\Lie(V_{d,\bar{E}}^D)}  & 0 \\
0 \ar[r] & \Lie(\bar{E}^{(q^d)})^\vee \ar[r] & \DR(\bar{E}^{(q^d)},\Ga)\ar[r] & \Lie((\bar{E}^{D})^{(q^d)}) \ar[r] & 0,
}
\]
where rows are exact and columns are complexes. Since $\Lie(F_{d,\bar{E}})=\Lie(F_{d,\bar{E}^D})=0$, Lemma \ref{DualFV} implies that the middle column of the diagram induces the complex
\[
\xymatrix{
0 \ar[r] & \Lie((\bar{E}^{D})^{(q^d)}) \ar[r]^{F_{d,\bar{E}}^*} &  \DR(\bar{E},\Ga) \ar[r]^{V_{d,\bar{E}}^*} & \Lie(\bar{E}^{(q^d)})^\vee  \ar[r] & 0.
}
\]
If it is exact, then as in the proof of \cite[Theorem 2.3.6]{Con}, by using \cite[Lemma 2.3.7]{Con} and Lemma \ref{DualFV} we obtain
\[
\Coker(\Lie(V_{d,\bar{E}})^\vee)\simeq \Coker(\Lie(F^D_{d,\bar{E}}))=\Coker(\Lie(V_{d,\bar{E}^D})).
\]

Let us show the exactness. Since it is a complex of locally free $\cO_{\bar{S}}$-modules of finite rank and its formation commutes with any base change of affine schemes, we may assume $\bar{S}=\Spec(k)$ for some field $k$. By comparing dimensions, it is enough to show that, for any Drinfeld module $\bar{E}$ of rank two over $k$, the maps
\begin{align*}
F_{d,\bar{E}}^*&: \Der_0(\bar{E}^{(q^d)},\Ga)/\Derin(\bar{E}^{(q^d)},\Ga) \to \Der_0(\bar{E},\Ga)/\Dersi(\bar{E},\Ga) \\
V_{d,\bar{E}}^*&: \Der_0(\bar{E},\Ga)/\Dersi(\bar{E},\Ga) \to \Derin(\bar{E}^{(q^d)},\Ga)/\Dersi(\bar{E}^{(q^d)},\Ga)
\end{align*}
are non-zero.

For the assertion on $F_{d,\bar{E}}^*$, we write as
\[
\Phi^{\bar{E}}_{t}=\theta+a_1 \tau+a_2 \tau^2, \quad \Phi^{\bar{E}}_{\wp}=(\alpha_d+\cdots+\alpha_{2 d}\tau^d)\tau^d
\]
with $a_2,\alpha_{2 d}\neq 0$.
Let $\delta$ be the element of $\Der_0(\bar{E}^{(q^d)},\Ga)$ satisfying $\delta_t=\tau$ and suppose that $F_{d,\bar{E}}^*(\delta)$ is an element of $\Dersi(\bar{E},\Ga)$. Namely, we have
\begin{equation}\label{Td1}
\tau^{d+1}=f\circ\Phi^{\bar{E}}_{t}-\Phi^{\Ga}_{t}\circ f
\end{equation}
for some $f\in \Hom_{\bF_q,k}(\bar{E},\Ga)$ satisfying $\Cot(f)=0$. We write $f$ as $f=b_r\tau^r+\cdots+b_s\tau^s$ with some $b_i\in k$ and $1\leq r\leq s$ satisfying $b_r,b_s\neq 0$. Then we have $s=d-1$ and the coefficient of $\tau^r$ in the right-hand side of (\ref{Td1}) is $(\theta^{q^r}-\theta)b_r$. Since $1\leq r\leq d-1$ and the element $\theta$ generates $k(\wp)=\bF_{q^d}$ over $\bF_q$, this term does not vanish and thus we have $r=d+1$, which is a contradiction.

Let us consider the assertion on $V_{d,\bar{E}}^*$. If $\alpha_d\neq 0$, then the map $\Lie(V_{d,\bar{E}})$ is an isomorphism and the claim follows from the above diagram. Otherwise, \cite[Lemma 2.5]{Shastry} yields $\alpha_i=0$ unless $i=2 d$. Let $\delta$ be the element of $\Der_0(\bar{E},\Ga)$ satisfying $\delta_t=\tau$ and suppose that $V_{d,\bar{E}}^*(\delta)$ is an element of $\Dersi(\bar{E}^{(q^d)},\Ga)$. We have
\[
\tau(\alpha_{2 d}\tau^d)=g\circ\Phi^{\bar{E}^{(q^d)}}_{t}-\Phi^{\Ga}_{t}\circ g
\]
for some $g\in \Hom_{\bF_q,k}(\bar{E}^{(q^d)},\Ga)$ satisfying $\Cot(g)=0$. Then we obtain a contradiction as in the above case.
\end{proof}




\subsection{Canonical subgroups}\label{SubsecCanSub}

Let $B$ be an $\okey$-algebra and $E$ a Drinfeld module of rank two over $B$. We say $E$ has ordinary reduction if $\bar{E}=E\times_B\Spec(\bar{B})$ is ordinary.

\begin{lem}\label{ExistCanSub}
	Let $B$ be a $\wp$-adic ring and $E$ a Drinfeld module of rank two over $B$ with ordinary reduction. Then, for any positive integer $n$, there exists a unique finite locally free closed $A$-submodule scheme $\cC_n(E)$ of $E[\wp^n]$ over $B$ satisfying $\overline{\cC_n(E)}=\Ker(F_{d,\bar{E}}^n)$. The formation of $\cC_n(E)$ commutes with any base change of $\wp$-adic rings. We refer to it as the canonical subgroup of level $n$ of the Drinfeld module $E$ with ordinary reduction.
	\end{lem}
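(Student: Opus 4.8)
The plan is to use Taguchi duality to replace the connected subgroup scheme $\Ker(F_{d,\bar{E}}^n)$, which is hard to lift directly, by an \'etale object that lifts essentially for free. Since $\bar{E}$ is ordinary, Proposition \ref{DualOrd} shows that $\bar{E}^D$ is ordinary as well, and by Lemma \ref{DualFV} together with Theorem \ref{DualDM} we have $(F_{d,\bar{E}}^n)^D=V_{d,\bar{E}^D}^n$, whose kernel is \'etale. Dualizing the exact sequence $0\to \Ker(F_{d,\bar{E}}^n)\to \bar{E}[\wp^n]\to \Ker(V_{d,\bar{E}}^n)\to 0$ and using $\bar{E}[\wp^n]^D\cong \bar{E}^D[\wp^n]$ (Theorem \ref{DualDM} (\ref{DualDMIsog})), we see that the Taguchi dual of the desired canonical subgroup is exactly the \'etale quotient $\bar{E}^D[\wp^n]\to \Ker(V_{d,\bar{E}^D}^n)$. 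This mirrors the situation in \cite{Katz_p}, where the canonical subgroup is the Cartier dual of the \'etale quotient, and it is consistent with Lemma \ref{ConstDual}.

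Concretely, I would first note that $E^D[\wp^n]=\Ker(\wp^n\colon E^D\to E^D)$ is a finite $v$-module over $B$ by Theorem \ref{DualDM} (\ref{DualDMIsog}), reducing modulo $\wp$ to $\bar{E}^D[\wp^n]$, which carries the \'etale quotient $\Ker(V_{d,\bar{E}^D}^n)$. The core step is to lift this \'etale quotient over the whole $\wp$-adic ring $B$: to produce a short exact sequence of finite locally free $A$-module schemes
\[
0\to \cF_n\to E^D[\wp^n]\to \cE_n\to 0
\]
over $B$ with $\cE_n$ \'etale and reducing to $\Ker(V_{d,\bar{E}^D}^n)$. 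Equipping $\cE_n$ with its unique \'etale $v$-structure (Lemma \ref{SredV} (\ref{SredV-Et})) and $\cF_n$ with the induced one (Lemma \ref{DMQFt}), this becomes a $\Shv$-exact sequence in $\vMod_B^f$. Applying the Taguchi dual, which is exact and commutes with base change (Theorem \ref{ThmFVDual}), yields a closed immersion $\cE_n^D\hookrightarrow E[\wp^n]$, and I would then set $\cC_n(E):=\cE_n^D$. Its reduction is $(\Ker(V_{d,\bar{E}^D}^n))^D=\Ker(F_{d,\bar{E}}^n)$ by Lemma \ref{DualFV} and double duality, as required, and it is a finite locally free closed $A$-submodule scheme of $E[\wp^n]$. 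Compatibility with base change of $\wp$-adic rings follows from the corresponding property of \'etale lifts together with Theorem \ref{ThmFVDual}, and uniqueness follows because any such subgroup has an \'etale Taguchi dual giving an \'etale quotient of $E^D[\wp^n]$ over $B$ lifting $\Ker(V_{d,\bar{E}^D}^n)$, and such lifts are unique.

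The main obstacle is the core step above: over a general $\wp$-adic ring $B$, which need be neither local nor Noetherian, one cannot simply invoke the connected--\'etale decomposition to produce the \'etale quotient. Instead I would lift it through the nilpotent thickenings $\bar{B}=B/\wp B\hookrightarrow B/\wp^m B$, where the \'etale quotient $\Ker(V_{d,\bar{E}^D}^n)$ lifts uniquely and the surjection onto it lifts uniquely by formal \'etaleness, and then pass to the inverse limit using $B=\varprojlim_m B/\wp^m B$. The delicate points inside this step are the flatness of the kernel $\cF_n$ at each stage, so that the sequence stays one of finite locally free group schemes (checked by the usual criterion that flatness lifts along $\wp$ once it holds modulo $\wp$), and the verification that $\cE_n^D$ indeed lands inside $E[\wp^n]$ as the subgroup dual to the quotient. \'Etaleness of the constructed $\cE_n$ over $B$, given its \'etaleness modulo $\wp$, is a $\wp$-adic Nakayama argument on the finitely generated module of differentials. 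Once these are in place, the remaining assertions are formal consequences of the functoriality and exactness of Taguchi duality.
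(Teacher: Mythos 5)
The lifting mechanism at the heart of your proposal---lift the \'etale quotient uniquely through the thickenings $B/\wp^mB$ using formal \'etaleness, pass to the limit over the $\wp$-adically complete base, take the kernel of the lifted quotient map, and deduce uniqueness from the rigidity of \'etale lifts---is exactly the paper's argument. But you run it on the dual side and then invoke Taguchi duality to come back, and that detour is where your proof has a genuine gap. Taguchi duality (Theorem \ref{ThmFVDual}) is a duality of finite \emph{$v$-modules}, so to dualize your sequence $0\to \cF_n\to E^D[\wp^n]\to \cE_n\to 0$ you must equip it with compatible $v$-structures over an \emph{arbitrary} $\wp$-adic ring $B$: the lemma is stated, and later used, for bases that are neither reduced nor flat over $\cO_K$ (e.g.\ the truncations $(\cO_K/(\wp^m))((x))$, and the base-change clause quantifies over all $\wp$-adic rings). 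The paper's tools for producing and comparing $v$-structures in such situations---Lemma \ref{SredV} (\ref{SredV-Red}) and Corollary \ref{DMFV}---require $B$ reduced (and flat), and the compatibility clause of Lemma \ref{SredV} (\ref{SredV-Et}) only governs morphisms \emph{out of} the \'etale object, whereas you need $v$-equivariance of the surjection $E^D[\wp^n]\to \cE_n$ \emph{into} it; since $\varphi_{E^D[\wp^n]}$ is not injective (its reduction has a connected part), this compatibility is not formal and would require a separate argument. It is telling that the paper itself only dualizes the canonical subgroup under a ``reduced flat'' hypothesis (Lemma \ref{DualEtale}), and proves the present lemma without any duality at all. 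Your uniqueness argument inherits the same problem, since it presumes every candidate subgroup carries a canonical $v$-structure making its Taguchi dual meaningful over such $B$.

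The good news is that the duality layer is also unnecessary: $\Ker(V^n_{d,\bar E})$ is already the \'etale quotient of $\bar E[\wp^n]$ \emph{itself}, via the exact sequence $0\to \Ker(F^n_d)\to \bar E[\wp^n]\to \Ker(V^n_d)\to 0$ recalled before the definition of ordinarity, so there is no need to pass to $\bar E^D$ to find an \'etale object to lift. Deleting the dual detour turns your own mechanism into the paper's proof: since $(B,\wp B)$ is a Henselian pair, finite \'etale schemes over $B$ and $\bar B$ are equivalent (Gabber), so $\Ker(V^n_{d,\bar E})$ lifts to a finite \'etale $A$-module scheme $\cH$ over $B$; the quotient map $\bar E[\wp^n]\to \overline{\cH}$ lifts to a finite locally free morphism $\pi\colon E[\wp^n]\to \cH$ by formal \'etaleness (the paper cites EGA IV 17.7.10), and one sets $\cC_n(E)=\Ker(\pi)$. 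Uniqueness is your rigidity argument, applied directly: two candidates give two \'etale quotients $E[\wp^n]/\cC_{n,i}$ lifting $\Ker(V^n_{d,\bar E})$, the isomorphism between them reducing to the identity commutes with the quotient maps by \'etaleness, hence the kernels coincide; base-change compatibility then follows from uniqueness, as you say. Your observation that $\cC_n(E)^D$ is the \'etale quotient of $E^D[\wp^n]$ is correct and worthwhile, but in the paper it is a \emph{consequence} proved later under reduced flat hypotheses (Lemma \ref{DualEtale}, via Proposition \ref{DualOrd}, Lemma \ref{DualFV} and Theorem \ref{DualDM} (\ref{DualDMIsog})), not an ingredient of the construction.
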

\begin{proof}
	First note that, since $(B,\wp B)$ is a Henselian pair, the functor $X\mapsto \bar{X}$ gives an equivalence between the categories of finite etale schemes over $B$ and those over $\bar{B}$ \cite[\S1]{Gabber}. 
	
	Let us show the existence. The finite etale $A$-module scheme $\bar{\cH}=\Ker(V_{d,\bar{E}}^n)$ can be lifted to a finite etale $A$-module scheme $\cH$ over $B$. By the etaleness and \cite[Proposition (17.7.10)]{EGA4-4}, we can lift the map $\bar{E}[\wp^n]\to \bar{\cH}$ to a finite locally free morphism of $A$-module schemes $\pi: E[\wp^n]\to \cH$ over $B$. 
	Then $\cC_n(E)=\Ker(\pi)$ is a lift of $\Ker(F_{d,\bar{E}}^n)$.
	
	For the uniqueness, suppose that we have two subgroup schemes $\cC_{n,1},\cC_{n,2}$ of $E[\wp^n]$ as in the lemma. Put $\cH_i=E[\wp^n]/\cC_{n,i}$. Since they are lifts of $\bar{\cH}$, there exists an isomorphism $\theta: \cH_1\to \cH_2$ over $B$ reducing to $\id_{\bar{\cH}}$ over $\bar{B}$. Then the etaleness implies that $\theta$ is compatible with the quotient maps $E[\wp^n]\to \cH_i$. Therefore, $\cC_{n,1}$ and $\cC_{n,2}$ agree as $A$-submodule schemes of $E[\wp^n]$. Since the formation of $\Ker(F_{d,\bar{E}}^n)$ commutes with any base change, the commutativity of $\cC_n(E)$ with any base change follows from its uniqueness.
	\end{proof}

We refer to the natural isogeny
\[
\pi_{E,n}: E\to E/\cC_n(E)
\]
as the canonical isogeny of level $n$ for $E$. We have $\pi_{E,n}\bmod \wp=F_d^n$. 

On the other hand, since $E[\wp^n]/\cC_n(E)$ is etale both over $\bar{B}$ and $B\otimes_{\cO_K} K$, it is etale over $B$ and we have a natural isomorphism 
\[
\omega_{E[\wp^n]} \to \omega_{\cC_n(E)}.
\] 
Moreover, the map 
\[
\rho_{E,n}: E/\cC_n(E)\to (E/\cC_n(E))/(E[\wp^n]/\cC_n(E))\overset{\wp^n}{\simeq} E
\]
is an etale isogeny satisfying 
\[
\rho_{E,n}\circ \pi_{E,n}=\Phi^E_{\wp^n},\quad \pi_{E,n}\circ \rho_{E,n}=\Phi^{E/\cC_n(E)}_{\wp^n}.
\]
In particular, we have $\rho_{E,n}\bmod \wp=V_d^n$. We refer to $\rho_{E,n}$ as the canonical etale isogeny of level $n$ for $E$. The formation of $\pi_{E,n}$ and $\rho_{E,n}$ also commutes with any base change of $\wp$-adic rings.

Suppose that the $\wp$-adic ring $B$ is reduced and flat. Then by Corollary \ref{DMFV} the quotient $E/\cC_n(E)$ has a natural structure of a Drinfeld module of rank two. Moreover, Lemma \ref{DMQFt} implies that $E[\wp^n]$, $\cC_n(E)$ and $E[\wp^n]/\cC_n(E)$ are finite $t$-modules, and by Lemma \ref{SredV} (\ref{SredV-Red}) they have unique structures of finite $v$-modules, which make the natural exact sequence
\begin{equation}\label{EqnExactConnEt}
\xymatrix{
0 \ar[r] & \cC_n(E) \ar[r] & E[\wp^n] \ar[r]& E[\wp^n]/\cC_n(E) \ar[r] & 0
}
\end{equation}
compatible with $v$-structures. We also see that the formation of the $v$-structure on $\cC_n(E)$ also commutes with any base change of reduced flat $\wp$-adic rings.

\begin{lem}\label{DualEtale}
Let $B$ be a reduced flat $\wp$-adic ring. Let $E$ be a Drinfeld module of rank two over $B$ with ordinary reduction. Then the Taguchi dual $\cC_n(E)^D$ of the canonical subgroup $\cC_n(E)$ is etale over $B$. Moreover, it is etale locally isomorphic as a finite $v$-module to the constant $A$-module scheme $\underline{A/(\wp^n)}$ over $B$.
\end{lem}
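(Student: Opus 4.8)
The plan is to realise $\cC_n(E)^D$ as the kernel of the dual of the canonical isogeny, and then to transport the ordinariness of $\bar{E}^D$ across $\wp$ by a Nakayama argument. Since $B$ is reduced and flat, Corollary \ref{DMFV} shows that $E/\cC_n(E)$ is a Drinfeld module of rank two and that $\pi_{E,n}\colon E\to E/\cC_n(E)$ is an isogeny of rank-two Drinfeld modules with kernel $\cC_n(E)$. Applying Theorem \ref{DualDM} (\ref{DualDMIsog}) I obtain a natural isomorphism of finite $v$-modules $\cC_n(E)^D\simeq \Ker(\pi_{E,n}^D)$, where $\pi_{E,n}^D\colon (E/\cC_n(E))^D\to E^D$ is again an isogeny of rank-two Drinfeld modules.

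Next I would compute the special fibre. The formation of the Drinfeld dual, and of the kernel $\Ker(\pi_{E,n}^D)$, commute with base change by Theorem \ref{DualDM} (\ref{DualDMBC}) and Theorem \ref{ThmFVDual}, so $\overline{\cC_n(E)^D}\simeq \Ker(\overline{\pi_{E,n}^D})$. Since $\pi_{E,n}\bmod\wp=F_{d,\bar{E}}^n$, the compatibility of duality with base change together with Lemma \ref{DualFV} --- extended from $F_d,V_d$ to their iterates by the functoriality of Theorem \ref{DualDM} (\ref{DualDMLin}) and the commutation with Frobenius twists --- gives $\overline{\pi_{E,n}^D}=V_{d,\bar{E}^D}^n$, whence $\overline{\cC_n(E)^D}\simeq \Ker(V_{d,\bar{E}^D}^n)$. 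By Proposition \ref{DualOrd} the Drinfeld module $\bar{E}^D$ is ordinary, so this kernel is \'etale over $\bar{B}$. To descend \'etaleness to $B$, I argue on invariant differentials: as a finite $v$-module, $\cC_n(E)^D$ is finite locally free over $B$, so $\omega_{\cC_n(E)^D}$ is a finitely generated $\cO_B$-module whose formation commutes with base change, and the previous step yields $\omega_{\cC_n(E)^D}\otimes_B\bar{B}=0$. Because $B$ is $\wp$-adically complete, $\wp$ lies in the Jacobson radical of $B$, so Nakayama's lemma forces $\omega_{\cC_n(E)^D}=0$; a finite locally free group scheme with vanishing sheaf of invariant differentials is \'etale.

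For the isomorphism type, note that since $\cC_n(E)^D$ is \'etale it carries a unique $v$-structure by Lemma \ref{SredV} (\ref{SredV-Et}), and any morphism of \'etale $t$-modules is automatically compatible with $v$-structures; hence it suffices to produce, \'etale locally, an isomorphism of $A$-module schemes $\cC_n(E)^D\simeq \underline{A/(\wp^n)}$. As $(B,\wp B)$ is a Henselian pair, the functor $X\mapsto\bar{X}$ is an equivalence on finite \'etale $A$-module schemes, so this may be checked on $\overline{\cC_n(E)^D}\simeq \Ker(V_{d,\bar{E}^D}^n)$. Passing to a separably closed fibre, $\Ker(V_{d,\bar{E}^D}^n)$ is an \'etale $A/(\wp^n)$-module scheme of order $q^{dn}$, and its $\wp$-torsion is the \'etale part of $\bar{E}^D[\wp]$, namely $\Ker(V_{d,\bar{E}^D})$, which has order $q^d=|A/(\wp)|$. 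Thus the associated finite $A$-module has one-dimensional socle over $\bF_{q^d}$, so it is cyclic; comparing orders identifies it with $A/(\wp^n)$.

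I expect the main obstacles to be the two bookkeeping points in the middle. First, upgrading Lemma \ref{DualFV} to the iterates $F_d^n,V_d^n$ in order to identify $\overline{\pi_{E,n}^D}$ with $V_{d,\bar{E}^D}^n$ requires care with the Frobenius twists that occur in the recursive definitions of $F_d^n$ and $V_d^n$. Second, the cyclicity of $\Ker(V_{d,\bar{E}^D}^n)$ is precisely the assertion that the \'etale part of the $\wp$-divisible module of the ordinary rank-two Drinfeld module $\bar{E}^D$ is free of rank one over $\cO_K=\varprojlim_n A/(\wp^n)$; the socle computation above is the concrete way I would extract this, using only that $\Ker(V_{d,\bar{E}^D})$ has order $q^d$.
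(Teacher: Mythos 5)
Your proof is correct, but it takes a genuinely different route from the paper. You dualize the canonical isogeny $\pi_{E,n}$ of $E$ itself, identify $\cC_n(E)^D\simeq\Ker(\pi_{E,n}^D)$ via Theorem \ref{DualDM} (\ref{DualDMIsog}), compute the reduction as $\Ker(V_{d,\bar{E}^D}^n)$ using the iterated form of Lemma \ref{DualFV} (the twist bookkeeping you flag does go through, since the Frobenius twist is a base change along $F_S$ and duality commutes with base change), lift \'etaleness from $\bar{B}$ to $B$ by Nakayama on $\omega_{\cC_n(E)^D}$, and pin down the isomorphism type by a socle count on geometric fibres. The paper instead works on the dual side over $B$ from the start: since $E^D$ has ordinary reduction (the same Proposition \ref{DualOrd} you use), it has its own canonical subgroup, and applying the exactness of Taguchi duality to the connected--\'etale sequence of $E^D[\wp^n]$, together with biduality $E[\wp^n]\simeq E^D[\wp^n]^D$ and the uniqueness of canonical subgroups, identifies $\cC_n(E)^D$ with the \'etale quotient $E^D[\wp^n]/\cC_n(E^D)$ outright --- the reduction being killed by $F_d^n$ is checked via Lemma \ref{ConstDual} and the Carlitz congruence (\ref{CarlitzModP}). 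The trade-offs: the paper's argument needs no Nakayama step and no fibrewise module theory, because \'etaleness is inherited from a quotient that is \'etale over $B$ itself, and the cyclicity comes for free from the ``not killed by $\wp^{n-1}$'' claim; your argument avoids biduality and Lemma \ref{ConstDual} entirely, at the cost of re-proving cyclicity by hand --- your socle computation is in effect a proof that the \'etale part of the $\wp$-divisible module of an ordinary rank-two Drinfeld module is cyclic, which the paper sidesteps. Two small cosmetic points: in the fibre computation the $\wp$-torsion should be taken inside the twist $(\bar{E}^D)^{(q^{dn})}[\wp]$ rather than $\bar{E}^D[\wp]$ (harmless, as the twist of an ordinary module is ordinary with \'etale $\wp$-torsion of order $q^d$ on geometric points), and when you invoke the uniqueness of $v$-structures at the end, note that $\cC_n(E)^D$ is a finite $t$-module trivially since $\omega_{\cC_n(E)^D}=0$, so Lemma \ref{SredV} (\ref{SredV-Et}) indeed applies.
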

\begin{proof}
By Proposition \ref{DualOrd}, the dual $E^D$ also has ordinary reduction. 
We claim that $E^D[\wp^n]/\cC_n(E^D)$ is not killed by $\wp^{n-1}$. Indeed, if it is killed by $\wp^{n-1}$, then we have $E^D[\wp]\subseteq \cC_n(E^D)$, which contradicts the fact that $\bar{E}^D[\wp]$ has an etale quotient. Since $E^D[\wp^n]/\cC_n(E^D)$ is etale, the claim implies that it is etale locally isomorphic to $\underline{A/(\wp^n)}$. Note that this identification is compatible with $v$-structures by Lemma \ref{SredV} (\ref{SredV-Et}).

Since Taguchi duality is exact, the exact sequence (\ref{EqnExactConnEt}) for $E^D$ yields an exact sequence of finite $v$-modules over $B$
\[
\xymatrix{
0 \ar[r] & (E^D[\wp^n]/\cC_n(E^D))^D \ar[r] & E^D[\wp^n]^D \ar[r]& \cC_n(E^D)^D \ar[r] & 0.
}
\]
By Theorem \ref{DualDM} (\ref{DualDMIsog}), we also have a natural isomorphism of $A$-module schemes $E[\wp^n]\simeq E^D[\wp^n]^D$, by which we identify both sides. Hence we reduce ourselves to showing the equality
\[
\cC_n(E)=(E^D[\wp^n]/\cC_n(E^D))^D.
\]
For this, by the uniqueness of the canonical subgroup it is enough to show that the reduction of $(E^D[\wp^n]/\cC_n(E^D))^D$ is killed by $F_d^n$. Since it can be checked after passing to a finite etale cover of $\Spec(B)$, we reduce ourselves to showing that the Taguchi dual $(\underline{A/(\wp^n)})^D$ of the constant $A$-module scheme $\underline{A/(\wp^n)}$ over $\bar{B}$ is killed by $F_d^n$. This follows from Lemma \ref{ConstDual} and (\ref{CarlitzModP}).
\end{proof}



\subsection{Hodge-Tate-Taguchi maps}

For any positive integer $n$, any $A$-algebra $B$ and any scheme $X$ over $A$, we put $B_n=B/(\wp^n)$ and $X_n=X\times_A \Spec(A_n)$. We identify a quasi-coherent module on the big fppf site of $X$ with a quasi-coherent $\cO_X$-module by descent.

Let $S$ be a scheme over $A$ and $\cG$ a finite $v$-module over $S$. For any scheme $T$ over $S$, Taguchi duality gives a natural homomorphism of $A$-modules
\begin{align*}
\cG^D(T)\simeq \Hom_{v,T}(\cG|_T, C|_T)&\to \omega_{\cG|_T}(T)\\
(g:\cG|_T\to C|_T)&\mapsto g^*(dZ),
\end{align*}
which defines a natural homomorphism of big fppf sheaves of $A$-modules over $S$
\[
\HTT_{\cG}: \cG^D\to \omega_{\cG}.
\]
We refer to it as the Hodge-Tate-Taguchi map for the finite $v$-module $\cG$ over $S$, and also denote it by $\HTT$ if no confusion may occur. The formation of the Hodge-Tate-Taguchi map commutes with any base change.

Suppose that the $A$-module scheme $\cG$ is killed by $\wp^n$. Then the Hodge-Tate-Taguchi map defines a natural $A$-linear homomorphism of big fppf sheaves on $S_n$
\[
\HTT: \cG^D|_{S_n}\otimes_{\underline{A_n}}\cO_{S_n}\to \omega_{\cG_n}.
\]
Note that, if $\cG^D$ is etale locally isomorphic to the constant $A$-module scheme $\underline{A_n}$ over $S$, then the $\cO_{S_n}$-module $\cG^D|_{S_n}\otimes_{\underline{A_n}}\cO_{S_n}$ is invertible. By Lemma \ref{DualEtale}, this is the case if $\cG=\cC_n(E)$ for any Drinfeld module $E$ of rank two over a reduced flat $\wp$-adic ring $B$ with ordinary reduction.

\begin{lem}\label{HTTCarlitz}
Let $S$ be any scheme over $A$. We give the finite $t$-module $C[\wp^n]$ the $v$-structure induced from that of $C$. Then the Hodge-Tate-Taguchi map for $C[\wp^n]$
\[
\HTT: \underline{A_n}\otimes_{\underline{A_n}} \cO_{S_n} \simeq C[\wp^n]^D|_{S_n}\otimes_{\underline{A_n}} \cO_{S_n}\to \omega_{C[\wp^n]_n}=\cO_{S_n}d Z
\]
is an isomorphism satisfying $\HTT(1)=d Z$.
\end{lem}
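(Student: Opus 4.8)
The plan is to identify both sides as free $\cO_{S_n}$-modules of rank one and then check that $\HTT$ carries a chosen generator of the source to a generator of the target. Since $C[\wp^n]^D\simeq\underline{A_n}$ is the constant $A_n$-module scheme, the source $\underline{A_n}\otimes_{\underline{A_n}}\cO_{S_n}$ is literally $\cO_{S_n}$, free of rank one on $1\otimes 1$. By Lemma \ref{ConstDual} this isomorphism sends $1$ to the natural closed immersion $\iota:C[\wp^n]\to C$, regarded as a morphism of $v$-modules; hence by the very definition of the Hodge-Tate-Taguchi map we have $\HTT(1)=\iota^*(dZ)$. The whole proof is therefore an unwinding of this identification together with a computation of $\omega_{C[\wp^n]_n}$.

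First I would compute $\iota^*(dZ)$. Writing $C=\Spec_S(\cO_S[Z])$ and $C[\wp^n]=\Spec_S(\cO_S[Z]/(\Phi^C_{\wp^n}(Z)))$, the immersion $\iota$ is the quotient map $\cO_S[Z]\to \cO_S[Z]/(\Phi^C_{\wp^n}(Z))$ sending $Z$ to its class. Thus the induced map $\iota^*:\omega_C=\cO_S\,dZ\to\omega_{C[\wp^n]}$ on invariant differentials sends $dZ$ to $dZ$, so that $\HTT(1)=dZ$, which is exactly the asserted value. After reduction to $S_n$ this identity persists, giving $\HTT(1)=dZ$ in $\omega_{C[\wp^n]_n}$.

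It then remains to verify that $\omega_{C[\wp^n]_n}=\cO_{S_n}\,dZ$ is free of rank one, for once this is known the map sending the generator $1\otimes 1$ to the generator $dZ$ is automatically an isomorphism. Since $C[\wp^n]$ is cut out in $C$ by the additive polynomial $\Phi^C_{\wp^n}(Z)$, its module of invariant differentials is $\cO_S\,dZ$ modulo the linear coefficient of $\Phi^C_{\wp^n}(Z)$, and this linear coefficient is the image of $\wp^n$ under the structure map $A\to\cO_S$. Over $S_n$ that image vanishes because $\wp^n\equiv 0\bmod\wp^n$, so $\omega_{C[\wp^n]_n}=\cO_{S_n}\,dZ$ is free of rank one, and the claim follows. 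The only point that requires genuine care is the bookkeeping of the identification of Lemma \ref{ConstDual} together with the vanishing of this linear coefficient modulo $\wp^n$; the rest is a direct reading of the definitions.
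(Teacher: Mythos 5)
Your proof is correct and follows essentially the same route as the paper: identify $1\in\underline{A_n}$ with the closed immersion $\iota:C[\wp^n]\to C$ via Lemma \ref{ConstDual}, read off $\HTT(1)=\iota^*(dZ)=dZ$ from the definition of the Hodge-Tate-Taguchi map, and conclude since both sides are generated by these elements. The only difference is that you explicitly verify $\omega_{C[\wp^n]_n}=\cO_{S_n}\,dZ$ by noting that the linear coefficient of the additive polynomial $\Phi^C_{\wp^n}(Z)$ is the image of $\wp^n$, which vanishes over $S_n$ — a detail the paper leaves implicit in the statement, and a harmless (indeed welcome) addition.
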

\begin{proof}
Let $\iota:C[\wp^n]\to C$ be the natural closed immersion, as in the proof of Lemma \ref{ConstDual}. The definition of the Hodge-Tate-Taguchi map gives $\HTT(1)=\iota^*(d Z)$, which yields the lemma.
\end{proof}

\begin{prop}\label{HTTIsom}
Let $B$ be a reduced flat $\wp$-adic ring. Let $E$ be a Drinfeld module of rank two over $B$ with ordinary reduction. Then the Hodge-Tate-Taguchi map
\[
\HTT: \cC_n(E)^D|_{B_n}\otimes_{\underline{A_n}}\cO_{\Spec(B_n)} \to \omega_{\cC_n(E)}\otimes_B B_n=\omega_{E}\otimes_B B_n
\]
is an isomorphism of invertible sheaves over $B_n$.
\end{prop}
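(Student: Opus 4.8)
The plan is to reduce the general ordinary-reduction case to the already-established Carlitz case (Lemma \ref{HTTCarlitz}) by exploiting the canonical etale isogeny and the compatibility of the Hodge-Tate-Taguchi map with the duality formalism developed above. The key structural fact to leverage is that $\cC_n(E)^D$ is etale locally isomorphic to $\underline{A_n}$ (Lemma \ref{DualEtale}), so both source and target of $\HTT$ are invertible $\cO_{B_n}$-modules; thus checking that $\HTT$ is an isomorphism is checking that a map of invertible sheaves is an isomorphism, which may be done etale locally and, by Nakayama, fiber by fiber over the reduction $\bar{B}=B/\wp B$.

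First I would reduce modulo $\wp$. Since both sides are invertible over $B_n$ and the formation of $\HTT$ commutes with base change, it suffices to prove the statement after reducing to $B_1=\bar{B}$; an isomorphism of invertible sheaves that is an isomorphism after tensoring down to $\bar{B}$ is an isomorphism over $B_n$ by Nakayama's lemma (the target is a line bundle, the source is a line bundle, and surjectivity mod $\wp$ of a map of line bundles of the same rank forces surjectivity, hence bijectivity). So I would work over $\bar{S}=\Spec(\bar{B})$, where $\overline{\cC_n(E)}=\Ker(F_{d,\bar{E}}^n)$. The task becomes: show that
\[
\HTT:\overline{\cC_n(E)}^D\otimes_{\underline{A_n}}\cO_{\bar{S}}\to \omega_{\Ker(F_d^n)}
\]
is an isomorphism.

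Next I would identify $\overline{\cC_n(E)}^D$ concretely. By Lemma \ref{DualEtale} it is etale locally $\underline{A_n}$, and its generator corresponds to a specific $v$-homomorphism $\Ker(F_d^n)\to C$. The cleanest route is to compare with the Carlitz situation via the canonical isogeny $\pi_{E,n}$, whose reduction is $F_d^n$. Dualizing the defining exact sequence (\ref{EqnExactConnEt}) and using the identification $\cC_n(E)=(E^D[\wp^n]/\cC_n(E^D))^D$ established inside the proof of Lemma \ref{DualEtale}, together with Lemma \ref{ConstDual} which pins down $C[\wp^n]^D\simeq \underline{A/(\wp^n)}$, I can transport the Carlitz computation of Lemma \ref{HTTCarlitz}. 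Concretely, a local generator of $\cC_n(E)^D$ is a map $g:\cC_n(E)\to C$, and $\HTT(g)=g^*(dZ)\in\omega_{\cC_n(E)}$; the isomorphism $\omega_{\cC_n(E)}\simeq \omega_E$ comes from the etaleness of $E[\wp^n]/\cC_n(E)$, already noted before the statement. I would then check that $g^*(dZ)$ is a generator of $\omega_E\otimes B_n$ by reducing mod $\wp$ and observing that the composite $C[\wp^n]\to\cC_n(E)^D$-pairing recovers the Carlitz Hodge-Tate-Taguchi map, which generates $\omega_{C[\wp^n]}$ by Lemma \ref{HTTCarlitz}, and that the nondegeneracy is preserved because $\Ker(F_d^n)$ is connected with one-dimensional co-Lie in each step.

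\textbf{The main obstacle} I anticipate is verifying nondegeneracy of the pairing at the level of invariant differentials over $\bar{B}$, i.e.\ that the generator $g^*(dZ)$ really spans $\omega_E\otimes\bar{B}$ rather than vanishing. The subtlety is that $\Ker(F_d^n)$ is an infinitesimal (connected) group scheme, so one cannot argue by etaleness there; instead one must show that the distinguished $v$-homomorphism to the Carlitz module pulls $dZ$ back to a nowhere-vanishing differential. I expect this to follow from tracking the generator through the exact sequences for $\Ker(F_d^n)$ and using that the relevant leading coefficient (the section $\alpha_d$ governing ordinariness, nowhere vanishing by hypothesis) controls the top differential, so that the pullback of $dZ$ is, up to a unit, the standard generator of $\omega_E$. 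Making this precise—ideally by reducing to the $n=1$ case via the filtration and dévissage on the exact sequences relating $\Ker(F_d^n)$, $\Ker(F_d)$, and $\Ker(F_d^{n-1})^{(q^d)}$, then invoking the Carlitz model—will be the technical heart of the argument.
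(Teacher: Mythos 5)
You have all the right ingredients, and your étale-local reduction is sound, but the proposal stalls exactly where you flag it: the ``technical heart'' --- showing that the pullback $g^*(dZ)$ generates $\omega_E\otimes \bar{B}$ --- is left open, and the route you sketch for closing it (a d\'evissage on the filtration of $\Ker(F_d^n)$ over $\bar{B}$ tracking the coefficient $\alpha_d$) is never carried out, and is in any case the wrong tool. The missing idea is biduality. By Lemma \ref{DualEtale} one may pass to a finite étale cover $\Spec(B')$ of $\Spec(B)$ on which $\cC_n(E)^D$ is the constant scheme $\underline{A_n}$; applying Taguchi duality once more, Theorem \ref{ThmFVDual} (4) together with Lemma \ref{ConstDual} yields an isomorphism of \emph{finite $v$-modules}
\[
\cC_n(E)|_{B'}\simeq \bigl((\cC_n(E)|_{B'})^D\bigr)^D\simeq \underline{A_n}^D\simeq C[\wp^n],
\]
and since $\HTT$ is defined on $v$-homomorphisms into $C$ and its formation commutes with base change, under this identification the map $\HTT$ for $\cC_n(E)|_{B'}$ \emph{is} the Carlitz map of Lemma \ref{HTTCarlitz}, hence an isomorphism over $B'_n$; faithfully flat descent along $B_n\to B'_n$ then finishes the proof. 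This is the paper's entire argument.

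Once biduality is invoked, the nondegeneracy you worry about is automatic: the generator of $\cC_n(E)^D|_{B'}$ corresponds to the closed immersion $\iota:C[\wp^n]\to C$, and $\iota^*(dZ)=dZ$ visibly generates the module of invariant differentials --- no computation with $\alpha_d$, no analysis of the connected group scheme $\Ker(F_d^n)$, and no reduction modulo $\wp$ is needed (your Nakayama step is harmless but superfluous, since Lemma \ref{HTTCarlitz} already gives the isomorphism over $B'_n$ for all $n$). You in fact brush against this mechanism when you cite the identification $\cC_n(E)=(E^D[\wp^n]/\cC_n(E^D))^D$ and Lemma \ref{ConstDual}, but you never convert it into the $v$-module isomorphism $\cC_n(E)|_{B'}\simeq C[\wp^n]$; note that for transporting Lemma \ref{HTTCarlitz} it is essential that this identification respect $v$-structures (which Theorem \ref{ThmFVDual} (4) and Lemma \ref{ConstDual} guarantee), since the Taguchi dual, and hence $\HTT$, only sees $v$-compatible homomorphisms. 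As written, your proof is incomplete at the decisive step; your fallback computation over $\bar{B}$ might be made to work for $n=1$ and propagated by d\'evissage, but that is genuine extra work the proposal does not do.
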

\begin{proof}
It is enough to show that $\HTT$ is an isomorphism after passing to a finite etale cover $\Spec(B')$ of $\Spec(B)$. We may assume that the $A$-module scheme $\cC_n(E)^D|_{B'}=(\cC_n(E)|_{B'})^D$ over $B'$ is constant. In this case, the proposition follows from Lemma \ref{HTTCarlitz}.
\end{proof}



\section{Drinfeld modular curves and Tate-Drinfeld modules}\label{Sec_DMC}

\subsection{Drinfeld modular curves}

Let $\frn$ be a non-constant monic polynomial in $A=\bF_q[t]$ which is prime to $\wp$. Put $A_\frn=A[1/\frn]$. For any Drinfeld module $E$ of rank two over an $A$-scheme $S$ and a non-constant monic polynomial $\frem\in A$, a $\Gamma(\frem)$-structure on $E$ is an $A$-linear homomorphism $\alpha:(A/(\frem))^2\to E(S)$ inducing the equality of effective Cartier divisors of $E$
\[
\sum_{a\in (A/(\frem))^2}[\alpha(a)]=E[\frem].
\]
If $\frem$ is invertible in $S$, then it is the same as an isomorphism of $A$-module schemes $\alpha: \underline{(A/(\frem))^2}\to E[\frem]$ over $S$. If $\frem$ has at least two different prime factors, then the functor over $A$ sending $S$ to the set of isomorphism classes of such pairs $(E,\alpha)$ over $S$ is represented by a regular affine scheme $Y(\frem)$ of dimension two which is flat and of finite type over $A$. Over $A[1/\frem]$, this functor is always representable by an affine scheme $Y(\frem)$ which is smooth of relative dimension one over $A[1/\frem]$. The natural left action of $\mathit{GL}_2(A/(\frem))$ on $(A/(\frem))^2$ induces a right action of this group on $Y(\frem)$.

For any Drinfeld module $E$ of rank two over an $A_\frn$-scheme $S$,
we define a $\Gamma_1(\frn)$-structure on $E$ as a closed immersion of $A$-module schemes $\lambda: C[\frn]\to E$ over $S$. Since $C[\frn]$ is etale over $S$, we see that over a finite etale cover of $A_\frn$ a $\Gamma_1(\frn)$-structure on $E$ is identified with a closed immersion of $A$-module schemes $\underline{A/(\frn)}\to E$. Then \cite[Proposition 4.2 (2)]{Flicker} implies that $E$ has no non-trivial automorphism fixing $\lambda$. Note that the quotient $E[\frn]/\Img(\lambda)$ is a finite etale $A$-module scheme over $S$ which is etale locally isomorphic to $\underline{A/(\frn)}$, and thus the functor 
\[
\sIsom_{A,S}(\underline{A/(\frn)},E[\frn]/\Img(\lambda))
\]
is represented by a finite etale $(A/(\frn))^\times$-torsor $I_{(E,\lambda)}$ over $S$.

Consider the functor over $A_\frn$ sending an $A_\frn$-scheme $S$ to the set of isomorphism classes $[(E,\lambda)]$ of pairs $(E,\lambda)$ consisting of a Drinfeld module $E$ of rank two over $S$ and a $\Gamma_1(\frn)$-structure $\lambda$ on $E$. Then we can show that this functor is representable by an affine scheme $Y_1(\frn)$ which is smooth over $A_\frn$ of relative dimension one. 

Suppose that there exists a prime factor $\frq$ of $\frn$ such that its residue extension $k(\frq)/\bF_q$ is of degree 
prime to $q-1$. In this case, the inclusion $\bF_q^\times\to k(\frq)^\times$ splits and we can choose a subgroup 
$\Delta\subseteq (A/(\frn))^\times$ such that the natural map $\Delta\to (A/(\frn))^\times/\bF_q^\times$ is an 
isomorphism. For such $\Delta$, we define a $\Gamma_1^\Delta(\frn)$-structure on $E$ as a pair 
$(\lambda,[\mu])$ of a $\Gamma_1(\frn)$-structure $\lambda$ on $E$ and an element $[\mu]\in 
(I_{(E,\lambda)}/\Delta)(S)$. We have a fine moduli scheme $Y_1^\Delta(\frn)$ of the isomorphism classes of triples 
$(E,\lambda,[\mu])$, which is finite etale over $Y_1(\frn)$. The universal Drinfeld module over $Y_1^\Delta(\frn)$ is 
denoted by $E_\univ^\Delta=\bV_*(\cL_\univ^\Delta)$ and put
\[
\omega^\Delta_\univ:=\omega_{E^\Delta_\univ}=(\cL^\Delta_\univ)^\vee.
\]

For any Drinfeld module $E$ over an $A_\frn$-scheme $S$, a $\Gamma_0(\wp)$-structure on $E$ is a finite locally free closed $A$-submodule scheme $\cG$ of $E[\wp]$ of rank $q^d$ over $S$. Then we have a fine moduli scheme $Y_1^\Delta(\frn,\wp)$ classifying tuples $(E,\lambda,[\mu],\cG)$ consisting of a Drinfeld module $E$ of rank two over an $A_\frn$-scheme $S$, a $\Gamma_1^\Delta(\frn)$-structure $(\lambda,[\mu])$ and a $\Gamma_0(\wp)$-structure $\cG$ on $E$. From the theory of Hilbert schemes, we see that the natural map $Y_1^\Delta(\frn,\wp)\to Y_1^\Delta(\frn)$ is finite, and it is also etale over $A_\frn[1/\wp]$. For any $A_\frn$-algebra $R$, we write as $Y_1^\Delta(\frn)_R=Y_1^\Delta(\frn)\times_{A_\frn}\Spec(R)$ and similarly for other Drinfeld modular curves.

\begin{lem}\label{NormalNearInfty}
	$Y_1^\Delta(\frn,\wp)$ is smooth over $A_\frn$ outside finitely many supersingular points on the fiber over $(\wp)$.
	\end{lem}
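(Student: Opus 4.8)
The plan is to reduce to the fibre over $(\wp)$ and there separate the two kinds of $\Gamma_0(\wp)$-structure carried by an ordinary Drinfeld module, using the canonical subgroup of Lemma~\ref{ExistCanSub} on one branch and an Atkin--Lehner symmetry to reach the other. First I would dispose of the locus where $\wp$ is invertible: since $Y_1^\Delta(\frn,\wp)\to Y_1^\Delta(\frn)$ is finite and étale over $A_\frn[1/\wp]$ while $Y_1^\Delta(\frn)$ is smooth over $A_\frn$, the source is smooth over $A_\frn$ there. It then remains to treat points of the fibre over $(\wp)$. The supersingular points are those at which the top coefficient $\alpha_d$ of (\ref{EqnWpAlpha}) for the universal Drinfeld module vanishes; as $\alpha_d$ is a section of $(\omega^\Delta_\univ)^{\otimes(q^d-1)}$ over the smooth affine curve $Y_1^\Delta(\frn)_{k(\wp)}$ and does not vanish identically (ordinary Drinfeld modules exist), its zero locus is finite, and so is its preimage in $Y_1^\Delta(\frn,\wp)$. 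Hence it suffices to prove smoothness over $A_\frn$ at each point $x$ of the fibre over $(\wp)$ lying above an \emph{ordinary} Drinfeld module. Because $Y_1^\Delta(\frn,\wp)$ is a fine moduli scheme of finite type over $A_\frn$, and the completion of $A_\frn$ at $(\wp)$ is $\okey$, this amounts to showing that the deformation functor of the tuple classified by $x$ over Artinian local $\okey$-algebras is smooth of relative dimension one. I would use throughout that any such algebra $R$ is a $\wp$-adic ring in which $\wp$ is nilpotent and that the tautological Drinfeld module over it has ordinary reduction, so that Lemma~\ref{ExistCanSub} applies over $R$.

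At the point $x$ the $\Gamma_0(\wp)$-structure $\bar\cG\subseteq\bar E[\wp]$ has rank $q^d$ and, as $\bar E$ is ordinary, is either connected, equal to $\Ker(F_{d,\bar E})$, or étale; I treat the two cases separately. In the connected case, for every deformation $E$ of $\bar E$ over such an $R$ Lemma~\ref{ExistCanSub} produces a \emph{unique} closed $A$-submodule scheme $\cC_1(E)\subseteq E[\wp]$ reducing to $\Ker(F_{d,\bar E})$; consequently any $\cG$ deforming $\bar\cG$ must equal $\cC_1(E)$, and the rule $E\mapsto(E,\cC_1(E))$ exhibits the forgetful functor $(E,\cG)\mapsto E$ as an isomorphism from the deformation functor of $x$ onto that of its image in $Y_1^\Delta(\frn)$ (the étale $\Gamma_1^\Delta(\frn)$-structure deforms uniquely and plays no role). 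Since $Y_1^\Delta(\frn)$ is smooth over $A_\frn$ of relative dimension one, the induced map on complete local rings is an isomorphism, and $Y_1^\Delta(\frn,\wp)$ is smooth over $A_\frn$ at $x$.

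For the étale case I would pass to the quotient. Any deformation $\cG$ of an étale $\bar\cG$ over Artinian local $R$ is again étale (étaleness being read off the closed fibre), so by Lemma~\ref{DMQFF}~(\ref{DMQFF-Dr}) the quotient $E'=E/\cG$ is a Drinfeld module of rank two and $\cG'=E[\wp]/\cG$ is a $\Gamma_0(\wp)$-structure on it. The assignment $(E,\lambda,[\mu],\cG)\mapsto(E',\lambda',[\mu]',\cG')$, with $\lambda',[\mu]'$ transported along the isogeny $E\to E'$ (an isomorphism on $\frn$-torsion because $\wp$ is prime to $\frn$), is an involution of deformation functors. In characteristic $\wp$ it sends the étale $\bar\cG$ to $\bar\cG'=\bar E[\wp]/\bar\cG$, which is connected of rank $q^d$ and hence equals $\Ker(F_{d,\bar E'})$; thus it carries the deformation functor of the étale point $x$ isomorphically onto that of a connected point, already shown to be smooth. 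Smoothness of $Y_1^\Delta(\frn,\wp)$ over $A_\frn$ at $x$ follows. Combining the three regimes yields smoothness over $A_\frn$ away from the finite supersingular set.

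The step I expect to be the crux is the connected case: everything hinges on applying the existence-and-uniqueness of the canonical subgroup (Lemma~\ref{ExistCanSub}) over the infinitesimal, non-reduced Artinian bases that arise in deformation theory, so that $\cG$ is rigidified by $E$ and the forgetful map becomes formally étale. This is exactly what neutralises the \emph{a priori} inseparable (Frobenius-type) behaviour of the forgetful map along the étale branch, which would otherwise obstruct smoothness of the total space. A subsidiary point demanding care is the verification that the quotient construction defines an honest involution of the relevant deformation functors---in particular that $\cG$ remains étale under deformation, so that Lemma~\ref{DMQFF}~(\ref{DMQFF-Dr}) is applicable, and that the transported level data again form a valid $\Gamma_1^\Delta(\frn)$-structure.
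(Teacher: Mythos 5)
Your three-step skeleton (generic fibre by \'etaleness, finiteness of the supersingular locus via the section $\alpha_d$ of $(\omega_\univ^\Delta)^{\otimes q^d-1}$, and the connected/\'etale dichotomy at ordinary points) is sound, but your treatment of the ordinary points is genuinely different from the paper's. The paper checks the infinitesimal criterion only along square-zero extensions $B\to B/J$ with $B$ Artinian local \emph{of characteristic $\wp$}: it lifts the coefficients of $\Phi^E_t$ arbitrarily, takes $\Ker(F_{d,\hat{E}})$ in the connected case, and in the \'etale case uses the characteristic-$\wp$ identification of $(E,\cG)$ with $((E/\cG)^{(q^d)},\Ker(V_{d,E/\cG}))$, so that any coefficient lift $\hat{F}$ of $E/\cG$ yields the lift $(\hat{F}^{(q^d)},\Ker(V_{d,\hat{F}}))$; canonical subgroups never enter. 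Your route through deformation functors over arbitrary Artinian local $\okey$-algebras, with Lemma \ref{ExistCanSub} making the forgetful map to $Y_1^\Delta(\frn)$ formally \'etale on the connected branch, is heavier but addresses the mixed-characteristic liftings (hence flatness at the point) directly rather than implicitly. One point you must add there: the uniqueness in Lemma \ref{ExistCanSub} is among subgroups whose reduction over \emph{all} of $R/\wp R$ is $\Ker(F_d)$, while your $\cG$ is only given to deform $\bar{\cG}$ at the closed point; you need to note that $\cG\bmod\wp$ is connected (its underlying space is that of the closed fibre), that a connected finite group scheme over an Artinian local ring maps trivially to the \'etale quotient of $(E\bmod\wp)[\wp]$, and hence $\cG\bmod\wp=\Ker(F_d)$ by comparing ranks. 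This is routine.

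The genuine gap is in the \'etale case, in the assertion that $(E,\lambda,[\mu],\cG)\mapsto(E/\cG,\ldots,E[\wp]/\cG)$ is an \emph{involution} of deformation functors. (That its square is the diamond automorphism $\langle\wp\rangle_\frn$ rather than the identity is harmless.) The direction you need first, \'etale to connected, is fine, since $\cG$ stays \'etale and Lemma \ref{DMQFF} (\ref{DMQFF-Dr}) applies. But to see that this map is an \emph{isomorphism} you must reconstruct $E$ from $(E',\cG')$, and the only reconstruction is $E\simeq E'/\cG'$ with $\cG'=\cC_1(E')$ \emph{connected}: the \'etale piece $E'[\wp]/\cG'$ is a quotient, not a subgroup, of $E'$, so there is no quotient-by-an-\'etale-subgroup shortcut. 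Forming $E'/\cG'$ as a group scheme is covered by Lemma \ref{DMQFF} (\ref{DMQFF-Phi}), but nothing you cite guarantees it is a Drinfeld module in \emph{standard form}: Lemma \ref{DMQFF} (\ref{DMQFF-Dr}) assumes the kernel \'etale or the base reduced, neither of which holds over your Artinian test rings, and in its proof the relation $p_1(b_0-\theta)=0$ is vacuous there because $p_1\equiv 0\bmod \wp$ is nilpotent. The gap is repairable within the paper's toolkit: with $\pi:E'\to Q=E'/\cC_1(E')$ and the complementary isogeny $\rho$ of \S\ref{SubsecCanSub} satisfying $\rho\circ\pi=\Phi^{E'}_\wp$ and $\pi\circ\rho=\Phi^{Q}_\wp$, applying $\Lie$ (a homomorphism to the commutative ring $R$) gives $\wp(b_0)=\Lie(\pi)\Lie(\rho)=\Lie(\rho)\Lie(\pi)=\wp(\theta)$; since $Q\bmod\wp$ is the $q^d$-th Frobenius twist one has $b_0-\theta\in\wp R$, nilpotent, and since $\wp$ is separable $\wp'(\theta)$ is a unit in $R$, forcing $b_0=\theta$. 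Alternatively, restructure as the paper does and verify only the lifting property in characteristic $\wp$, where the quotient by $\Ker(F_d)$ is the explicit Frobenius twist and the issue evaporates. As written, though, the "involution" step would not compile into a proof.
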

\begin{proof}
	Let $B$ be an Artinian local $A_\frn$-algebra of characteristic $\wp$ and $J$ an ideal of $B$ satisfying $J^2=0$. Let $E$ be an ordinary Drinfeld module of rank two over $B/J$ and $\cG$ a $\Gamma_0(\wp)$-structure on $E$. Since $B$ is local, the underlying invertible sheaf of $E$ is trivial. It is enough to show that the isomorphism class of the pair $(E,\cG)$ lifts to $B$. 
	
	Since $E$ is ordinary and $B/J$ is Artinian local, we have either $\cG=\Ker(F_{d,E})$ or the composite $\cG\to E[\wp]\to \Ker(V_{d,E})$ is an isomorphism. In the former case, write as $\Phi^E_t=\theta+a_1\tau+a_2\tau^2$. For any lift $\hat{a}_i\in B$ of $a_i$, we can define a structure of a Drinfeld module of rank two over $B$ on $\hat{E}=\Spec(B[X])$ by putting $\Phi^{\hat{E}}_t=\theta+\hat{a}_1\tau+\hat{a}_2\tau^2$, which is also ordinary. Then $\cG$ lifts to $\Ker(F_{d,\hat{E}})$. In the latter case $\cG$ is etale and, by Lemma \ref{DMQFF} (\ref{DMQFF-Dr}), $E/\cG$ has a structure of a Drinfeld module of rank two. Moreover, it is also ordinary since $(E/\cG)[\wp]$ has the etale quotient $\cG$. Thus we have isomorphisms
	\[
	\xymatrix{
	(E/\cG)^{(q^d)} & (E/\cG)/\Ker(F_{d,E})\ar[r]_-{\sim}^-{\wp}\ar[l]^-{\sim}_-{F_{d,E/\cG}} & E	
	}
	\]
	sending $\Ker(V_{d,E/\cG})$ to $\cG$. Since the above argument shows that $E/\cG$ also lifts to an ordinary Drinfeld module $\hat{F}$ of rank two over $B$, the pair $(E,\cG)$ lifts to the pair $(\hat{F}^{(q^d)},\Ker(V_{d,\hat{F}}))$ over $B$.
	\end{proof}

Let $K_\infty$ be the completion of $\bF_q(t)$ with respect to the $(1/t)$-adic valuation and $\bC_\infty$ the $(1/t)$-adic completion of an algebraic closure of $K_\infty$. Let $\bA_f$ be the ring of finite adeles and $\hat{A}$ its subring of elements which are integral at all finite places. Let $\Omega$ be the Drinfeld upper half plane over $\bC_\infty$. Put
\begin{align*}
K_1^\Delta(\frn)&=\left\{ g\in\mathit{GL}_2(\hat{A})\ \middle |\ g\bmod \frn\hat{A} \in \begin{pmatrix}\Delta&A/(\frn) \\ 0 & 1\end{pmatrix}\right\},\\
\Gamma(\frn)&=\left\{ g\in\mathit{GL}_2(A)\ \middle |\ g\bmod (\frn) \in \begin{pmatrix}1&0 \\ 0 & 1\end{pmatrix}\right\}
\end{align*}
and $\Gamma_1^\Delta(\frn)=\mathit{GL}_2(A)\cap K_1^\Delta(\frn)$. Since $A^\times=\bF_q^\times$, we have $\Gamma_1^\Delta(\frn)\subseteq \mathit{SL}_2(A)$. This yields
\[
\Gamma_1^\Delta(\frn)=\left\{ g\in\mathit{SL}_2(A)\ \middle |\ g\bmod (\frn) \in \begin{pmatrix}1&A/(\frn) \\ 0 & 1\end{pmatrix}\right\}.
\]
In particular, the group $\Gamma_1^\Delta(\frn)$ is independent of the choice of $\Delta$.
Note that the natural right action of $g\in \mathit{GL}_2(A/(\frn))$ on $Y(\frn)_{\bC_\infty}$ corresponds to the left action of ${}^t\! g$ on $\Gamma(\frn)\backslash \Omega$ via the M\"{o}bius transformation.
Since $\bF_q^\times\det(K_1^\Delta(\frn))=\hat{A}^\times$, \cite[Proposition 6.6]{Dri} implies that the analytification of $Y_1^\Delta(\frn)_{\bC_\infty}$ is identified with
\[
\mathit{GL}_2(\bF_q(t))\backslash \Omega\times \mathit{GL}_2(\bA_f)/K_1^\Delta(\frn)=\Gamma^\Delta_1(\frn)\backslash \Omega,
\]
and thus the fiber $Y_1^\Delta(\frn)_{K_\infty}$ is geometrically connected. Similarly, we see that $Y_1^\Delta(\frn,\wp)_{K_\infty}$ is also geometrically connected.

For any Drinfeld module $E$ of rank two over $S$, we write the $t$-multiplication map of $E$ as $\Phi^{E}_t=\theta+ a_1\tau+a_2\tau^2$ and put 
\[
j_t(E)=a_1^{\otimes q+1}\otimes a_2^{\otimes -1}\in \cO_S(S). 
\]
Consider the finite flat map
\[
j_t: Y_1^\Delta(\frn) \to \bA^1_{A_\frn}=\Spec(A_\frn[j]), \quad j\mapsto j_t(E_\univ^\Delta)
\]
and a similar finite map for $Y_1^\Delta(\frn,\wp)$.
We define the compactifications $X_1^\Delta(\frn)$ and $X_1^\Delta(\frn,\wp)$ of $Y_1^\Delta(\frn)$ and $Y_1^\Delta(\frn,\wp)$ as the normalizations of $\bP^1_{A_\frn}$ in $Y_1^\Delta(\frn)$ and $Y_1^\Delta(\frn,\wp)$ via this map, respectively. As in \cite[\S7.2]{Shastry}, we see that $X_1^\Delta(\frn)$ is smooth over $A_\frn$ and $X_1^\Delta(\frn,\wp)$ is smooth over $A_\frn[1/\wp]$. By a similar argument to the proof of \cite[Corollary 10.9.2]{KM}, Zariski's connectedness theorem implies that each fiber of the map $X^\Delta_1(\frn)\to \Spec(A_\frn)$ is geometrically connected, and so is $X^\Delta_1(\frn,\wp)\to \Spec(A_\frn[1/\wp])$. For any $A_\frn$-algebra $R$ which is Noetherian, excellent and regular, we also have the compactifications $X_1^\Delta(\frn)_R$ and $X_1^\Delta(\frn,\wp)_R$ of $Y_1^\Delta(\frn)_R$ and $Y_1^\Delta(\frn,\wp)_R$. From the smoothness of $X_1^\Delta(\frn)$, we have $X_1^\Delta(\frn)_R=X_1^\Delta(\frn)\times_{A_\frn}\Spec(R)$. The base change compatibility also holds for $X_1^\Delta(\frn,\wp)_R$ if $\wp$ is invertible in $R$.

On the other hand, the maps
\[
[(E,\lambda,[\mu])]\mapsto [(E,a\lambda,[\mu])],\quad [(E,\lambda,[\mu])]\mapsto [(E,\lambda,c[\mu])]
\]
induce actions of the groups $(A/(\frn))^\times$ and $(A/(\frn))^\times/\Delta=\bF_q^\times$ on $X_1^\Delta(\frn)_R$. We denote them by $\langle a \rangle_{\frn}$ and $\langle c \rangle_{\Delta}$, respectively.


\begin{lem}\label{DMAut}
Let $S$ be a scheme over $A$ and $E$ a Drinfeld module of rank two over $S$. If $j_t(E)\in \cO_S(S)$ is invertible, 
then for the big fppf sheaf $\sAut_{A,S}(E)$ defined by
\[
T\mapsto \Aut_{A,T}(E|_T),
\]
the natural map $\underline{\bF_q^\times}\to \sAut_{A,S}(E)$ is an isomorphism.
\end{lem}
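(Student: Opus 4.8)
The plan is to compute $\Aut_{A,T}(E|_T)$ explicitly for affine $T$ and show it coincides with the scalars $\bF_q^\times$. Since both $\underline{\bF_q^\times}$ and $\sAut_{A,S}(E)$ are sheaves and the assertion is local on $S$, I would first pass to a Zariski covering trivializing $\cL$ and assume $S=\Spec(B_0)$ with $\cL\cong \cO_S$. Writing $\Phi^E_t=\theta+a_1\tau+a_2\tau^2$, the rank-two hypothesis makes $a_2$ a unit, and invertibility of $j_t(E)=a_1^{\otimes q+1}\otimes a_2^{\otimes -1}$ forces $a_1\in B_0^\times$ as well; both properties persist under base change. To prove the natural map is an isomorphism of sheaves it then suffices to show it is bijective on $T$-points for every affine $S$-scheme $T=\Spec(B)$ (so that $a_1,a_2\in B^\times$ and $E|_T=\Spec(B[X])$); concretely, that $\Aut_{A,T}(E|_T)$ consists precisely of the constants $c_0\in B^\times$ with $c_0^q=c_0$, that is, of $\underline{\bF_q^\times}(T)$.

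So I would take $f=\sum_{i=0}^m c_i\tau^i\in B\{\tau\}=\End_{\bF_q,T}(E|_T)$ and write out the $A$-linearity condition $f\Phi^E_t=\Phi^E_t f$ coefficientwise; comparing the coefficient of $\tau^k$ gives, for every $k$,
\[
c_k(\theta^{q^k}-\theta)+a_1^{q^{k-1}}c_{k-1}-a_1 c_{k-1}^q+a_2^{q^{k-2}}c_{k-2}-a_2 c_{k-2}^{q^2}=0.
\]
The first step is to exploit that $f$ is an automorphism, hence invertible in $B\{\tau\}$: reducing modulo any prime $\frp\subset B$, the ring $(B/\frp)\{\tau\}$ is a domain in which the $\tau$-degree is additive, so an invertible element has degree $0$; thus $c_i\in\frp$ for all $i\ge 1$ and all $\frp$, i.e. $c_i$ is nilpotent for $i\ge 1$. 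The second step clears these nilpotent tails by descending induction: assuming $c_i=0$ for $i>k$ with $k\ge 1$, the relation for index $k+2$ collapses to $a_2^{q^k}c_k=a_2 c_k^{q^2}$, whence $c_k=a_2^{1-q^k}c_k^{q^2}$; iterating this identity expresses $c_k$ as a unit times an arbitrarily high $q$-power of itself, and nilpotence forces $c_k=0$. Hence $f=c_0$, and the coefficient of $\tau$ now reads $a_1(c_0-c_0^q)=0$, where invertibility of $a_1$ (that is, $j_t(E)\neq 0$) gives $c_0^q=c_0$; invertibility of $f$ makes $c_0$ a unit, so $c_0\in\underline{\bF_q^\times}(T)$, which is exactly the image of the natural map.

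The main obstacle is that one cannot simply reduce to the case of a field: over a non-reduced base the skew polynomial ring $B\{\tau\}$ has units of positive $\tau$-degree (for instance $1+\varepsilon\tau$ over $B=\bF_q[\varepsilon]/(\varepsilon^2)$), so invertibility alone does not bound the degree of $f$. The crux is therefore the interplay in the second step between the nilpotence of the higher coefficients coming from the first step and the leading relation $a_2^{q^k}c_k=a_2 c_k^{q^2}$, which relies on $a_2$ being a unit. I should also keep careful track of where $a_1\in B^\times$ enters: it is used only in the very last step, and it is indispensable there, since for $a_1=0$ one would obtain merely $c_0^{q^2}=c_0$ and thus the extra automorphisms characteristic of the $j_t=0$ case.
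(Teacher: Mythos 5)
Your proof is correct, but it takes a genuinely different route from the paper's. The paper disposes of the crucial step --- that every automorphism of $E=\Spec(B[X])$ is linear, i.e.\ of the form $X\mapsto bX$ --- by citing Flicker's Proposition 4.2\,(2); after that, comparing coefficients in $\Phi^E_t(bX)=b\Phi^E_t(X)$ and using $a_1\in B^\times$ gives $b^{q-1}=1$, and the isomorphism $\mu_{q-1}\simeq\underline{\bF_q^\times}$ over $\bF_q$-algebras finishes. You instead prove the linearity from scratch: invertibility of $f$ in $B\{\tau\}$ puts the coefficients $c_i$ ($i\geq 1$) into every prime of $B$, hence into the nilradical (since degree is additive in $(B/\frp)\{\tau\}$ for $\frp$ prime), and then the top-degree relation $a_2^{q^k}c_k=a_2c_k^{q^2}$ --- where the rank-two hypothesis makes $a_2$ a unit --- iterates against nilpotence to kill the tail by descending induction. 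This buys a self-contained argument in place of the citation, and it explicitly handles the non-reduced bases where a naive reduction to fields fails (your example $1+\varepsilon\tau$ is exactly the obstruction); it also isolates, just as the paper does, the precise role of the hypothesis $j_t(E)\in\cO_S(S)^\times$, entering only at the last step via $a_1(c_0-c_0^q)=0$. One small point you should make explicit, since it is the same point the paper spends its last sentence on: the identification of $\{c_0\in B^\times \mid c_0^{q-1}=1\}=\mu_{q-1}(B)$ with $\underline{\bF_q^\times}(T)$ holds because $X^{q-1}-1=\prod_{\lambda\in\bF_q^\times}(X-\lambda)$ with the factors pairwise generating the unit ideal, so $c_0$ induces an idempotent decomposition of $T$ on each piece of which it is a constant in $\bF_q^\times$; for disconnected $T$ the resulting section is locally constant rather than constant, which is exactly what the constant sheaf $\underline{\bF_q^\times}$ permits.
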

\begin{proof}
We may assume that $S=\Spec(B)$ is affine and the underlying invertible sheaf of $E$ is trivial. By \cite[Proposition 4.2 (2)]{Flicker}, any automorphism of $E=\Spec(B[X])$ is linear, namely it is given by $X\mapsto b X$ for some $b\in B^\times$. Write as $\Phi^E_t=\theta+ a_1 \tau +a_2\tau^2$.
From the assumption, we have $a_1\in B^\times$ and the equality $\Phi^E_t(bX)=b\Phi_t^E(X)$ yields $b^{q-1}=1$. Since the group scheme $\mu_{q-1}$ over $\bF_q$ is isomorphic to the constant group scheme $\underline{\bF_q^\times}$, so is $\mu_{q-1}|_B$ over the $\bF_q$-algebra $B$. This concludes the proof.
\end{proof}

\begin{lem}\label{DMIsog}
Let $S$ be a scheme over $A$. Let $E$ and $E'$ be Drinfeld modules of rank two over $S$ satisfying $j_t(E)=j_t(E')\in 
\cO_S(S)^\times$. Then the big fppf sheaf $\sIsom_{A,S}(E,E')$ over $S$ defined by
\[
T\mapsto \Isom_{A,T}(E|_T, E'|_T)
\]
is represented by a Galois covering of $S$ with Galois group $\bF_q^\times$.
\end{lem}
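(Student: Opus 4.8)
The plan is to describe $\sIsom_{A,S}(E,E')$ explicitly after a harmless reduction and then recognize it as a torsor under $\sAut_{A,S}(E)\simeq\underline{\bF_q^\times}$. First I would work Zariski-locally and assume $S=\Spec(B)$ with the underlying line bundles of $E$ and $E'$ trivial, writing $\Phi^E_t=\theta+a_1\tau+a_2\tau^2$ and $\Phi^{E'}_t=\theta+a_1'\tau+a_2'\tau^2$. Here $a_2,a_2'\in B^\times$ are the nowhere vanishing leading coefficients, and the hypothesis that $j_t=a_1^{q+1}/a_2=(a_1')^{q+1}/a_2'$ be invertible forces $a_1,a_1'\in B^\times$ as well.

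Next I would single out the \emph{linear} isomorphisms $X\mapsto uX$ with $u\in B^\times$. The intertwining condition $\Phi^{E'}_t(uX)=u\,\Phi^E_t(X)$ is equivalent to the two equations $a_1'u^{q-1}=a_1$ and $a_2'u^{q^2-1}=a_2$. The key computation is that the second is redundant: if $u^{q-1}=a_1/a_1'$, then $u^{q^2-1}=(a_1/a_1')^{q+1}=a_1^{q+1}/(a_1')^{q+1}$, and the equality of $j$-invariants rewrites the right-hand side as $a_2/a_2'$. Hence the scheme of linear isomorphisms is $\Spec(B[u]/(u^{q-1}-a_1/a_1'))$, which---because $q-1\in B^\times$ and $a_1/a_1'\in B^\times$---is finite \'etale over $S$ and, via the action $u\mapsto\zeta u$ of $\mu_{q-1}=\underline{\bF_q^\times}$, is a $\bF_q^\times$-torsor for the \'etale topology.

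It then remains to see that every isomorphism is linear, so that this torsor is all of $\sIsom_{A,S}(E,E')$. Since the displayed cover is faithfully flat, it carries a tautological linear isomorphism $\phi\colon E\to E'$; given any $T$-point $f$ of $\sIsom_{A,S}(E,E')$, after the fppf base change to $T\times_S\Spec(B[u]/(u^{q-1}-a_1/a_1'))$ the composite $\phi^{-1}\circ f$ is an automorphism of $E$, hence a scalar in $\bF_q^\times$ by Lemma \ref{DMAut}; thus $f$ becomes linear, and since the vanishing of the higher $\tau$-coefficients descends along a faithfully flat morphism, $f$ is already linear over $T$. Consequently $\sIsom_{A,S}(E,E')$ agrees locally with the linear-isomorphism scheme, is a pseudo-torsor under $\sAut_{A,S}(E)\simeq\underline{\bF_q^\times}$ by Lemma \ref{DMAut}, and is locally non-empty; it is therefore a torsor under the finite \'etale group $\underline{\bF_q^\times}$, represented by a finite \'etale $S$-scheme, i.e.\ a Galois covering of $S$ with Galois group $\bF_q^\times$. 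The one point needing care---and the main obstacle---is exactly this reduction to linear morphisms: without it one cannot directly read off the representing scheme, and it is here that Lemma \ref{DMAut} together with faithfully flat descent of coefficients does the work.
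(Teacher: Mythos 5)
Your proof is correct and follows essentially the same route as the paper: reduce to the affine trivialized case, note $a_1,a_1'\in B^\times$ from the invertibility of $j_t$, form $J=\Spec(B[u]/(u^{q-1}-a_1/a_1'))$, and verify the map $J\to\sIsom_{A,S}(E,E')$ is an isomorphism after the faithfully flat base change to $J$ itself using Lemma \ref{DMAut}. Your explicit check that the condition $a_2'u^{q^2-1}=a_2$ is forced by $u^{q-1}=a_1/a_1'$ together with $j_t(E)=j_t(E')$ is exactly the computation the paper leaves implicit when asserting that $Y\mapsto(X\mapsto YX)$ defines a map of functors.
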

\begin{proof}
By gluing, we reduce ourselves to the case where $S=\Spec(B)$ is affine and the underlying line bundles of $E$ and $E'$ are trivial.
We write the $t$-multiplication maps of $E$ and $E'$ as
\[
\Phi^E_t=\theta+ a_1\tau+ a_2 \tau^2,\quad \Phi^{E'}_t=\theta+ a'_1\tau+ a'_2 \tau^2
\]
with some $a_1,a'_1\in B$ and $a_2,a'_2\in B^\times$.
By assumption, we have $a_1^{q+1}/a_2=(a'_1)^{q+1}/a'_2 \in B^\times$ and thus $a_1, a'_1 \in B^\times$. Hence the scheme
\[
J=\Spec(B[Y]/(Y^{q-1}-a_1/a'_1))
\]
is a finite etale $\bF_q^\times$-torsor over $B$. By $Y\mapsto (X\mapsto YX)$, we obtain a map of functors $J\to 
\sIsom_{A,S}(E,E')$. To show that it is an isomorphism, we may prove it over $J$. In this case, it follows from Lemma 
\ref{DMAut}.
\end{proof}



\subsection{Tate-Drinfeld modules}

To investigate the structure around cusps of Drinfeld modular curves and extend the sheaf $\omega_\univ^\Delta$, we 
need to introduce the Tate-Drinfeld module. Let $R_0$ be a flat $A_\frn$-algebra which is an excellent Noetherian domain with fraction 
field $K_0$. Let $R_0((x))$ and $K_0((x))$ be the Laurent power series rings over $R_0$ and $K_0$, respectively. Put 
$T_0=\Spec(R_0((x)))$. We denote the normalized $x$-adic valuation on $K_0((x))$ by $v_x$. We also denote the ring of 
entire series over $K_0((x))$ by $K_0((x))\{\{X\}\}$; it is the subring of $K_0((x))[[X]]$ consisting of elements 
$\sum_{i\geq 0} a_i X^i$ satisfying
\[
\lim_{i\to \infty} (v_x(a_i)+ i\rho) =+\infty\text{ for any }\rho\in \bR.
\]
We put $R_0[[x]]\{\{X\}\}=K_0((x))\{\{X\}\}\cap R_0[[x]][[X]]$.

Let $(C,\Phi^C)$ be the Carlitz module over $R_0$. For any non-zero element $f\in A$, put
\begin{gather}
f\Lambda=\left\{\Phi_{fa}^C\left(\frac{1}{x}\right)\ \middle |\ a\in A\right\}\subseteq R_0((x)),\label{EqnDefLambda}\\
e_{f\Lambda}(X)=X\prod_{\alpha\neq 0\in f\Lambda}\left(1-\frac{X}{\alpha}\right)\in X+x X^2 R_0[[x]][[X]]\label{EqnDefExp}
\end{gather}
as in \cite[Ch.~5, \S2]{Lehmkuhl}. Note that any non-zero element of $f\Lambda$ is invertible in $R_0((x))$. We 
consider $f\Lambda$ as an $A$-module via $\Phi^C$. 
Then it is a free $A$-module of rank one, and it is also discrete inside $K_0((x))$. Hence the power series $e_{f\Lambda}(X)$ is entire, and it is an element of $R_0[[x]]\{\{X\}\}$. 

Put
\[
F_f(x)=\frac{1}{\Phi^C_f\left(\frac{1}{x}\right)}\in x^{q^{\deg(f)}}\bF_q^\times (1+ x R_0[[x]]).
\]
Then $x\mapsto F_f(x)$ defines an $R_0$-algebra homomorphism $\nu_f^\sharp:R_0((x))\to R_0((x))$ and a map $\nu_f:T_0\to T_0$. For any element $h(X)=\sum_i a_i X^i\in R_0((x))[[X]]$, we put $\nu_f^*(h)(X)=\sum_i \nu_f^\sharp(a_i)X^i$. Then we have $\nu_f^\sharp(\Lambda)=f\Lambda$ and $\nu_f^*(e_\Lambda)(X)=e_{f\Lambda}(X)$.

For any element $a\in A$, consider the power series
\begin{equation}\label{EqnDefTDa}
\Phi^{f\Lambda}_a(X)=e_{f\Lambda}(\Phi^C_a(e_{f\Lambda}^{-1}(X))) \in R_0[[x]][[X]].
\end{equation}
Note that (\ref{EqnDefExp}) yields
\begin{equation}\label{EqnTDRed}
\Phi^{f\Lambda}_a(X)\equiv \Phi^C_a(X) \bmod x R_0[[x]] \text{ for any }a\in A.
\end{equation}
Let $K_0((x))^{\alg}$ be an algebraic closure of $K_0((x))$. For any $a\in A$, put
\[
(\Phi^C_a)^{-1}(f\Lambda)=\{y\in K_0((x))^{\alg}\mid \Phi^C_a(y)\in f\Lambda\},
\]
which is an $A$-module, and let $\Sigma_a\subseteq (\Phi^C_a)^{-1}(f\Lambda)$ be a representative of the set 
\[
((\Phi^C_a)^{-1}(f\Lambda)/f\Lambda)\setminus\{0\}.
\]
Since $R_0$ is flat over $A$, we have
\begin{equation}\label{EqnTDBoe}
\Phi^{f\Lambda}_a(X)=a X \prod_{\beta \in \Sigma_a} \left(1-\frac{X}{e_{f\Lambda}(\beta)}\right)
\end{equation}
(see for example the proof of \cite[Proposition 2.9]{Boeckle}). In particular, it is an $\bF_q$-linear additive polynomial of degree $q^{2 \deg(a)}$.

\begin{lem}\label{TDCoeff}
If we write as $\Phi^{\Lambda}_t=\theta+ a_1 \tau+a_2 \tau^2$ for some $a_i\in R_0[[x]]$,
then we have 
\[
a_1\in 1+x R_0[[x]],\quad a_2\in x^{q-1} R_0[[x]]^\times.
\]
\end{lem}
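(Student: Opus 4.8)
The plan is to read the coefficients $a_1,a_2$ off a functional equation and then pin down their $x$-adic valuations through power sums of the lattice $\Lambda$. Since $\Lambda$ is an $\bF_q$-subspace of $R_0((x))$, the exponential is $\bF_q$-linear, so I may write $e_\Lambda(Z)=\sum_{i\ge 0}c_iZ^{q^i}$ with $c_0=1$ (and $c_i\in R_0[[x]]$ by (\ref{EqnDefExp})). Setting $X=e_\Lambda(Z)$ in the defining formula (\ref{EqnDefTDa}) turns it into the identity $e_\Lambda(\Phi^C_t(Z))=\Phi^\Lambda_t(e_\Lambda(Z))$. Substituting $\Phi^C_t(Z)=\theta Z+Z^q$ and $\Phi^\Lambda_t(X)=\theta X+a_1X^q+a_2X^{q^2}$ and comparing the coefficients of $Z^{q^j}$ for $j=1,2$, I obtain
\[
a_1=1+c_1(\theta^q-\theta),\qquad a_2=c_1-a_1c_1^q+c_2(\theta^{q^2}-\theta).
\]
Everything then reduces to computing $v_x(c_1)$ and $v_x(c_2)$ together with the leading $x$-coefficient of $c_1$.

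For this I would exploit that $e_\Lambda'=c_0=1$, so that the logarithmic derivative of $e_\Lambda(Z)=Z\prod_{0\ne\lambda}(1-Z/\lambda)$ gives $1/e_\Lambda(Z)=\sum_{\lambda\in\Lambda}1/(Z-\lambda)$. Expanding both sides at $Z=0$ and matching the coefficients of $Z^{q-2}$ and $Z^{q^2-2}$ yields $c_1=S_{q-1}$ and $c_2=S_{q^2-1}$, where $S_m=\sum_{0\ne\lambda\in\Lambda}\lambda^{-m}$ (the cross terms from $(1+\cdots)^{-1}$ cannot reach these degrees for $q\ge 2$). By (\ref{EqnDefLambda}) the nonzero $\lambda$ are the $\Phi^C_a(1/x)$ with $a\ne 0$, and since $\Phi^C_a$ is monic of degree $q^{\deg(a)}$ one has $v_x(\Phi^C_a(1/x))=-q^{\deg(a)}$. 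Hence the $x$-adically dominant contributions to $S_m$ come precisely from the $q-1$ elements $\lambda=a/x$ with $a\in\bF_q^\times$ (valuation $-1$), every other $\lambda$ having valuation $\le -q$. As $a^{q-1}=1$, these contribute $(q-1)x^{q-1}$ to $S_{q-1}$ and $(q-1)x^{q^2-1}$ to $S_{q^2-1}$, giving $v_x(c_1)=q-1$ with leading coefficient $q-1=-1\in\bF_q^\times\subseteq R_0^\times$, while $v_x(c_2)=q^2-1$.

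Feeding this back in, $a_1-1=c_1(\theta^q-\theta)$ has $v_x\ge q-1\ge 1$, so $a_1\in 1+xR_0[[x]]$. In the formula for $a_2$, the term $c_1$ has valuation $q-1$ with unit leading coefficient $-1$, whereas $a_1c_1^q$ has valuation $q(q-1)>q-1$ (note $v_x(a_1)=0$) and $c_2(\theta^{q^2}-\theta)$ has valuation $\ge q^2-1>q-1$; the dominant term is therefore $c_1$, so $a_2=-x^{q-1}+(\text{higher order in }x)$, that is $a_2\in x^{q-1}R_0[[x]]^\times$.

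The main obstacle is the valuation bookkeeping of the second paragraph, and in particular the \emph{strict} inequality $v_x(c_2)>q-1$: the membership (\ref{EqnDefExp}) only gives $v_x(c_2)\ge 1$, which is not enough to prevent $c_2$ from lowering $v_x(a_2)$ or from spoiling the unit leading coefficient once $q\ge 3$. This is exactly what the identities $c_1=S_{q-1}$, $c_2=S_{q^2-1}$ and the valuation $v_x(\Phi^C_a(1/x))=-q^{\deg(a)}$ are designed to supply. As a cross-check on the valuation alone, one can instead sum $v_x(e_\Lambda(\beta))$ over $\beta\in\Sigma_t$ in the product expansion (\ref{EqnTDBoe}): the $q-1$ points with $\Phi^C_t(\beta)=0$ have $v_x(e_\Lambda(\beta))=0$ and the remaining $q(q-1)$ points each have $v_x(e_\Lambda(\beta))=-1/q$, so $v_x(a_2)=-\sum_\beta v_x(e_\Lambda(\beta))=q-1$; but this route still requires a separate argument for the unit leading coefficient, so I would carry out the functional-equation computation above.
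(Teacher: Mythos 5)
Your route is genuinely different from the paper's. The paper obtains $a_1\in 1+xR_0[[x]]$ at once from the mod-$x$ congruence (\ref{EqnTDRed}), and evaluates $a_2$ through the product expansion (\ref{EqnTDBoe}): it writes $a_2=\theta/\prod_{\beta\in\Sigma_t}e_\Lambda(\beta)$ and computes that product explicitly following the proof of \cite[Lemma 2.10]{Boeckle} --- this is exactly the ``cross-check'' you sketch at the end, pushed far enough to yield not only $v_x(a_2)=q-1$ but also the unit leading coefficient. Your alternative, via the functional equation $e_\Lambda(\Phi^C_t(Z))=\Phi^\Lambda_t(e_\Lambda(Z))$ from (\ref{EqnDefTDa}) together with the partial-fraction identity $1/e_\Lambda(Z)=\sum_{\lambda\in\Lambda}1/(Z-\lambda)$ (legitimate here: $e_\Lambda'=1$, and the identity passes to the limit coefficientwise $x$-adically since $v_x(\lambda)\to-\infty$ by (\ref{EqnDefLambda})), is a clean and valid way to pin down the leading coefficients exactly, and your formulas $a_1=1+c_1(\theta^q-\theta)$ and $a_2=c_1-a_1c_1^q+c_2(\theta^{q^2}-\theta)$ are correct.

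However, one step is false as stated: the claim that the cross terms of $(1+c_1Z^{q-1}+c_2Z^{q^2-1}+\cdots)^{-1}$ cannot reach degree $Z^{q^2-2}$. They can: writing $u=c_1Z^{q-1}+c_2Z^{q^2-1}+\cdots$, the term $(-1)^{q+1}u^{q+1}$ contributes $(-1)^{q+1}c_1^{q+1}$ in degree $Z^{q^2-1}$ because $(q+1)(q-1)=q^2-1$, so the correct identity is
\[
c_2=S_{q^2-1}+(-1)^{q+1}c_1^{q+1},
\]
not $c_2=S_{q^2-1}$ (your identity $c_1=S_{q-1}$ is fine, since $k(q-1)=q-1$ forces $k=1$). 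Fortunately the slip is harmless for your purpose: $v_x(c_1^{q+1})=(q+1)(q-1)=q^2-1$, so you still get $v_x(c_2)\ge q^2-1>q-1$, and the dominant term of $a_2$ remains $c_1$, whose leading coefficient is $\sum_{a\in\bF_q^\times}a^{-(q-1)}=q-1=-1\in\bF_q^\times$; hence $a_2\in x^{q-1}R_0[[x]]^\times$ stands, as does $a_1\in 1+xR_0[[x]]$ from $v_x(c_1)=q-1\ge 1$. Two minor points worth fixing: $\Phi^C_a$ is monic only for monic $a$ (in general its top coefficient lies in $\bF_q^\times$, which is all you need for $v_x(\Phi^C_a(1/x))=-q^{\deg(a)}$), and you should remark that the sums $S_m$ converge in $R_0[[x]]$ because there are only finitely many $a\in A$ of each degree.
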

\begin{proof}
The assertion on $a_1$ follows from (\ref{EqnTDRed}). That on $a_2$ is proved by the computation in the proof of \cite[Lemma 2.10]{Boeckle}. Indeed, we choose a root $\eta\in K_0((x))^{\alg}$ of the equation 
\[
\Phi^C_t(X)=\theta X+ X^q=\frac{1}{x}.
\]
Put $\tilde{\Sigma}=\{c\eta\mid c\in \bF_q^\times \}$, $\Sigma_0=\{\zeta \in K_0((x))^{\alg}\mid \Phi^C_t(\zeta)=0\}$ and $\Sigma_t=(\tilde{\Sigma}+\Sigma_0)\cup (\Sigma_0\setminus\{0\})$. By (\ref{EqnTDBoe}), we have $a_2=\theta/(\prod_{\beta\in \Sigma_t} e_\Lambda(\beta))$. The denominator $\prod_{\beta\in \Sigma_t} e_\Lambda(\beta)$ is equal to
\[
\prod_{\beta\in \tilde{\Sigma}}\prod_{\zeta\in \Sigma_0}(\beta+\zeta)\prod_{\alpha\neq 0\in \Lambda}\left(\frac{\alpha-(\beta+\zeta)}{\alpha}\right)\cdot \prod_{\zeta\in \Sigma_0\setminus \{0\}}\zeta\prod_{\alpha\neq 0\in \Lambda}\left(\frac{\alpha-\zeta}{\alpha}\right).
\]
The first term is equal to 
\[
\prod_{\beta\in\tilde{\Sigma}}\Phi^C_t(\beta)\prod_{\alpha\neq 0\in \Lambda}\left(\frac{\Phi^C_t(\alpha-\beta)}{\alpha^q}\right)=\frac{\left(\prod_{c\in \bF_q^\times} c\right)}{x^{q-1}}\cdot \prod_{c\in \bF_q^\times}\prod_{\alpha\neq 0\in \Lambda}\frac{\theta \alpha +\alpha^q-\frac{c}{x}}{\alpha^q}.
\]
By the definition (\ref{EqnDefLambda}) of $\Lambda$, any $\alpha\neq 0\in \Lambda$ can be written as $\alpha=\Phi^C_a(1/x)$ for some $a\neq 0\in A$. Thus we have $\alpha=x^{-q^r}h$ with $r=\deg(a)$ and $h\in R_0[[x]]^\times$, which yields $(\theta \alpha +\alpha^q-c/x)/\alpha^q\in 1+x R_0[[x]]$. By a similar computation, the second term is equal to
\[
\theta \prod_{\alpha\neq 0\in \Lambda}\frac{\theta  +\alpha^{q-1}}{\alpha^{q-1}}\in \theta(1+x R_0[[x]]).
\]
Hence we obtain the assertion on $a_2$.
\end{proof}

Using Lemma \ref{TDCoeff} and the map $\nu_f$, we see that 
the polynomials $\Phi^{f\Lambda}_a$ define a structure of a Drinfeld module of rank two over $T_0$. We refer to it as the 
Tate-Drinfeld module $\TD(f\Lambda)$ over $T_0$.

\begin{lem}\label{TDKS}
Let $X$ be the parameter of $\TD(\Lambda)$ as above.
We trivialize the underlying invertible sheaf $\omega_{\TD(\Lambda)}^{\otimes q}$ of the dual $\TD(\Lambda)^D$ by $(dX)^{\otimes q}$, and we denote the corresponding parameter of $\TD(\Lambda)^D$ by $Y$.
Then
the dual of the Kodaira-Spencer map
\[
\KS^\vee: \omega_{\TD(\Lambda)}\otimes \omega_{\TD(\Lambda)^D}\to \Omega^1_{R_0((x))/R_0}
\]
satisfies $\KS^\vee(d X\otimes d Y)=l(x) d x$ with
\[
l(x)=\frac{d a_1}{d x}-\frac{a_1}{a_2}\frac{d a_2}{d x} \equiv \frac{1}{x} \bmod R_0[[x]].
\]
\end{lem}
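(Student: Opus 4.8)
The plan is to unwind the definition of the Kodaira–Spencer map and reduce the identity to an explicit computation with biderivations. Write $E=\TD(\Lambda)=\Spec(R_0((x))[X])$, $\Phi^\Lambda_t=\theta+a_1\tau+a_2\tau^2$, and abbreviate $a_i'=da_i/dx$. By the construction of $\KS$ and its dual, the asserted identity $\KS^\vee(dX\otimes dY)=l(x)\,dx$ is equivalent, after pairing against the derivation $d/dx$, to the scalar identity $l(x)=\langle dY,\pi_{d/dx}(dX)\rangle$, where $\pi_{d/dx}\colon\omega_E\to\Lie(E^D)$ is the composite $\omega_E\xrightarrow{\iota}\DR(E,\Ga)\xrightarrow{\nabla_{d/dx}}\DR(E,\Ga)\xrightarrow{p}\Lie(E^D)$ determined by the Hodge filtration (\ref{HodgeFil}) and the connection $\nabla_{d/dx}$ (which acts by $d/dx$ on the coefficients of $\ev_t$). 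Thus everything comes down to evaluating this composite on $dX$ and pairing with $dY$.

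First I would compute $\iota(dX)$. By Lemma \ref{DualLieCot} the inclusion $\omega_E=\Lie(E)^\vee\hookrightarrow\DR(E,\Ga)$ is induced by $\delta_f\mapsto\Lie(f)$, so $dX$ is represented by the inner biderivation $\delta_{\mathrm{id}}$ with $\ev_t(\delta_{\mathrm{id}})=\Phi^\Lambda_t-\theta=a_1\tau+a_2\tau^2$. Applying $\nabla_{d/dx}$ then produces the biderivation $\eta$ with $\ev_t(\eta)=a_1'\tau+a_2'\tau^2$.

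The key identification—and the step I expect to require the most care—is to pin down the pairing $\langle dY,-\rangle$ on $\Lie(E^D)\cong\Der_0(E,\Ga)/\Derin(E,\Ga)$. Here I would trace the trivialization through (\ref{EqnDualBdl}) and the dual-number description in the proof of Lemma \ref{DualLieCot}. Since the underlying sheaf of $E^D$ is $\cL^{\otimes -q}=\omega_E^{\otimes q}$, trivialized by $(dX)^{\otimes q}$, a point of $E^D$ with parameter value $y$ corresponds under (\ref{EqnDualBdl}) to the class whose biderivation has $\delta_t=y\tau$; passing to the dual numbers with $y=\varepsilon$ identifies the tangent vector $\partial_Y$ dual to $dY$ with the class $[\tau]\in\Der_0/\Derin$, so that $\langle dY,[\tau]\rangle=1$. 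It therefore suffices to reduce $[\eta]$ to a multiple of $[\tau]$ modulo $\Derin$. Using the inner biderivation $\delta_{c_0}$ attached to the scalar $c_0=a_2'/a_2$, for which $\ev_t(\delta_{c_0})=c_0(a_1\tau+a_2\tau^2)$, cancels the $\tau^2$-term and yields $[\eta]=\left(a_1'-\frac{a_1}{a_2}a_2'\right)[\tau]$ in $\Der_0/\Derin$. Hence $l(x)=a_1'-\frac{a_1}{a_2}a_2'=\frac{da_1}{dx}-\frac{a_1}{a_2}\frac{da_2}{dx}$.

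Finally, for the congruence I would substitute the shapes of $a_1,a_2$ from Lemma \ref{TDCoeff}. Writing $a_1=1+xh$ with $h\in R_0[[x]]$ gives $da_1/dx\in R_0[[x]]$, while $a_2=x^{q-1}u$ with $u\in R_0[[x]]^\times$ together with the relation $q-1\equiv-1\bmod p$ in characteristic $p$ gives $\frac{1}{a_2}\frac{da_2}{dx}\equiv-\frac1x\bmod R_0[[x]]$; multiplying by $a_1\equiv1$ and combining yields $l(x)\equiv1/x\bmod R_0[[x]]$. The only genuinely delicate point is the compatibility of the trivializations in the third paragraph; once the generator of $\Lie(E^D)$ dual to $dY$ is correctly matched with $[\tau]$, the remaining manipulations are routine.
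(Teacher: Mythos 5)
Your proposal is correct and follows essentially the same route as the paper's proof: representing $dX$ by the inner biderivation $\delta_{\id}$ with $\ev_t(\delta_{\id})=a_1\tau+a_2\tau^2$, differentiating coefficients, subtracting the inner biderivation attached to the scalar $a_2^{-1}\frac{da_2}{dx}$ to reduce to $l(x)\tau$, and then matching the class of $\tau$ with the generator dual to $dY$ via the trivialization $(dX)^{\otimes q}$, the dual-numbers description of $\Lie(\TD(\Lambda)^D)$ from Lemma \ref{DualLieCot}, and the identification (\ref{EqnDualBdl}). Your explicit verification of the congruence $l(x)\equiv 1/x \bmod R_0[[x]]$ from Lemma \ref{TDCoeff} (using $q-1\equiv -1 \bmod p$) is also correct; the paper leaves this step implicit.
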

\begin{proof}
We want to compute $\nabla_{\frac{d}{d x}}(d X)$. For this, first note that the inner biderivation $\delta_{\id}\in\Derin(\TD(\Lambda),\Ga)$ gives $d X$ via the second isomorphism of Lemma \ref{DualLieCot}. Then we have
\[
\delta_{\id,t}=\id\circ \Phi^{\TD(\Lambda)}_t-\Phi^{\Ga}_t\circ\id=a_1\tau+ a_2\tau^2
\]
and $\nabla_{\frac{d}{d x}}(d X)$ corresponds to the class of $\delta\in \Der_0(\TD(\Lambda),\Ga)$ satisfying $\delta_t=\frac{d 
a_1}{d x}\tau+\frac{d a_2}{d x} \tau^2$. Subtracting the inner biderivation $\delta_\beta$ for $\beta=a_2^{-1}\frac{d 
a_2}{d x}$, we may assume $\delta_t=l(x)\tau$. Hence, the element $\pi_{\frac{d}{d x}}(d X)\in \Lie(\TD(\Lambda)^D)$ is 
given by the biderivation $\delta'\in \Der_0(\TD(\Lambda)|_{T_{0,\varepsilon}},C|_{T_{0,\varepsilon}})$ satisfying 
$\delta'_t=\varepsilon l(x)\tau$, where $T_{0,\varepsilon}=\Spec_{T_0}(\cO_{T_0}[\varepsilon]/(\varepsilon^2))$. The map $\delta'_t$ is an element of 
$\Hom_{\bF_q,T_{0,\varepsilon}}(\TD(\Lambda)|_{T_{0,\varepsilon}},C|_{T_{0,\varepsilon}})$ defined by $Z\mapsto \varepsilon l(x) 
X^q$. Let $\cL=R_0((x)) \frac{d}{d X}$ be the underlying invertible sheaf of $\TD(\Lambda)$. Via the identification 
(\ref{EqnDualBdl}), the above homomorphism corresponds to the element 
\[
\varepsilon l(x) (d X)^{\otimes q}\in\Ker(\bV_*(\cL^{\otimes -q})(T_{0,\varepsilon})\to \bV_*(\cL^{\otimes -q})(T_0))
\]
and, with the parameter $Y$ of $\bV_*(\cL^{\otimes -q})$ in the lemma, it corresponds to $l(x)\frac{d}{d Y}$. This concludes the proof.
\end{proof}

\begin{lem}\label{TDGamma1}
For any monic polynomial $\frem\in A$, there exists a natural $A$-linear closed immersion $\lambda_{\infty,\frem}^{f\Lambda}:C[\frem]\to \TD(f\Lambda)$ over $T_0$ satisfying $\nu_f^*(\lambda_{\infty,\frem}^{\Lambda})=\lambda_{\infty,\frem}^{f\Lambda}$. In particular, the Tate-Drinfeld module $\TD(f\Lambda)$ is endowed with a natural $\Gamma_1(\frn)$-structure $\lambda_{\infty,\frn}^{f\Lambda}$ over $T_0$.
\end{lem}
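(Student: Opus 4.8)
The plan is to construct $\lambda^{f\Lambda}_{\infty,\frem}$ directly from the exponential $e_{f\Lambda}(X)$, exploiting the intertwining property $e_{f\Lambda}(\Phi^C_a(X))=\Phi^{f\Lambda}_a(e_{f\Lambda}(X))$ built into the definition (\ref{EqnDefTDa}). Write $C[\frem]_{T_0}=\Spec(R_0((x))[Z]/(\Phi^C_\frem(Z)))$ and recall that $\Phi^C_\frem(Z)\in R_0[Z]$ is monic of degree $q^{\deg(\frem)}$ since $\frem$ is monic. I would define $\lambda^{f\Lambda}_{\infty,\frem}$ to be the morphism $C[\frem]_{T_0}\to \TD(f\Lambda)$ over $T_0$ corresponding to the $R_0((x))$-algebra homomorphism $R_0((x))[X]\to R_0((x))[Z]/(\Phi^C_\frem(Z))$ sending $X$ to the reduction of $e_{f\Lambda}(Z)$ modulo $\Phi^C_\frem(Z)$. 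The first point to settle is that this reduction is well defined: writing the entire $\bF_q$-linear series as $e_{f\Lambda}(Z)=\sum_{i\geq 0}c_i Z^{q^i}$ with $c_0=1$ and $c_i\in R_0[[x]]$, entireness (with $\rho=0$) gives $v_x(c_i)\to +\infty$, while the reductions of $Z^{q^i}$ modulo the monic polynomial $\Phi^C_\frem(Z)$ have coefficients in $R_0$. Hence the series $\sum_i c_i(Z^{q^i}\bmod \Phi^C_\frem(Z))$ converges $x$-adically in the finite free, $x$-adically complete $R_0[[x]]$-module $R_0[[x]][Z]/(\Phi^C_\frem(Z))$, and its image defines the desired algebra map.

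Next I would verify the required properties. $A$-linearity is immediate from the functional equation: for every $a\in A$ the identity $e_{f\Lambda}\circ \Phi^C_a=\Phi^{f\Lambda}_a\circ e_{f\Lambda}$ holds as power series and descends modulo $\Phi^C_\frem(Z)$, and since $\Phi^C_\frem(Z)=0$ on $C[\frem]$ the same identity shows the image lies in $\TD(f\Lambda)[\frem]$. That $\lambda^{f\Lambda}_{\infty,\frem}$ is a closed immersion—\emph{the key point}—I would prove by a Nakayama argument. From (\ref{EqnDefExp}) we have $e_{f\Lambda}(Z)\equiv Z \bmod x$, so the image of $X$ reduces to $Z$ modulo $x$ in $R_0[Z]/(\Phi^C_\frem(Z))$; thus the finitely generated $R_0[[x]]$-subalgebra generated by this image surjects onto the reduction modulo $x$. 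Since $R_0$ is Noetherian and $R_0[[x]]$ is $x$-adically complete, so that $x$ lies in its Jacobson radical, Nakayama's lemma forces the subalgebra to be all of $R_0[[x]][Z]/(\Phi^C_\frem(Z))$; inverting $x$ gives surjectivity of the algebra map over $R_0((x))$, whence a closed immersion.

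For the compatibility with $\nu_f$, I would note that $\nu_f^\sharp$ is an $R_0$-algebra homomorphism, hence fixes the coefficients of $\Phi^C_\frem(Z)$, so that applying $\nu_f^*$ commutes with reduction modulo $\Phi^C_\frem(Z)$; combined with the identity $\nu_f^*(e_\Lambda)(X)=e_{f\Lambda}(X)$ recorded before the lemma, this yields $\nu_f^*(\lambda^{\Lambda}_{\infty,\frem})=\lambda^{f\Lambda}_{\infty,\frem}$. Finally, for the last assertion, since $R_0$ is a flat $A_\frn$-algebra the polynomial $\frn$ is invertible in $R_0$, so $C[\frn]$ is finite étale over $T_0$ and the closed immersion $\lambda^{f\Lambda}_{\infty,\frn}$ is by definition a $\Gamma_1(\frn)$-structure on $\TD(f\Lambda)$. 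The main obstacle is the closed-immersion claim: one must simultaneously control the $x$-adic convergence of the infinite exponential series modulo $\Phi^C_\frem(Z)$ and deduce surjectivity of the resulting algebra map, which is precisely where entireness of $e_{f\Lambda}$ and the completeness of $R_0[[x]]$ enter. The $A$-linearity, the landing in $\frem$-torsion, and the $\nu_f$-compatibility are then formal consequences of the functional equation and of $\nu_f^*(e_\Lambda)=e_{f\Lambda}$.
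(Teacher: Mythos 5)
Your proof is correct and follows essentially the same route as the paper: the paper also defines $(\lambda_{\infty,\frem}^{f\Lambda})^*$ by $X\mapsto e_{f\Lambda}(Z)$, justifying convergence via the identification $R_0[[x]][Z]/(\Phi^C_\frem(Z))\simeq R_0[[x]]\langle Z\rangle/(\Phi^C_\frem(Z))$ (your direct coefficientwise $x$-adic convergence in the finite free $R_0[[x]]$-module is the same point), proves $A$-linearity from $\Phi^{f\Lambda}_a(e_{f\Lambda}(Z))=e_{f\Lambda}(\Phi^C_a(Z))$, gets the closed immersion from surjectivity modulo $x$ plus $x$-adic completeness (your Nakayama argument), and deduces the $\nu_f$-compatibility from $\nu_f^*(e_\Lambda)=e_{f\Lambda}$ exactly as you do.
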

\begin{proof}
Let $R_0[[x]]\langle Z\rangle$ be the $x$-adic completion of the ring $R_0[[x]][Z]$.
We have a natural map
\[
i:R_0[[x]][Z]/(\Phi_\frem^C(Z))\to R_0[[x]]\langle Z\rangle/(\Phi_\frem^C(Z)).
\]
Since $\Phi_\frem^C(Z)\in R_0[Z]$ is monic, the ring on the left-hand side is finite over the $x$-adically complete Noetherian ring $R_0[[x]]$. Hence this ring is also $x$-adically complete and the map $i$ is an isomorphism.
Since $R_0[[x]]\{\{Z\}\}\subseteq R_0[[x]]\langle Z\rangle$, the map 
\[
R_0[[x]][X] \to R_0[[x]]\{\{Z\}\},\quad X\mapsto e_{f\Lambda}(Z)
\]
induces a homomorphism of Hopf algebras
\[
R_0((x))[X]\to R_0[[x]]\langle Z \rangle[1/x]/(\Phi_\frem^C(Z))\overset{i^{-1}}{\to} R_0((x))[Z]/(\Phi_\frem^C(Z)),
\]
which we denote by $(\lambda_{\infty,\frem}^{f\Lambda})^*$.
In the ring $R[[x]]\langle Z\rangle$, we have $\Phi_a^{f\Lambda}(e_{f\Lambda}(Z))=e_{f\Lambda}(\Phi_a^C(Z))$ for any 
$a\in A$ and this implies that the map $(\lambda_{\infty,\frem}^{f\Lambda})^*$ is compatible with $A$-actions.
Thus we obtain a homomorphism of finite locally free $A$-module schemes over $T_0$
\[
\lambda_{\infty,\frem}^{f\Lambda}: C[\frem]\to \TD(f\Lambda)[\frem]
\]
which is compatible with the map $\nu_f$.

To prove that it is a closed immersion, it is enough to show that the map $R_0[[x]][X]\to R_0[[x]]\langle Z \rangle/(\Phi_\frem^C(Z))$ defined by $X\mapsto e_{f\Lambda}(Z)$ is surjective. Since the right-hand side is $x$-adically complete, it suffices to show the surjectivity modulo $x$, which follows from (\ref{EqnDefExp}). 
\end{proof}

\begin{lem}\label{ELambEval}
Let $D$ be any finite flat $R_0((x))$-algebra whose restriction to $\Frac(R_0((x)))$ is etale, and $\delta$ any element of $D$. Let $\cD$ be the integral closure of $R_0[[x]]$ in $D$. We consider $D$ as a topological ring by taking $\{x^l \cD\}_{l\in \bZ_{\geq 0}}$ as a fundamental system of neighborhoods of $0\in D$. Then, for any $F(X)\in R_0((x))\{\{X\}\}$, the evaluation $F(\delta)$ converges for any $\delta\in D$. In particular, we have an $\bF_q$-linear map $e_{f\Lambda}: D\to D$ which is functorial on $D$.
\end{lem}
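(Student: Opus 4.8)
The plan is to first pin down the topological ring $D$, then reduce the convergence of $F(\delta)$ to the statement that the general term $a_i\delta^i$ tends to $0$, and finally read off this decay from the entireness of $F$.

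First I would record the structure of $\cD$ and $D$. Since $D$ is flat over $R_0((x))$ and étale over $\Frac(R_0((x)))$, the base change $D\otimes_{R_0((x))}\Frac(R_0((x)))$ is a finite product of finite separable field extensions of $\Frac(R_0((x)))$, and $D$ injects into it by flatness; in particular $D$ is reduced. As $D$ is module-finite over $R_0((x))=R_0[[x]][1/x]$, every $\delta\in D$ is integral over $R_0((x))$, and clearing denominators shows $x^N\delta\in\cD$ for some $N$; hence $D=\cD[1/x]$. The key commutative-algebra input is that, $R_0$ being excellent, the ring $R_0[[x]]$ is again excellent and in particular Japanese, so the integral closure $\cD$ of $R_0[[x]]$ in the reduced finite algebra $D$ is a finite $R_0[[x]]$-module. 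Because $R_0[[x]]$ is Noetherian and $x$-adically complete, the finite module $\cD$ is then $x$-adically complete and separated ($\bigcap_l x^l\cD=0$ by the Krull intersection theorem).

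Next I would check that $D=\bigcup_{l\geq 0}x^{-l}\cD$ is complete for the linear topology with fundamental system $\{x^l\cD\}_{l\in\bZ_{\geq 0}}$. Any Cauchy sequence lies, from some index on, in a single coset of $\cD$ and is therefore contained in some $x^{-l_0}\cD$; multiplying by $x^{l_0}$ reduces its convergence to convergence in the complete separated module $\cD$. Thus $D$ is complete and Hausdorff, and a series $\sum_{i\geq 0}c_i$ in $D$ converges if and only if $c_i\to 0$, i.e. for every $l$ one has $c_i\in x^l\cD$ for all large $i$. It then remains to estimate $a_i\delta^i$: fixing $\delta\in D$ and choosing $s\in\bZ_{\geq 0}$ with $x^s\delta\in\cD$, we get $\delta^i\in x^{-si}\cD$, while writing $F(X)=\sum_{i\geq 0}a_iX^i$ with $a_i\in R_0((x))$ gives $a_i\in x^{v_x(a_i)}R_0[[x]]\subseteq x^{v_x(a_i)}\cD$, whence $a_i\delta^i\in x^{v_x(a_i)-si}\cD$. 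Applying the defining condition of $K_0((x))\{\{X\}\}$ with $\rho=-s$ yields $v_x(a_i)-si\to+\infty$, so $a_i\delta^i\to 0$ and $F(\delta)=\sum_i a_i\delta^i$ converges in $D$. For the final assertion, $e_{f\Lambda}(X)\in R_0[[x]]\{\{X\}\}\subseteq R_0((x))\{\{X\}\}$, so $\delta\mapsto e_{f\Lambda}(\delta)$ is a well-defined map $D\to D$; it is $\bF_q$-linear because $e_{f\Lambda}$ is an $\bF_q$-linear (additive, $\bF_q$-homogeneous) power series, the identities passing to the limit term by term, and it is functorial because any continuous $R_0((x))$-algebra homomorphism commutes with finite partial sums and hence with the limit.

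The main obstacle is the finiteness of the integral closure $\cD$ over $R_0[[x]]$: this is exactly where the excellence hypothesis on $R_0$ is used, and it is what guarantees that the topology defined by $\cD$ is complete, so that the Cauchy series actually sums in $D$. Once this is in place, the remaining estimate is routine.
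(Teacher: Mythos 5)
Your proposal is correct and follows essentially the same route as the paper's proof: both rest on the identity $D=\cD[1/x]$ together with the excellence of $R_0[[x]]$ (the paper cites Gabber via Kurano--Shimomoto, with the regular case handled by Matsumura), which gives finiteness of $\cD$ over $R_0[[x]]$ and hence its $x$-adic completeness. The paper leaves the remaining convergence argument implicit, and you have merely filled in the routine details (completeness and separatedness of $D$, the estimate $a_i\delta^i\in x^{v_x(a_i)-si}\cD$ with $\rho=-s$, and automatic continuity of algebra maps via preservation of integral closures), all of which check out.
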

\begin{proof}
We have $\cD[1/x]=D$. Since $R_0$ is excellent, so is the power series ring $R_0[[x]]$. Thus $\cD$ is finite over 
$R_0[[x]]$ and $x$-adically complete. (Here the fact that $R_0[[x]]$ is excellent follows from an unpublished work of Gabber \cite[Main Theorem 2]{KS}. If we assume that $R_0$ is regular, then the finiteness of $\cD$ follows from \cite[Proposition (31.B)]{Matsumura}. This is the only case we need.)
This implies that the evaluation $F(\delta)$ converges and defines an element of $D$.
\end{proof}

Put $\cH_{\infty,\frem}^{f\Lambda}=\TD(f\Lambda)[\frem]/\Img(\lambda_{\infty,\frem}^{f\Lambda})$ and 
\begin{equation}\label{EqnDefBn}
B_{0,\frem}^{f\Lambda}=R_0((x))[\eta]/(\Phi^C_\frem(\eta)-\Phi_f^C(1/x)).
\end{equation}
Then $\Spec(B_{0,\frem}^{f\Lambda})$ is a finite flat $C[\frem]$-torsor over $T_0$. Since $\frem$ is invertible in $K_0$, it is etale over $\Frac(R_0((x)))$. 

\begin{lem}\label{TDCokerTriv}
For any monic polynomial $\frem\in A$, there exists an $A$-linear isomorphism 
$\mu_{\infty,\frem}^{f\Lambda}: \underline{A/(\frem)}\to \cH_{\infty,\frem}^{f\Lambda}$ which is compatible with the 
map $\nu_f$ such that the image of $\mu_\infty^{f\Lambda}(1)\in \cH_{\infty,\frem}^{f\Lambda}(T_0)$ in 
$\cH_{\infty,\frem}^{f\Lambda}(B_{0,\frem}^{f\Lambda})$ is equal to the image $\overline{e_{f\Lambda}(\eta)}$ of the 
element $e_{f\Lambda}(\eta)\in \TD(f\Lambda)[\frem](B_{0,\frem}^{f\Lambda})$. 
In particular, we have an exact sequence of $A$-module schemes over $T_0$ 
\begin{equation}\label{TDfrnExact}
\xymatrix{
0 \ar[r] & C[\frem]\ar[r]^-{\lambda_{\infty,\frem}^{f\Lambda}} & \TD(f\Lambda)[\frem] \ar[r]^-{\pi_{\infty,\frem}^{f\Lambda}} & \underline{A/(\frem)} \ar[r] & 0.
}
\end{equation}
\end{lem}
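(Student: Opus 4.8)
The plan is to produce the generator $\mu_{\infty,\frem}^{f\Lambda}(1)$ by fppf descent from the $C[\frem]$-torsor $\Spec(B_{0,\frem}^{f\Lambda})$, and then to check that the induced map $\underline{A/(\frem)}\to \cH_{\infty,\frem}^{f\Lambda}$ is an isomorphism by a monomorphism argument that may be verified after the faithfully flat base change to this torsor. Recall that $\cH_{\infty,\frem}^{f\Lambda}=\TD(f\Lambda)[\frem]/\Img(\lambda_{\infty,\frem}^{f\Lambda})$ is a finite locally free $A$-module scheme over $T_0$ of rank $q^{\deg(\frem)}$, the same rank as $\underline{A/(\frem)}$; once the map is shown to be a monomorphism, equality of ranks forces it to be an isomorphism (a closed immersion of finite locally free $A$-module schemes of the same rank induces a surjection of locally free $\cO_{T_0}$-modules of equal finite rank, hence an isomorphism), and in particular $\cH_{\infty,\frem}^{f\Lambda}$ will be seen a posteriori to be \'etale.

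First I would construct the section. By Lemma \ref{ELambEval} the entire series $e_{f\Lambda}$ evaluates on $\eta\in B_{0,\frem}^{f\Lambda}$, giving $e_{f\Lambda}(\eta)\in \TD(f\Lambda)(B_{0,\frem}^{f\Lambda})$; since $\Phi^{f\Lambda}_\frem(e_{f\Lambda}(\eta))=e_{f\Lambda}(\Phi^C_\frem(\eta))=e_{f\Lambda}(\Phi^C_f(1/x))=0$ by the defining relation of $B_{0,\frem}^{f\Lambda}$ together with $\Phi^C_f(1/x)\in f\Lambda=\ker(e_{f\Lambda})$, this element is $\frem$-torsion. Now $\Spec(B_{0,\frem}^{f\Lambda})\to T_0$ is a finite faithfully flat $C[\frem]$-torsor, so $\Spec(B_{0,\frem}^{f\Lambda}\otimes_{R_0((x))}B_{0,\frem}^{f\Lambda})\simeq \Spec(B_{0,\frem}^{f\Lambda})\times_{T_0}C[\frem]$, under which the two projections send $\eta$ to $\eta$ and to $\eta+\zeta$ for the universal point $\zeta$ of $C[\frem]$. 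As $e_{f\Lambda}$ is additive and $e_{f\Lambda}(\zeta)\in \Img(\lambda_{\infty,\frem}^{f\Lambda})$ for $\zeta\in C[\frem]$ by Lemma \ref{TDGamma1}, the class $\overline{e_{f\Lambda}(\eta)}$ in $\cH_{\infty,\frem}^{f\Lambda}$ has equal pull-backs under the two projections; since $\cH_{\infty,\frem}^{f\Lambda}$ is representable and hence an fppf sheaf, descent yields a unique $\mu_{\infty,\frem}^{f\Lambda}(1)\in \cH_{\infty,\frem}^{f\Lambda}(T_0)$ lifting it. Because $\nu_f^\sharp$ carries the relation $\Phi^C_\frem(\eta)=1/x$ to $\Phi^C_\frem(\eta)=\Phi^C_f(1/x)$ and $\nu_f^*(e_{\Lambda})=e_{f\Lambda}$, this section is compatible with $\nu_f$.

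It then remains to show that the $A$-linear map $\mu_{\infty,\frem}^{f\Lambda}\colon \underline{A/(\frem)}\to \cH_{\infty,\frem}^{f\Lambda}$ sending $1$ to $\mu_{\infty,\frem}^{f\Lambda}(1)$ is a monomorphism; being a monomorphism is fppf-local on the base, so I would verify it after base change to $\Spec(B_{0,\frem}^{f\Lambda})$, where the section is represented by $e_{f\Lambda}(\eta)$. If $a\in A$ satisfies $a\cdot\mu_{\infty,\frem}^{f\Lambda}(1)=0$ over some $B_{0,\frem}^{f\Lambda}$-algebra, then $\Phi^{f\Lambda}_a(e_{f\Lambda}(\eta))=e_{f\Lambda}(\Phi^C_a(\eta))$ lies in $\Img(\lambda_{\infty,\frem}^{f\Lambda})$, hence equals $e_{f\Lambda}(\zeta)$ for some $\zeta\in C[\frem]$; additivity and $\ker(e_{f\Lambda})=f\Lambda$ give $\Phi^C_a(\eta)-\zeta\in f\Lambda$, and applying $\Phi^C_\frem$ yields $\Phi^C_{af}(1/x)=\Phi^C_a(\Phi^C_f(1/x))\in \Phi^C_\frem(f\Lambda)=\frem\cdot f\Lambda$. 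Since $f\Lambda$ is a free $A$-module of rank one generated by $\Phi^C_f(1/x)$ and $A$ is a domain, this forces $\frem\mid a$, so the map is a monomorphism and therefore an isomorphism; the exact sequence (\ref{TDfrnExact}) results by taking $\pi_{\infty,\frem}^{f\Lambda}$ to be the composite of the quotient map $\TD(f\Lambda)[\frem]\to \cH_{\infty,\frem}^{f\Lambda}$ with $(\mu_{\infty,\frem}^{f\Lambda})^{-1}$. The main obstacle is the precise control of the kernel of the entire series $e_{f\Lambda}$, namely that $e_{f\Lambda}(w)=0$ forces $w\in f\Lambda$, on the finite flat $R_0((x))$-algebras furnished by Lemma \ref{ELambEval}; this rests on the discreteness of $f\Lambda$ inside $K_0((x))$ and the simple-zero product expansion (\ref{EqnDefExp}), and is cleanest on geometric fibers, where $\frem$ is invertible and the full $\frem$-torsion is \'etale.
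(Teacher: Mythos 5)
Your construction of the section $\mu_{\infty,\frem}^{f\Lambda}(1)$ by descent along the $C[\frem]$-torsor $\Spec(B_{0,\frem}^{f\Lambda})$ is correct and is exactly what the paper does, and your reduction ``monomorphism $+$ equal rank $\Rightarrow$ isomorphism'' is sound. The gap is in the monomorphism check. You must rule out vanishing of $a\cdot\mu_{\infty,\frem}^{f\Lambda}(1)$ at \emph{every} point of $T_0=\Spec(R_0((x)))$, and the lemma is stated (and later used in the paper, with $\frem=\wp^n$ over $R_0=\cO_K$, to define the canonical subgroups $\cC_n^{f\Lambda}$) for arbitrary monic $\frem$, with no invertibility hypothesis. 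Your closing remark that the kernel control is ``cleanest on geometric fibers, where $\frem$ is invertible and the full $\frem$-torsion is \'etale'' misreads the generality: when $\wp\mid\frem$ and $\wp$ is not invertible in $R_0$, there are fibers of characteristic $\wp$ on which $C[\frem]$ has an infinitesimal part. At such a point, $a\cdot\mu_{\infty,\frem}^{f\Lambda}(1)=0$ only gives, after an fppf cover $D'$ of the residue field, an equation $e_{f\Lambda}(\Phi^C_a(\eta))=e_{f\Lambda}(\zeta)$ with $\zeta$ \emph{nilpotent} (e.g.\ $\zeta^{q^{dn}}=0$ for $\frem=\wp^n$), and the assertion ``$e_{f\Lambda}(w)=0\Rightarrow w\in f\Lambda$'' — which is a statement about the zero set of an entire series over characteristic-zero complete fields, via the product expansion (\ref{EqnDefExp}) — gives no control in such a non-reduced positive-characteristic algebra. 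Nor can you get away with checking injectivity only on the generic fiber: without first knowing that $\cH_{\infty,\frem}^{f\Lambda}$ is \'etale, a nonzero section of a finite locally free group scheme may well specialize to zero on a closed fiber, so generic injectivity does not propagate. Note also that your parenthetical ``$\cH_{\infty,\frem}^{f\Lambda}$ will be seen a posteriori to be \'etale'' is circular here: the \'etaleness is needed \emph{before} the fiberwise check can be localized to characteristic zero.

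This missing step is precisely where the paper's proof diverges from yours. The paper first proves $\omega_{\cH_{\infty,\frem}^{f\Lambda}}=0$, hence \'etaleness, by a co-Lie complex argument: since $d(e_{f\Lambda}(Z))=dZ$, the map $(\lambda_{\infty,\frem}^{f\Lambda})^*\colon\omega_{\TD(f\Lambda)[\frem]}\to\omega_{C[\frem]}$ is an isomorphism (both sides being $\cO/(\frem)$ times a generator), which kills the invariant differentials of the quotient. Only then, with both source and target finite \'etale over the connected base, does it suffice to check injectivity at a single geometric point, where one may assume $R_0=K_0$; there the paper concludes via the irreducibility of $\Phi^C_\frem(X)-1/x$ (an Eisenstein argument) and an inspection of $x$-adic valuations. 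Your endgame over the generic fiber — applying $\Phi^C_\frem$ to $\Phi^C_a(\eta)-\zeta\in f\Lambda$ and using that $f\Lambda$ is $A$-free of rank one on $\Phi^C_f(1/x)$ to force $\frem\mid a$ — is in fact a pleasant alternative to the paper's valuation computation, and your handling of general $f$ directly (rather than reducing to $f=1$ via $\nu_f$ as the paper does) is fine; but to make the proof complete you must either insert the \'etaleness argument or otherwise control the characteristic-$\wp$ fibers, and your proposal does neither.
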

\begin{proof}
By Lemma \ref{ELambEval}, we have an element $e_{f\Lambda}(\eta)\in \TD(f\Lambda)[\frem](B_{0,\frem}^{f\Lambda})$. 
Since its image $\overline{e_{f\Lambda}(\eta)}$ in $\cH_{\infty,\frem}^{f\Lambda}(B_{0,\frem}^{f\Lambda})$ is invariant 
under the action of $C[\frem]$ on $B_{0,\frem}^{f\Lambda}$, we obtain $\overline{e_{f\Lambda}(\eta)}\in 
\cH_{\infty,\frem}^{f\Lambda}(T_0)$. This yields an $A$-linear homomorphism $\underline{A/(\frem)}\to 
\cH_{\infty,\frem}^{f\Lambda}$ over $T_0$ which is compatible with the map $\nu_f$.  

To see that it is an isomorphism, using the map $\nu_f$ we reduce ourselves to the case of $f=1$. Since the element $\frem$ is invertible in $K_0$, using co-Lie complexes we obtain the exact sequence
\[
\xymatrix{
0 \ar[r] & 	\omega_{\cH_{\infty,\frem}^\Lambda} \ar[r] & \omega_{\TD(\Lambda)[\frem]} \ar[r]^-{(\lambda_{\infty,\frem}^{\Lambda})^*} & \omega_{C[\frem]}\ar[r] & 	0.		
}
\]
We also see that the natural sequence
\[
\xymatrix{
	0 \ar[r] & 	\omega_{\TD(\Lambda)} \ar[r]^{\frem} & \omega_{\TD(\Lambda)} \ar[r] & \omega_{\TD(\Lambda)[\frem]}\ar[r] & 	0
}
\]
is exact and similarly for $C[\frem]$. Since we have $d(e_\Lambda(Z))=dZ$, the map $(\lambda_{\infty,\frem}^{\Lambda})^*$ is an isomorphism. Hence $\omega_{\cH_{\infty,\frem}^\Lambda}=0$ and $\cH_{\infty,\frem}^\Lambda$ is etale.

Now it is enough to show $a\overline{e_{\Lambda}(\eta)}\neq 0$ in 
$\cH_{\infty,\frem}^{\Lambda}(B_{0,\frem}^{f\Lambda})$ for any non-zero element $a\in A/(\frem)$. For this, we may 
assume $R_0=K_0$. In this case, note that the polynomial $\Phi^C_\frem(X)-1/x$ is irreducible over $K_0((x))$, since 
the equation $\Phi^C_\frem(1/X)=1/x$ gives an Eisenstein extension over $K_0[[x]]$. Hence we may consider the ring 
$B^\Lambda_{0,\frem}$ as a subfield of $K_0((x))^{\alg}$. 
Let $\hat{a}\in A$ be a lift of $a$ satisfying $\deg(\hat{a})<\deg(\frem)$. The condition $a\overline{e_{\Lambda}(\eta)}= 0$ implies $\Phi^C_{\hat{a}}(\eta)\equiv \zeta\bmod \Lambda$ for some root $\zeta$ of $\Phi^C_\frem(X)$ in $K_0((x))^{\alg}$. By inspecting $x$-adic valuations it forces $\zeta=0$, and the irreducibility of $\Phi^C_\frem(X)-1/x$ implies $\hat{a}=0$. This concludes the proof.
\end{proof}
We often write $\lambda_{\infty,\frn}^{\Lambda}$, $\cH_{\infty,\frn}^{\Lambda}$, $B_{0,\frn}^{\Lambda}$, $\mu_{\infty,\frn}^{\Lambda}$ and $\pi_{\infty,\frn}^{\Lambda}$ as $\lambda_\infty$, $\cH_\infty$, $B_0$, $\mu_\infty$ and $\pi_\infty$, respectively.

Put $S_0=\Spec(R_0((y)))$ and consider the morphism 
\[
\sigma_{q-1}:T_0\to S_0
\] 
defined by $y\mapsto x^{q-1}$. The $S_0$-scheme $T_0$ is a finite etale $\bF_q^\times$-torsor, where $c\in \bF_q^\times$ acts on it by the $R_0$-linear map
\[
g_c:R_0((x))\to R_0((x)),\quad x\mapsto c^{-1} x.
\]
Since $\Lambda$ is stable under this $\bF_q^\times$-action, we see that the coefficients of $e_\Lambda(X)$ and $\Phi^\Lambda_a(X)$ are in $R_0[[x^{q-1}]]$ for any $a\in A$ \cite[\S 5C1]{Armana}. This means that there exists a unique pair of a Drinfeld module and its $\Gamma_1(\frn)$-structure over $S_0$ 
\[
(\TD^\triangledown(\Lambda),\lambda^\triangledown_\infty)
\] 
satisfying $\sigma_{q-1}^*(\TD^\triangledown(\Lambda),\lambda^\triangledown_\infty)=(\TD(\Lambda),\lambda_\infty)$. 
Over $T_0$, the Tate-Drinfeld module $\TD^\triangledown(\Lambda)|_{T_0}=\TD(\Lambda)$ has a $\Gamma_1^\Delta(\frn)$-structure 
\[
(\TD(\Lambda),\lambda_\infty,[\mu_\infty])
\]
with the element $[\mu_\infty]\in (I_{(\TD(\Lambda),\lambda_\infty)}/\Delta)(T_0)$ defined by $\mu_\infty$.
We also put
\[
\cH^\triangledown_\infty = \TD^\triangledown(\Lambda)[\frn]/\Img(\lambda^\triangledown_\infty),\quad I^\triangledown_\infty=\sIsom_{A,S_0}(\underline{A/(\frn)},\cH_\infty^\triangledown).
\]

\begin{lem}\label{TDMuInfty}
There exists an isomorphism of finite etale $\bF_q^\times$-torsors over $S_0$
\[
T_0\to I^\triangledown_\infty/\Delta.
\]
\end{lem}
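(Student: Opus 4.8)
The plan is to exhibit the required isomorphism as the $T_0$-point of $I^\triangledown_\infty/\Delta$ determined by $\mu_\infty$, and then to check that it is $\bF_q^\times$-equivariant; since a morphism between torsors under the same group is automatically an isomorphism, this will suffice. First I would record that, because $\sigma_{q-1}^*(\TD^\triangledown(\Lambda),\lambda^\triangledown_\infty)=(\TD(\Lambda),\lambda_\infty)$, pulling back along $\sigma_{q-1}\colon T_0\to S_0$ gives $\sigma_{q-1}^*\cH^\triangledown_\infty=\cH_\infty$ and hence $\sigma_{q-1}^*I^\triangledown_\infty=\sIsom_{A,T_0}(\underline{A/(\frn)},\cH_\infty)$. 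By Lemma \ref{TDCokerTriv} the isomorphism $\mu_\infty$ is a $T_0$-point of this pullback, i.e.\ a $T_0$-point $\tilde\mu_\infty$ of $I^\triangledown_\infty$ (with $T_0$ regarded as an $S_0$-scheme via $\sigma_{q-1}$). Since $\cH^\triangledown_\infty$ is finite etale and etale locally isomorphic to $\underline{A/(\frn)}$, the sheaf $I^\triangledown_\infty$ is a finite etale $(A/(\frn))^\times$-torsor and $I^\triangledown_\infty/\Delta$ a finite etale $\bF_q^\times$-torsor over $S_0$. Composing $\tilde\mu_\infty$ with the quotient map yields a morphism
\[
h=[\tilde\mu_\infty]\colon T_0\to I^\triangledown_\infty/\Delta
\]
of $S_0$-schemes, the target carrying the residual action of $\bF_q^\times\simeq (A/(\frn))^\times/\Delta$.

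The crux is to understand how $h$ intertwines the two $\bF_q^\times$-actions, and this reduces to a direct computation with $\mu_\infty$. Recall from (\ref{EqnDefBn}) that $\tilde\mu_\infty$ is pinned down by $\mu_\infty(1)=\overline{e_\Lambda(\eta)}$, where $\Phi^C_\frn(\eta)=1/x$. For $c\in\bF_q^\times$ the automorphism $g_c\colon x\mapsto c^{-1}x$ of $T_0$ lifts to the torsor $\Spec(B_0)$ by $\eta\mapsto c\eta$, since $\Phi^C_\frn(c\eta)=c\,\Phi^C_\frn(\eta)=c/x=g_c(1/x)$. Because $\Lambda$ is stable under the $\bF_q^\times$-action, the coefficients of $e_\Lambda$ lie in $R_0[[x^{q-1}]]$ and are therefore fixed by $g_c$; combined with the $\bF_q$-linearity of $e_\Lambda$ (as $\Lambda$ is an $A$-module), this gives
\[
g_c\big(e_\Lambda(\eta)\big)=e_\Lambda(c\eta)=c\,e_\Lambda(\eta),
\]
whence $g_c(\mu_\infty(1))=c\,\overline{e_\Lambda(\eta)}=\mu_\infty(c)$. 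In other words, transporting the trivializing section $\mu_\infty$ by $g_c$ replaces it by its precomposition with multiplication by $c$ on $\underline{A/(\frn)}$; equivalently, the descent cocycle of $I^\triangledown_\infty$ along the $\bF_q^\times$-cover $T_0/S_0$ is the natural inclusion $\bF_q^\times\hookrightarrow (A/(\frn))^\times$.

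Finally I would pass to the quotient by $\Delta$. Since $\Delta\to (A/(\frn))^\times/\bF_q^\times$ is an isomorphism, internally $(A/(\frn))^\times=\Delta\times\bF_q^\times$, so the image of the above cocycle in $(A/(\frn))^\times/\Delta\simeq\bF_q^\times$ is the identity. Concretely this says $h\circ g_c$ equals the residual action of $c$ composed with $h$, so $h$ is $\bF_q^\times$-equivariant and hence an isomorphism of finite etale $\bF_q^\times$-torsors over $S_0$, as claimed. The main obstacle is the computation in the second paragraph: one must check carefully that $g_c$ lifts to $B_0$ by $\eta\mapsto c\eta$ and keep track of the torsor conventions so that the resulting cocycle is exactly the natural inclusion (rather than its inverse or some other twist), which is precisely what forces its image mod $\Delta$ to be an isomorphism of $\bF_q^\times$ and not merely a nonzero homomorphism.
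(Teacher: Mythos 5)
Your proposal is correct and follows essentially the same route as the paper: you exhibit the morphism via the $T_0$-point of $I^\triangledown_\infty$ given by $\mu_\infty$, lift $g_c$ to $B_0$ by $\eta\mapsto c\eta$, and use the $\bF_q^\times$-invariance of the coefficients of $e_\Lambda$ together with its $\bF_q$-linearity to get $\tilde{g}_c(e_\Lambda(\eta))=e_\Lambda(c\eta)=c\,e_\Lambda(\eta)$, i.e.\ $c\mu_\infty=g_c^*(\mu_\infty)$, which is exactly the paper's equivariance computation. Your extra bookkeeping (identifying the descent cocycle with the inclusion $\bF_q^\times\hookrightarrow(A/(\frn))^\times$ and using $(A/(\frn))^\times=\Delta\times\bF_q^\times$ before passing to the quotient) is just a more explicit rendering of the paper's reduction to giving an $\bF_q^\times$-equivariant morphism $T_0\to I^\triangledown_\infty$.
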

\begin{proof}
It is enough to give an $\bF_q^\times$-equivariant morphism $T_0\to I_{\infty}^\triangledown$ over $S_0$, which amounts 
to giving an $A$-linear isomorphism $\mu: \underline{A/(\frn)}\to \cH_\infty$ over $T_0$ satisfying $c\mu=g_c^*(\mu)$ 
for any $c\in \bF_q^\times$. The map $g_c$ extends to a similar $R_0((x))$-linear isomorphism on $B_0$ via $\eta\mapsto 
c\eta$, which we denote by $\tilde{g}_c$. Then the inclusion $\cH_\infty(R_0((x)))\to \cH_\infty(B_0)$ is compatible 
with $g_c$ and $\tilde{g}_c$. 
Consider the isomorphism $\mu_\infty$ of Lemma \ref{TDCokerTriv}. We have $\tilde{g}_c(e_\Lambda(\eta))=e_\Lambda(c \eta)$ in $B_0$ and this yields $c\mu_\infty=g_c^*(\mu_\infty)$.
\end{proof}



\subsection{Structure around cusps I}\label{SubsecCuspI}

Suppose moreover that $R_0$ is regular. Note that Lemma \ref{TDCoeff} implies
\begin{equation}\label{EqnTDdj}
j_t(\TD^\triangledown(\Lambda))\in y^{-1} R_0[[y]]^\times.
\end{equation}
We define a scheme $\widehat{\Cusps}_{R_0}^\Delta$ by the cartesian diagram
\[
\xymatrix{
\widehat{\Cusps}_{R_0}^\Delta \ar[r]\ar[d] & X_1^\Delta(\frn)_{R_0}\ar[d] \\
\Spec(R_0[[\frac{1}{j}]]) \ar[r] & \bP^1_{R_0}
}
\]
and put $\Cusps_{R_0}^\Delta=(\widehat{\Cusps}_{R_0}^\Delta|_{V(1/j)})_\red$.
Since $Y_1^\Delta(\frn)_{R_0}$ is regular and (\ref{EqnTDdj}) implies that the map $j_t$ induces an isomorphism 
\[
y^\triangledown :S_0=\Spec(R_0((y)))\to \Spec(R_0((1/j))),
\]
we see as in the proof of \cite[Lemma 8.11.4]{KM} that $\widehat{\Cusps}_{R_0}^\Delta$ is isomorphic to the normalization of $\cS_0=\Spec(R_0[[y]])$ in the scheme $Y_1^\Delta(\frn)_{S_0}$ defined by the cartesian diagram
\[
\xymatrix{
Y_1^\Delta(\frn)_{S_0} \ar[rr]\ar[d] & &Y_1^\Delta(\frn)_{R_0}\ar[d] \\
S_0 \ar[r]_-{y^\triangledown} & \Spec(R_0((1/j))) \ar[r]& \bA^1_{R_0}.
}
\]

For $\bullet\in\{\emptyset, \Delta\}$, let us consider the functor sending a scheme $S$ over $S_0$ to the set of $\Gamma_1^\bullet(\frn)$-structures on $\TD^\triangledown(\Lambda)|_S$, which is representable by a finite etale scheme $[\Gamma^\bullet_1(\frn)]_{\TD^\triangledown}$ over $S_0$. By Lemma \ref{DMAut} and Lemma \ref{DMIsog}, as in the proof of \cite[Corollary 8.4.4]{KM} we obtain a natural isomorphism
\[
[\Gamma_1^\Delta(\frn)]_{\TD^\triangledown}/\bF_q^\times \to Y_1^\Delta(\frn)_{S_0},
\]
where $\bF_q^\times$ acts as the automorphism group of $\TD^\triangledown(\Lambda)$.
Thus $\widehat{\Cusps}_{R_0}^\Delta$ is isomorphic to the quotient $\cZ_{R_0}^\Delta/\bF_q^\times$ of the normalization $\cZ_{R_0}^\Delta$ of $\cS_0$ in $[\Gamma_1^\Delta(\frn)]_{\TD^\triangledown}$ by the induced action of $\bF_q^\times$. Note that we have a natural identification 
\[
[\Gamma_1(\frn)]_{\TD^\triangledown}\times_{S_0}T_0=[\Gamma_1(\frn)]_{\TD},
\]
where the right-hand side is a similar finite etale scheme over $T_0$ for $\TD(\Lambda)$. We also put $\cT_0=\Spec(R_0[[x]])$. It is normal since $R_0$ is regular.

\begin{lem}\label{GammaDeltaNonCan}
There exists a natural isomorphism over $S_0$
\[
[\Gamma_1(\frn)]_{\TD}=[\Gamma_1(\frn)]_{\TD^\triangledown}\times_{S_0}T_0 \to [\Gamma_1^\Delta(\frn)]_{\TD^\triangledown}
\]
which is compatible with actions of $\bF_q^\times=\Aut_{A,S_0}(\TD^\triangledown(\Lambda))$. Here this group acts on the left-hand side diagonally.
\end{lem}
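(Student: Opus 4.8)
The plan is to realise both sides as $\bF_q^\times$-torsors over $W:=[\Gamma_1(\frn)]_{\TD^\triangledown}$ and to identify them after peeling off the $\Gamma_1(\frn)$-level data. Write $E=\TD^\triangledown(\Lambda)$ and let $\lambda^{\mathrm{un}}$ denote the universal $\Gamma_1(\frn)$-structure on $E|_W$. The forgetful map $[\Gamma_1^\Delta(\frn)]_{\TD^\triangledown}\to W$ exhibits the right-hand side as the quotient by $\Delta$ of $\sIsom_{A,W}(\underline{A/(\frn)},E[\frn]/\Img(\lambda^{\mathrm{un}}))$, an $(A/(\frn))^\times$-torsor, hence an $(A/(\frn))^\times/\Delta=\bF_q^\times$-torsor over $W$. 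On the other hand, Lemma \ref{TDMuInfty} identifies $T_0$ with $I^\triangledown_\infty/\Delta=\sIsom_{A,S_0}(\underline{A/(\frn)},E[\frn]/\Img(\lambda^\triangledown_\infty))/\Delta$, so after base change to $W$ the left-hand side $[\Gamma_1(\frn)]_{\TD^\triangledown}\times_{S_0}T_0$ becomes $\sIsom_{A,W}(\underline{A/(\frn)},E[\frn]/\Img(\lambda^\triangledown_\infty))/\Delta$. Thus the two sides differ only in that the quotient of $E[\frn]$ is formed by $\Img(\lambda^{\mathrm{un}})$ on one side and by the pullback of $\Img(\lambda^\triangledown_\infty)$ on the other, and it suffices to produce an isomorphism of these two quotient group schemes that is natural in the $\Gamma_1(\frn)$-structure.

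The key device will be a determinant identification. Since $\frn$ is invertible over $S_0$, for any Drinfeld module $E$ of rank two and any $\Gamma_1(\frn)$-structure $\lambda$ the torsion $E[\frn]$ is a finite etale $A/(\frn)$-module scheme which is etale-locally free of rank two, $\Img(\lambda)\cong C[\frn]$ is locally free of rank one via $\lambda$, and the quotient $E[\frn]/\Img(\lambda)$ is locally free of rank one. The exact sequence $0\to\Img(\lambda)\to E[\frn]\to E[\frn]/\Img(\lambda)\to 0$ then yields a canonical isomorphism $\wedge^2_{A/(\frn)}E[\frn]\cong \Img(\lambda)\otimes_{A/(\frn)}(E[\frn]/\Img(\lambda))$, and using $\lambda$ to trivialise $\Img(\lambda)\cong C[\frn]$ I obtain a canonical isomorphism
\[
c_\lambda\colon E[\frn]/\Img(\lambda)\xrightarrow{\ \sim\ } M:=\wedge^2_{A/(\frn)}E[\frn]\otimes_{A/(\frn)} C[\frn]^{\otimes -1},
\]
whose target does not involve $\lambda$. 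Applying this with $\lambda=\lambda^{\mathrm{un}}$ and with $\lambda=\lambda^\triangledown_\infty$ over $W$ identifies each of the two quotient group schemes above with the single group scheme $M$, which together with $\lambda^\triangledown_\infty$ is already defined over $S_0$. Concretely, $c_{\lambda^\triangledown_\infty}$ realises $T_0\cong I^\triangledown_\infty/\Delta$ as the pullback to $W$ of the $\bF_q^\times$-torsor $\sIsom_{A,S_0}(\underline{A/(\frn)},M)/\Delta$ on $S_0$, and $c_{\lambda^{\mathrm{un}}}$ realises the right-hand side as isomorphic to the same pullback.

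I then define the isomorphism $\Theta$ on functors of points by sending a point $(\lambda,\tau)$ of $[\Gamma_1(\frn)]_{\TD}=W\times_{S_0}T_0$, with $\tau$ corresponding to $[\nu]\in(I^\triangledown_\infty/\Delta)(S)$, to $(\lambda,[\,c_\lambda^{-1}\circ c_{\lambda^\triangledown_\infty}\circ\nu\,])$. Since $c_\lambda^{-1}\circ c_{\lambda^\triangledown_\infty}$ is a bijection between the relevant $\sIsom$-sets with evident inverse, $\Theta$ is an isomorphism of schemes over $W$, natural in $S$, and a fortiori an isomorphism over $S_0$. It remains to check compatibility with the two $\bF_q^\times$-actions, where on the left $\epsilon\in\bF_q^\times=\Aut_{A,S_0}(E)$ acts diagonally through $\lambda\mapsto\epsilon\circ\lambda$ on $W$ and through $g_\epsilon$ on $T_0$. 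I expect this to be the main, though essentially bookkeeping, obstacle: one must match the action $g_\epsilon$ on $T_0$, which by the relation $c\mu_\infty=g_c^*(\mu_\infty)$ established in the proof of Lemma \ref{TDMuInfty} corresponds to rescaling $[\nu]$ by $\epsilon$, against the action of the automorphism $\epsilon$ of $E$ on $\wedge^2_{A/(\frn)}E[\frn]$ and on $C[\frn]$, hence on $M$ and on the two quotients. Because the determinant construction $c_\lambda$ is functorial in $E$, this reduces to tracking the scalar $\epsilon$ through $M$, and the remaining naturality and base-change compatibilities are immediate from the constructions.
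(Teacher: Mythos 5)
Your construction coincides with the paper's own proof: your determinant identification $c_\lambda^{-1}\circ c_{\lambda^\triangledown_\infty}$ is exactly the isomorphism $\iota:\cH^\trd_\infty|_{[\Gamma_1(\frn)]_{\TD^\trd}}\to \TDt(\Lambda)[\frn]/\Img(\lambda)$ that the paper obtains by taking determinants of the locally constant etale sheaves of locally free $A/(\frn)$-modules (using the two $\Gamma_1(\frn)$-structures to cancel the $C[\frn]$-factor), and your map $\Theta$ is precisely the paper's $[j]\mapsto[\iota\circ j]$ after invoking Lemma \ref{TDMuInfty} to identify $T_0$ with $I^\trd_\infty/\Delta$. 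The $\bF_q^\times$-equivariance that you defer as bookkeeping is likewise left implicit in the paper, and your sketch names the correct ingredients for it (the relation $c\mu_\infty=g_c^*(\mu_\infty)$ and functoriality of the determinant), so the proposal is correct and follows essentially the same route.
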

\begin{proof}
Let $\lambda$ be the universal $\Gamma_1(\frn)$-structure on $\TD^\trd(\Lambda)$ over $[\Gamma_1(\frn)]_{\TD^\triangledown}$. Taking the determinant of locally constant etale sheaves of locally free $A/(\frn)$-modules, we obtain a natural isomorphism of $A$-module schemes $\iota: \cH_\infty^\trd|_{[\Gamma_1(\frn)]_{\TD^\triangledown}}\to \TDt(\Lambda)[\frn]/\Img(\lambda)$. Then, by Lemma \ref{TDMuInfty}, the map
\[
(I_\infty^\trd/\Delta)|_{[\Gamma_1(\frn)]_{\TD^\triangledown}}\to  [\Gamma_1^\Delta(\frn)]_{\TD^\triangledown},\quad 
[(j:\underline{A/(\frn)}\to \cH_\infty^\trd)]\mapsto [\iota\circ j]
\]
gives the desired isomorphism.
\end{proof}

\begin{lem}\label{GammaDeltaModuli}
The scheme $\cZ^\Delta_{R_0}$ over $\cS_0$ is decomposed as
\[
\cZ^\Delta_{R_0}=\cZ_{R_0}^{\Delta,0}\sqcup \cZ_{R_0}^{\Delta,\neq 0}, \quad \cZ_{R_0}^{\Delta,0}=\coprod_{(A/(\frn))^\times} \cT_0.
\]
Moreover, the group $\bF_q^\times=\Aut_{A,S_0}(\TDt(\Lambda))$ induces free actions on the two components of the former decomposition.
\end{lem}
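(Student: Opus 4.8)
The plan is to combine the non-canonical trivialization of Lemma \ref{GammaDeltaNonCan} with a direct analysis of $\Gamma_1^\Delta(\frn)$-structures on $\TDt(\Lambda)$, stratifying $[\Gamma_1^\Delta(\frn)]_{\TDt}$ according to whether such a structure meets the canonical subgroup $\Img(\lambda_\infty^\trd)$. Concretely, for a $\Gamma_1^\Delta(\frn)$-structure $(\lambda,[\mu])$ on $\TDt(\Lambda)|_S$ I would form the composite $\pi_\infty^\trd\circ\lambda\colon C[\frn]\to\cH_\infty^\trd$, where $\pi_\infty^\trd$ denotes the quotient map onto $\cH_\infty^\trd\cong\underline{A/(\frn)}$. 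Since the target is étale, its zero section is open and closed, so the vanishing locus of $\pi_\infty^\trd\circ\lambda$ cuts out an open and closed subscheme of $[\Gamma_1^\Delta(\frn)]_{\TDt}$; after normalization this induces $\cZ^\Delta_{R_0}=\cZ^{\Delta,0}_{R_0}\sqcup\cZ^{\Delta,\neq 0}_{R_0}$, with $\cZ^{\Delta,0}_{R_0}$ the part where $\pi_\infty^\trd\circ\lambda=0$.

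Next I would identify the zero stratum explicitly. When $\pi_\infty^\trd\circ\lambda=0$ the structure $\lambda$ factors through the closed immersion $\lambda_\infty^\trd$, and since $\lambda$ is itself a closed immersion it must equal $\lambda_\infty^\trd\circ\langle a\rangle$ for a unique $a\in(A/(\frn))^\times$, where $\langle a\rangle$ ranges over $\Aut_A(C[\frn])=(A/(\frn))^\times$. For each such $a$ the attached torsor $I_{(\TDt(\Lambda),\lambda)}$ coincides with $I_\infty^\trd$ because $\Img(\lambda)=\Img(\lambda_\infty^\trd)$, so by Lemma \ref{TDMuInfty} the possible $[\mu]$ are parametrized by $I_\infty^\trd/\Delta\cong T_0$. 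Hence the zero stratum is $\coprod_{(A/(\frn))^\times}T_0$ over $S_0$. Normalizing $\cS_0=\Spec(R_0[[y]])$ in each copy $T_0=\Spec(R_0((x)))$ with $y=x^{q-1}$: since $R_0$ is regular, $R_0[[x]]$ is normal and finite over $R_0[[y]]$ (with basis $1,x,\dots,x^{q-2}$), and any element of $R_0((x))$ integral over $R_0[[y]]$ is integral over $R_0[[x]]$ hence lies in it; therefore the integral closure is $R_0[[x]]$ and the normalization of each copy is $\cT_0$. This yields $\cZ^{\Delta,0}_{R_0}=\coprod_{(A/(\frn))^\times}\cT_0$.

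For the freeness of the $\bF_q^\times=\Aut_{A,S_0}(\TDt(\Lambda))$-action I would work directly with the post-composition action on $[\Gamma_1^\Delta(\frn)]_{\TDt}$, which avoids the torsor twist hidden in Lemma \ref{GammaDeltaNonCan}. An automorphism $c$ acts on the rank-two module $\TDt(\Lambda)[\frn]$ as the scalar $c\in\bF_q^\times\subseteq(A/(\frn))^\times$, hence by multiplication by $c$ on $\cH_\infty^\trd\cong\underline{A/(\frn)}$; consequently $c$ sends $\pi_\infty^\trd\circ\lambda$ to $c\cdot(\pi_\infty^\trd\circ\lambda)$, preserving the vanishing/non-vanishing dichotomy and thus each stratum. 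On $\cZ^{\Delta,\neq 0}_{R_0}$ a fixed point of $c\neq 1$ would force $(c-1)(\pi_\infty^\trd\circ\lambda)=0$; as $c-1\in\bF_q^\times$ is invertible this gives $\pi_\infty^\trd\circ\lambda=0$, a contradiction, so the action is free there. On $\cZ^{\Delta,0}_{R_0}$ the $\bF_q$-linearity of $e_\Lambda$ gives $c\circ\lambda_\infty^\trd=\lambda_\infty^\trd\circ\langle c\rangle$, whence $c$ carries the component indexed by $a$ to the one indexed by $ca$; left multiplication by $c\neq 1$ fixes no element of $(A/(\frn))^\times$, so no component is preserved and the action is free on $\coprod_{(A/(\frn))^\times}\cT_0$ as well.

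The main obstacle I anticipate is precisely the bookkeeping of the $\bF_q^\times$-action: the deck/torsor action $g_c$ on $T_0$ fixes the origin of each $\cT_0$, so freeness cannot be read off from the torsor structure and must instead come from the free permutation $a\mapsto ca$ of the cusp components. Disentangling the genuine automorphism action from the torsor twist implicit in Lemma \ref{GammaDeltaNonCan}, and checking that the stratification and the action are compatible, is the delicate point; the normalization computation and the open-closed splitting are then routine.
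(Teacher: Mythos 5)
Your overall route is the same as the paper's: stratify by the vanishing of the composite $\pi_\infty^\trd\circ\lambda$ (whose zero locus is open and closed since the target is etale), identify the zero stratum via the fact that a closed immersion $C[\frn]\to \Img(\lambda_\infty^\trd)\simeq C[\frn]$ of equal rank is an automorphism, so that $[\Gamma_1(\frn)]^0\simeq \sAut_{A}(C[\frn])=\underline{(A/(\frn))^\times}$, then normalize ($R_0[[x]]$ is indeed the integral closure of $R_0[[y]]$ in $R_0((x))$), and obtain freeness on $\cZ_{R_0}^{\Delta,0}$ from the free permutation $a\mapsto ca$ of the index set $(A/(\frn))^\times$, using that $\bF_q^\times\to (A/(\frn))^\times$ is injective because $\frn$ is non-constant. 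Your handling of $[\mu]$ directly through Lemma \ref{TDMuInfty} instead of invoking Lemma \ref{GammaDeltaNonCan} once and for all is a harmless repackaging, and the component-permutation argument for the zero stratum is exactly how the paper gets freeness there, boundary points included.

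There is, however, a genuine gap in your freeness argument on $\cZ_{R_0}^{\Delta,\neq 0}$: you evaluate the invariant $\pi_\infty^\trd\circ\lambda$ at a hypothetical fixed point, but points of the normalization lying over $y=0$ carry no level structure $\lambda$ at all — the moduli interpretation covers only the open locus over $S_0$. Freeness on that open locus does not propagate to the boundary; your own observation that $g_c$ acts freely on $T_0$ yet fixes the origin of $\cT_0$ is precisely such a failure, and the boundary is where the lemma is actually used (descent of $\cO\,dX$ along the cusps in Lemma \ref{GammaDeltaDesc}). The paper closes this as follows: $\sHom_{A,\cT_0}(C[\frn],\underline{A/(\frn)})$ is finite etale over $\cT_0$, hence coincides with the normalization of $\cT_0$ in $\sHom_{A,T_0}(C[\frn],\underline{A/(\frn)})$, so the map $\lambda\mapsto \pi_\infty\circ\lambda$ extends to an equivariant morphism $\cZ_{R_0}^\Delta\to \cT_0\sqcup\cU$; the group $\bF_q^\times$ acts freely on all of $\cU$ (etale locally $\cU\simeq \underline{A/(\frn)\setminus\{0\}}$ with $c$ acting through its class, and $c-1\in\bF_q^\times$ a unit of $A/(\frn)$), and freeness pulls back to $\cZ_{R_0}^{\Delta,\neq 0}$ including its boundary points. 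Note that this extension step is only available over $\cT_0$, i.e., after the reduction of Lemma \ref{GammaDeltaNonCan}, where $\cH_\infty$ is trivialized by $\mu_\infty$; over $\cS_0$ the analogous Hom scheme is ramified at $y=0$, being twisted by the Kummer torsor $T_0\to S_0$ (in particular $\cH_\infty^\trd\not\simeq\underline{A/(\frn)}$ over $S_0$, a small slip in your first paragraph). So the "torsor twist" you deliberately set aside for the freeness argument is exactly the ingredient the boundary case requires.
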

\begin{proof}
	First note that, for any scheme $S$ over $A_\frn$ and any finite etale $A$-module scheme $\cG$ over $S$, the big fppf sheaf $\sHom_{A,S}(C[\frn],\cG)$ is representable by a finite etale $A$-module scheme over $S$ and thus its zero section is a closed and open immersion. 
	
	Since $\cT_0$ is normal, Lemma \ref{GammaDeltaNonCan} implies that $\cZ^\Delta_{R_0}$ is identified with the normalization of $\cT_0$ in the finite etale scheme $[\Gamma_1(\frn)]_{\TD}$ over $T_0$. For any scheme $T$ over $T_0$, we have an exact sequence of finite etale $A$-module schemes over $T$
	\[
	\xymatrix{
		0 \ar[r] & C[\frn]|_T \ar[r]^-{\lambda_\infty} & \TD(\Lambda)[\frn]|_T \ar[r]^-{\pi_\infty}& \underline{A/(\frn)}|_T \ar[r] & 0.
	}
	\]
	Any $\Gamma_1(\frn)$-structure $\lambda: C[\frn]|_T\to\TD(\Lambda)[\frn]|_T$ over $T$ induces an $A$-linear homomorphism $\pi_\infty\circ\lambda: C[\frn]|_T\to \underline{A/(\frn)}|_T$. This gives a morphism over $T_0$
	\[
	[\Gamma_1(\frn)]_{\TD}\to \sHom_{A,T_0}(C[\frn],\underline{A/(\frn)})=T_0\sqcup U,
	\]
	where $U$ is the complement of the zero section. 
	Let $[\Gamma_1(\frn)]_{\TD}^{0}$ be the inverse image of $T_0$. It is isomorphic to $\sAut_{A,T_0}(C[\frn])=\underline{(A/(\frn))^\times}$.
	
	Since $\sHom_{A,\cT_0}(C[\frn],\underline{A/(\frn)})$ is also a finite etale $A$-module scheme over $\cT_0$, it agrees with the normalization of $\cT_0$ in $\sHom_{A,T_0}(C[\frn],\underline{A/(\frn)})$. Moreover, it is etale locally isomorphic to $\underline{A/(\frn)}$. Thus we obtain a map
	\[
	\cZ^\Delta_{R_0}\to \sHom_{A,\cT_0}(C[\frn],\underline{A/(\frn)})=\cT_0\sqcup \cU,
	\]
	where $\cU$ is the complement of the zero section. Since $\cU$ is etale locally isomorphic to $\underline{A/(\frn)\setminus\{0\}}$, the group $\bF_q^\times$ acts freely on $\cU$.
	
	Let $\cZ_{R_0}^{\Delta,0}$ and $\cZ_{R_0}^{\Delta,\neq 0}$ be the inverse images of $\cT_0$ and $\cU$, respectively. Since the component $\cZ_{R_0}^{\Delta, 0}$ is the normalization of $\cT_0$ in $[\Gamma_1(\frn)]_{\TD}^0$, the latter decomposition of the lemma follows. Hence we also obtain the freeness of the $\bF_q^\times$-actions as in the lemma.
\end{proof}

The tuple $(\TD(\Lambda),\lambda_\infty,[\mu_\infty])$ over $T_0$ gives a map $T_0\to Y^\Delta_1(\frn)_{R_0}$.
Since the ring $R_0[[x]]$ is normal, this extends to a map
\[
x^\Delta_\infty: \cT_0 \to X^\Delta_1(\frn)_{R_0}.
\]
The $R_0$-algebra homomorphism defined by $x\mapsto 0$ gives a point $P_\infty^\Delta\in X^\Delta_1(\frn)_{R_0}$, which we refer to as the $\infty$-cusp. We write the complete local ring at this point as $\hat{\cO}_{X_1^\Delta(\frn)_{R_0},P^\Delta_\infty}$.

\begin{lem}\label{GammaDeltaDesc}
Suppose that $R_0$ is a flat $A_\frn$-algebra which is an excellent regular domain. 
\begin{enumerate}
\item\label{GammaDeltaDescCompl} 
The map $x^\Delta_\infty$ induces an isomorphism of complete local rings
\[
(x^\Delta_\infty)^*: \hat{\cO}_{X_1^\Delta(\frn)_{R_0},P^\Delta_\infty} \to R_0[[x]].
\]
\item\label{GammaDeltaDescSheaf}
The invertible sheaf $\omega_\univ^\Delta$ on $Y_1^\Delta(\frn)_{R_0}$ extends to an invertible sheaf $\bar{\omega}_\univ^\Delta$ on $X_1^\Delta(\frn)_{R_0}$ satisfying
\[
(x_\infty^\Delta)^*(\bar{\omega}^\Delta_\univ)=R_0[[x]] d X,
\]
where $dX$ denotes the invariant differential of $\TDt(\Lambda)$ associated to its parameter $X$. 

\item\label{GammaDeltaDescBC}
The formation of $\bar{\omega}_\univ^\Delta$ is compatible with any base change $R_0\to R'_0$ of flat $A_\frn$-algebras which are excellent regular domains.

\item\label{GammaDeltaDescCompat}
The natural action of $\bF_q^\times$ on $\omega_\univ^\Delta$ via $c\mapsto [c]_\Delta$ extends to an action on $\bar{\omega}_\univ^\Delta$ covering its action on $X_1^\Delta(\frn)_{R_0}$.
\end{enumerate}
\end{lem}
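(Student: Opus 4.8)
The plan is to read off all four statements from the identification $\widehat{\Cusps}^\Delta_{R_0}\cong \cZ^\Delta_{R_0}/\bF_q^\times$ established just before Lemma \ref{GammaDeltaNonCan}, together with the decomposition of Lemma \ref{GammaDeltaModuli}. For (\ref{GammaDeltaDescCompl}) I would first observe that, since the $\infty$-cusp $P^\Delta_\infty$ lies in the special fibre $V(1/j)$, the completion of $X_1^\Delta(\frn)_{R_0}$ along $P^\Delta_\infty$ agrees with the completion of $\widehat{\Cusps}^\Delta_{R_0}$ along the corresponding point. By Lemma \ref{GammaDeltaModuli} the component $\cZ^{\Delta,0}_{R_0}=\coprod_{(A/(\frn))^\times}\cT_0$ carries a free $\bF_q^\times$-action permuting the copies of $\cT_0=\Spec(R_0[[x]])$, so near $P^\Delta_\infty$ the quotient $\cZ^\Delta_{R_0}/\bF_q^\times$ is isomorphic to a single copy of $\cT_0$. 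Tracing the construction of $x^\Delta_\infty$ from the tuple $(\TD(\Lambda),\lambda_\infty,[\mu_\infty])$, this copy is precisely the image of $x^\Delta_\infty$, so $(x^\Delta_\infty)^*$ is the desired isomorphism onto $R_0[[x]]$.

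For (\ref{GammaDeltaDescSheaf}), recall that $X_1^\Delta(\frn)_{R_0}=X_1^\Delta(\frn)\times_{A_\frn}\Spec(R_0)$ is smooth over the regular ring $R_0$, hence regular, and that the cuspidal locus is a relative Cartier divisor. By the very definition of $x^\Delta_\infty$, the universal Drinfeld module pulls back over $T_0=\Spec(R_0((x)))$ to $\TD(\Lambda)$, whence $\omega^\Delta_\univ$ pulls back to $\omega_{\TD(\Lambda)}=R_0((x))\,dX$. I would then define $\bar\omega^\Delta_\univ$ by gluing $\omega^\Delta_\univ$ on $Y_1^\Delta(\frn)_{R_0}$ with the free rank-one module $R_0[[x]]\,dX$ on the formal neighbourhood of each cusp supplied by (\ref{GammaDeltaDescCompl}), the two agreeing over the punctured disk $T_0$. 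On the regular scheme $X_1^\Delta(\frn)_{R_0}$ this formal datum determines a rank-one reflexive, hence invertible, sheaf, and its pullback along $x^\Delta_\infty$ is $R_0[[x]]\,dX$ by construction. For the gluing to descend to the quotient $\widehat{\Cusps}^\Delta_{R_0}=\cZ^\Delta_{R_0}/\bF_q^\times$ one needs the generator $dX$ to be $\bF_q^\times$-equivariant along $\cT_0\to \cZ^{\Delta,0}_{R_0}/\bF_q^\times$; this is furnished by the descent of $\TD(\Lambda)$ to $\TDt(\Lambda)$ over $S_0$ and by the equivariance of $\mu_\infty$ in Lemma \ref{TDMuInfty}.

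Statements (\ref{GammaDeltaDescBC}) and (\ref{GammaDeltaDescCompat}) then follow from this explicit description. For (\ref{GammaDeltaDescBC}), the compactification satisfies $X_1^\Delta(\frn)_{R'_0}=X_1^\Delta(\frn)_{R_0}\times_{R_0}\Spec(R'_0)$, while both the Tate-Drinfeld module $\TD(\Lambda)$ and its invariant differential $dX$ are defined over $A_\frn$ and commute with base change; since $\bar\omega^\Delta_\univ$ is characterized by the lattice $R_0[[x]]\,dX$ near the cusps, its formation is compatible with $R_0\to R'_0$. For (\ref{GammaDeltaDescCompat}), the diamond operator $\langle c\rangle_\Delta$ permutes the cusps, and the $\bF_q^\times$-equivariance of the Tate-Drinfeld uniformization used in (\ref{GammaDeltaDescSheaf}) shows that the trivializing sections $dX$ are carried to one another compatibly, so the action on $\omega^\Delta_\univ$ extends to $\bar\omega^\Delta_\univ$ covering its action on $X_1^\Delta(\frn)_{R_0}$.

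The main obstacle is (\ref{GammaDeltaDescCompl}): one must correctly match the $\infty$-cusp with the distinguished $\cT_0$-component of $\cZ^{\Delta,0}_{R_0}$ and check that the free $\bF_q^\times$-action makes the quotient an isomorphism there, so that $x$ becomes the formal parameter at $P^\Delta_\infty$. Once this local model is in place, (\ref{GammaDeltaDescSheaf})--(\ref{GammaDeltaDescCompat}) are the standard extension-across-cusps arguments as in \cite[\S8.11]{KM}, the only additional point being the bookkeeping of $\bF_q^\times$-equivariance needed to pass to the $\Delta$-level quotient.
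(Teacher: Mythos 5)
Your skeleton matches the paper's proof — parts (\ref{GammaDeltaDescCompl}) and the general strategy (extend $\omega_\univ^\Delta$ by a trivialization over the cusps and control everything through the identification $\widehat{\Cusps}^\Delta_{R_0}\simeq \cZ^\Delta_{R_0}/\bF_q^\times$ and Lemma \ref{GammaDeltaModuli}) are exactly right — but two steps are genuinely incomplete. First, in (\ref{GammaDeltaDescSheaf}) your gluing data only exists at the $\infty$-cusp: Lemma \ref{GammaDeltaDesc} (\ref{GammaDeltaDescCompl}) describes $P^\Delta_\infty$ alone, while the remaining cusps lie over the component $\cZ^{\Delta,\neq 0}_{R_0}$ of Lemma \ref{GammaDeltaModuli}, whose pieces are \emph{not} of the form $\Spec(R_0[[x]])$ over $R_0$ (their explicit description as $\coprod \Spec(R_\frn[[w]])$ only comes later, after base change to $R_\frn$, in Lemma \ref{GammaDeltaExpDesc} and Corollary \ref{GammaDeltaCusp}). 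Accordingly, your descent step, which you invoke only ``along $\cT_0\to \cZ^{\Delta,0}_{R_0}/\bF_q^\times$'', never produces the extension across the other cusps. The paper avoids this by descending the trivial invertible sheaf $\cO_{\cZ^\Delta_{R_0}}dX$ along the free $\bF_q^\times$-action on \emph{all} of $\cZ^\Delta_{R_0}$ — this is precisely why Lemma \ref{GammaDeltaModuli} records freeness on both components — with the equivariant structure $X\mapsto cX$. Note also a conceptual slip in your formulation: what descent requires is an equivariant structure on the sheaf, not an invariant generator; $dX$ is not $\bF_q^\times$-invariant (it transforms by $dX\mapsto c\,dX$), and the normalization $(x_\infty^\Delta)^*(\bar{\omega}^\Delta_\univ)=R_0[[x]]dX$ comes out only because the quotient map restricted to the distinguished copy of $\cT_0$ is an isomorphism. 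With the uniform descent in hand, (\ref{GammaDeltaDescBC}) is the uniqueness of the descended sheaf, as you essentially say.

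Second, in (\ref{GammaDeltaDescCompat}) the sentence ``the trivializing sections $dX$ are carried to one another compatibly'' is the entire content of the claim and is not proved. A priori the comparison isomorphism $[c]_\Delta^*\,\omega^\Delta_\univ\to \omega^\Delta_\univ$ over $Y_1^\Delta(\frn)_{R_0}$ could acquire a zero or pole along the cuspidal divisor, in which case no extension to $\bar{\omega}^\Delta_\univ$ would exist; one must compute the coefficient at the cusps and see it is a unit. The paper does this via Lemma \ref{GammaDeltaNonCan}: under $[\Gamma_1^\Delta(\frn)]_{\TD^\triangledown}\simeq [\Gamma_1(\frn)]_{\TD^\triangledown}\times_{S_0}T_0$ the operator $[c]_\Delta$ acts by $1\times g_c^*$, hence fixes the pair $(\TDt(\Lambda),\lambda^\trd_\univ)$ (both are defined over $S_0$), and then the rigidity of $\Gamma_1(\frn)$-structures (\cite[Proposition 4.2 (2)]{Flicker}: no non-trivial automorphisms) forces the induced map on $\omega$ to be exactly $dX\otimes 1\mapsto dX$, which visibly extends over $\cZ^\Delta_{R_0}$. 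Your appeal to the ``$\bF_q^\times$-equivariance of the Tate--Drinfeld uniformization'' captures the first half of this but omits the rigidity step; without it the induced map on $\omega^\Delta_\univ$ is only pinned down up to an automorphism of the pair, and the absence of an $x$-power in the comparison at the cusp — the very point at issue — is unverified.
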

\begin{proof}
The assertion (\ref{GammaDeltaDescCompl}) follows from Lemma \ref{GammaDeltaModuli}.
Moreover, Lemma \ref{GammaDeltaModuli} also implies that the trivial invertible sheaf $\cO_{\cZ_{R_0}^\Delta}dX$, with the natural $\bF_q^\times$-action via $X\mapsto cX$ which covers the action on $\cZ_{R_0}^\Delta$, descends to the quotient $\cZ_{R_0}^\Delta/\bF_q^\times\simeq \widehat{\Cusps}_{R_0}^\Delta$ and we obtain $\bar{\omega}_\univ^\Delta$ by gluing. 
(\ref{GammaDeltaDescBC}) follows from the uniqueness of the descended sheaf.

For (\ref{GammaDeltaDescCompat}), Lemma \ref{GammaDeltaNonCan} implies that $[c]_\Delta$ acts on 
\[
\cP:=[\Gamma_1^\Delta(\frn)]_{\TD^\triangledown}\simeq [\Gamma_1(\frn)]_{\TD^\triangledown}\times_{S_0}T_0
\]
via $1\times g_c^*$. Thus, for the universal $\Gamma_1(\frn)$-structure $\lambda^\trd_\univ$ on $\TDt(\Lambda)$ over $[\Gamma_1(\frn)]_{\TD^\triangledown}$,
we have
\[
[c]_\Delta^*(\TDt(\Lambda)|_{\cP}, \lambda^\trd_\univ|_{\cP})=(\TDt(\Lambda)|_{\cP}, \lambda^\trd_\univ|_{\cP}).
\]
Since any $\Gamma_1(\frn)$-structure has no non-trivial automorphism, the natural action of $[c]_\Delta$ on $\omega_\univ^\Delta|_{\cP/\bF_q^\times}$ is the descent of the map given by
\[
[c]_\Delta^*(\cO_\cP dX)\to \cO_\cP dX,\quad dX\otimes 1\mapsto dX.
\]
Hence it extends to the sheaf $\cO_{\cZ_{R_0}^\Delta} dX$, and thus to $\bar{\omega}_\univ^\Delta$.
\end{proof}

\subsection{Structure around cusps II}\label{SubsecCuspII}

Let $W_\frn(X)$ be the unique monic prime factor of $\Phi^C_\frn(X)$ in $A[X]$ which does not divide $\Phi^C_\frem(X)$ for any non-trivial divisor $\frem$ of $\frn$ \cite[\S3]{Carlitz}.
Then
\[
I=\sIsom_{A,R_0}(\underline{A/(\frn)},C[\frn])
\]
is represented by $\Spec(R_0[X]/(W_\frn(X)))$, which is finite etale over $R_0$. For any scheme $S$ over $R_0$, we put $I_S=I\times_{R_0}S$.

Let $R_\frn$ be the affine ring of a connected component of $I$, which is a finite etale domain over $R_0$.
We denote by $\zeta$ the image of $X$ in $R_\frn$.
In the sequel, we also need an explicit description of the scheme $[\Gamma_1^\Delta(\frn)]_{\TD^\triangledown}$ over $S_\frn=\Spec(R_\frn((y)))$. 

Put $T_\frn=\Spec(R_\frn((x)))$. By Lemma \ref{GammaDeltaNonCan}, it is enough to describe the restriction
\[
[\Gamma_1(\frn)]_{\TD|_{T_\frn}}=[\Gamma_1(\frn)]_{\TD}\times_{T_0} T_\frn.
\]
For this, we denote by $\sH$ the set of $A$-linear surjections $(A/(\frn))^2\to A/(\frn)$. 
By the map $(a,b)\mapsto ({}^t\! (u,v)\mapsto (a,b){}^t\! (u,v))$, we identify the set $\sH$ with $\{(a,b)\in (A/(\frn))^2\mid (a,b)=(1)\}$.
As in \cite[Proposition 10.2.4]{KM}, for any $\Xi\in \sH$ we denote by $k_\Xi$ the unique generator of $\Ker(\Xi)$ satisfying $\Xi(l)=\det(k_\Xi,l)$ for any $l\in (A/(\frn))^2$. We also choose $l_\Xi\in (A/(\frn))^2$ satisfying $\Xi(l_\Xi)=1$. Then, for any $g\in \mathit{GL}_2(A/(\frn))$ there exists a unique $n(g,\Xi)\in A/(\frn)$ satisfying
\begin{equation}\label{EqnDefNxi}
l_{\Xi\circ g}=g^{-1}(l_{\Xi})+ n(g,\Xi)g^{-1}(k_\Xi).
\end{equation}
Put $\Fix(\Xi)=\{g\in \mathit{GL}_2(A/(\frn))\mid \Xi\circ g=\Xi\}$. Considering the representing matrix for $g$ with respect to the ordered basis $(k_\Xi,l_\Xi)$, we have an isomorphism
\begin{equation}\label{FixGpIsom}
\Fix(\Xi)\to \left\{\begin{pmatrix}\det(g)& n(g,\Xi)\\0 & 1\end{pmatrix}\ \middle |\ g\in \Fix(\Xi)\right\}.
\end{equation}

We denote by $[\Gamma(\frn)]_{\TD|_{T_\frn}}$ the scheme representing the functor over $T_\frn$ sending a $T_\frn$-scheme $T$ to the set of $\Gamma(\frn)$-structures on $\TD(\Lambda)|_T$. It is finite etale over $T_\frn$.
By (\ref{TDfrnExact}), to give $\alpha\in[\Gamma(\frn)]_{\TD|_{T_\frn}}(T)$ satisfying $\pi_\infty\circ\alpha=\Xi$ is the same as to give $\alpha(k_\Xi)\in C[\frn](T)$ inducing an $A$-linear isomorphism $\underline{A/(\frn)}\to C[\frn]$ and $\alpha(l_\Xi)\in \pi_\infty^{-1}([1])(T)$, where $[1]$ is the section $T_\frn\to \underline{A/(\frn)}$ corresponding to $1\in A/(\frn)$.

By taking the determinant, we have an $A$-linear isomorphism of etale sheaves of locally free $A/(\frn)$-modules
\[
\omega: \bigwedge^2 \TD(\Lambda)[\frn]\to C[\frn],
\]
which defines a map $[\Gamma(\frn)]_{\TD|_{T_\frn}}\to I$ by $(\alpha\mapsto \omega\circ \wedge^2\alpha)$. For any scheme $T$ over $T_\frn$, we say an element $\alpha\in [\Gamma(\frn)]_{\TD|_{T_\frn}}(T)$ is canonical if the map $\omega\circ\wedge^2\alpha: T\to I$ is equal to the structure map $T\to T_\frn\to I$. 
The subfunctor of canonical elements is represented by a finite etale scheme $[\Gamma(\frn)]_{\TD|_{T_\frn}}^\can$ over $T_\frn$.

\begin{lem}\label{TD1Torsor}
Put $B_\frn=R_\frn((x))[\eta]/(\Phi^C_\frn(\eta)-1/x)$. Then the map over $T_\frn$
\[
\coprod_{\Xi\in \sH} \Spec(B_\frn)\to [\Gamma(\frn)]_{\TD|_{T_\frn}}^\can,
\]
which is defined on the $\Xi$-component by the canonical $\Gamma(\frn)$-structure $(k_\Xi,l_\Xi)\mapsto (e_\Lambda(\zeta),e_\Lambda(\eta))$ over $B_\frn$, is an isomorphism.
\end{lem}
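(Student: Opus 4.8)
The plan is to prove the statement by comparing the two sides as functors of points over $T_\frn$: since both schemes are finite \'etale over $T_\frn$, a bijection of $T$-points that is natural in the $T_\frn$-scheme $T$ will force the map to be an isomorphism. The organizing device is the exact sequence (\ref{TDfrnExact}), restricted (base changed) from $T_0$ to $T_\frn$,
\[
0 \to C[\frn] \xrightarrow{\lambda_\infty} \TD(\Lambda)[\frn] \xrightarrow{\pi_\infty} \underline{A/(\frn)} \to 0,
\]
an exact sequence of finite \'etale $A$-module schemes because $\frn$ is invertible. For any canonical $\alpha$ the composite $\pi_\infty\circ\alpha\colon (A/(\frn))^2\to A/(\frn)$ is a surjection, hence an element $\Xi\in\sH$; as this assignment is locally constant it decomposes $[\Gamma(\frn)]_{\TD|_{T_\frn}}^\can$ into open-closed pieces indexed by $\sH$, matching the decomposition of the source. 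It thus suffices to fix one $\Xi$ and identify the canonical structures lying over it.

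For fixed $\Xi$ I would first use that $\det(k_\Xi,l_\Xi)=\Xi(l_\Xi)=1$ is a unit, so $(k_\Xi,l_\Xi)$ is an $A/(\frn)$-basis and $\alpha$ is determined by the pair $(\alpha(k_\Xi),\alpha(l_\Xi))$. Since $k_\Xi\in\Ker(\Xi)=\Ker(\pi_\infty\circ\alpha)$, the point $\alpha(k_\Xi)$ lies in $\Img(\lambda_\infty)$, while $\pi_\infty(\alpha(l_\Xi))=\Xi(l_\Xi)=1$, so $\alpha(l_\Xi)\in\pi_\infty^{-1}([1])(T)$. The key computation is to make the determinant map $\omega$ explicit via the canonical identification $\bigwedge^2\TD(\Lambda)[\frn]\simeq C[\frn]$ sending $\lambda_\infty(c)\wedge s\mapsto \pi_\infty(s)\cdot c$: using $e_1\wedge e_2=k_\Xi\wedge l_\Xi$ (from $\det(k_\Xi,l_\Xi)=1$) together with $\pi_\infty(\alpha(l_\Xi))=1$, one finds that $\omega\circ\bigwedge^2\alpha$ carries the standard generator to $\lambda_\infty^{-1}(\alpha(k_\Xi))$. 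Hence $\alpha$ is canonical, i.e. $\omega\circ\bigwedge^2\alpha$ equals the structural section $\zeta$ of $I$, exactly when $\alpha(k_\Xi)=\lambda_\infty(\zeta)=e_\Lambda(\zeta)$. Conversely, for any lift $\alpha(l_\Xi)\in\pi_\infty^{-1}([1])$ and with $\alpha(k_\Xi)$ set to $e_\Lambda(\zeta)$, the five lemma applied to the evident map of short exact sequences (the induced maps on $\Ker(\Xi)$ and on the rank-one cokernel being isomorphisms) shows that $\alpha$ is an isomorphism, hence a genuine $\Gamma(\frn)$-structure. So canonical structures over $\Xi$ are in natural bijection with $\pi_\infty^{-1}([1])(T)$.

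It remains to identify $\pi_\infty^{-1}([1])$ with $\Spec(B_\frn)$. By Lemma \ref{ELambEval} the value $e_\Lambda(\eta)$ is a well-defined element of $\TD(\Lambda)[\frn](B_\frn)$, and by Lemma \ref{TDCokerTriv} its image under $\pi_\infty$ is $[1]$ (this is how $\mu_\infty$ was normalized), giving a morphism $\Spec(B_\frn)\to\pi_\infty^{-1}([1])$ over $T_\frn$. Both are $C[\frn]$-torsors ($\Spec(B_\frn)$ by the discussion after (\ref{EqnDefBn}), and $\pi_\infty^{-1}([1])$ as a fibre of $\pi_\infty$), and additivity of $e_\Lambda$ gives $e_\Lambda(\eta+\zeta')=e_\Lambda(\eta)+\lambda_\infty(\zeta')$ for $\zeta'\in C[\frn]$, so the morphism is $C[\frn]$-equivariant; a morphism of torsors under the same group is automatically an isomorphism. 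Assembling the three steps produces a bijection on $T$-points functorial in $T$, whence the displayed map is an isomorphism. As a consistency check the degrees over $T_\frn$ agree, $|\sH|\cdot q^{\deg(\frn)}=|\mathit{SL}_2(A/(\frn))|$, which one verifies prime-by-prime through the Chinese remainder theorem. The part I expect to require the most care is the explicit description of the determinant map $\omega$ and the resulting equivalence of the canonical condition with $\alpha(k_\Xi)=e_\Lambda(\zeta)$; once that is in place, the torsor identification and the bijection are formal consequences of the earlier lemmas on $\lambda_\infty$, $\pi_\infty$ and $B_\frn$.
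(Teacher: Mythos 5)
Your proof is correct and takes essentially the same route as the paper: the reduction of the canonical locus over each $\Xi\in\sH$ to the fiber $\pi_\infty^{-1}([1])$ (via the exact sequence (\ref{TDfrnExact}), the basis $(k_\Xi,l_\Xi)$, and the determinant condition pinning $\alpha(k_\Xi)=e_\Lambda(\zeta)$) is exactly the discussion the paper places immediately before the lemma, and your final step --- the $C[\frn]$-equivariant map $\Spec(B_\frn)\to\pi_\infty^{-1}([1])$ given by $e_\Lambda(\eta)$, an isomorphism because it is a morphism of $C[\frn]$-torsors --- is verbatim the paper's proof. Your explicit normalization of $\omega$ and the five-lemma check just make precise details the paper leaves implicit.
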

\begin{proof}
The element $e_\Lambda(\eta)\in B_\frn$ defines a map $\Spec(B_\frn)\to \pi_\infty^{-1}([1])$. Since it is $C[\frn]$-equivariant, it is an isomorphism of $C[\frn]$-torsors over $T_\frn$ and the lemma follows.
\end{proof}

Put $\bar{\Gamma}_1=\left\{\begin{pmatrix}* & 0\\ *& 1\end{pmatrix}\in \mathit{GL}_2(A/(\frn)) \right\}$ and $\bar{\Gamma}_1^1=\bar{\Gamma}_1\cap \mathit{SL}_2(A/(\frn))$.
For any element $f\neq 0\in A$, we define
\[
G_f(w)=w^{q^{\deg(f)}}-x w^{q^{\deg(f)}}\Phi^C_f\left(\frac{1}{w}\right) \in R_0[[x]][w].
\]
Then we have natural maps
\[
\xymatrix{
R_0[[w]] \ar[r] & R_0[[x]][w]/(G_f(w))\ar[r]& R_0[[w]]
}
\]
which are isomorphisms.
Moreover, for any $b\in A/(\frn)$, let $f_b$ be the monic generator of the ideal $\Ann_A(b(A/(\frn)))$. Then $f_b$ divides $\frn$ and $f_b\in A_\frn^\times$.

\begin{lem}\label{GammaDeltaExpDesc}
The scheme $[\Gamma_1^\Delta(\frn)]_{\TDt}$ over $S_\frn$ is decomposed as
\begin{align*}
[\Gamma_1^\Delta(\frn)]_{\TDt}&=\coprod_{(a,b)} \Spec(R_\frn((x))[w]/(G_{f_{b}}(w)))\simeq \coprod_{(a,b)} \Spec(R_\frn((w))).
\end{align*}
Here the direct sum is taken over a complete representative of the set
\[
\{(a,b)\in (A/(\frn))^2\mid (a,b)=(1)\}/\bar{\Gamma}^1_1.
\]
\end{lem}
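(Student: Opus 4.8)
The plan is to reduce the assertion to an explicit description over $T_\frn=\Spec(R_\frn((x)))$ and then pass to invariants under a unipotent group. By Lemma \ref{GammaDeltaNonCan}, base-changed along the finite etale map $R_0\to R_\frn$, there is a natural isomorphism over $S_\frn$ between $[\Gamma_1^\Delta(\frn)]_{\TDt}$ and $[\Gamma_1(\frn)]_{\TD|_{T_\frn}}=[\Gamma_1(\frn)]_{\TD}\times_{T_0}T_\frn$, so it suffices to describe the latter. First I would reinterpret a $\Gamma_1(\frn)$-structure on $\TD(\Lambda)|_{T_\frn}$ in terms of canonical $\Gamma(\frn)$-structures (compare \cite{KM}): over $R_\frn$ the chosen generator $\zeta$ of $C[\frn]$ both trivializes the target of a $\Gamma_1(\frn)$-structure and pins down the determinant map to $I$, so that, etale locally, every $\Gamma_1(\frn)$-structure extends to a canonical $\Gamma(\frn)$-structure $\alpha$ whose restriction to the second standard basis vector ${}^{t}(0,1)$ recovers it. The fibre of this extension is a torsor under $\bar{\Gamma}_1^1=\bar{\Gamma}_1\cap \mathit{SL}_2(A/(\frn))$, the $\mathit{SL}_2$-stabilizer of ${}^{t}(0,1)$, which identifies $[\Gamma_1(\frn)]_{\TD|_{T_\frn}}$ with the quotient $[\Gamma(\frn)]_{\TD|_{T_\frn}}^{\can}/\bar{\Gamma}_1^1$.

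Next I would run this quotient through Lemma \ref{TD1Torsor}, which gives $[\Gamma(\frn)]_{\TD|_{T_\frn}}^{\can}=\coprod_{\Xi\in \sH}\Spec(B_\frn)$ with $\bar{\Gamma}_1^1$ permuting components by $\Xi\mapsto \Xi\circ g$. Under the identification of $\sH$ with $\{(a,b)\mid (a,b)=(1)\}$, the element $g=\left(\begin{smallmatrix}1&0\\ c&1\end{smallmatrix}\right)$ sends $(a,b)$ to $(a+bc,b)$, so the orbits are exactly indexed by the quotient $\{(a,b)\mid (a,b)=(1)\}/\bar{\Gamma}_1^1$ appearing in the statement, and the stabilizer of $\Xi=(a,b)$ is $\{c\in A/(\frn)\mid bc=0\}=f_b(A/(\frn))$, of order $q^{\deg(\frn/f_b)}$. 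Using (\ref{EqnDefNxi}) and (\ref{FixGpIsom}) I would check that a stabilizing $g$ acts on the corresponding copy of $\Spec(B_\frn)$ by $\eta\mapsto \eta+\Phi^C_{n(g,\Xi)}(\zeta)$; since $\zeta$ generates $C[\frn]$ and the action is free, the resulting translations run over the unique subgroup of $C[\frn]$ of order $q^{\deg(\frn/f_b)}$, namely $C[\frn/f_b]=\Phi^C_{f_b}(C[\frn])$.

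It then remains to compute, for one $\Xi$ in a fixed orbit, the quotient of $\Spec(B_\frn)$ by the stabilizer $C[\frn/f_b]$. Setting $W=\Phi^C_{\frn/f_b}(\eta)$, the relation $\Phi^C_\frn(\eta)=1/x$ becomes $\Phi^C_{f_b}(W)=1/x$ (using $\Phi^C_\frn=\Phi^C_{f_b}\circ\Phi^C_{\frn/f_b}$), and $\eta$ satisfies $\Phi^C_{\frn/f_b}(\eta)=W$ over $R'=R_\frn((x))[W]/(\Phi^C_{f_b}(W)-1/x)$; thus $R'\hookrightarrow B_\frn$ exhibits $\Spec(B_\frn)$ as a $C[\frn/f_b]$-torsor over $\Spec(R')$, and the torsor quotient---hence the corresponding component of $[\Gamma_1(\frn)]_{\TD|_{T_\frn}}$---is $\Spec(R')$. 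Finally, writing $w=1/W$ and multiplying $\Phi^C_{f_b}(W)-1/x=0$ by $-x\,w^{q^{\deg f_b}}$ converts it into $G_{f_b}(w)=0$, identifying $R'$ with $R_\frn((x))[w]/(G_{f_b}(w))$, which is isomorphic to $R_\frn((w))$ by the isomorphisms recorded just before the lemma. Running over the orbit set yields the claimed decomposition.

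The conceptual heart is the first step: the identification of $\Gamma_1(\frn)$-structures with canonical $\Gamma(\frn)$-structures modulo $\bar{\Gamma}_1^1$, where one must keep careful track of the determinant/Carlitz normalization so that ``canonical'' matches the fixed component $R_\frn$ of $I$. The main computational obstacle will be verifying that the stabilizer of $\Xi$ acts on $B_\frn$ precisely by translation by $C[\frn/f_b]$ and that $W=\Phi^C_{\frn/f_b}(\eta)$ generates the invariants; once this is in place, the passage to $G_{f_b}(w)$ is the elementary manipulation above.
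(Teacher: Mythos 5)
Your proposal is correct and follows essentially the same route as the paper's proof: reduction to $[\Gamma_1(\frn)]_{\TD|_{T_\frn}}$ via Lemma \ref{GammaDeltaNonCan}, the identification $[\Gamma_1(\frn)]_{\TD|_{T_\frn}}=[\Gamma(\frn)]^{\can}_{\TD|_{T_\frn}}/\bar{\Gamma}^1_1$ using Lemma \ref{TD1Torsor}, the stabilizer computation giving $n(\Xi)=(f_b)/(\frn)$ from (\ref{EqnDefNxi}) and (\ref{FixGpIsom}), and the substitution $W=\Phi^C_{\frn/f_b}(\eta)$, $w=1/W$ producing $G_{f_b}(w)$. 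The only deviations are cosmetic: where the paper identifies $R_\frn((x))[\eta']/(\Phi^C_{f_b}(\eta')-1/x)$ with the invariant ring via its normality, you instead use the $C[\frn/f_b]$-torsor structure of $\Spec(B_\frn)$ over $\Spec(R')$ (which does hold, by the rank count showing $B_\frn\simeq R'[\eta]/(\Phi^C_{\frn/f_b}(\eta)-W)$), and your phrase ``unique subgroup of $C[\frn]$'' should be ``unique $A$-submodule'' --- uniqueness among plain subgroups can fail, and it is exactly the containment $b\,n(g,\Xi)=0$ from (\ref{EqnDefNxi}), which you cite, that pins the translation group down, as in the paper.
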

\begin{proof}
	For any scheme $T$ over $T_\frn$, any $\Gamma(\frn)$-structure $\alpha$ on $\TD(\Lambda)|_T$ defines a $\Gamma_1(\frn)$-structure $\zeta\mapsto \alpha({}^t\!(0,1))$.
Since we have $\mathit{SL}_2(A/(\frn))/\bar{\Gamma}^1_1=\mathit{GL}_2(A/(\frn))/\bar{\Gamma}_1$, Lemma \ref{TD1Torsor} yields
\[
[\Gamma_1(\frn)]_{\TD|_{T_\frn}}=[\Gamma(\frn)]^\can_{\TD|_{T_\frn}}/\bar{\Gamma}^1_1=\coprod_{\Xi\in \sH/\bar{\Gamma}^1_1}\Spec(B_\frn^{\bar{\Gamma}^1_1\cap \Fix(\Xi)}).
\]
Note that, via the isomorphism of Lemma \ref{TD1Torsor}, any element $g\in \bar{\Gamma}^1_1\cap \Fix(\Xi)$ acts on $B_\frn$ of the $\Xi$-component by 
\[
\eta\mapsto \eta+ \Phi^C_{n(g,\Xi)}(\zeta).
\]

For $\Xi=(a,b)$, we have $k_\Xi={}^t\!(b,-a)$ and
\[
\bar{\Gamma}^1_1\cap \Fix(\Xi)=\left\{ \begin{pmatrix}1 &0 \\
 (f_b)/(\frn)& 1\end{pmatrix}\right\}.
\]
By the isomorphism (\ref{FixGpIsom}), the additive subgroup
\[
n(\Xi)=\{n(g,\Xi)\in A/(\frn)\mid g\in \bar{\Gamma}^1_1\cap \Fix(\Xi)\}
\]
is isomorphic to $(f_b)/(\frn)$. In particular, they have the same cardinality. On the other hand, for any $g\in \bar{\Gamma}^1_1\cap \Fix(\Xi)$, (\ref{EqnDefNxi}) yields $b n(g,\Xi)=0$ and thus $n(\Xi)\subseteq (f_b)/(\frn)$. Hence they are equal. 

Put $h_b=\frn/f_b$. Consider the map
\[
R_\frn((x))[\eta']/(\Phi^C_{f_b}(\eta')-1/x)\to B_\frn,\quad \eta'\mapsto \Phi^C_{h_b}(\eta).
\]
Note that the left-hand side is isomorphic to $R_\frn((1/\eta'))$ and thus normal. Hence this map identifies the left-hand side with $B_\frn^{\bar{\Gamma}^1_1\cap \Fix(\Xi)}$. By changing the variable as $w=1/\eta'$, Lemma \ref{GammaDeltaNonCan} yields the decomposition as in the lemma.
\end{proof}

\begin{cor}\label{GammaDeltaCusp}
Suppose that $R_0$ is a flat $A_\frn$-algebra which is an excellent regular domain. 
\begin{enumerate}
\item\label{GammaDeltaCuspCompl} We have a natural isomorphism over $R_\frn[[y]]$
\[
\widehat{\Cusps}_{R_\frn}^\Delta=\coprod_{(a,b)} \Spec(R_\frn[[x]][w]/(G_{f_{b}}(w)))\simeq \coprod_{(a,b)} \Spec(R_\frn[[w]]).
\]
Here the direct sum is taken over a complete representative of the set
\[
\bF^\times_q\backslash \{(a,b)\in (A/(\frn))^2\mid (a,b)=(1)\}/\bar{\Gamma}^1_1.
\]
\item\label{GammaDeltaCuspRed} $\Cusps_{R_0}^\Delta$ is finite etale over $R_0$. In particular, it defines an effective Cartier divisor of $X_1^\Delta(\frn)_{R_0}$ over $R_0$.
\item\label{GammaDeltaCuspDlog} At each point of $\Cusps_{R_0}^\Delta$, the invertible sheaf 
\[
\Omega^1_{X_1^\Delta(\frn)_{R_0}/R_0}(2\Cusps_{R_0}^\Delta)
\]
is locally generated by the section $dx/x^2$.
\end{enumerate}
\end{cor}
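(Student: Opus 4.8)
The plan is to read off all three parts from the explicit description of $[\Gamma_1^\Delta(\frn)]_{\TDt}$ over $S_\frn$ in Lemma~\ref{GammaDeltaExpDesc}, combined with the presentation of $\widehat{\Cusps}_{R_0}^\Delta$ as a normalization followed by a quotient by $\bF_q^\times$ established in \S\ref{SubsecCuspI}. For~(\ref{GammaDeltaCuspCompl}) I would first pass to the finite etale cover $R_\frn$ of $R_0$. Since $R_0$ is excellent, normalization commutes with the finite etale base change $R_0\to R_\frn$, and the quotient by the finite group $\bF_q^\times$ commutes with flat base change; hence $\widehat{\Cusps}_{R_\frn}^\Delta$ is identified with $\cZ_{R_\frn}^\Delta/\bF_q^\times$, where $\cZ_{R_\frn}^\Delta$ is the normalization of $\Spec(R_\frn[[y]])$ in $[\Gamma_1^\Delta(\frn)]_{\TDt}|_{S_\frn}$. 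By Lemma~\ref{GammaDeltaExpDesc} this scheme is $\coprod_{(a,b)}\Spec(R_\frn((w)))$, and since $y=x^{q-1}$ while the relation $G_{f_b}(w)=0$ forces $x=w^{q^{\deg(f_b)}}$ times a unit of $R_\frn[[w]]$, the domain $R_\frn[[w]]$ is normal and module-finite over $R_\frn[[y]]$; thus the normalization of each component is $\Spec(R_\frn[[w]])=\Spec(R_\frn[[x]][w]/(G_{f_b}(w)))$. Finally I quotient by $\bF_q^\times$, which by Lemma~\ref{GammaDeltaModuli} acts freely, merely permuting the components along its action on the index set, so that $\sH/\bar{\Gamma}_1^1$ is replaced by $\bF_q^\times\backslash\sH/\bar{\Gamma}_1^1$ and the stated isomorphism follows.

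For~(\ref{GammaDeltaCuspRed}), by~\eqref{EqnTDdj} the ideal $(1/j)$ pulls back to $(y)=(x^{q-1})$ on each formal branch, whose radical in $R_\frn[[w]]$ is $(w)$; hence after base change to $R_\frn$ the reduced special fiber $\Cusps_{R_\frn}^\Delta$ equals $\coprod\Spec(R_\frn)$, indexed by $\bF_q^\times\backslash\sH/\bar{\Gamma}_1^1$, and is finite etale over $R_\frn$. Since $R_0$ is a domain, $\Spec(R_0)$ is connected, so the connected component $R_\frn$ of the $(A/(\frn))^\times$-torsor $I$ is finite etale and surjective, hence faithfully flat, over $R_0$; as the formation of $\Cusps_{R_0}^\Delta$ commutes with this etale base change, fppf descent shows that $\Cusps_{R_0}^\Delta$ is finite etale over $R_0$. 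Being finite flat of relative dimension zero inside the smooth relative curve $X_1^\Delta(\frn)_{R_0}$, it is a relative effective Cartier divisor.

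For~(\ref{GammaDeltaCuspDlog}), generation of an invertible sheaf may be checked after the faithfully flat base change to $R_\frn$, so I work in the complete local ring $R_\frn[[w]]$ of a cusp. There the pullback of $\Omega^1_{X_1^\Delta(\frn)_{R_0}/R_0}$ is free on $dw$ by smoothness, and $\Cusps=V(w)$, so $\Omega^1_{X_1^\Delta(\frn)_{R_0}/R_0}(2\Cusps_{R_0}^\Delta)$ is generated by $dw/w^2$; it remains to show that $dx/x^2$ is a unit multiple of $dw/w^2$. The relation $G_{f_b}(w)=0$ yields the clean identity $\Phi^C_{f_b}(1/w)=1/x$, and differentiating it while using that $R_0$ has characteristic $p$ --- so that every Frobenius-twisted summand $w^{-q^i}$ with $i\geq 1$ has vanishing derivative and only the linear term $f_b\cdot(1/w)$ contributes --- gives $d(1/x)=-f_b\,w^{-2}\,dw$, that is, $dx/x^2=f_b\,dw/w^2$. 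Since $f_b\in A_\frn^\times\subseteq R_0^\times$, this is the desired generator.

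The main obstacle lies in~(\ref{GammaDeltaCuspDlog}): the section $dx/x^2$ is written in the $\infty$-Tate parameter $x$, which at a cusp with $\deg(f_b)>0$ is only the high power $w^{q^{\deg(f_b)}}$ (times a unit) of the true uniformizer $w$, so that a priori $x^{-2}$ has a pole of order $2q^{\deg(f_b)}>2$ there. What rescues the uniform statement is precisely the characteristic-$p$ cancellation above: $dx$ acquires a zero of order $2q^{\deg(f_b)}-2$, so $dx/x^2$ has exactly a double pole at every cusp. Verifying this rests entirely on the Carlitz identity $\Phi^C_{f_b}(1/w)=1/x$ and the vanishing of the derivatives of the $q$-power terms; by contrast, the index bookkeeping and the descent in~(\ref{GammaDeltaCuspCompl}) and~(\ref{GammaDeltaCuspRed}) I expect to be routine.
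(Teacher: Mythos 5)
Your proposal is correct and follows essentially the same route as the paper: part (\ref{GammaDeltaCuspCompl}) by normalizing the explicit decomposition of Lemma~\ref{GammaDeltaExpDesc} in $R_\frn[[w]]\simeq R_\frn[[x]][w]/(G_{f_b}(w))$ and quotienting by the free $\bF_q^\times$-permutation of the index set, part (\ref{GammaDeltaCuspRed}) by descent along the finite etale cover $R_0\to R_\frn$, and part (\ref{GammaDeltaCuspDlog}) by the same characteristic-$p$ cancellation (your differentiation of $\Phi^C_{f_b}(1/w)=1/x$ is just a repackaging of the paper's identity $H(w)\,dx=x f_b w^{q^e-2}\,dw$, both yielding $dx/x^2=f_b\,dw/w^2$ with $f_b\in A_\frn^\times$). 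The only cosmetic slip is attributing the freeness of the $\bF_q^\times$-action on the index set $\sH/\bar{\Gamma}^1_1$ to Lemma~\ref{GammaDeltaModuli}, which literally concerns the coarser decomposition over $R_0$; the freeness on the finer index set is exactly what the paper asserts at the same point, and it is true.
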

\begin{proof}
Note that the ring $R_\frn[[w]]$ is normal.
Since the group $\bF_q^\times$ acts freely on the index set of the decomposition of Lemma \ref{GammaDeltaExpDesc}, we obtain the assertion (\ref{GammaDeltaCuspCompl}), which implies the assertion (\ref{GammaDeltaCuspRed}) since we have $\Cusps_{R_\frn}^\Delta=\Cusps_{R_0}^\Delta\times_{R_0}\Spec(R_\frn)$.

For the assertion (\ref{GammaDeltaCuspDlog}), by a base change it is enough to show it over $R_\frn$. Put $e=\deg(f_b)$ and $G_{f_b}(w)=w^{q^e}-x H(w)$. 
Then we have $H(w)dx=x f_b w^{q^e-2} dw$ in $\Omega^1_{R_\frn[[w]]/R_\frn}$ and
\[
\frac{dw}{w^2}=\frac{H(w)}{f_b}\frac{dx}{xw^{q^e}}=\frac{1}{f_b}\frac{dx}{x^2},
\]
which concludes the proof.
\end{proof}

On the component of $\widehat{\Cusps}_{R_\frn}^\Delta$ corresponding to $\Xi=(a,b)$, the pull-back of $\TDt(\Lambda)$ agrees with $\TD(f_b \Lambda)$ over $R_\frn((w))$ with a universal $\Gamma_1^\Delta(\frn)$-structure $(\lambda,[\mu])$. Let us describe them explicitly. We set $T'_\frn=\Spec(R_\frn((w)))$, and consider the ring $R_\frn((w))$ as a subring of $B_\frn$ as in the proof of Lemma \ref{GammaDeltaExpDesc}. Put
\[
(P_\Xi,Q_\Xi)=(e_{f_b\Lambda}(\zeta),e_{f_b\Lambda}(\eta))(k_\Xi,l_\Xi)^{-1}.
\]
Then we have $Q_\Xi\in \TD(f_b\Lambda)[\frn](T'_\frn)$ and
\begin{equation}\label{EqnUnivLambda}
\lambda: C[\frn](T'_\frn)\to \TD(f_b\Lambda)[\frn](T'_\frn),\quad \zeta\mapsto Q_\Xi.
\end{equation}
On the other hand, taking the determinant as in the proof of Lemma \ref{GammaDeltaNonCan} yields
\begin{align*}
C[\frn]\otimes (\TD(f_b\Lambda)[\frn]/\Img(\lambda)) &\to \bigwedge^2 \TD(f_b\Lambda)[\frn] \\
\zeta \otimes (P_\Xi\bmod \Img(\lambda))&\mapsto Q_\Xi\wedge P_\Xi
\end{align*}
and similarly for $\lambda_{\infty,\frn}^{f_b\Lambda}$. Since $\det(k_\Xi,l_\Xi)=1$, we obtain an isomorphism
\[
\iota: \cH_\infty|_{T'_\frn}\to \TD(f_b\Lambda)[\frn]/\Img(\lambda)
\]
defined by $e_{f_b\Lambda}(\eta)\bmod \Img(\lambda_{\infty,\frn}^{f_b\Lambda})\mapsto -P_\Xi\bmod \Img(\lambda)$.
Then we have $\mu=\iota\circ\mu_{\infty,\frn}^{f_b\Lambda}$, which is given by
\begin{equation}\label{EqnUnivMu}
\mu: \underline{A/(\frn)}\to \TD(f_b\Lambda)[\frn]/\Img(\lambda),\quad 1\mapsto -P_\Xi\bmod \Img(\lambda).
\end{equation}

\begin{cor}\label{Ample}
Suppose that $R_0$ is a flat $A_\frn$-algebra which is an excellent regular domain.
Let $g$ be the common genus of the fibers of $X_1^\Delta(\frn)_{R_0}$ over $R_0$. Then, on each fiber, the invertible sheaf $(\bar{\omega}^\Delta_\univ)^{\otimes 2}$ has degree no less than $2g$.
\end{cor}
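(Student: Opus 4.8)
The plan is to compute the degree of $(\bar{\omega}_\univ^\Delta)^{\otimes 2}$ on a geometric fibre by identifying it, through the Kodaira--Spencer comparison, with a logarithmic cotangent bundle whose degree is manifestly at least $2g$. First I would reduce to a single fibre: base-changing along a geometric point $\Spec(\bar{k})\to\Spec(R_0)$, we may assume $R_0=\bar{k}$ is algebraically closed, so that $X:=X_1^\Delta(\frn)_{\bar{k}}$ is a smooth projective geometrically connected curve of genus $g$ and, by Corollary \ref{GammaDeltaCusp} (\ref{GammaDeltaCuspRed}), $\Cusps_{\bar{k}}^\Delta$ is a nonempty reduced divisor, say with $c:=\#\Cusps_{\bar{k}}^\Delta\geq 1$ points. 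Write $E$ for the universal Drinfeld module over $X$, $\omega:=\bar{\omega}_\univ^\Delta=\omega_E$ for its extended Hodge bundle, and $E^D$ for its dual, whose Hodge bundle $\omega_{E^D}$ we extend across each cusp by normalizing the dual parameter so that $E^D$ degenerates to a Tate--Drinfeld module, exactly as $\omega$ is extended in Lemma \ref{GammaDeltaDesc}.

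Next I would globalize the local Kodaira--Spencer computation. Over the open curve $Y_1^\Delta(\frn)_{\bar{k}}$ the moduli problem is smooth of relative dimension one, so the dual Kodaira--Spencer map
\[
\KS^\vee\colon \omega\otimes\omega_{E^D}\to \Omega^1_{X/\bar{k}}
\]
is an isomorphism there. At a cusp, Lemma \ref{TDKS} gives $\KS^\vee(dX\otimes dY)=l(x)\,dx$ with $l(x)\equiv 1/x$; after rewriting $dY$ in terms of the honest parameter of the extended $E^D$ (which differs from the formal one by the uniformizer $x$, precisely so as to absorb the pole of the leading coefficient $a_2^{\otimes -q}$ of $\Phi_t^{E^D}$), the image becomes a unit multiple of $dx/x^2$. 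By Corollary \ref{GammaDeltaCusp} (\ref{GammaDeltaCuspDlog}) this is a local generator of $\Omega^1_{X/\bar{k}}(2\Cusps_{\bar{k}}^\Delta)$, so $\KS^\vee$ extends to an isomorphism $\omega\otimes\omega_{E^D}\xrightarrow{\sim}\Omega^1_{X/\bar{k}}(2\Cusps_{\bar{k}}^\Delta)$, whence
\[
\deg\omega+\deg\omega_{E^D}=2g-2+2c.
\]

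It then remains to prove $\deg\omega_{E^D}=\deg\omega$, for which I would use the leading coefficients together with the weak autoduality of Remark \ref{RmkAutodual}. The coefficient $a_2$ of $\Phi_t^E$ is a global section of $\omega^{\otimes q^2-1}$ that is invertible on $Y_1^\Delta(\frn)_{\bar{k}}$ and vanishes only along the cusps, so $\deg\omega=\deg(\mathrm{div}\,a_2)/(q^2-1)$; the analogous statement holds for $E^D$ and $\omega_{E^D}$. Since $a_2^{E^D}=a_2^{\otimes -q}$ on $Y$ by Theorem \ref{DualDM} (\ref{DualDMRep}), and since the explicit Tate--Drinfeld descriptions $\TD(f_b\Lambda)$ at each cusp (Lemma \ref{GammaDeltaExpDesc}) show that $E$ and $E^D$ degenerate to the same order at every cusp, the two leading coefficients vanish to the same total order, giving $\deg\omega_{E^D}=\deg\omega$. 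Combining with the displayed identity yields $2\deg\omega=2g-2+2c$, and since $c\geq 1$ we obtain
\[
\deg(\bar{\omega}_\univ^\Delta)^{\otimes 2}=2g-2+2c\geq 2g,
\]
as desired.

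The main obstacle is the cusp analysis underlying the two middle steps. Matching the Kodaira--Spencer pole of Lemma \ref{TDKS} with the generator $dx/x^2$ of $\Omega^1(2\Cusps_{\bar{k}}^\Delta)$ requires the correct normalization of the dual parameter $Y$, equivalently a precise understanding of how $\omega_{E^D}$ extends across the cusps; and the equality of degeneration orders of $E$ and $E^D$ must be checked at every cusp, not merely at $\infty$, where the orders need not equal $q-1$. Both reduce to the explicit behaviour of the Tate--Drinfeld modules $\TD(f_b\Lambda)$ furnished by Lemma \ref{GammaDeltaExpDesc} and Corollary \ref{GammaDeltaCusp}, so the analysis is where I expect the real work to lie, while the degree bookkeeping and the final inequality are then routine.
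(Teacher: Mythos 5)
Your proposal is correct in substance and rests on exactly the same pillars as the paper's proof --- the Kodaira--Spencer isomorphism over the open curve (via \cite[Theorem 6.11]{Gek_dR}), the cusp computation of Lemma \ref{TDKS}, and Corollary \ref{GammaDeltaCusp} (\ref{GammaDeltaCuspDlog}) --- but it packages the dual Hodge bundle differently. The paper never extends $\omega_{(E^\Delta_\univ)^D}$ across the cusps: it raises $\KS^\vee$ to the $(q-1)$-st tensor power and substitutes the weak autoduality (\ref{EqnWeakAutoDual}), $l\mapsto l\otimes A_2^{\otimes -1}$, so that the single isomorphism $(\bar{\omega}^\Delta_\univ)^{\otimes q-1}\otimes(\bar{\omega}^\Delta_\univ)^{\otimes q-1}\to(\Omega^1_{X_1^\Delta(\frn)_{R_0}/R_0}(2\Cusps^\Delta_{R_0}))^{\otimes q-1}$ is verified at the cusps in one stroke, the section $a_2^{-1}\bigl(\tfrac{da_1}{dx}-\tfrac{a_1}{a_2}\tfrac{da_2}{dx}\bigr)^{q-1}(dx)^{\otimes q-1}$ being a unit multiple of $(dx/x^2)^{\otimes q-1}$ by Lemma \ref{TDCoeff}; the factor $q-1$ then cancels in the degree count. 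You instead build an honest extension $\bar{\omega}_{E^D}$ by rescaling the dual parameter by the uniformizer: setting $Y=xY'$ turns $\Phi_t^{E^D}=\theta-a_1a_2^{-1}\tau+a_2^{-q}\tau^2$ into a module whose $\tau$-coefficient is a unit and whose $\tau^2$-coefficient lies in $x^{q-1}R_0[[x]]^\times$, so $dY'=x^{-1}dY$ and $\KS^\vee(dX\otimes dY')$ is indeed a unit times $dx/x^2$ --- this checks out, and note it needs no $(q-1)$-st root extraction, so it even works over $R_0$ itself. Your degree identity $\deg\bar{\omega}_{E^D}=\deg\bar{\omega}^\Delta_\univ$ via divisors of leading coefficients is also sound: $a_2$ vanishes to order $q^{\deg f_b}(q-1)$ in $w$ at the cusp over $\Xi=(a,b)$, and $x^{q^2-1}a_2^{-q}$ vanishes to order $q^{\deg f_b}\bigl((q^2-1)-q(q-1)\bigr)=q^{\deg f_b}(q-1)$, the same. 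What the paper's trick buys is economy (no $\bF_q^\times$-descent construction for $\bar{\omega}_{E^D}$, no per-cusp order bookkeeping); what yours buys is the finer statements that $\KS^\vee$ itself extends to an isomorphism $\bar{\omega}^\Delta_\univ\otimes\bar{\omega}_{E^D}\simeq\Omega^1(2\Cusps^\Delta)$ and that the two Hodge bundles have equal degree.

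One repair is needed: the opening reduction ``we may assume $R_0=\bar{k}$'' is not innocuous for fibers of finite $A$-characteristic, since such residue fields are not flat over $A_\frn$, while the entire Tate--Drinfeld apparatus you quote (Lemmas \ref{TDCoeff}, \ref{TDKS}, \ref{GammaDeltaDesc} and Corollary \ref{GammaDeltaCusp}) is set up for $R_0$ a \emph{flat} excellent regular $A_\frn$-domain. Either carry out the argument over $R_0$ as the paper does and restrict the resulting isomorphism of invertible sheaves to each fiber, or invoke local constancy of the degree in the proper flat family $X_1^\Delta(\frn)_{R_0}\to\Spec(R_0)$ over the connected base $\Spec(R_0)$ to reduce to a geometric generic fiber, which \emph{is} flat over $A_\frn$.
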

\begin{proof}
Since the map $Y(\frn)\to Y_1^\Delta(\frn)$ is etale, \cite[Theorem 6.11]{Gek_dR} implies that the dual of the Kodaira-Spencer map for the universal Drinfeld module $E^\Delta_\univ$ over $Y_1^\Delta(\frn)_{R_0}$
\[
\KS^\vee: \omega_{E^\Delta_\univ}\otimes \omega_{(E^\Delta_\univ)^D}\to \Omega^1_{Y_1^\Delta(\frn)_{R_0}/R_0}
\]
is an isomorphism. We write as $\Phi^{E^\Delta_\univ}_t=\theta+ A_1 \tau+ A_2 \tau^2$.
Since we have the isomorphism
\begin{equation}\label{EqnWeakAutoDual}
\omega_{E^\Delta_\univ}^{\otimes q-1}\to \omega_{(E^\Delta_\univ)^D}^{\otimes q-1},\quad l\mapsto l\otimes A_2^{\otimes -1},
\end{equation}
the map $(\KS^\vee)^{\otimes q-1}$ induces an isomorphism
\begin{equation}\label{EqnWeakAutoDualPower}
\omega_{E^\Delta_\univ}^{\otimes q-1}\otimes \omega_{E^\Delta_\univ}^{\otimes q-1} \to (\Omega^1_{Y_1^\Delta(\frn)_{R_0}/R_0})^{\otimes q-1}.
\end{equation}

Consider the cusp corresponding to $\Xi=(a,b)$ and the pull-back of this map to $R_\frn((w))$. Since $R_\frn((w))$ is a domain, the isomorphism $E_\univ^\Delta|_{R_\frn((w))}\to \TD(f_b\Lambda)$ is $R_\frn((w))$-linear.
Using Theorem \ref{DualDM} (\ref{DualDMLin}) and the functoriality of $\KS$, we can show that the pull-back of (\ref{EqnWeakAutoDualPower}) is identified with a similar map 
\[
\omega_{\TD(f_b\Lambda)}^{\otimes q-1}\otimes \omega_{\TD(f_b\Lambda)}^{\otimes q-1} \to (\Omega^1_{R_\frn((w))/R_\frn})^{\otimes q-1}
\]
induced by $\KS^\vee$ for $\TD(f_b\Lambda)$ over $R_\frn((w))$.
By Lemma \ref{TDKS} and (\ref{EqnWeakAutoDual}), this map is given by
\[
(d X)^{\otimes q-1}\otimes (d X)^{\otimes q-1}\mapsto  a_2^{-1}(\frac{d a_1}{d x}-\frac{a_1}{a_2}\frac{d a_2}{d x})^{q-1} (d x)^{\otimes q-1}.
\]
Since the right-hand side is an element of $R_\frn[[w]]^\times (\frac{d x}{x^2})^{\otimes q-1}$, Corollary \ref{GammaDeltaCusp} (\ref{GammaDeltaCuspDlog}) implies that the isomorphism (\ref{EqnWeakAutoDualPower}) extends to an isomorphism
\[
(\bar{\omega}^\Delta_\univ)^{\otimes q-1}\otimes (\bar{\omega}^\Delta_\univ)^{\otimes q-1} \to (\Omega^1_{X_1^\Delta(\frn)_{R_0}/R_0}(2\Cusps^{\Delta}_{R_0}))^{\otimes q-1}.
\]
Since $\Cusps^{\Delta}_{R_0}$ is non-empty, the corollary follows.
\end{proof}



\subsection{Case of level $\Gamma_1^\Delta(\frn,\wp)$}\label{SubsecCuspP}

For the structure around cusps of $X_1^\Delta(\frn,\wp)$, we first note that $Y_1^\Delta(\frn,\wp)_{R_0}$ is normal near infinity in the sense of \cite[(8.6.2)]{KM} by Lemma \ref{NormalNearInfty}. Thus the description around cusps using Tate-Drinfeld modules and normalization as in the beginning of \S\ref{SubsecCuspI} is also valid in this case.

The closed immersion $\lambda_{\infty,\wp}^\Lambda:C[\wp]\to \TD(\Lambda)$ defines a $\Gamma_0(\wp)$-structure on $\TD(\Lambda)$ over $T_0$. Hence we also have a map
\[
x^{\Delta,\wp}_\infty: \cT_0 \to X^\Delta_1(\frn,\wp)_{R_0}
\]
and a point $P_\infty^{\Delta,\wp}\in X^\Delta_1(\frn,\wp)_{R_0}$.

More generally, for any $\Xi=(a,b)\in \sH$, consider the map $R_0((x))\to R_0((w))=R_0((x))[w]/(G_{f_b}(w))$ and the Tate-Drinfeld module $\TD(f_b\Lambda)$ over $R_0((w))$. The latter has a canonical $\Gamma_0(\wp)$-structure $\cC$ given by the closed immersion $\lambda_{\infty,\wp}^{f_b\Lambda}$ of Lemma \ref{TDGamma1}. We denote by $Z=[\Gamma_0(\wp)]_{\TD(f_b\Lambda)}$ the scheme representing the functor sending each scheme $T$ over $R_0((w))$ to the set of $\Gamma_0(\wp)$-structures on $\TD(f_b\Lambda)|_T$. It is finite over $R_0((w))$ and thus Noetherian. We denote by $\cG_\univ$ the universal $\Gamma_0(\wp)$-structure on $Z$.

For any Noetherian scheme $T$ over $R_0((w))$ and any $\Gamma_0(\wp)$-structure $\cG$ on $\TD(f_b\Lambda)|_T$, the theory of Hilbert schemes shows that the functor $\sHom_{T,A}(\cG,\underline{A/(\wp)})$ is representable, locally of finite presentation and separated over $T$. From the etaleness of $\underline{A/(\wp)}$, we see that the group scheme $\sHom_{T,A}(\cG,\underline{A/(\wp)})$ is also formally etale over $T$. Hence it is etale over $T$ and thus its zero section is a closed and open immersion. We write its complement as $U_T$.

By composing with $\pi_{\infty,\wp}^{f_b\Lambda}: \TD(f_b\Lambda)[\wp]\to \underline{A/(\wp)}$, the universal $\Gamma_0(\wp)$-structure $\cG_\univ$ gives a map
\[
Z=[\Gamma_0(\wp)]_{\TD(f_b\Lambda)}\to \sHom_{Z,A}(\cG_\univ,\underline{A/(\wp)})=Z\sqcup U_Z.
\]
Hence the left-hand side is decomposed accordingly, and the component over $Z$ agrees with the section $\Spec(R_0((w)))\to Z$ given by $\cC$. From this, we can show that we have the same description of the complete local ring at $P_\infty^{\Delta,\wp}\in X^\Delta_1(\frn,\wp)_{R_0}$ and a similar extended invertible sheaf $\bar{\omega}_\univ^{\Delta,\wp}$ which is compatible with $\bar{\omega}_\univ^\Delta$, as in Lemma \ref{GammaDeltaDesc}. Furthermore, after passing to $R_\frn((w))$, we can also show that the formal completion of $X_1^\Delta(\frn,\wp)_{R_\frn}$ along the cusp corresponding to $\cC$ over the component of $\Xi$ is isomorphic to $R_\frn[[w]]$ via the projection to $X_1^\Delta(\frn)_{R_\frn}$. We refer to this cusp as the unramified cusp over $\Xi$.



\subsection{Canonical subgroups of Tate-Drinfeld modules}

In this subsection we consider the case $R_0=\cO_K$. Thus we have the Tate-Drinfeld module $\TD(f\Lambda)$ over the ring $\cO_K((x))$. Put $d=\deg(\wp)$ as before. We denote the normalized $\wp$-adic valuation of $\cO_K((x))$ by $v_\wp$.

\begin{lem}\label{TDOrd}
The Tate-Drinfeld module $\TD(f\Lambda)$ over $\cO_K((x))$ has ordinary reduction.
\end{lem}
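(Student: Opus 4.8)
The plan is to compute the reduction $\overline{\Phi^{f\Lambda}_\wp}$ modulo $\wp$ and to identify the coefficient of $\tau^d$ in it. Over the characteristic-$\wp$ ring $\overline{\cO_K((x))}=k(\wp)((x))$, \cite[Proposition 2.7]{Shastry} lets us write $\overline{\Phi^{f\Lambda}_\wp}=(\alpha_d+\cdots+\alpha_{2d}\tau^d)\tau^d$, and by definition $\TD(f\Lambda)$ has ordinary reduction precisely when $\alpha_d$ is nowhere vanishing. Since the underlying line bundle of the Tate-Drinfeld module is trivial and $k(\wp)((x))$ is a field, this amounts to showing that the coefficient of $\tau^d$ in $\Phi^{f\Lambda}_\wp$ reduces to a nonzero element of $k(\wp)((x))$.

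First I would expand $\Phi^{f\Lambda}_\wp=\sum_{i=0}^{2d}c_i\tau^i$ with $c_i\in R_0[[x]]=\cO_K[[x]]$ and invoke the congruence (\ref{EqnTDRed}), which gives $\Phi^{f\Lambda}_\wp(X)\equiv\Phi^C_\wp(X)\bmod x\cO_K[[x]]$ coefficient by coefficient. Hence $c_i$ is congruent modulo $x\cO_K[[x]]$ to the coefficient of $X^{q^i}$ in $\Phi^C_\wp(X)$. By Lemma \ref{CarlitzRed}, $\Phi^C_\wp(Z)$ is a monic Eisenstein polynomial of degree $q^d$ over $\cO_K$; in particular its coefficient of $Z^{q^d}$ equals $1$. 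Taking $i=d$ therefore yields $c_d\equiv 1\bmod x\cO_K[[x]]$, so that $c_d\in 1+x\cO_K[[x]]\subseteq\cO_K[[x]]^\times$. Reducing modulo $\wp$, the coefficient $\alpha_d=\bar c_d$ of $\tau^d$ in $\overline{\Phi^{f\Lambda}_\wp}$ lies in $1+x\,k(\wp)[[x]]$, which is a unit in $k(\wp)((x))$ and in particular nonzero. This establishes ordinariness of the reduction.

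The computation is essentially immediate once (\ref{EqnTDRed}) and the Eisenstein property of $\Phi^C_\wp$ are in hand, so there is no serious obstacle; the only point requiring a little care is the bookkeeping identification of Shastry's normal-form coefficient $\alpha_d$ with the raw $\tau^d$-coefficient $\bar c_d$. This follows by comparing the two expressions $\overline{\Phi^{f\Lambda}_\wp}=\sum_{i\geq 0}\bar c_i\tau^i$ and $(\alpha_d+\cdots+\alpha_{2d}\tau^d)\tau^d=\sum_{i\geq d}\alpha_{i}\tau^{i}$ term by term: \cite[Proposition 2.7]{Shastry} forces the lower terms $\bar c_i$ ($i<d$) to vanish, while the coefficient of $\tau^d$ is left untouched, so $\alpha_d=\bar c_d$ exactly as needed.
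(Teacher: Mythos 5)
Your proof is correct, and it concludes by a genuinely more direct route than the paper's. Both arguments share the same key input: from (\ref{EqnTDRed}) together with Lemma \ref{CarlitzRed} (equivalently (\ref{CarlitzModP})), the coefficient of $X^{q^d}$ in $\bar{\Phi}^{f\Lambda}_\wp(X)$ lies in $1+x\,k(\wp)[[x]]$, hence is an $x$-adic unit, while all other coefficients lie in $x\,k(\wp)[[x]]$. From there the paper argues indirectly: it also invokes Lemma \ref{TDCoeff} to see that the top coefficient (of $X^{q^{2d}}$) is nonzero, and then inspects the Newton polygon of $\bar{\Phi}^{f\Lambda}_\wp(X)$ with respect to $v_x$ to produce at least $q^{2d}-q^d$ nonzero roots (with multiplicity) in an algebraic closure of $k(\wp)((x))$; combined with the factorization (\ref{EqnWpAlpha}), which forces the $\tau^i$-coefficients with $i<d$ to vanish, this pins down $\alpha_d\neq 0$. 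You instead use that factorization of \cite[Proposition 2.7]{Shastry} up front --- the paper has already recorded it as (\ref{EqnWpAlpha}) and uses it throughout \S 3, so this costs nothing --- to identify $\alpha_d$ with the raw $\tau^d$-coefficient, and then read off $\alpha_d\equiv 1\bmod x$ directly. Your one point of care is also handled correctly: since the factor $\tau^d$ in $(\alpha_d+\cdots+\alpha_{2d}\tau^d)\tau^d$ sits on the right, it does not twist the coefficients, so $\alpha_d$ really is the $\tau^d$-coefficient. The trade-off: your argument is more economical, dispensing with the Newton polygon and with Lemma \ref{TDCoeff} (beyond its prior role in making $\TD(f\Lambda)$ a rank-two Drinfeld module, so that Shastry's result applies to the reduction and the leading coefficient is nonzero); the paper's valuation-theoretic computation yields a little more, namely explicit nonzero $\wp$-torsion of finite positive $x$-adic valuation, in the same style as the Newton-polygon arguments it uses later (e.g., in the proof of Lemma \ref{MonodromySurj}).
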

\begin{proof}
Put $\bar{\Phi}^{f\Lambda}_\wp(X)=\Phi^{f\Lambda}_\wp(X)\bmod \wp$, which is an element of the ring $k(\wp)[[x]][X]$. From (\ref{EqnTDRed}), we see that the coefficient of $X^{q^d}$ in $\bar{\Phi}^{f\Lambda}_\wp(X)$ is an $x$-adic unit and those of larger degree have positive $x$-adic valuations. By Lemma \ref{TDCoeff}, the coefficient of $X^{q^{2d}}$ is non-zero. An inspection of the Newton polygon of $\bar{\Phi}^{f\Lambda}_\wp(X)$ shows that this polynomial has at least $q^{2d}-q^d$ non-zero roots in an algebraic closure of $k(\wp)((x))$. Thus the reduction of $\TD(f\Lambda)$ modulo $\wp$ is ordinary.
\end{proof}

By the exact sequence (\ref{TDfrnExact}), the closed immersion $\lambda^{f\Lambda}_{\infty,\wp^n}$ identifies $C[\wp^n]$ with a closed $A$-submodule scheme of $\TD(f\Lambda)[\wp^n]$, which we denote by $\cC_n^{f\Lambda}$. We refer to $\cC_n^{f\Lambda}$ as the canonical subgroup of $\TD(f\Lambda)$ of level $n$. The reduction $\overline{\cC_n^{f\Lambda}}$ modulo $\wp$ agrees with $\Ker(F_d^n)$ of the reduction of $\TD(f\Lambda)$. Thus the pull-backs of $\cC_n^{f\Lambda}$ to $(\cO_K/(\wp^m))((x))$ and the $\wp$-adic completion $\cO_K((x))^\wedge$ agree with the canonical subgroups of level $n$ of $\TD(f\Lambda)$ over them in the sense of Lemma \ref{ExistCanSub}.
We have $\cC_n^{f\Lambda}=\nu_f^*(\cC_n^{\Lambda})$. 

We define the canonical and canonical etale isogenies of level one for $\TD(f\Lambda)$ as the natural maps
\[
\pi^{f\Lambda}: \TD(f\Lambda)\to \TD(f\Lambda)/\cC_1^{f\Lambda},\quad \rho^{f\Lambda}: \TD(f\Lambda)/\cC_1^{f\Lambda}\to \TD(f\Lambda).
\]
They satisfy
\begin{equation}\label{EqnTDFVCompos}
\rho^{f\Lambda}\circ \pi^{f\Lambda}=\Phi^{\TD(f\Lambda)}_{\wp},\quad \pi^{f\Lambda}\circ \rho^{f\Lambda}=\Phi^{\TD(f\Lambda)/\cC_1^{f\Lambda}}_{\wp}.
\end{equation}
By Lemma \ref{DMQFF} (\ref{DMQFF-Dr}), the quotient $\TD(f\Lambda)/\cC_1^{f\Lambda}$ has a natural structure of a Drinfeld module of rank two which makes these isogenies compatible with $A$-actions. 
The $\Gamma_1^\Delta(\frn)$-structure $(\lambda_{\infty,\frn}^{f\Lambda},[\mu_{\infty,\frn}^{f\Lambda}])$ on $\TD(f\Lambda)$ induces that on $\TD(f\Lambda)/\cC_1^{f\Lambda}$, which we denote by $(\bar{\lambda}_{\infty,\frn}^{f\Lambda},[\bar{\mu}_{\infty,\frn}^{f\Lambda}])$.

Since the power series $e_{f\Lambda}(X)\in \cO_K[[x]][[X]]$ is entire, any root $\beta\neq 0$ of $\Phi^C_\wp(Z)$ in its splitting field $L$ over $K$ defines an element $e_{f\Lambda}(\beta)$ of $\cO_L[[x]]$. From Lemma \ref{CarlitzRed} and (\ref{EqnDefExp}) we obtain
\begin{equation}\label{EqnValBeta}
\Phi^C_\wp(\beta)=0,\beta\neq 0 \Rightarrow e_{f\Lambda}(\beta)\in \beta (\cO_L[[x]]^\times).
\end{equation}
Then we put
\[
\Psi^{f\Lambda}_\wp(X)=\wp X\prod_{\Phi^C_\wp(\beta)=0,\beta\neq 0}\left( 1-\frac{X}{e_{f\Lambda}(\beta)}\right)\in \cO_K[[x]][X].
\]
As in the proof of \cite[Ch.~2, Lemma 1.2]{Lehmkuhl}, we see that this is an $\bF_q$-linear additive polynomial, and 
(\ref{EqnValBeta}) implies that its leading coefficient is an element of $\cO_K[[x]]^\times$. Hence $X\mapsto 
\Psi^{f\Lambda}_\wp(X)$ defines an isogeny of $\bF_q$-module schemes over $\cO_K((x))$
\[
\pi^{f\Lambda}_\wp : \TD(f\Lambda) \to \TD(f\Lambda).
\]

\begin{lem}\label{KerLambda}
$\Ker(\pi^{f\Lambda}_\wp)=\cC_1^{f\Lambda}$.
\end{lem}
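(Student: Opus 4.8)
The plan is to prove the equality of the two closed $\bF_q$-submodule schemes $\cC_1^{f\Lambda}$ and $\Ker(\pi^{f\Lambda}_\wp)$ of $\TD(f\Lambda)$ over $\cO_K((x))$ by establishing the inclusion $\cC_1^{f\Lambda}\subseteq \Ker(\pi^{f\Lambda}_\wp)$ and then comparing ranks. First I would record that both sides are finite locally free of rank $q^d$ over $\cO_K((x))$. The canonical subgroup $\cC_1^{f\Lambda}$ is, by definition, the image of the closed immersion $\lambda^{f\Lambda}_{\infty,\wp}$, hence isomorphic to $C[\wp]$, whose rank is $\deg(\Phi^C_\wp)=q^d$ by Lemma \ref{CarlitzRed}. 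On the other side, $\Ker(\pi^{f\Lambda}_\wp)=\Spec(\cO_K((x))[X]/(\Psi^{f\Lambda}_\wp(X)))$ is free over $\cO_K((x))$ of rank $\deg(\Psi^{f\Lambda}_\wp)$; since $\Phi^C_\wp$ has exactly $q^d-1$ nonzero roots and $\Psi^{f\Lambda}_\wp$ has unit leading coefficient (as noted before the lemma), this degree is $1+(q^d-1)=q^d$.

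Next I would verify the inclusion, which amounts to the vanishing of the composite $\pi^{f\Lambda}_\wp\circ \lambda^{f\Lambda}_{\infty,\wp}$. By Lemma \ref{TDGamma1} the immersion $\lambda^{f\Lambda}_{\infty,\wp}$ is given on coordinate rings by $X\mapsto e_{f\Lambda}(Z)$, so the composite corresponds to the element $\Psi^{f\Lambda}_\wp(e_{f\Lambda}(Z))$ of $D:=\cO_K((x))[Z]/(\Phi^C_\wp(Z))$, and I must show it is zero. Since $D$ is free over the domain $\cO_K((x))$, it is $\wp$-torsion free and embeds into $D[1/\wp]$; because $\wp$ is invertible in $K$ and $\Phi^C_\wp$ is separable there, $D[1/\wp]$ is etale over $K((x))$, hence a finite product of fields, each embedding into $K((x))^{\alg}$. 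It therefore suffices to check that $\Psi^{f\Lambda}_\wp(e_{f\Lambda}(Z))$ maps to zero under every $\cO_K((x))$-algebra homomorphism sending $Z$ to a root $\beta$ of $\Phi^C_\wp$. By the $x$-adic continuity and functoriality of the entire-series evaluation underlying Lemma \ref{ELambEval}, the image of $e_{f\Lambda}(Z)$ is $e_{f\Lambda}(\beta)$, and $\Psi^{f\Lambda}_\wp(e_{f\Lambda}(\beta))=0$ is immediate from the defining product of $\Psi^{f\Lambda}_\wp$: the factor $X$ kills the case $\beta=0$, since $e_{f\Lambda}(0)=0$ by (\ref{EqnDefExp}), while for $\beta\neq 0$ the factor $1-X/e_{f\Lambda}(\beta)$ vanishes at $X=e_{f\Lambda}(\beta)$.

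Finally, the inclusion $\cC_1^{f\Lambda}\subseteq \Ker(\pi^{f\Lambda}_\wp)$ yields a surjection $\cO_{\Ker(\pi^{f\Lambda}_\wp)}\to \cO_{\cC_1^{f\Lambda}}$, which is a surjective $\cO_K((x))$-linear map between free modules of the same finite rank $q^d$ and hence an isomorphism; thus the two closed subschemes of $\TD(f\Lambda)$ coincide. The only genuinely delicate point is the vanishing in the second paragraph: one must make sense of the evaluation of the entire series $e_{f\Lambda}$ at the element $Z$ of the finite algebra $D$ and reduce the asserted identity to geometric points, using the injection $D\hookrightarrow D[1/\wp]$ together with the continuity of the evaluation from Lemma \ref{ELambEval}. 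Granting this compatibility, the rank comparison and the remaining arguments are entirely formal.
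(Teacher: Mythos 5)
Your proof is correct, but the key step is handled by a genuinely different mechanism than the paper's. Both arguments begin identically: reduce by a rank count (both sides are finite free of rank $q^d$, since $\Phi^C_\wp$ is monic of degree $q^d$ and $\Psi^{f\Lambda}_\wp$ has degree $q^d$ with unit leading coefficient) to showing $\Psi^{f\Lambda}_\wp(e_{f\Lambda}(Z))=0$ in $D=\cO_K((x))[Z]/(\Phi^C_\wp(Z))$. The paper then proves the stronger functional equation $\Psi^{f\Lambda}_\wp(e_{f\Lambda}(Z))=e_{\wp f\Lambda}(\Phi^C_\wp(Z))$ as entire series in $K((x))\{\{Z\}\}$ (same linear term $\wp$, same divisor $f\Lambda+(\Phi^C_\wp)^{-1}(0)$) and concludes because the ideal $(\Phi^C_\wp(Z))$ is $x$-adically closed in the Noetherian complete ring $\cO_K[[x]]\langle Z\rangle$; you instead embed $D$ into a product of fields and check the vanishing at the $q^d$ geometric points $Z=\beta$, where it drops out of the defining product of $\Psi^{f\Lambda}_\wp$. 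Your route is sound: $\wp$-torsion-freeness gives the injection into the generic-fiber algebra, separability of $\Phi^C_\wp$ (its derivative is $\wp\neq 0$) makes that algebra a product of finite separable extensions of $K((x))$, the roots $\beta$ lie in $\cO_L$ for the splitting field $L$, and the composite $\cO_K[[x]][Z]/(\Phi^C_\wp(Z))\to \cO_L[[x]]$ sends $x^n$-multiples to $x^n$-multiples, hence is $x$-adically continuous and commutes with the evaluation of the entire series $e_{f\Lambda}$ — exactly the compatibility you flag, and which is the correct justification. Two remarks. First, a notational slip: $D[1/\wp]$ is étale over $\cO_K((x))[1/\wp]$, which is a proper subring of $K((x))$ and not a field, so $D[1/\wp]$ is not itself a product of fields; you should instead pass to $D\otimes_{\cO_K((x))}K((x))$ (injectivity still holds since $D$ is free over the domain $\cO_K((x))$), after which your argument goes through verbatim. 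Second, what each approach buys: your pointwise check is more elementary and avoids manipulating entire series beyond a single evaluation, but the paper's identity (\ref{EqnExpP}) is not merely a means to this lemma — it is reused in (\ref{EqnPLa}) and in the proof of Lemma \ref{TDQuot} to identify $\TD(f\Lambda)/\cC_1^{f\Lambda}$ with $\nu_\wp^*(\TD(f\Lambda))$, so on your route that identity would still have to be established separately later.
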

\begin{proof}
	By comparing ranks, it is enough to show that the composite $\pi^{f\Lambda}_\wp \circ \lambda_{\infty,\wp}^{f\Lambda}$ is zero. From the definition of the map $\lambda_{\infty,\wp}^{f\Lambda}$, this amounts to showing that the image of $\Psi^{f\Lambda}_\wp(e_{f\Lambda}(Z))$ in the ring $\cO_K[[x]]\langle Z\rangle/(\Phi^C_\wp(Z))$
	is zero.
	For this, note that we have the equality of entire series in $K((x))\{\{Z\}\}$
	\begin{equation}\label{EqnExpP}
	\Psi^{f\Lambda}_\wp(e_{f\Lambda}(Z))=e_{\wp f\Lambda}(\Phi^C_\wp(Z)),
	\end{equation}
	since they have the same linear term $\wp$ and divisor $f\Lambda+(\Phi^C_\wp)^{-1}(0)$. Thus the equality also holds in $\cO_K[[x]]\langle Z\rangle$. Since the latter ring is Noetherian, the ideal $(\Phi^C_\wp(Z))$ is $x$-adically closed and thus it contains the element $e_{\wp f\Lambda}(\Phi^C_\wp(Z))$. 
\end{proof}

Thus the $a$-multiplication map of $\TD(f\Lambda)/\cC_1^{f\Lambda}$ for any $a\in A$ is given by a unique polynomial $\Phi'_a(X)$ satisfying
\[
\Psi^{f\Lambda}_\wp(\Phi^{f\Lambda}_a(X))=\Phi'_a(\Psi^{f\Lambda}_\wp(X)).
\]

Note that we have
\[
F_\wp(x)=\frac{1}{\Phi^C_\wp\left(\frac{1}{x}\right)}\in x^{q^d} (1+\wp x\cO_K[[x]]).
\]
We also have the $\cO_K$-algebra homomorphism
\[
\nu^\sharp_\wp: \cO_K((x)) \to \cO_K((x)),\quad x\mapsto F_\wp(x)
\]
and the induced map $\nu_\wp:\Spec(\cO_K((x)))\to \Spec(\cO_K((x)))$. For any element $F(X)=\sum_{l\geq 0} a_l X^l\in \cO_K((x))[[X]]$, we put $\nu^*_\wp(F)(X)=\sum_{l\geq 0} \nu^\sharp_\wp(a_l) X^l$, as before. 
Then we have 
\begin{equation}\label{EqnExpWpCompat}
\nu^*_\wp(e_{f\Lambda})(X)=e_{\wp f\Lambda}(X).
\end{equation}
Thus (\ref{EqnDefTDa}) yields
\begin{equation}\label{EqnExpPLa}
\nu_\wp^*(\Phi^{f\Lambda}_a)(e_{\wp f\Lambda}(X))=e_{\wp f\Lambda}(\Phi^C_a(X))
\end{equation}
for any $a\in A$. On the other hand, (\ref{EqnExpP}) and (\ref{EqnExpWpCompat}) yield
\begin{equation}\label{EqnPLa}
\Psi^{f\Lambda}_\wp(e_{f\Lambda}(X))=e_{\wp f\Lambda}(\Phi^C_\wp(X))=\nu^*_\wp(e_{f\Lambda})(\Phi^C_\wp(X)).
\end{equation}

\begin{lem}\label{TDQuot}
\[
(\TD(f\Lambda)/\cC^{f\Lambda}_1,\bar{\lambda}_{\infty,\frn}^{f\Lambda},[\bar{\mu}_{\infty,\frn}^{f\Lambda}])=\nu_\wp^*(\TD(f\Lambda),\wp \lambda_{\infty,\frn}^{f\Lambda},[\mu_{\infty,\frn}^{f\Lambda}]).
\]
\end{lem}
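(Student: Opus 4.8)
The plan is to make the identification explicit through the canonical isogeny $\pi^{f\Lambda}=\pi^{f\Lambda}_\wp$, which acts on coordinates by $X\mapsto\Psi^{f\Lambda}_\wp(X)$ and has kernel $\cC_1^{f\Lambda}$ by Lemma~\ref{KerLambda}. Thus $\pi^{f\Lambda}$ presents $\TD(f\Lambda)/\cC_1^{f\Lambda}$ as a Drinfeld module on the image coordinate, and I would take the identification of the lemma to be the one sending this image coordinate to the coordinate of $\nu_\wp^*(\TD(f\Lambda))$. Once this identification is fixed, the asserted equality of tuples splits into three compatibilities---for the $A$-action, for the level structure $\lambda$, and for the class $[\mu]$---and all three are governed by the functional equations (\ref{EqnPLa}), (\ref{EqnExpPLa}) and (\ref{EqnExpWpCompat}) relating $\Psi^{f\Lambda}_\wp$, $e_{f\Lambda}$, $e_{\wp f\Lambda}$ and $\nu_\wp^*$.

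First I would match the two $A$-actions. By definition the $a$-multiplication $\Phi'_a$ on the quotient satisfies $\Psi^{f\Lambda}_\wp\circ\Phi^{f\Lambda}_a=\Phi'_a\circ\Psi^{f\Lambda}_\wp$. Evaluating both $\Psi^{f\Lambda}_\wp\circ\Phi^{f\Lambda}_a$ and $\nu_\wp^*(\Phi^{f\Lambda}_a)\circ\Psi^{f\Lambda}_\wp$ on $X=e_{f\Lambda}(Z)$, the defining relation (\ref{EqnDefTDa}) together with the functional equation (\ref{EqnPLa}) turns the former into $e_{\wp f\Lambda}(\Phi^C_a(\Phi^C_\wp(Z)))$, while (\ref{EqnPLa}) followed by (\ref{EqnExpPLa}) turns the latter into the same series; since $e_{f\Lambda}$ is an invertible substitution this yields $\Psi^{f\Lambda}_\wp\circ\Phi^{f\Lambda}_a=\nu_\wp^*(\Phi^{f\Lambda}_a)\circ\Psi^{f\Lambda}_\wp$ as additive polynomials. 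Cancelling the nonzero separable polynomial $\Psi^{f\Lambda}_\wp$ then gives $\Phi'_a=\nu_\wp^*(\Phi^{f\Lambda}_a)$ for every $a$, hence $\TD(f\Lambda)/\cC_1^{f\Lambda}=\nu_\wp^*(\TD(f\Lambda))$.

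Next I would handle the $\Gamma_1(\frn)$-structure. By Lemma~\ref{TDGamma1}, $\lambda_{\infty,\frn}^{f\Lambda}$ sends the coordinate of $C[\frn]$ by $Z\mapsto e_{f\Lambda}(Z)$, so the induced structure $\bar{\lambda}_{\infty,\frn}^{f\Lambda}=\pi^{f\Lambda}\circ\lambda_{\infty,\frn}^{f\Lambda}$ sends it to $\Psi^{f\Lambda}_\wp(e_{f\Lambda}(Z))=e_{\wp f\Lambda}(\Phi^C_\wp(Z))$ by (\ref{EqnPLa}). Because $e_{\wp f\Lambda}=\nu_\wp^*(e_{f\Lambda})$ by (\ref{EqnExpWpCompat}) and $\wp\in A$ has coefficients fixed by $\nu_\wp^\sharp$, this is precisely $\nu_\wp^*(\wp\lambda_{\infty,\frn}^{f\Lambda})$, the inner $\Phi^C_\wp$ supplying the multiplication by $\wp$.

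Finally I would match the class $[\mu]$, and this is where I expect the main difficulty, since it requires keeping track of the relevant $C[\frn]$-torsors. The generator $\mu_{\infty,\frn}^{f\Lambda}(1)=\overline{e_{f\Lambda}(\eta)}$ of Lemma~\ref{TDCokerTriv} is carried by $\pi^{f\Lambda}$ to $\overline{e_{\wp f\Lambda}(\Phi^C_\wp(\eta))}$. On the other hand, $\nu_\wp^\sharp$ sends $\Phi^C_f(1/x)$ to $\Phi^C_{\wp f}(1/x)$, so applying $\nu_\wp^*$ to (\ref{EqnDefBn}) identifies $\nu_\wp^*(B_{0,\frn}^{f\Lambda})$ with $B_{0,\frn}^{\wp f\Lambda}$, on which $\nu_\wp^*(\mu_{\infty,\frn}^{f\Lambda})(1)=\overline{e_{\wp f\Lambda}(\tilde{\eta})}$ with $\Phi^C_\frn(\tilde{\eta})=\Phi^C_{\wp f}(1/x)$. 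Since $\Phi^C_\wp(\eta)$ satisfies this same equation over $B_{0,\frn}^{f\Lambda}$, there is an $\cO_K((x))$-algebra map $B_{0,\frn}^{\wp f\Lambda}\to B_{0,\frn}^{f\Lambda}$ sending $\tilde{\eta}\mapsto\Phi^C_\wp(\eta)$, under which the two explicit generators correspond; as both are $C[\frn]$-invariant and so descend to sections of the etale quotient $\cH_{\infty,\frn}^{\wp f\Lambda}$ over $T_0$, faithful flatness of this torsor map forces $\bar{\mu}_{\infty,\frn}^{f\Lambda}=\nu_\wp^*(\mu_{\infty,\frn}^{f\Lambda})$ as sections, a fortiori modulo $\Delta$. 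Combining the three compatibilities then gives the equality of tuples, the earlier steps being direct consequences of the functional equations and this last torsor bookkeeping being the genuine obstacle.
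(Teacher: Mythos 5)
Your proposal is correct and takes essentially the same route as the paper's proof: the same identification of $\TD(f\Lambda)/\cC_1^{f\Lambda}$ with $\nu_\wp^*(\TD(f\Lambda))$ through $X\mapsto\Psi^{f\Lambda}_\wp(X)$ and the functional equations (\ref{EqnPLa}), (\ref{EqnExpPLa}), (\ref{EqnExpWpCompat}), with the classes $[\mu]$ compared via the same algebra map $\nu_\wp^*(B_{0,\frn}^{f\Lambda})\to B_{0,\frn}^{f\Lambda}$, $\tilde{\eta}\mapsto\Phi^C_\wp(\eta)$. The only (harmless) divergence is that the paper uses $(\wp,\frn)=1$ to show this map is an isomorphism, whereas you bypass that and invoke faithful flatness of the finite free torsor $\Spec(B_{0,\frn}^{f\Lambda})\to T_0$ to compare the two descended sections, which indeed suffices.
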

\begin{proof}
First let us show the equality $\nu_\wp^*(\TD(f\Lambda))=\TD(f\Lambda)/\cC_1^{f\Lambda}$. This amounts to showing 
\[
\Psi_\wp^{f\Lambda}(\Phi^{f\Lambda}_a(X))=\nu_\wp^*(\Phi^{f\Lambda}_a)(\Psi^{f\Lambda}_\wp(X))
\]
for any $a\in A$. It is enough to show the equality in the ring $K((x))[[X]]$. For this, (\ref{EqnDefTDa}), (\ref{EqnExpPLa}) and (\ref{EqnPLa}) yield
\begin{align*}
\Psi_\wp^{f\Lambda}(\Phi^{f\Lambda}_a(e_{f\Lambda}(X)))&=\Psi^{f\Lambda}_\wp(e_{f\Lambda}(\Phi^C_a(X)))=e_{\wp f\Lambda}(\Phi^C_{\wp a}(X))\\
&=\nu_\wp^*(\Phi^{f\Lambda}_a)(e_{\wp f\Lambda}(\Phi^C_{\wp}(X)))=\nu_\wp^*(\Phi^{f\Lambda}_a)(\Psi^{f\Lambda}_\wp(e_{f\Lambda}(X)))
\end{align*}
and the claim follows by plugging in $e_{f\Lambda}^{-1}(X)$. The $\Gamma_1(\frn)$-structure $\bar{\lambda}^{f\Lambda}_{\infty,\frn}$ is given by $X\mapsto \Psi^{f\Lambda}_\wp(e_{f\Lambda}(Z))$. By (\ref{EqnPLa}), the latter element is equal to $\nu_\wp^*(e_{f\Lambda})(\Phi^C_\wp(Z))$, which means $\bar{\lambda}^{f\Lambda}_{\infty,\frn}=\nu_\wp^*(\wp\lambda^{f\Lambda}_{\infty,\frn})$.

For the assertion on $[\bar{\mu}_{\infty,\frn}^{f\Lambda}]$, consider the ring $B_{0,\frn}^{f\Lambda}$ of (\ref{EqnDefBn}) and its base extension $\nu_\wp^*(B_{0,\frn}^{f\Lambda})$ by the map $\nu_\wp^\sharp$. These rings are free of rank $q^{\deg(\frn)}$ over $\cO_K((x))$. We have a homomorphism of $\cO_K((x))$-algebras
\[
\nu_\wp^*(B_{0,\frn}^{f\Lambda})\simeq \cO_K((x))[\eta]/(\Phi^C_\frn(\eta)-\Phi^C_{f\wp}(1/x))\to B_{0,\frn}^{f\Lambda}
\]
defined by $\eta\mapsto \Phi^C_\wp(\eta)$.
Since $(\wp,\frn)=1$, we have $\alpha\wp+\beta \frn=1$ for some $\alpha,\beta\in A$ and this map sends $\Phi^C_\alpha(\eta)+\Phi^C_{f \beta}(1/x)$ to $\eta$. Hence it is surjective and thus these two rings are isomorphic as $\cO_K((x))$-algebras.

Now a similar argument as above implies that, for the map $\bar{\mu}_{\infty,\frn}^{f\Lambda}: \underline{A/(\frn)}\to \nu_\wp^*(\cH_{\infty,\frn}^{f\Lambda})$, the restriction $\bar{\mu}_{\infty,\frn}^{f\Lambda}(1)|_{B_{0,\frn}^{f\Lambda}}$ is equal to the image of the element
\[
\nu_\wp^*(e_{f\Lambda})(\Phi^C_\wp(\eta))\in \nu_\wp^*(\TD(f\Lambda))(B_{0,\frn}^{f\Lambda}).
\]
On the other hand, for the pull-back $\nu_\wp^*(\mu_{\infty,\frn}^{f\Lambda}): \underline{A/(\frn)}\to \nu_\wp^*(\cH_{\infty,\frn}^{f\Lambda})$, the restriction $\nu_\wp^*(\mu_{\infty,\frn}^{f\Lambda})(1)|_{\nu_\wp^*(B_{0,\frn}^{f\Lambda})}$ is equal to the image of the element
\[
e_{f\Lambda}(\eta)\otimes 1 =\nu_\wp^*(e_{f\Lambda})(\eta)\in \nu_\wp^*(\TD(f\Lambda))(\nu_\wp^*(B_{0,\frn}^{f\Lambda})).
\]
Since they agree with each other in $\nu_\wp^*(\TD(f\Lambda))(B_{0,\frn}^{f\Lambda})$, we obtain $\bar{\mu}_{\infty,\frn}^{f\Lambda}=\nu_\wp^*(\mu_{\infty,\frn}^{f\Lambda})$.
\end{proof}

By Lemma \ref{TDQuot}, the canonical etale isogeny $\rho^{f\Lambda}$ induces an isomorphism of $\cO_K((x))$-modules
\[
((\rho^{f\Lambda})^*)^{-1}: \omega_{\TD(f\Lambda)}\otimes_{\cO_K((x)),\nu^\sharp_\wp} \cO_K((x))=\omega_{\TD(f\Lambda)/\cC^{f\Lambda}_1} \to \omega_{\TD(f\Lambda)}.
\]

\begin{cor}\label{TDWcanFixed}
\[
((\rho^{f\Lambda})^*)^{-1}(d X\otimes 1)=d X.
\]
\end{cor}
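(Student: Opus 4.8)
The plan is to exploit the factorization of $\wp$-multiplication through the two canonical isogenies recorded in (\ref{EqnTDFVCompos}), namely $\rho^{f\Lambda}\circ\pi^{f\Lambda}=\Phi_\wp^{\TD(f\Lambda)}$, and to transport everything to invariant differentials, where an $\bF_q$-linear (additive) homomorphism acts by multiplication by its linear coefficient. First I would assemble the two inputs I need. By Lemma \ref{KerLambda} and the discussion following it, the canonical isogeny $\pi^{f\Lambda}$ is realized on coordinates by $X\mapsto \Psi_\wp^{f\Lambda}(X)$; and by Lemma \ref{TDQuot} the target $\TD(f\Lambda)/\cC_1^{f\Lambda}$ is identified with $\nu_\wp^*(\TD(f\Lambda))$ in such a way that its invariant differential is precisely $dX\otimes 1$ under $\omega_{\TD(f\Lambda)/\cC_1^{f\Lambda}}=\omega_{\TD(f\Lambda)}\otimes_{\cO_K((x)),\nu_\wp^\sharp}\cO_K((x))$. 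The relevant numerical facts are that the leading constant of $\Psi_\wp^{f\Lambda}(X)=\wp X\prod_{\Phi^C_\wp(\beta)=0,\beta\neq 0}(1-X/e_{f\Lambda}(\beta))$ is $\wp$, and that the linear coefficient of $\Phi_\wp^{\TD(f\Lambda)}$ is the image of $\wp$ under the structure map, again $\wp$.

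Next I would compute the two pullbacks on invariant differentials. Since $\cO_K((x))$ has characteristic $p$, the invariant differential of any $q$-power monomial vanishes, so for an additive polynomial $P$ the map $P^*$ sends $dX$ to (linear coefficient of $P$)$\cdot dX$. Hence $(\pi^{f\Lambda})^*(dX\otimes 1)=d(\Psi_\wp^{f\Lambda}(X))=\wp\,dX$ and $(\Phi_\wp^{\TD(f\Lambda)})^*(dX)=\wp\,dX$.

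Finally I would combine these. Applying the pullback functor to the factorization gives $(\pi^{f\Lambda})^*\circ(\rho^{f\Lambda})^*=(\Phi_\wp^{\TD(f\Lambda)})^*$ as endomorphisms of $\omega_{\TD(f\Lambda)}$. Writing $(\rho^{f\Lambda})^*(dX)=c\,(dX\otimes 1)$ for the unique $c\in\cO_K((x))$, the $\cO_K((x))$-linearity of $(\pi^{f\Lambda})^*$ together with the computation above yields $c\wp\,dX=(\pi^{f\Lambda})^*(c\,(dX\otimes 1))=\wp\,dX$; since $\cO_K((x))$ is a domain and $\wp\neq0$ we conclude $c=1$, i.e. $(\rho^{f\Lambda})^*(dX)=dX\otimes 1$, which is exactly $((\rho^{f\Lambda})^*)^{-1}(dX\otimes 1)=dX$.

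The computation itself is short; the only genuine care needed is the bookkeeping. The step most prone to error is verifying that the identification of Lemma \ref{TDQuot} really matches the invariant differential of the quotient with $dX\otimes 1$, and that the $\cO_K((x))$-module structure on the twisted tensor product (through which the scalar $c$ acts, and with respect to which $(\pi^{f\Lambda})^*$ is linear) is the one carried by the right-hand factor $\cO_K((x))$. Once these identifications are pinned down, solving $c\wp=\wp$ closes the argument.
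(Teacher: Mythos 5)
Your proposal is correct and is essentially the paper's own argument: both proofs use that $\pi^{f\Lambda}$ is given by $X\mapsto \Psi^{f\Lambda}_\wp(X)$ (so $(\pi^{f\Lambda})^*(dX\otimes 1)=\wp\, dX$), combine this with the factorization $\rho^{f\Lambda}\circ\pi^{f\Lambda}=\Phi^{\TD(f\Lambda)}_\wp$ from (\ref{EqnTDFVCompos}) whose pullback is also multiplication by $\wp$, and cancel $\wp$ in the domain $\cO_K((x))$, using the identification of Lemma \ref{TDQuot} to read the quotient's differential as $dX\otimes 1$. Your explicit bookkeeping of the scalar $c$ and of the module structure on the twisted tensor product only makes the paper's terse cancellation step precise.
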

\begin{proof}
Since we have shown that the canonical isogeny $\pi^{f\Lambda}$ of level one for $\TD(f\Lambda)$ is given by $X\mapsto \Psi^{f\Lambda}_\wp(X)$, we have $(\pi^{f\Lambda})^*(dX)=\wp dX$. From (\ref{EqnTDFVCompos}), we obtain $(\rho^{f\Lambda})^*(dX)=dX$ in $\omega_{\TD(f\Lambda)/\cC^{f\Lambda}_1}$, which is identified with $dX\otimes 1$ via $\nu_\wp^*(\TD(f\Lambda))=\TD(f\Lambda)/\cC^{f\Lambda}_1$.
\end{proof}



\section{$\wp$-adic properties of Drinfeld modular forms}\label{Sec_DMF}

\subsection{Drinfeld modular forms}

Let $k$ be an integer. Let $M$ be an $A_\frn$-module. We define a Drinfeld modular form of level $\Gamma_1^\Delta(\frn)$ and weight $k$ with coefficients in $M$ as an element of 
\[
M_k(\Gamma_1^\Delta(\frn))_{M}=H^0(X_1^\Delta(\frn)_{A_\frn},(\bar{\omega}^\Delta_\univ)^{\otimes k}\otimes_{A_\frn}M).
\]
By Lemma \ref{GammaDeltaDesc} (\ref{GammaDeltaDescCompat}), the group $\bF_q^\times$ acts on the $A_\frn$-module $M_k(\Gamma_1^\Delta(\frn))_{M}$ via $c\mapsto \langle c \rangle_\Delta$. Since $q-1$ is invertible in $A_\frn$, we have a decomposition
\[
M_{k}(\Gamma^\Delta_1(\frn))_{M}=\bigoplus_{m\in\bZ/(q-1)\bZ} M_{k,m}(\Gamma_1(\frn))_{M},
\]
where the direct summand $M_{k,m}(\Gamma_1(\frn))_{M}$ is the maximal submodule on which the operator $\langle c\rangle_\Delta$ acts by the multiplication by $c^{-m}$ for any $c\in\bF_q^\times$. 
We say $f\in M_k(\Gamma_1^\Delta(\frn))_{M}$ is of type $m$ if $f\in M_{k,m}(\Gamma_1(\frn))_{M}$. 

Consider the map $x^\Delta_\infty:\Spec(A_\frn[[x]])\to X_1^\Delta(\frn)_{A_\frn}$ as in \S\ref{SubsecCuspI}.
For any $f\in M_k(\Gamma_1^\Delta(\frn))_{M}$, we define the $x$-expansion of $f$ at the $\infty$-cusp as the unique power series $f_\infty(x)\in A_\frn[[x]]\otimes_{A_\frn} M$ satisfying
\[
(x_\infty^\Delta)^*(f)=f_\infty(x) (d X)^{\otimes k}.
\]
We also have a variant $M_k(\Gamma_1^\Delta(\frn,\wp))_M$ of level $\Gamma_1^\Delta(\frn,\wp)$, using $X_1^\Delta(\frn,\wp)$, the sheaf $\bar{\omega}^{\Delta,\wp}_\univ$ and the $\infty$-cusp $x^{\Delta,\wp}_\infty$.

\begin{prop}\label{QexpBC}
\begin{enumerate}
\item\label{QexpBC-Q} ($x$-expansion principle) For any $A_\frn$-module $M$ and $f\in M_k(\Gamma_1^\Delta(\frn))_M$, if $f_\infty(x)=0$ then $f=0$. Moreover, for any $A_n$-modules $N\subseteq M$ and any $f\in M_k(\Gamma_1^\Delta(\frn))_M$, we have $f_\infty(x)\in A_\frn[[x]]\otimes_{A_\frn} N$ if and only if $f\in M_k(\Gamma_1^\Delta(\frn))_N$. The same assertions hold for the case of level $\Gamma_1^\Delta(\frn,\wp)$ if $M$ is an $A_\frn[1/\wp]$-module.
\item\label{QexpBC-BC} For any $k\geq 2$ and any $A_\frn$-module $M$, the natural map
\[
M_k(\Gamma^\Delta_1(\frn))_{A_\frn}\otimes_{A_\frn} M\to M_k(\Gamma^\Delta_1(\frn))_{M}
\]
is an isomorphism.
\end{enumerate}
\end{prop}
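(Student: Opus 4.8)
The plan is to read both parts as instances of cohomology and base change on the proper smooth curve $X := X_1^\Delta(\frn)_{A_\frn}$, in the spirit of \cite[\S1.6]{Katz_p}. Write $R=A_\frn$, $\pi\colon X\to \Spec(R)$ and $\mathcal{L}=(\bar{\omega}_\univ^\Delta)^{\otimes k}$. Recall from \S\ref{Sec_DMC} that $X$ is proper (finite over $\bP^1_R$) and smooth over $R$ with geometrically connected fibers, so each geometric fiber $X_{\bar s}$ is an integral proper curve and $\cO_X$ is flat over $R$. By Lemma \ref{GammaDeltaDesc}~(\ref{GammaDeltaDescCompl}) the $\infty$-cusp $P_\infty^\Delta$ is a relative effective Cartier divisor with local equation $x$, trivializing $\mathcal{L}$ via $(dX)^{\otimes k}$, so that $f\mapsto f_\infty(x)$ is exactly the completion of $f$ along $P_\infty^\Delta$, a natural transformation $H^0(X,\mathcal{L}\otimes_R(-))\to A_\frn[[x]]\otimes_R(-)$.

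For part (\ref{QexpBC-Q}) I would first reduce the submodule statement to the vanishing statement. Since $\pi$ is flat, $\cO_X$ is $R$-flat, so $\mathcal{L}\otimes_R(-)$ is an exact functor; applying $H^0(X,-)$ to $0\to N\to M\to M/N\to 0$ and comparing with the exact sequence $0\to A_\frn[[x]]\otimes_R N\to A_\frn[[x]]\otimes_R M\to A_\frn[[x]]\otimes_R (M/N)\to 0$ (exact because $A_\frn[[x]]$ is $A_\frn$-flat), the equivalence "$f_\infty(x)\in A_\frn[[x]]\otimes_R N \Leftrightarrow f\in M_k(\Gamma_1^\Delta(\frn))_N$" follows once the $x$-expansion map is injective for the module $M/N$. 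The core is thus injectivity for an arbitrary $M$. Here $f_\infty(x)=0$ means $f$ vanishes to infinite order along $P_\infty^\Delta$; since $P_\infty^\Delta$ is Cartier with equation $x$, the truncations of $f_\infty$ compute the images of $f$ in $H^0\!\big(\mathcal{L}/\mathcal{L}(-nP_\infty^\Delta)\otimes_R M\big)$, so $f_\infty(x)=0$ is equivalent to $f\in\bigcap_n H^0\!\big(X,\mathcal{L}(-nP_\infty^\Delta)\otimes_R M\big)$. On each fiber $\deg\big(\mathcal{L}(-nP_\infty^\Delta)_s\big)=k\deg(\bar{\omega}_s)-n$, which is negative for one large $n$ (uniform in $s$, as degrees are locally constant in the flat family), so $H^0(X_s,\mathcal{L}(-nP_\infty^\Delta)_s)=0$ for all $s$ because $X_s$ is an integral proper curve. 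Representing $R\pi_*\big(\mathcal{L}(-nP_\infty^\Delta)\big)$ by a two-term complex $[K^0\xrightarrow{\phi}K^1]$ of finite locally free $R$-modules, the vanishing $\ker(\phi\otimes_R\kappa(s))=0$ for all $s$ shows $\phi$ is fiberwise injective, hence a sub-bundle inclusion with locally free cokernel, in particular universally injective; therefore $H^0(X,\mathcal{L}(-nP_\infty^\Delta)\otimes_R M)=\ker(\phi\otimes_R M)=0$ for every $M$, giving $f=0$. The level $\Gamma_1^\Delta(\frn,\wp)$ case runs identically after base change to $A_\frn[1/\wp]$, over which $X_1^\Delta(\frn,\wp)$ is smooth and proper with geometrically integral fibers and the unramified cusp of \S\ref{SubsecCuspP} furnishes the required section with coordinate $w$.

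For part (\ref{QexpBC-BC}) I would invoke cohomology and base change: the natural map $M_k(\Gamma_1^\Delta(\frn))_{A_\frn}\otimes_{A_\frn}M=\pi_*\mathcal{L}\otimes_R M\to \pi_*(\mathcal{L}\otimes_R M)=M_k(\Gamma_1^\Delta(\frn))_M$ is an isomorphism for all $M$ once $R^1\pi_*\mathcal{L}$ is locally free, for which it suffices that $H^1(X_s,\mathcal{L}_s)=0$ on every fiber. By Serre duality on the smooth proper curve $X_s$ of genus $g$ one has $H^1(X_s,\mathcal{L}_s)^\vee\cong H^0(X_s,\Omega^1_{X_s}\otimes\mathcal{L}_s^{-1})$, and $\deg(\Omega^1_{X_s}\otimes\mathcal{L}_s^{-1})=(2g-2)-k\deg(\bar{\omega}_s)$. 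Corollary \ref{Ample} gives $2\deg(\bar{\omega}_s)\geq 2g$, so $\deg(\bar{\omega}_s)\geq g$ and for $k\geq 2$ we obtain $k\deg(\bar{\omega}_s)\geq 2g>2g-2$; hence this $H^0$ vanishes and $H^1(X_s,\mathcal{L}_s)=0$ for all $s$. Thus $R^1\pi_*\mathcal{L}=0$, so $\pi_*\mathcal{L}$ commutes with arbitrary base change, which is the claimed isomorphism.

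The only genuine obstacle is the presence of arbitrary (possibly non-finitely generated, non-flat) coefficient modules $M$ in the injectivity of part (\ref{QexpBC-Q}): one cannot simply use injectivity of $\cO_X(U)\hookrightarrow A_\frn[[x]]$ and tensor with $M$, since $-\otimes_R M$ destroys injectivity. The device that removes this difficulty is to recast "vanishing $x$-expansion" as membership in $H^0(\mathcal{L}(-nP_\infty^\Delta)\otimes_R M)$ for all $n$ and to exploit that a sufficiently negative line bundle has no sections on the integral fibers; the fiberwise-injectivity criterion for the representing complex then annihilates $H^0$ for every $M$ at once, bypassing all finiteness hypotheses on $M$. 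Part (\ref{QexpBC-BC}) is then a routine application of cohomology and base change, its sole nontrivial input being the $H^1$-vanishing supplied by Corollary \ref{Ample}.
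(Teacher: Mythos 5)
Your proposal is correct, but for part (\ref{QexpBC-Q}) it takes a genuinely different route from the paper. The paper proves the $x$-expansion principle as in \cite[\S 1.6]{Katz_p}: using Lemma \ref{GammaDeltaDesc} to identify the $x$-expansion with passage to the completed local ring $A_\frn[[x]]$ at the cusp, one applies Krull's intersection theorem in that Noetherian local ring (after the standard filtered-colimit reduction to finitely generated coefficient modules $M$) to conclude that the stalk of $f$ at $P_\infty^\Delta$ vanishes, and then propagates the vanishing over the whole curve using smoothness and geometric connectedness (hence geometric integrality) of the fibers. You instead recast the vanishing of $f_\infty(x)$ as membership in $H^0(X,\mathcal{L}(-nP_\infty^\Delta)\otimes_R M)$ for every $n$ and kill that group universally: fiberwise vanishing for one large $n$ (uniform since the fiber degree is constant), combined with the universal complex of EGA III 6.10.5/Mumford, makes the first differential fiberwise injective, hence locally split injective --- note that a unit maximal minor gives this over \emph{any} Noetherian base, so your aside about reducedness is unnecessary --- hence injective after $\otimes_R M$ for every $M$ at once. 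Your route buys a uniform treatment of arbitrary, non-finitely-generated $M$ with no colimit reduction, at the price of the heavier properness/perfect-complex machinery; the paper's route is more elementary but needs the reduction to finitely generated modules. Two small remarks: (i) only the forward implication of your claimed equivalence ($f_\infty(x)=0$ implies $f\in\bigcap_n H^0(X,\mathcal{L}(-nP_\infty^\Delta)\otimes_R M)$) is used, and it is the immediate one; the converse is also true, but verifying it amounts to the $x$-adic separatedness of $A_\frn[[x]]\otimes_{A_\frn}M$, which again reduces to finitely generated $M$; (ii) your part (\ref{QexpBC-BC}) is essentially the paper's own argument, which likewise deduces fiberwise $H^1$-vanishing from Corollary \ref{Ample} and concludes by cohomology and base change as in \cite[Theorem 1.7.1]{Katz_p}.
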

\begin{proof}
Since $X_1^\Delta(\frn)_{A_\frn}$ and $X_1^\Delta(\frn,\wp)_{A_\frn[1/\wp]}$ are smooth and geometrically connected, Krull's intersection theorem and Lemma \ref{GammaDeltaDesc} (and for the case of level $\Gamma_1^\Delta(\frn,\wp)$, the corresponding statements in \S\ref{SubsecCuspP}) imply the assertion (\ref{QexpBC-Q}), as in the proof of \cite[\S1.6]{Katz_p}. The assertion (\ref{QexpBC-BC}) follows from Corollary \ref{Ample}, similarly to the proof of \cite[Theorem 1.7.1]{Katz_p}.
\end{proof}

Note that our definition of Drinfeld modular forms is compatible with the classical one as in \cite{Gek_DMC,Gek_Coeff}; 
over $X(\frn)_{\bC_\infty}$ this follows from \cite[Theorem 1.79]{Goss_pi}, and the spaces of Drinfeld modular forms of 
level $\Gamma_1^\Delta(\frn)$ and weight $k$ in both definitions are the fixed parts of the natural action of 
$\Gamma_1^\Delta(\frn)/\Gamma(\frn)$ on them. We can also show that our $x$-expansion $f_\infty(x)$ of $f$ at the 
$\infty$-cusp agrees with Gekeler's $t$-expansion at $\infty$  (see \cite[Ch.~V, \S 2]{Gek_DMC}, while the 
normalization we adopt is as in \cite[\S5]{Gek_Coeff}) of the associated classical Drinfeld modular form to $f$.

By \cite[Proposition (6.11)]{Gek_Coeff} and Proposition \ref{QexpBC} (\ref{QexpBC-Q}), Gekeler's lift $g_d$ of the Hasse invariant is an element of $M_{q^d-1,0}(\Gamma_1(\frn))_{A_\frn}$ satisfying 
\begin{equation}\label{EqnGekHasseInv}
(g_d)_\infty(x)\equiv 1\bmod \wp.
\end{equation}



\subsection{Ordinary loci}

In the rest of the paper, we write as $Y_\univ=Y_1^\Delta(\frn)_{\cO_K}$ and $X_\univ=X_1^\Delta(\frn)_{\cO_K}$. For any positive integer $m$, the pull-back of any scheme $T$ over $\cO_K$ to $\cO_{K,m}=\cO_{K}/(\wp^m)$ is denoted by $T_m$.

Since we know that $X_{\univ,1}$ has a supersingular point \cite[Satz (5.9)]{Gek_Moduln}, the ordinary loci $X_{\univ,m}^\ord$ in $X_{\univ,m}$ and $Y_{\univ,m}^\ord$ in $Y_{\univ,m}$ are affine open subschemes of finite type over $\cO_{K,m}$. We put 
\[
B_{\univ,m}^\ord=\cO(Y_{\univ,m}^\ord).
\]
This is a flat $\cO_{K,m}$-algebra of finite type, and the collection $\{B_{\univ,m}^\ord\}_m$ forms a projective system of $\cO_K$-algebras with surjective transition maps. We define
\[
\hat{B}_{\univ}^\ord=\varprojlim_n B_{\univ,m}^\ord,\quad Y_{\univ}^\ord=\Spec(\hat{B}_{\univ}^\ord).
\]
Then we have $\hat{B}_{\univ}^\ord/(\wp^m)=B_{\univ,m}^\ord$ and $\hat{B}_{\univ}^\ord$ is flat over $\cO_K$. This implies that $\hat{B}_{\univ}^\ord$ is $\wp$-adically complete and topologically of finite type over $\cO_{K}$. Moreover, since $B_{\univ,1}^\ord$ is a regular domain, the ring $\hat{B}_{\univ}^\ord$ is reduced. Thus $\hat{B}_{\univ}^\ord$ is a reduced flat $\wp$-adic ring. On the other hand, we have a map $Y_{\univ}^\ord\to Y_\univ$ and we denote by $\cE_\univ^\ord$ the pull-back of the universal Drinfeld module to $\hat{B}_{\univ}^\ord$, which has ordinary reduction.

Now we can form the canonical subgroup $\cC_n=\cC_n(\cE_\univ^\ord)$ of level $n$ for $\cE_\univ^\ord$. As is seen in \S\ref{SubsecCanSub}, it has the $v$-structure induced from that of $\cE_\univ^\ord$, which is unique by Lemma \ref{SredV} (\ref{SredV-Red}). Lemma \ref{DualEtale} implies that its Taguchi dual $\cC_n^D$ is etale. We denote by $\cC_{n,m}$ the pull-back of $\cC_n$ to $Y_{\univ,m}^\ord$ endowed with the induced $v$-structure, and similarly for $(\cC_{n}^D)_m$. Then
the Taguchi dual $\cC_{n,m}^D$ of $\cC_{n,m}$ agrees with $(\cC_n^D)_m$ as a finite $v$-module and they are finite etale over $Y_{\univ,m}^\ord$.

\begin{lem}\label{CansubExtCusp}
The finite $v$-module $\cC_{n,m}^D$ over $Y_{\univ,m}^\ord$ extends to an etale finite $v$-module $\bar{\cC}_{n,m}^D$ over $X_{\univ,m}^\ord$ which is etale locally isomorphic to $\underline{A/(\wp^n)}$.
\end{lem}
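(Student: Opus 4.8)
The plan is to reduce the problem to the special fibre and there build the extension by normalization, reading off etaleness at the cusps from the Tate-Drinfeld picture. Since $\wp^m=0$ in $\cO_{K,m}$, the closed immersion $X_{\univ,1}^\ord\hookrightarrow X_{\univ,m}^\ord$ defined by $(\wp)$ is a nilpotent thickening, and likewise for $Y_{\univ,1}^\ord\hookrightarrow Y_{\univ,m}^\ord$. By the topological invariance of the finite etale site, reduction modulo $\wp$ is then an equivalence between finite etale covers of $X_{\univ,m}^\ord$ and of $X_{\univ,1}^\ord$, compatibly with the analogous equivalence over $Y^\ord$ and with restriction along the open immersion $Y^\ord\hookrightarrow X^\ord$. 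As these equivalences preserve all categorical structure, it suffices to produce a finite etale $A$-module scheme $\bar\cC_{n,1}^D$ over $X_{\univ,1}^\ord$ whose restriction to $Y_{\univ,1}^\ord$ is $\cC_{n,1}^D$; transporting it up the tower then yields $\bar\cC_{n,m}^D$, whose restriction to $Y_{\univ,m}^\ord$ is the unique lift of $\cC_{n,1}^D$, namely $\cC_{n,m}^D$.

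Over the field $\cO_{K,1}=k(\wp)$ the scheme $X_{\univ,1}^\ord$ is a smooth affine geometrically connected curve, hence an excellent normal integral scheme, so I would take $\bar\cC_{n,1}^D$ to be the normalization of $X_{\univ,1}^\ord$ in $\cC_{n,1}^D$; its $A$-module structure extends from $\cC_{n,1}^D$ by the functoriality of normalization. This scheme is finite over $X_{\univ,1}^\ord$ and etale over $Y_{\univ,1}^\ord$, so only etaleness at the finitely many cusps remains. Because $X_{\univ,1}^\ord$ is excellent, normalization commutes with completion, and after the finite etale residue extension to $R_\frn$ and passing to the component indexed by some $\Xi=(a,b)$, the universal Drinfeld module becomes, over $R_\frn((w))$, the Tate-Drinfeld module $\TD(f_b\Lambda)$ of \S\ref{SubsecCuspII}. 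By the uniqueness of canonical subgroups (Lemma \ref{ExistCanSub}) this identification carries $\cC_n$ to $\cC_n^{f_b\Lambda}=\lambda_{\infty,\wp^n}^{f_b\Lambda}(C[\wp^n])$; since $\lambda_{\infty,\wp^n}^{f_b\Lambda}$ is a $t$-module morphism out of $C[\wp^n]$ over the reduced flat ring $\cO_K((w))$, Lemma \ref{SredV} (\ref{SredV-Red}) makes it a $v$-module isomorphism, whence $(\cC_n^{f_b\Lambda})^D\cong C[\wp^n]^D\cong\underline{A/(\wp^n)}$ by Lemma \ref{ConstDual}. As Taguchi duality commutes with base change (Theorem \ref{ThmFVDual}), reduction modulo $\wp$ gives $\cC_{n,1}^D\cong\underline{A/(\wp^n)}$ over the punctured disc, and the normalization of the complete local ring in this split cover is again split, hence finite etale. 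Thus $\bar\cC_{n,1}^D$ is etale at each cusp and therefore finite etale over $X_{\univ,1}^\ord$.

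It then remains only to assemble the structures. The same cusp computation shows $\bar\cC_{n,1}^D$ is etale locally isomorphic to $\underline{A/(\wp^n)}$ at the cusps, while this holds over $Y_{\univ,1}^\ord$ by Lemma \ref{DualEtale}; both properties, together with the $A$-module structure, are carried through the topological-invariance equivalence to a finite etale $A$-module scheme $\bar\cC_{n,m}^D$ over $X_{\univ,m}^\ord$ restricting to $\cC_{n,m}^D$. Being etale, $\bar\cC_{n,m}^D$ carries a unique $v$-structure by Lemma \ref{SredV} (\ref{SredV-Et}), and by that same uniqueness this restricts on $Y_{\univ,m}^\ord$ to the given $v$-structure of $\cC_{n,m}^D$, completing the construction. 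The main obstacle is precisely the etaleness at the cusps: everything turns on identifying the universal Drinfeld module near each cusp with a Tate-Drinfeld module and matching canonical subgroups so that the cover splits there, and the detour through $m=1$ is forced because normalization is unavailable over the non-reduced base $\cO_{K,m}$.
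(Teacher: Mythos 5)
Your proposal is correct, but it routes the gluing differently from the paper. The paper never dualizes first and never reduces to $m=1$: it performs the cusp computation over the $\wp$-adic completion $\cO$ of $\cO_{K_\frn}((w))$, where the uniqueness in Lemma \ref{ExistCanSub} and Lemma \ref{SredV} (\ref{SredV-Red}) give a $v$-module isomorphism $\cC_n|_{\cO}\simeq C[\wp^n]$; it then reduces modulo $\wp^m$ and extends the \emph{connected} finite $v$-module $\cC_{n,m}$ itself across the cusps by fpqc descent, gluing $C[\wp^n]$ over the formal completions $\Spec(\cO_{K_\frn,m}[[w]])$ with $\cC_{n,m}$ over $Y^\ord_{\univ,m}$, and only at the very end applies Taguchi duality, whose exactness and base-change compatibility (Theorem \ref{ThmFVDual}) together with Lemma \ref{ConstDual} yield the etale, locally constant $\bar{\cC}_{n,m}^D$. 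You instead dualize over $Y^\ord_{\univ,m}$, transport everything to the special fibre by topological invariance of the finite etale site, extend by normalization there, and read off etaleness at the cusps from the triviality of the monodromy of $\cC_{n,1}^D$ on the punctured disc---which you extract from the same Tate-Drinfeld computation. So the two arguments share the identical local kernel (identifying the canonical subgroup with $C[\wp^n]$, $v$-compatibly, over a mixed-characteristic base, then invoking Lemma \ref{ConstDual}), and differ in the gluing mechanism: your route avoids descent along a formal completion over the non-reduced base $\cO_{K,m}$, only ever extends etale objects, and gets the $v$-structure for free at the end via Lemma \ref{SredV} (\ref{SredV-Et}), with its uniqueness also guaranteeing agreement with the given $v$-structure on $\cC_{n,m}^D$; the paper's descent is uniform in $m$ and produces, as a by-product, an extension $\bar{\cC}_{n,m}$ of the canonical subgroup itself rather than only of its dual. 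Two points to tighten: first, endow the normalization with its $A$-module structure only \emph{after} establishing etaleness at the cusps, since the normalization of a fibre product need not be the fibre product of the normalizations, so ``functoriality of normalization'' alone does not produce the group law; second, the identification of $\cC_n$ with $\cC_n^{f_b\Lambda}$ via the uniqueness of Lemma \ref{ExistCanSub} takes place over the $\wp$-adic completion of $\cO_{K_\frn}((w))$ (a $\wp$-adic ring), not over $R_\frn((w))$ itself---at $m=1$ this is harmless, because both reductions equal $\Ker(F_d^n)$ with base-change-compatible induced $v$-structures, but the intermediate completion (and the base $\cO_{K_\frn}((w))$ rather than $\cO_K((w))$ in your appeal to Lemma \ref{SredV} (\ref{SredV-Red})) should be made explicit.
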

\begin{proof}
Let $K_\frn$ be a splitting field of $\Phi^C_\frn(X)$ over $K$.
Consider the formal completion of $X_{\univ}|_{\cO_{K_\frn}}$ at the cusp corresponding to $\Xi=(a,b)$, which is isomorphic to $\Spec(\cO_{K_\frn}[[w]])$. 

We denote the $\wp$-adic completion of $\cO_{K_\frn}((w))$ by $\cO$, which is a reduced flat $\wp$-adic ring. The pull-back of $\cE_\univ^\ord$ to $\cO$ is isomorphic to that of the Tate-Drinfeld module $\TD(f_b\Lambda)$ over $\cO_{K_\frn}((w))$ to $\cO$. By the uniqueness of the canonical subgroup in Lemma \ref{ExistCanSub}, we have $\cC_n|_\cO\simeq \cC_n^{f_b\Lambda}|_\cO=C[\wp^n]$. Lemma \ref{SredV} (\ref{SredV-Red}) implies that this identification is compatible with $v$-structures, where we give $C[\wp^n]$ the induced $v$-structure from $C$. Taking modulo $\wp^m$, we obtain an isomorphism $\cC_{n,m}|_{\cO_{K_\frn,m}((w))}\simeq C[\wp^n]$ of $v$-modules over $\cO_{K_\frn,m}((w))$.

This implies that, by an fpqc descent, the finite $v$-module $\cC_{n,m}$ extends to a finite $v$-module $\bar{\cC}_{n,m}$ over $X_{\univ,m}^\ord$ such that its restriction to the formal completion at each cusp is isomorphic to $C[\wp^n]$ with the induced $v$-structure from $C$. Taking the dual yields the lemma.
\end{proof}

\begin{lem}\label{MonodromySurj}
Let $U$ be any non-empty open subscheme of $X_{\univ,m}^\ord$ and $\bar{\xi}$ any geometric point of $U$. Then the character of its etale fundamental group with base point $\bar{\xi}$
\[
r_{n,m} :\pi_1^\et(U)\to \pi_1^\et(X_{\univ,m}^\ord)\to (A/(\wp^n))^\times
\]
defined by $\bar{\cC}_{n,m}^D$ is surjective.
\end{lem}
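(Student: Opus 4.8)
The plan is to prove the equivalent assertion that the character
$\pi_1^\et(X_{\univ,m}^\ord)\to (A/(\wp^n))^\times$ classifying the rank-one étale local system $\bar{\cC}_{n,m}^D$ is surjective. This implies the lemma, since the inclusion of a non-empty open $U$ into the connected normal scheme $X_{\univ,m}^\ord$ induces a surjection $\pi_1^\et(U)\to\pi_1^\et(X_{\univ,m}^\ord)$, so the image of $r_{n,m}$ equals the image of the second map. By the topological invariance of the étale site under the nilpotent thickening $X_{\univ,1}^\ord\hookrightarrow X_{\univ,m}^\ord$, together with the base-change compatibility of $\cC_n$, this image is independent of $m$; so I would take $m=1$ and work over the smooth connected curve $X_{\univ,1}^\ord$ over $\bF_{q^d}$, which is obtained from the proper smooth geometrically connected curve $X_{\univ,1}$ by deleting its finitely many supersingular points while \emph{retaining} the cusps (these being ordinary by Lemma \ref{TDOrd}).

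The first key point, supplied by Lemma \ref{CansubExtCusp}, is that $\bar{\cC}_{n,1}^D$ extends over the cusps to an étale local system étale-locally isomorphic to the constant sheaf $\underline{A/(\wp^n)}$. Hence the monodromy of $r_{n,1}$ is trivial on the cuspidal inertia, and the finite Galois cover $\mathrm{Ig}=\sIsom_{A}(\underline{A/(\wp^n)},\bar{\cC}_{n,1}^D)\to X_{\univ,1}^\ord$ extends, after normalization, to a cover $\overline{\mathrm{Ig}}\to X_{\univ,1}$ that is finite étale over the cusps and ramified \emph{at most} over the supersingular points. Surjectivity of $r_{n,1}$ is equivalent to connectedness of $\mathrm{Ig}$; since $X_{\univ,1}^\ord$ is connected, it suffices to exhibit a supersingular point of $X_{\univ,1}$ (one exists by \cite[Satz (5.9)]{Gek_Moduln}) over which the inertia group of $\overline{\mathrm{Ig}}$ is all of $(A/(\wp^n))^\times$.

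I would then split the target as $(A/(\wp^n))^\times=\bF_{q^d}^\times\times(1+\wp(A/(\wp^n)))$ into its tame (prime-to-$p$) and wild (pro-$p$) parts and compute the two pieces of inertia separately. For the tame part, the Hodge-Tate-Taguchi isomorphism of Proposition \ref{HTTIsom} identifies $\bar{\cC}_1^D\otimes\cO$ with $\bar{\omega}$ and exhibits $r_{1,1}$ as the Kummer character attached to a $(q^d-1)$-st root of the Hasse invariant $\alpha_d\in\bar{\omega}^{\otimes q^d-1}$; since Gekeler's lift $g_d$ reduces to a unit at every cusp by (\ref{EqnGekHasseInv}) while $\alpha_d$ vanishes simply at each supersingular point, the vanishing order $1$ is prime to $q^d-1$ and the tame inertia is cyclic of full order $q^d-1$. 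For the wild part I would invoke the generalization of Katz's Riemann--Hilbert correspondence over $A/(\wp^n)$ (extending \cite[Proposition 4.1.1]{Katz_p}): under it the pro-$p$ quotient of $r_{n,1}$ corresponds to an Artin--Schreier--Witt-type extension whose defining parameter acquires a pole at the supersingular point, again forced by the simple vanishing of the Hasse invariant there, so that the wild inertia fills out $1+\wp(A/(\wp^n))$.

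The main obstacle is precisely this local analysis at a supersingular point, i.e.\ the explicit computation of the local monodromy, and especially the wild part: it requires the promised Riemann--Hilbert dictionary over $A/(\wp^n)$, its compatibility with the Hodge-Tate-Taguchi map, and the fact that the Hasse invariant vanishes simply (to order prime to $p$) at each supersingular point. By contrast, the cusp computations play the complementary and essential role of confining all ramification of $\overline{\mathrm{Ig}}$ to the supersingular locus: Lemma \ref{CansubExtCusp} shows $\bar{\cC}_{n,1}^D$ is unramified at the cusps, and Corollary \ref{TDWcanFixed} records that the canonical étale isogeny fixes $dX$ there, so the Frobenius structure is trivial at the cusps. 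This is what guarantees that the inertia computed at a single supersingular point already equals the entire image of $r_{n,1}$, yielding the surjectivity.
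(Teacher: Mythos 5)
Your global reductions agree with the paper's: reduce to $m=1$, note that the cusps carry no monodromy (Lemma \ref{CansubExtCusp}), and reduce surjectivity to showing that the inertia group at a single supersingular point (which exists by \cite[Satz (5.9)]{Gek_Moduln}) surjects onto $(A/(\wp^n))^\times$. Your tame computation is also essentially viable: over the punctured formal disk the level-one cover is Kummer of degree $q^d-1$ with parameter a Hasse invariant vanishing to order one (for the \emph{dual} Drinfeld module --- via Proposition \ref{DualOrd} this has the same simple vanishing, a point your sketch elides by working with $\alpha_d$ of $E$ itself). But the wild part is a genuine gap, and it is precisely the hard core of the lemma. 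First, the correspondence of Lemma \ref{RHCorr} is only available over the ordinary locus $X_{\univ,n}^\ord$ (flat over $A_n$ with a Frobenius lift); a supersingular point lies outside it, so within that dictionary there is no ``defining parameter acquiring a pole at the supersingular point'' --- the Frobenius crystal does not extend across, and that failure \emph{is} the ramification you are trying to measure, not a tool for measuring it. Second, $1+\wp(A/(\wp^n))$ is in general a non-cyclic abelian $p$-group of order $q^{d(n-1)}$, so a single Artin--Schreier--Witt class cannot cut out the whole wild quotient; one would need a full system of Witt-vector classes together with quantitative lower bounds on their pole orders, and ``simple vanishing of the Hasse invariant'' alone does not force the wild image to be everything --- that assertion is exactly what remains to be proved.

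The paper closes this gap by a single local computation that handles tame and wild inertia simultaneously, with no splitting. Over the complete local ring $k[[u]]$ at a supersingular point one has $\Lie(V_{d,\bE})=-u$ by \cite[Remark 3.15]{Shastry}; by Theorem \ref{DualDM} (\ref{DualDMIsog}) and Lemma \ref{DualFV} the sheaf defining $r_{n,1}$ over $k((u))$ is $\cC_n(\bE|_{k((u))})^D=\Ker(V^n_{d,\bE^D|_{k((u))}})$, and Proposition \ref{DualOrd} transfers the simple vanishing to the dual, so that $V_{d,\bE^D}(X)=a_0X+\cdots+a_dX^{q^d}$ with $v_u(a_0)=1$, $v_u(a_i)\geq 1$ for $1\leq i<d$, $v_u(a_d)=0$. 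The Newton polygon then produces a root $z'$ of $V^n_{d,\bE^D}$ with $v_u(z')=1/((q^d-1)q^{d(n-1)})$, i.e.\ a totally ramified extension of $k((u))$ of degree $\sharp(A/(\wp^n))^\times$, which forces inertia to surject onto the full group. If you want to salvage your two-step structure, you would have to replace the Artin--Schreier--Witt gesture by exactly this kind of valuation estimate on torsion points of the formal module (e.g.\ an induction on $n$ bounding valuations of roots of $V^n$), at which point you have reproduced the paper's argument without the simplification your splitting was meant to buy.
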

\begin{proof}
We may assume $m=1$. Let $L$ be the function field of $X_{\univ,1}$. As in \cite[Theorem 4.3]{Katz_p}, it is enough to show that the restriction of $r_{n,1}$ to the inertia subgroup of $\Gal(L^\sep/L)$ at a supersingular point is surjective.

Take $\xi'\in X_{\univ,1}$ corresponding to a supersingular Drinfeld module over an algebraic closure $k$ of $k(\wp)$. The complete local ring of $X_{\univ,1}\times_{k(\wp)} k$ at $\xi'$ is isomorphic to $k[[u]]$. Let $\bE$ be the restriction of $\cE_\univ$ to this complete local ring. 
By \cite[Remark 3.15]{Shastry}, we have $\Lie(V_{d,\mathbb{E}})=-u$ and the restriction $\bE|_{k((u))}$ to the generic fiber is ordinary. By Theorem \ref{DualDM} (\ref{DualDMIsog}) and Lemma \ref{DualFV}, we have $\cC_n(\bE|_{k((u))})^D=\Ker(V^n_{d,\bE^D|_{k((u))}})$. Here $\bE^D|_{k((u))}$ is the dual of $\bE|_{k((u))}$, which is also ordinary by Proposition \ref{DualOrd}. Hence it suffices to show that the finite etale $A$-module scheme $\Ker(V^n_{d,\bE^D|_{k((u))}})$ defines a totally ramified extension of $k((u))$ of degree $\sharp(A/(\wp^n))^\times$.

For this, Proposition \ref{DualOrd} also implies that the map $\Lie(V_{d,\bE^D})$ is the multiplication by an element of $k[[u]]$ with normalized $u$-adic valuation one. Let $v_u$ be the normalized $u$-adic valuation on $k((u))$ and we extend it to its algebraic closure $k((u))^{\alg}$. Since the fiber of $\bE^D$ at $u=0$ is also supersingular, the map $V_{d,\bE^D}$ can be written as
\[
V_{d,\bE^D}(X)=a_0 X+\cdots+a_d X^{q^d}
\]
with some $a_i\in k[[u]]$ satisfying $v_u(a_0)=1$, $v_u(a_i)\geq 1$ for $1\leq i<d$ and $v_u(a_d)=0$. Then an inspection of the Newton polygon shows that any non-zero root $z$ of $V_{d,\bE^D}(X)$ satisfies $v_u(z)=1/(q^d-1)$ and there exists a root $z'$ of $V_{d,\bE^D}^n(X)$ with $v_u(z')=1/((q^d-1)q^{d(n-1)})=\sharp(A/(\wp^n))^\times$. This concludes the proof.
\end{proof}

Consider the quotient $\cE_\univ^\ord/\cC_1$ over $Y_{\univ}^\ord$, which has a natural structure of a Drinfeld module of rank two by Lemma \ref{DMQFF} (\ref{DMQFF-Dr}). Since the universal $\Gamma_1^\Delta(\frn)$-structure on $\cE_\univ$ induces that on $\cE_\univ^\ord/\cC_1$, we have a corresponding map $\pi_d: Y_{\univ}^\ord \to Y_\univ$. Since $\cE_\univ^\ord/\cC_1$ has ordinary reduction, the induced map $Y_{\univ,m}^\ord\to Y_{\univ,m}$ factors through $Y_{\univ,m}^\ord$. Hence $\pi_d$ also factors as $\pi_d: Y_{\univ}^\ord \to Y_{\univ}^\ord$. On the other hand, the endomorphism $\langle\wp^{-1} \rangle_\frn$ of $X_\univ$ defines endomorphisms of $X_\univ^\ord$ and $Y_\univ^\ord$, which we also denote by $\langle\wp^{-1} \rangle_{\frn}$. Put 
\[
\varphi_d=\langle\wp^{-1} \rangle_{\frn}\circ \pi_d.
\]
This gives the cartesian diagram
\[
\xymatrix{
\cE_\univ^\ord/\cC_1 \ar[r]\ar[d] & \cE_\univ^\ord\ar[r]\ar[d] & \cE_\univ^\ord\ar[d]\\
Y_{\univ}^\ord\ar[r]_{\pi_d} & Y_{\univ}^\ord\ar[r]_{\langle\wp^{-1} \rangle_{\frn}} & Y_{\univ}^\ord.
}
\]

\begin{lem}\label{PhiExt}
For any positive integer $m$, the induced map $\varphi_d: Y_{\univ,m}^\ord \to Y_{\univ,m}^\ord$ extends to $\bar{\varphi}_d: X_{\univ,m}^\ord \to X_{\univ,m}^\ord$ which is compatible with respect to $m$. Moreover, $\bar{\varphi}_d$
agrees with the $q^d$-th power Frobenius map on $X_{\univ,1}^\ord$.
\end{lem}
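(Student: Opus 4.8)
The plan is to treat the two assertions separately: the extension of $\varphi_d$ across the cusps is a formal computation using Tate--Drinfeld modules, while the Frobenius identification modulo $\wp$ is read off from the moduli interpretation. Since $\langle\wp^{-1}\rangle_\frn$ is already a global automorphism of $X_{\univ,m}^\ord$, and $\varphi_d$ is defined on the open $Y_{\univ,m}^\ord$, the only thing to check for the first part is that $\varphi_d$ extends across the boundary divisor $\Cusps$; this is local and may be verified on the formal completion at each cusp.

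First I would localize at a cusp. After the finite \'etale base change to $R_\frn=\cO_{K_\frn}$ of \S\ref{SubsecCuspII}, the completion of $X_\univ^\ord$ at the cusp attached to $\Xi=(a,b)$ is $\Spec(\cO_{K_\frn}[[w]])$, with punctured disc $\Spec(\cO_{K_\frn}((w)))\subset Y_\univ^\ord$ over which the universal Drinfeld module is the Tate--Drinfeld module $\TD(f_b\Lambda)$ (Corollary \ref{GammaDeltaCusp} and the discussion following it); it is ordinary by Lemma \ref{TDOrd}. By Lemma \ref{TDQuot}, the quotient of $\TD(f_b\Lambda)$ by its canonical subgroup $\cC_1^{f_b\Lambda}$, corrected by the $\langle\wp^{-1}\rangle_\frn$ that undoes the scaling $\lambda\mapsto\wp\lambda$ of the level structure, is pulled back from $\TD(f_b\Lambda)$ along $\nu_\wp$. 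Hence on the punctured disc the morphism $\varphi_d$ is the substitution $w\mapsto F_\wp(w)=1/\Phi^C_\wp(1/w)$. Since
\[
F_\wp(w)\in w^{q^d}(1+\wp w\,\cO_K[[w]])\subseteq \cO_K[[w]],
\]
this substitution restricts to an $\cO_{K_\frn,m}$-algebra endomorphism of $\cO_{K_\frn,m}[[w]]$ sending $w=0$ to a cusp. Thus $\varphi_d$ extends integrally across each cusp; gluing these extensions with $\varphi_d$ on $Y_{\univ,m}^\ord$ and descending from $R_\frn$ back to $\cO_K$ produces $\bar\varphi_d\colon X_{\univ,m}^\ord\to X_{\univ,m}^\ord$. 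As the formula $w\mapsto F_\wp(w)$ is already defined over $\cO_K$, the extensions for varying $m$ are compatible with the reduction maps.

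For the final assertion I would compute $\varphi_d$ modulo $\wp$ on the moduli locus $Y_{\univ,1}^\ord$. There $\cC_1\bmod\wp=\Ker(F_d)=\Ker(\tau^d)$ (Lemma \ref{ExistCanSub}), so $\pi_d$ sends $(\cE,\lambda,[\mu])$ to $(\cE^{(q^d)},F_d\circ\lambda,\dots)$ under the identification $\cE/\Ker(\tau^d)\xrightarrow{\sim}\cE^{(q^d)}$ given by $\tau^d$. The key input is (\ref{CarlitzModP}): since $\theta\in k(\wp)=\bF_{q^d}$, the Carlitz module $C$ over $X_{\univ,1}^\ord$ descends to $\bF_{q^d}$, so $C^{(q^d)}=C$, and under this identification the relative $q^d$-Frobenius of $C$ is $Z\mapsto Z^{q^d}=\Phi^C_\wp(Z)$, i.e.\ it is the $\wp$-multiplication. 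By naturality of the relative Frobenius, $F_d\circ\lambda=\lambda^{(q^d)}\circ\langle\wp\rangle=\wp\lambda^{(q^d)}$, and likewise for the $[\mu]$-component. Thus $\pi_d$ classifies the $\wp$-scaled Frobenius pullback, and applying $\langle\wp^{-1}\rangle_\frn$ cancels the factor $\wp$, so $\varphi_d$ classifies precisely the Frobenius pullback of $(\cE,\lambda,[\mu])$. This shows $\bar\varphi_d=F_{X_{\univ,1}^\ord}^d$ on the dense open $Y_{\univ,1}^\ord$, and since $X_{\univ,1}^\ord$ is reduced and separated the two morphisms agree on all of $X_{\univ,1}^\ord$.

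The main obstacle I expect is turning the formal computation at the cusps into an honest morphism of schemes when $m>1$, where $X_{\univ,m}^\ord$ is non-reduced and normality is unavailable: one must argue that the explicit substitution $w\mapsto F_\wp(w)$ algebraizes to a morphism on an open neighborhood of each cusp and glues correctly with $\varphi_d$ on $Y_{\univ,m}^\ord$. A secondary point requiring care is tracking the $[\mu]$-part of the level structure through the relative Frobenius, so as to confirm that the single twist $\langle\wp^{-1}\rangle_\frn$ recovers the full Frobenius pullback and not merely its $\lambda$-component.
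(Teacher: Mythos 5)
Your overall route is the same as the paper's: reduce the extension problem to the formal completion at each cusp after the finite etale base change to $\cO_{K_\frn}$, identify $\varphi_d$ there via Lemma \ref{TDQuot}, extend using $F_\wp(w)\in \cO_{K_\frn}[[w]]$, descend, and prove the mod-$\wp$ statement from the fact that $\cC_1$ lifts the Frobenius kernel together with (\ref{CarlitzModP}), concluding on $X_{\univ,1}^\ord$ by integrality and separatedness. Your second half is in fact a usefully fleshed-out version of the paper's one-line argument, and your treatment of the $[\mu]$-part (no $\wp$-scaling there, consistent with Lemma \ref{TDQuot}) is correct.

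There is, however, one concrete inaccuracy in the cuspidal identification. You assert that on the punctured disc at the cusp attached to $\Xi=(a,b)$ the map $\varphi_d$ is the $\cO_{K_\frn}$-linear substitution $w\mapsto F_\wp(w)$, citing Lemma \ref{TDQuot}. But that lemma concerns the standard structures $(\lambda_{\infty,\frn}^{f\Lambda},[\mu_{\infty,\frn}^{f\Lambda}])$, whereas the universal $\Gamma_1^\Delta(\frn)$-structure on the $\Xi$-component is given by (\ref{EqnUnivLambda}) and (\ref{EqnUnivMu}) and involves the root $\zeta\in\cO_{K_\frn}$ through $P_\Xi$ and $Q_\Xi$. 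Passing to the quotient by $\cC_1^{f_b\Lambda}$ replaces $e_{f_b\Lambda}(\zeta)$ by $e_{\wp f_b\Lambda}(\Phi^C_\wp(\zeta))$ (by (\ref{EqnExpP})), so the quotient triple is \emph{not} $\nu_\wp^*(\wp\lambda,[\mu])$ but $\tilde{\nu}_\wp^*(\wp\lambda,[\mu])$, where $\tilde{\nu}_\wp=\sigma_\wp\otimes\nu_\wp$ and $\sigma_\wp$ is the automorphism of $\Spec(\cO_{K_\frn})$ over $\cO_K$ given by $\zeta\mapsto\Phi^C_\wp(\zeta)$; one must also verify that $X\mapsto\Phi^C_\wp(X)$ preserves the connected components of $\Spec(\cO_K[X]/(W_\frn(X)))$, which the paper deduces from Lemma \ref{CarlitzRed} since modulo $\wp$ it is the $q^d$-power Frobenius. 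Thus $\varphi_d$ near the cusp is only $\cO_K$-semilinear on $\cO_{K_\frn}((w))$, not $\cO_{K_\frn}$-linear as you claim. The gap is repairable and does not affect your conclusions: $\tilde{\nu}_\wp$ still carries $\cO_{K_\frn,m}[[w]]$ into itself and is defined over $\cO_K$ (indeed only over $\cO_K$, which is exactly what is needed for the fpqc descent back to $\cO_K$ and for the compatibility in $m$), and the uniqueness of the extension needed for descent follows since the cusps form a Cartier divisor, so $Y_{\univ,m}^\ord$ is schematically dense in the separated scheme $X_{\univ,m}^\ord$ even for $m>1$.
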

\begin{proof}
Let $K_\frn$ be a splitting field of $\Phi^C_\frn(X)$, as before. Put $\cO_{K_\frn,m}=\cO_{K_\frn}/(\wp^m)$. 
By an fpqc descent, it suffices to show the existence of an extension as in the lemma around each cusp over $\cO_{K_\frn,m}$. For this, first note that the automorphism of $\Spec(\cO_K[X]/(W_\frn(X)))$ given by $X\mapsto \Phi^C_\wp(X)$ preserves its connected components, since so does its restriction over $k(\wp)$ by Lemma \ref{CarlitzRed}. Hence we have an automorphism of $\Spec(\cO_{K_\frn})$ over $\cO_K$ defined by $\zeta\mapsto \Phi^C_\wp(\zeta)$, which we denote by $\sigma_\wp$. We define an endomorphism $\tilde{\nu}_\wp$ of $\Spec(\cO_{K_\frn}((w)))$ over $\cO_K$ by $\tilde{\nu}_\wp=\sigma_\wp\otimes \nu_\wp$.
 
Let $\Xi=(a,b)$ be any element of $\sH$ and $f_b$ the monic generator of $\Ann_A(b(A/(\frn)))$, as before. 
On the component defined by $\Xi$, we have the Tate-Drinfeld module $\TD(f_b\Lambda)$ over $\cO_{K_\frn}((w))$ endowed with a $\Gamma_1^\Delta(\frn)$-structure $(\lambda,[\mu])$. As in the proof of Lemma \ref{TDQuot}, using (\ref{EqnUnivLambda}) and (\ref{EqnUnivMu}) we see that the image of $(\lambda,[\mu])$ by the map $\TD(f_b\Lambda)\to \TD(f_b\Lambda)/\cC_1^{f_b\Lambda}$ can be identified with $\tilde{\nu}_\wp^*(\wp\lambda,[\mu])$. We denote by
\[
(\lambda,[\mu]): \Spec(\cO_{K_\frn,m}((w)))\to Y_{\univ,m}^\ord
\]
the map defined by the triple $(\TD(f_b\Lambda)|_{\cO_{K_\frn,m}((w))},\lambda,[\mu])$. Then we have the commutative diagram
\[
\xymatrix{
	Y_{\univ,m}^\ord \ar[r]^{\varphi_d} & Y_{\univ,m}^\ord \\
	\Spec(\cO_{K_\frn,m}((w))) \ar[r]_{\tilde{\nu}_\wp} \ar[u]^{(\lambda,[\mu])} & \Spec(\cO_{K_\frn,m}((w)))\ar[u]_{(\lambda,[\mu])},
}
\]
where the vertical arrows identify the lower term with the formal completion of $X_{\univ,m}^\ord$ at the cusp corresponding to $\Xi$ with $w$ inverted. Since we have $F_\wp(w)\in \cO_{K_\frn}[[w]]$, we obtain an extension of $\varphi_d$ to each cusp. 

Since the canonical subgroup $\cC_1$ is a lift of the Frobenius kernel, from (\ref{CarlitzModP}) we see that the morphism $\varphi_d:Y_{\univ,1}^\ord\to Y_{\univ,1}^\ord$ agrees with the $q^d$-th power Frobenius map.
Then the assertion on $X_{\univ,1}^\ord$ also follows, since it is integral and separated.
\end{proof}

We denote by $\omega_{\univ,m}^\ord$ and $\bar{\omega}_{\univ,m}^\ord$ the pull-backs of the sheaf $\bar{\omega}_\univ^\Delta$ to $Y_{\univ,m}^\ord$ and $X_{\univ,m}^\ord$, respectively.

\begin{prop}\label{OmegaExt}
Let $\rho_\univ: \cE_\univ^\ord/\cC_1\to  \cE_\univ^\ord$ be the canonical etale isogeny of $\cE_\univ^\ord$ over $Y_{\univ}^\ord$. Then the isomorphism of $\cO_{Y_{\univ,m}^\ord}$-modules
\[
F_{\omega_{\univ,m}^\ord}=(\rho_\univ^*)^{-1}: \varphi_d^*(\omega^\ord_{\univ,m})\simeq \omega_{(\cE_{\univ}^\ord/\cC_{1})_m}\to \omega^\ord_{\univ,m}
\]
extends to an isomorphism of $\cO_{X_{\univ,m}^\ord}$-modules
\[
F_{\bar{\omega}_{\univ,m}^\ord}:\bar{\varphi}_d^*(\bar{\omega}_{\univ,m}^\ord)\to  \bar{\omega}_{\univ,m}^\ord.
\]
\end{prop}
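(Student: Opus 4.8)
The plan is to extend the isomorphism $F_{\omega_{\univ,m}^\ord}$ across the finitely many cusps of $X_{\univ,m}^\ord$ by a local computation on Tate-Drinfeld modules. Both $\bar{\varphi}_d^*(\bar{\omega}_{\univ,m}^\ord)$ and $\bar{\omega}_{\univ,m}^\ord$ are invertible sheaves on $X_{\univ,m}^\ord$, and $F_{\omega_{\univ,m}^\ord}$ is already an isomorphism over the dense open subscheme $Y_{\univ,m}^\ord$, whose complement consists of the cusps (these are ordinary by Lemma \ref{TDOrd}) and forms an effective Cartier divisor by Corollary \ref{GammaDeltaCusp} (\ref{GammaDeltaCuspRed}). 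Viewing $F_{\omega_{\univ,m}^\ord}$ as a nowhere-vanishing section of the invertible sheaf $\sHom(\bar{\varphi}_d^*(\bar{\omega}_{\univ,m}^\ord),\bar{\omega}_{\univ,m}^\ord)$ over $Y_{\univ,m}^\ord$, it then suffices to show that this section is a unit in the complete local ring at each cusp; granting this, it extends to a nowhere-vanishing global section, i.e. $F_{\omega_{\univ,m}^\ord}$ extends to the asserted global isomorphism $F_{\bar{\omega}_{\univ,m}^\ord}$. By the same fpqc descent used in the proofs of Lemma \ref{PhiExt} and Lemma \ref{CansubExtCusp}, I may work over the splitting field, i.e. over $\cO_{K_\frn,m}$, and treat each cusp separately on its formal completion.

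Fix the cusp corresponding to $\Xi=(a,b)$; its formal completion is $\Spec(\cO_{K_\frn,m}[[w]])$, and there the universal Drinfeld module $\cE_\univ^\ord$ is identified with the Tate-Drinfeld module $\TD(f_b\Lambda)$, its canonical subgroup $\cC_1$ with $\cC_1^{f_b\Lambda}$ (by the uniqueness in Lemma \ref{ExistCanSub}), and the canonical etale isogeny $\rho_\univ$ with $\rho^{f_b\Lambda}$. By Lemma \ref{GammaDeltaDesc} (\ref{GammaDeltaDescSheaf}) the sheaf $\bar{\omega}_{\univ,m}^\ord$ is generated on this completion by $dX$, so $\bar{\varphi}_d^*(\bar{\omega}_{\univ,m}^\ord)$ is generated by the pull-back of $dX$, which under the identification $\cE_\univ^\ord/\cC_1=\nu_\wp^*(\TD(f_b\Lambda))$ of Lemma \ref{TDQuot} is the element $dX\otimes 1$ of $\omega_{\TD(f_b\Lambda)/\cC_1^{f_b\Lambda}}$. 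The crucial input is now exactly Corollary \ref{TDWcanFixed}, which gives $((\rho^{f_b\Lambda})^*)^{-1}(dX\otimes 1)=dX$. Since $F_{\omega_{\univ,m}^\ord}$ is $(\rho_\univ^*)^{-1}$, it therefore carries the generator $dX\otimes 1$ of $\bar{\varphi}_d^*(\bar{\omega}_{\univ,m}^\ord)$ to the generator $dX$ of $\bar{\omega}_{\univ,m}^\ord$ on the completion at the cusp, so the section above is a unit there, as required.

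The main point requiring care is the compatibility of the two descriptions of the geometry around the cusp. The extension $\bar{\varphi}_d$ of $\varphi_d$ is given in Lemma \ref{PhiExt} through the endomorphism $\tilde{\nu}_\wp=\sigma_\wp\otimes\nu_\wp$ of $\Spec(\cO_{K_\frn}((w)))$, where $\sigma_\wp$ twists the coefficient field by $\zeta\mapsto\Phi^C_\wp(\zeta)$ and $\varphi_d$ further incorporates the level twist $\langle\wp^{-1}\rangle_\frn$, whereas Corollary \ref{TDWcanFixed} is phrased purely via $\rho^{f_b\Lambda}$ and the substitution $\nu_\wp$ for the Tate-Drinfeld module. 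One must verify that on invariant differentials the coefficient twist $\sigma_\wp$ and the level-structure twist $\langle\wp^{-1}\rangle_\frn$ act trivially on the generator $dX$, so that $\bar{\varphi}_d^*(dX)$ is genuinely $dX\otimes 1$ and Corollary \ref{TDWcanFixed} applies verbatim; this is where Lemma \ref{TDQuot} (matching the quotient \emph{together with} its $\Gamma_1^\Delta(\frn)$-structure) and the fact that $F_\wp(w)\in\cO_{K_\frn}[[w]]$ enter. By contrast, the passage to the non-reduced base $\cO_{K_\frn,m}$ for $m>1$ causes no difficulty once the section is seen to be a unit on each completion, since the cusps form a Cartier divisor; the compatibility of $F_{\bar{\omega}_{\univ,m}^\ord}$ with respect to $m$ then follows from the base-change compatibility already recorded for $\bar{\varphi}_d$ and for $\bar{\omega}_\univ^\Delta$.
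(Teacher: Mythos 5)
Your proposal is correct and is essentially the paper's own argument in expanded form: the paper's proof likewise reduces, as in Lemma \ref{PhiExt}, to extending $F_{\omega_{\univ,m}^\ord}$ over each cusp after base change to $\cO_{K_\frn,m}$, and then concludes from Corollary \ref{TDWcanFixed} together with the construction of $\bar{\omega}^\Delta_\univ$ (local generation by $dX$). The care you take with the twists $\sigma_\wp$ and $\langle\wp^{-1}\rangle_\frn$ acting trivially on $dX$, via the level-structure matching in Lemma \ref{TDQuot}, is exactly the implicit content the paper leaves to the reader.
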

\begin{proof}
As in the proof of Lemma \ref{PhiExt}, it is enough to extend $F_{\omega_{\univ,m}^\ord}$ to each cusp over $\cO_{K_\frn,m}$. This follows from Corollary \ref{TDWcanFixed} and the construction of $\bar{\omega}^\Delta_\univ$.
\end{proof}



\subsection{Weight congruence}

First we give a version of the Riemann-Hilbert correspondence of Katz in our setting. Put $A_n=A/(\wp^n)$.

\begin{lem}\label{RHCorr}
Let $n$ be a positive integer. Let $S_n$ be an affine scheme which is flat over $A_n$ such that $S_1=S_n\times_{A_n}\Spec(A_1)$ is normal and connected. Let $\varphi_d:S_n\to S_n$ be a morphism over $A_n$ such that the induced map on $S_1$ agrees with the $q^d$-th power Frobenius map. We denote by $\pi_1^\et(S_n)$ the etale fundamental group for a geometric point of $S_n$. Then there exists an equivalence between the category $\Rep_{A_n}(S_n)$ of free $A_n$-modules of finite rank with continuous actions of $\pi_1^\et(S_n)$ and the category $F\text{-}\Crys^0(S_n)$ of pairs $(\cH,F_{\cH})$ consisting of a locally free $\cO_{S_n}$-module $\cH$ of finite rank 
and an isomorphism of $\cO_{S_n}$-modules $F_\cH:\varphi_d^*(\cH)\to \cH$.
\end{lem}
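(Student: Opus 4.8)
The plan is to exhibit mutually quasi-inverse functors and reduce everything to a local triviality statement proved by an Artin--Schreier argument, following \cite[Proposition 4.1.1]{Katz_p} with $W_n(\bF_q)$ replaced by $A_n$; recall $A_1=A/(\wp)=\bF_{q^d}$. To a pair $(\cH,F_\cH)\in F\text{-}\Crys^0(S_n)$ I would attach the \emph{solution sheaf}
\[
D(\cH,F_\cH)=\Ker\bigl(\id-F_\cH\circ\varphi_d^*:\cH\to\cH\bigr),
\]
the étale sheaf of $A_n$-modules whose $T$-sections are the $v\in\cH(T)$ with $F_\cH(\varphi_d^*v)=v$. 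Conversely, to a locally constant sheaf $\cF$ of free $A_n$-modules of finite rank (an object of $\Rep_{A_n}(S_n)$) I would attach $V(\cF)=\cF\otimes_{\underline{A_n}}\cO_{S_n}$, with Frobenius structure obtained from the canonical identification $\varphi_d^*\cF\simeq\cF$ afforded by étale local constancy, tensored with the tautological Frobenius on $\cO_{S_n}$. There are natural comparison maps $V\circ D\to\id$ and $\id\to D\circ V$, and the whole content of the lemma is that both are isomorphisms.

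The crux, and the step I expect to be the main obstacle, is the case $n=1$, i.e.\ étale local triviality over the normal connected $\bF_{q^d}$-scheme $S_1$. Here $\varphi_d$ is the $q^d$-power Frobenius, so the endomorphism $\id-F_{\cH}\circ\varphi_d^*$ of the vector group scheme $\bV_*(\cH)$ is an $\bF_{q^d}$-linear homomorphism whose differential at the origin is $\id$ (the $q^d$-power map $\varphi_d^*$ contributes nothing, as $q$ is a power of $p$). Because $F_\cH$ is an isomorphism its leading term is invertible, so this homomorphism is a separable isogeny of vector groups of the same dimension; hence it is finite étale and faithfully flat, and its kernel $D(\cH,F_\cH)$ is a finite étale group scheme which is a locally constant sheaf of $\bF_{q^d}$-vector spaces of rank $r=\rank_{\cO_{S_1}}(\cH)$. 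Passing to a finite étale cover that splits this kernel simultaneously trivializes $(\cH,F_\cH)$, and one recovers the classical unit-root/Artin--Schreier correspondence over $S_1$; normality and connectedness of $S_1$ are what make the fiber functor and $\pi_1^\et(S_1)$ well defined. Verifying the finite-étaleness of the solution scheme and the freeness of its fibers of the correct rank is the genuinely technical point.

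Granting the case $n=1$, I would treat general $n$ by dévissage along the exact sequences $0\to\underline{A_1}\xrightarrow{\wp^{n-1}}\underline{A_n}\to\underline{A_{n-1}}\to 0$ on the representation side and $0\to\wp^{n-1}\cH\to\cH\to\cH/\wp^{n-1}\cH\to 0$ on the crystal side. Flatness of $S_n$ over $A_n$ keeps these exact, identifies $\wp^{n-1}\cH\simeq\cH/\wp\cH$ as an object over $S_1$, and exhibits $\cH/\wp^{n-1}\cH$ as an object over $S_{n-1}$, compatibly with the Frobenius structures. The functor $D$ is exact on these sequences because $\id-F_\cH\circ\varphi_d^*$ is étale-locally surjective---the same Artin--Schreier surjectivity established for $n=1$, lifted along the nilpotent thickening $S_1\hookrightarrow S_n$---and topological invariance of the étale site gives $\pi_1^\et(S_n)=\pi_1^\et(S_1)$, so every object of $\Rep_{A_n}(S_n)$ is built from $\bF_{q^d}$-representations by the same extensions. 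Applying $D$ and $V$ to the two sequences and comparing via the five lemma then reduces full faithfulness and essential surjectivity to the cases $n=1$ and $n-1$, which completes the induction.
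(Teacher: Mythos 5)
Your proposal is correct and takes essentially the same route as the paper, which itself runs the argument of Katz's Proposition 4.1.1: both proofs rest on Artin--Schreier/Lang \'etale-local triviality in the case $n=1$ and an induction on $n$ through the $\wp$-adic filtration, with the topological invariance $\pi_1^\et(S_n)=\pi_1^\et(S_1)$ and the unique \'etale lifting of $\varphi_d$ to covers in the background. The differences are presentational rather than substantive: the paper builds the functor in the opposite direction ($M\mapsto \cH(M)$ by Galois descent along a trivializing covering $T_n$, with full faithfulness coming from the exactness of $0\to A_n\to\cO(T_n)\xrightarrow{\varphi_{T_n}-1}\cO(T_n)$), cites Katz outright for $n=1$ where you reprove the Lang-isogeny step, and carries out your d\'evissage concretely---lifting a trivializing finite \'etale cover through the nilpotent thickening by Hensel's lemma, lifting a fixed basis, and solving the matrix Artin--Schreier congruence $N+\varphi_d(N')\equiv N'\bmod \wp$ on a further finite \'etale cover---which is exactly the ``\'etale-local surjectivity of $\id-F_\cH\circ\varphi_d^*$ on $\wp^{n-1}\cH$'' driving your five-lemma argument; if you keep your sheaf-theoretic version, you should make explicit the two points you gloss, namely that $\varphi_d$ lifts canonically to every \'etale $T\to S_n$ (so that your solution sheaf $D$ is actually defined on the \'etale site) and that the stalks of $D(\cH)$, a priori only extensions of free $A_{n-1}$-modules by free $A_1$-modules, are free over $A_n$ (e.g.\ by faithfully flat descent along $A_n\to\cO_{S_n,x}$ once $V(D(\cH))\to\cH$ is known to be an isomorphism).
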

\begin{proof}
This follows by a verbatim argument as in the proof of \cite[Proposition 4.1.1]{Katz_p}. Here we sketch the argument for the convenience of the reader. For any object $M$ of $\Rep_{A_n}(S_n)$, let $T_n$ be a (connected) Galois covering of $S_n$ such that $\pi_1^\et(S_n)\to \Aut(M)$ factors through the Galois group $G(T_n/S_n)$ of it. By the etaleness, we can uniquely lift the $q^d$-th power Frobenius map on $T_1$ to a $\varphi_d$-equivariant endomorphism $\varphi_{T_n}$ of $T_n$ over $A_n$. 

We claim that the sequence
\begin{equation}\label{EqnExactPhiTnFixed}
\xymatrix{
0\ar[r] & A_n \ar[r] & \cO(T_n) \ar[r]^{\varphi_{T_n}-1} & \cO(T_n) 
}
\end{equation}
is exact. Indeed, since $T_n$ is flat over $A_n$ we may assume $n=1$, and in this case the claim follows since $\cO(T_1)$ is an integral domain.

We have an endomorphism on $M\otimes_{A_n} \cO(T_n)$ defined by $m\otimes f\mapsto m\otimes \varphi_{T_n}^*(f)$, and Galois descent yields an object $(\cH(M),F_{\cH(M)})$ of $F\text{-}\Crys^0(S_n)$. This defines a functor 
\[
\cH(-): \Rep_{A_n}(S_n)\to F\text{-}\Crys^0(S_n). 
\]
The exact sequence (\ref{EqnExactPhiTnFixed}) implies $(\cH(M)|_{T_n})^{\varphi_{T_n}-1}=M$ and thus the functor $\cH(-)$ is fully faithful.

We prove the essential surjectivity by induction on $n$. For $n=1$, it follows by applying the original result \cite[Proposition 4.1.1]{Katz_p} to the case where the extension $k/\bF_q$ there is $\bF_{q^d}/\bF_{q^d}$.  Suppose that the case of $n-1$ is valid. Let $(\cH,F_{\cH})$ be any object of $F\text{-}\Crys^0(S_n)$. By assumption, there exists a finite etale cover $T_{n-1}\to S_{n-1}$ such that $\cH|_{T_{n-1}}$ has an $F_{\cH}$-fixed basis $\bar{h}_1,\ldots,\bar{h}_r$.
By Hensel's lemma, we can lift $T_{n-1}$ to a finite etale cover $T_n\to S_n$. Take a lift $h_i$ of $\bar{h}_i$ to $\cH|_{T_n}$. We have
\[
F_\cH(h_1,\ldots,h_r)=(h_1,\ldots,h_r)(I+\wp^{n-1} N)
\]
for some matrix $N\in M_r(\cO(T_n))$. Then it is enough to solve the equation
\[
F_\cH((h_1,\ldots,h_r)(I+\wp^{n-1} N'))=(h_1,\ldots,h_r)(I+\wp^{n-1} N')
\]
over some finite etale cover of $T_n$. Since $\cO(T_n)$ is flat over $A_n$, the equation is equivalent to $N+\varphi_d(N')\equiv N'\bmod \wp$, from which the claim follows.
\end{proof}

\begin{cor}\label{ResOpenFF}
Let $U$ be any non-empty affine open subscheme of $S_n$. Note that, since $\varphi_d$ agrees with the $q^d$-th power Frobenius map on $S_1$, it induces a map $\varphi_d:U\to U$. Then the functor $F\text{-}\Crys^0(S_n)\to F\text{-}\Crys^0(U)$ defined by the restriction to $U$ is fully faithful.
\end{cor}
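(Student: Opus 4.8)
The plan is to deduce the statement from the Riemann--Hilbert correspondence of Lemma \ref{RHCorr}, by transferring the question to one about representations of fundamental groups. First I would check that $U$ itself satisfies the hypotheses of that lemma, so that $F\text{-}\Crys^0(U)$ is equivalent to $\Rep_{A_n}(U)$. Indeed, $U$ is affine by assumption and flat over $A_n$ as an open subscheme of the flat scheme $S_n$. Writing $U_1=U\times_{A_n}\Spec(A_1)$, its underlying space is a non-empty open subset of the irreducible space $|S_1|$ (recall $S_1$ is normal and connected, hence integral), so $U_1$ is connected, and it is normal as an open subscheme of the normal scheme $S_1$. Since $\varphi_d$ induces the identity on $|S_n|=|S_1|$, it satisfies $\varphi_d^{-1}(U)=U$ and hence restricts to an endomorphism of $U$ which agrees with the $q^d$-th power Frobenius map on $U_1$. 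Thus Lemma \ref{RHCorr} applies to $U$ as well.

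Next I would observe that the equivalences $\cH(-)$ of Lemma \ref{RHCorr} for $S_n$ and for $U$ are compatible with restriction to $U$. Concretely, for $M\in\Rep_{A_n}(S_n)$ trivialized by a connected Galois covering $T_n\to S_n$ with its canonical Frobenius lift $\varphi_{T_n}$, the object $\cH(M)$ is obtained by Galois descent of $M\otimes_{A_n}\cO(T_n)$; restricting to $U$ amounts to descending $M|_U\otimes_{A_n}\cO(T_n\times_{S_n}U)$ along the Galois covering $T_n\times_{S_n}U\to U$. By the uniqueness of Frobenius lifts along etale morphisms, the Frobenius lift on $T_n\times_{S_n}U$ is the restriction of $\varphi_{T_n}$, so this descent datum is exactly the one defining $\cH(M|_U)$. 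Hence, under the equivalences of Lemma \ref{RHCorr}, the restriction functor $F\text{-}\Crys^0(S_n)\to F\text{-}\Crys^0(U)$ corresponds to the restriction of representations $\Rep_{A_n}(S_n)\to\Rep_{A_n}(U)$ induced by the homomorphism $\pi_1^\et(U)\to\pi_1^\et(S_n)$.

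It then suffices to prove that this restriction of representations is fully faithful, and this holds if and only if $\pi_1^\et(U)\to\pi_1^\et(S_n)$ is surjective: for $M,M'\in\Rep_{A_n}(S_n)$ one has $\Hom_{\pi_1^\et(S_n)}(M,M')=\Hom_{A_n}(M,M')^{\pi_1^\et(S_n)}$ and $\Hom_{\pi_1^\et(U)}(M|_U,M'|_U)=\Hom_{A_n}(M,M')^{\Img(\pi_1^\et(U))}$, and since the action on these finite modules is continuous and $\pi_1^\et(U)$ is profinite (so its image is closed), these invariants agree for all $M,M'$ exactly when the image is everything. Because the etale site is insensitive to the nilpotent thickenings $S_1\hookrightarrow S_n$ and $U_1\hookrightarrow U$, this map is identified with $\pi_1^\et(U_1)\to\pi_1^\et(S_1)$, which is surjective by the standard criterion: any connected finite etale cover $Y\to S_1$ is again normal and connected, hence irreducible, so its pull-back $Y\times_{S_1}U_1$ is a non-empty open subscheme of an irreducible scheme and is therefore connected. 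This gives the corollary. I expect the \emph{main obstacle} to be the verification in the second paragraph that the Riemann--Hilbert equivalences are compatible with restriction to $U$; once this identification is in place the remaining steps are formal.
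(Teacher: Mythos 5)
Your proof is correct and follows essentially the same route as the paper, whose entire argument is that since $S_1$ is normal and connected the restriction functor $\Rep_{A_n}(S_n)\to\Rep_{A_n}(U)$ is fully faithful; you have simply made explicit the two ingredients the paper leaves implicit (that $U$ again satisfies the hypotheses of Lemma \ref{RHCorr} with the equivalences compatible with restriction, and that full faithfulness reduces to surjectivity of $\pi_1^{\et}(U)\to\pi_1^{\et}(S_n)$, proved via irreducibility of normal connected covers). No gaps; the extra detail, including the uniqueness of Frobenius lifts along the \'etale covering, is accurate.
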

\begin{proof}
It follows from the fact that, since $S_1$ is normal and connected, the restriction functor $\Rep_{A_n}(S_n)\to \Rep_{A_n}(U)$ is fully faithful.
\end{proof}

By Lemma \ref{PhiExt}, $X_{\univ,m}^\ord$ satisfies the assumptions of Lemma \ref{RHCorr}.

\begin{prop}\label{HTTExtIsom}
By the equivalence of Lemma \ref{RHCorr}, the character 
\[
r_{n,n}:\pi_1^\et(X_{\univ,n}^\ord)\to A_n^\times
\]
of Lemma \ref{MonodromySurj} associated to $\bar{\cC}^D_{n,n}$ corresponds to the pair $(\bar{\omega}_{\univ,n}^\ord,F_{\bar{\omega}_{\univ,n}^\ord})$ of Proposition \ref{OmegaExt}.
\end{prop}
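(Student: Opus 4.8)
The plan is to show that the Hodge--Tate--Taguchi map underlies an isomorphism in $F\text{-}\Crys^0(X_{\univ,n}^\ord)$ between the two objects in question, and then to invoke the equivalence of Lemma \ref{RHCorr}. First I would identify the $F$-crystal $\cH(r_{n,n})$ attached to the character $r_{n,n}$. Since $r_{n,n}$ is by definition (Lemma \ref{MonodromySurj}) the monodromy of the lisse \'etale sheaf $\bar{\cC}_{n,n}^D$, which is \'etale locally isomorphic to $\underline{A_n}$ by Lemma \ref{CansubExtCusp}, the construction in the proof of Lemma \ref{RHCorr} shows that $\cH(r_{n,n})$ has underlying $\cO_{X_{\univ,n}^\ord}$-module $\bar{\cC}_{n,n}^D\otimes_{\underline{A_n}}\cO_{X_{\univ,n}^\ord}$, with Frobenius structure $F_{\cH}$ given by the canonical identification $\varphi_d^*\bar{\cC}_{n,n}^D\simeq \bar{\cC}_{n,n}^D$ of the \'etale sheaf (the inverse relative $q^d$-Frobenius over $X_{\univ,1}^\ord$).

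Next I would produce the comparison map. Over the open ordinary locus $Y_{\univ,n}^\ord$, Proposition \ref{HTTIsom} gives that the Hodge--Tate--Taguchi map $\HTT:\cC_{n,n}^D\otimes_{\underline{A_n}}\cO\to \omega_{\univ,n}^\ord$ is an isomorphism of invertible sheaves. The essential point is that it is a morphism in $F\text{-}\Crys^0(Y_{\univ,n}^\ord)$, i.e. it intertwines $F_{\cH}$ with $F_{\bar{\omega}_{\univ,n}^\ord}$ of Proposition \ref{OmegaExt}. To see this I would use the cartesian diagram preceding Lemma \ref{PhiExt}, which identifies $\varphi_d^*\cE_\univ^\ord$ with $\cE_\univ^\ord/\cC_1$ and $\varphi_d^*\omega_{\univ,n}^\ord$ with $\omega_{(\cE_\univ^\ord/\cC_1)_n}$; by base-change compatibility of the canonical subgroup (Lemma \ref{ExistCanSub}) and of Taguchi duality (Theorem \ref{ThmFVDual}) this gives $\varphi_d^*\cC_{n,n}^D=\cC_n(\cE_\univ^\ord/\cC_1)^D$, and since the Hodge--Tate--Taguchi map commutes with base change, $\varphi_d^*(\HTT)$ is the Hodge--Tate--Taguchi map of $\cE_\univ^\ord/\cC_1$. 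The canonical \'etale isogeny $\rho_\univ:\cE_\univ^\ord/\cC_1\to \cE_\univ^\ord$ carries $\cC_n(\cE_\univ^\ord/\cC_1)$ into $\cC_n(\cE_\univ^\ord)$ (it does so modulo $\wp$, where $\rho_\univ=V_d$ and $V_d(\Ker(F_d^n))\subseteq \Ker(F_d^n)$ because $\Ker(F_d)\subseteq\Ker(\Phi_\wp)$, and this lifts by the uniqueness in Lemma \ref{ExistCanSub}). Applying the functoriality of $\HTT$ to this isogeny, together with the compatibility $F_{\bar{\omega}_{\univ,n}^\ord}=(\rho_\univ^*)^{-1}$, the required intertwining reduces to the identity
\[
(\rho_\univ|_{\cC_n})^D=F_{\cH}^{-1},
\]
that is, to the statement that the dual of the \'etale isogeny induced by $\rho_\univ$ on canonical subgroups is the relative $q^d$-Frobenius of $\cC_{n,n}^D$. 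This is the Frobenius--Verschiebung duality under Taguchi duality, which I would extract from Lemma \ref{DualFV} together with Theorem \ref{DualDM} (\ref{DualDMIsog}).

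Having established that $\HTT$ is an isomorphism in $F\text{-}\Crys^0(Y_{\univ,n}^\ord)$, I would extend the conclusion to $X_{\univ,n}^\ord$. Both $(\bar{\cC}_{n,n}^D\otimes_{\underline{A_n}}\cO,F_{\cH})$ and $(\bar{\omega}_{\univ,n}^\ord,F_{\bar{\omega}_{\univ,n}^\ord})$ are objects of $F\text{-}\Crys^0(X_{\univ,n}^\ord)$, by Lemma \ref{CansubExtCusp} and Proposition \ref{OmegaExt} respectively. Since $X_{\univ,1}^\ord$ is normal and connected and $\bar\varphi_d$ reduces to the $q^d$-power Frobenius on it (Lemma \ref{PhiExt}), the restriction functor $F\text{-}\Crys^0(X_{\univ,n}^\ord)\to F\text{-}\Crys^0(Y_{\univ,n}^\ord)$ is fully faithful by Corollary \ref{ResOpenFF}. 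Hence the $F$-crystal isomorphism $\HTT|_{Y_{\univ,n}^\ord}$ lifts uniquely to an isomorphism in $F\text{-}\Crys^0(X_{\univ,n}^\ord)$ between these two objects, so that $\cH(r_{n,n})\simeq (\bar{\omega}_{\univ,n}^\ord,F_{\bar{\omega}_{\univ,n}^\ord})$, and the proposition follows from the equivalence of Lemma \ref{RHCorr}. I expect the main obstacle to be precisely the Frobenius-compatibility of $\HTT$ on $Y_{\univ,n}^\ord$, namely the identification of $(\rho_\univ|_{\cC_n})^D$ with the relative Frobenius underlying $F_{\cH}$; the explicit cusp computations (Lemma \ref{HTTCarlitz}, sending the canonical generator of $C[\wp^n]^D$ to $dZ$, and Corollary \ref{TDWcanFixed}, showing that $F_{\bar{\omega}}$ fixes $dX$) provide a direct consistency check that both Frobenii fix the distinguished generator along each cusp.
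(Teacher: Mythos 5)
Your proposal is correct, and while its skeleton coincides with the paper's proof, it substitutes a genuinely different argument at the crux. Like the paper, you identify $\cH(r_{n,n})$ with $\bar{\cC}_{n,n}^D\otimes_{\underline{A_n}}\cO$ carrying the canonical Frobenius of an etale sheaf, take the Hodge--Tate--Taguchi isomorphism of Proposition \ref{HTTIsom} as the comparison map, and use the full faithfulness of Corollary \ref{ResOpenFF} to pass between $Y_{\univ,n}^\ord$ and $X_{\univ,n}^\ord$. The divergence is in how the Frobenius-compatibility of $\HTT$ is established: the paper exploits the injectivity of $B_{\univ,n}^\ord\to\cO_{K,n}((x))$ to reduce the identity of the two Frobenius structures to the $\infty$-cusp, where everything is explicit --- $\cC_n(\TD(\Lambda))_n\simeq C[\wp^n]$, $\cC_n(\TD(\Lambda))_n^D\simeq \underline{A_n}$ by Lemma \ref{ConstDual}, $\HTT(1)=dX$ by Lemma \ref{HTTCarlitz} (this is (\ref{EqnHTTTriv})), and $F_{\bar{\omega}_{\univ,n}^\ord}$ fixes $dX$ by Corollary \ref{TDWcanFixed}. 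You instead argue intrinsically: naturality of $\HTT$ under the isogeny $\rho_\univ|_{\cC_n}$ reduces the intertwining to the identity $(\rho_\univ|_{\cC_n})^D=F_{\cH}^{-1}$, i.e.\ to Frobenius--Verschiebung duality under Taguchi duality via Lemma \ref{DualFV} and Theorem \ref{DualDM} (\ref{DualDMIsog}). Your route is more conceptual and would survive in a setting without an explicit uniformization of the cusps; the paper's route is shorter here because the cusp computations are developed anyway and are reused afterwards (Corollary \ref{TDWcanFixed} in the proof of Theorem \ref{WeightCongr}, and (\ref{EqnHTTTriv}) again in \S\ref{Sec_DMF}), and it sidesteps all duality diagram chases.

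Three details of your argument deserve tightening, though none is a real obstruction. First, your stated reason that $V_d$ preserves the Frobenius kernels is off: the containment $V_d(\Ker(F_d^n))\subseteq\Ker(F_d^n)$ follows from functoriality of the relative Frobenius, $F_d^n\circ V_d=V_d^{(q^{dn})}\circ F_d^n$ (or from the fact that $\Ker(F_d^n)$ is connected while $\Ker(V_d^n)$ is etale in the ordinary case), not from $\Ker(F_d)\subseteq\Ker(\Phi_\wp^{\bar{E}})$. Second, to lift this containment from $\bar{B}$ to $B$, the clean argument is that the composite $\cC_n(\cE_\univ^\ord/\cC_1)\to\cE_\univ^\ord[\wp^n]/\cC_n(\cE_\univ^\ord)$ has etale target over the Henselian pair and vanishes modulo $\wp$, hence vanishes; the uniqueness statement of Lemma \ref{ExistCanSub} by itself is not quite what does it. Third, the identity $(\rho_\univ|_{\cC_n})^D=F_{\cH}^{-1}$ is an equality of morphisms between etale schemes, so it can and should be checked modulo $\wp$ and then lifted by etale rigidity; modulo $\wp$ it requires the (routine, but not recorded in the paper) compatibility of the isomorphism $(\Ker f)^D\simeq\Ker(f^D)$ of Theorem \ref{DualDM} (\ref{DualDMIsog}) with the inclusions of the kernels into the ambient Drinfeld modules, so that Lemma \ref{DualFV} restricts to identify $(V_d|_{\cC_n})^D$ with the relative $q^d$-Frobenius of $\cC_n^D$; this follows from the functoriality of the connecting map in the long exact sequence used to prove Theorem \ref{DualDM} (\ref{DualDMIsog}), and is worth spelling out if you write the argument in full.
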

\begin{proof}
By Corollary \ref{ResOpenFF}, it is enough to show that the character of $\pi_1^\et(Y_{\univ,n}^\ord)$ associated to $\cC_{n,n}^D$ corresponds to the pair $(\omega_{\univ,n}^\ord,F_{\omega_{\univ,n}^\ord})$. By Proposition \ref{HTTIsom}, the Hodge-Tate-Taguchi map yields an isomorphism of invertible $\cO_{Y_{\univ,n}^\ord}$-modules
\[
\HTT: \cC_{n,n}^D\otimes_{\underline{A_n}}\cO_{Y_{\univ,n}^\ord}\to \omega_{\univ,n}^\ord.
\]
Note that, over any Galois covering $T_n\to Y_{\univ,n}^\ord$ trivializing $\cC_{n,n}^D$, the map $\HTT$ is compatible with Galois actions. Hence it suffices to show that this map is also compatible with Frobenius structures, where we consider $1\otimes \varphi_d$ on the left-hand side.

Since the natural map $B_{\univ,n}^\ord\to \cO_{K,n}((x))$ is injective, we reduce ourselves to showing that at the $\infty$-cusp the Hodge-Tate-Taguchi map over $\cO_{K,n}((x))$
\[
\HTT: \cC_n(\TD(\Lambda))_n^D\otimes_{\underline{A_n}} \cO_{\Spec(\cO_{K,n}((x)))}\to \omega_{\TD(\Lambda)}\otimes \cO_{K,n}((x))
\]
commutes with Frobenius structures. As is seen in the proof of Lemma \ref{CansubExtCusp}, the induced $v$-structure on $\cC_n(\TD(\Lambda))_n\simeq C[\wp^n]$ from $\cE_\univ^\ord$ agrees with that from $C$. By Lemma \ref{ConstDual} we have $\cC_n(\TD(\Lambda))_n^D\simeq \underline{A_n}$, and Lemma \ref{HTTCarlitz} implies that the isomorphism $\HTT$ is given by 
\begin{equation}\label{EqnHTTTriv}
\HTT(1)=((\lambda_{\infty,\wp^n}^{\Lambda})^*)^{-1}(d Z)=d X.
\end{equation}
Now the proposition follows from Corollary \ref{TDWcanFixed}.
\end{proof}

\begin{thm}\label{WeightCongr}
For $i=1,2$, let $f_i$ be an element of $M_{k_i}(\Gamma_1^\Delta(\frn))_{\cO_K}$. Suppose that their $x$-expansions at the $\infty$-cusp $(f_i)_\infty(x)$ satisfy the congruence
\[
(f_1)_\infty(x)\equiv (f_2)_\infty(x)\notequiv 0 \bmod \wp^n.
\]
Then we have 
\[
k_1\equiv k_2 \bmod (q^d-1)p^{l_p(n)},\quad l_p(n)=\min\{N\in \bZ\mid p^N\geq n\}.
\]
\end{thm}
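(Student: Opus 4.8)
The plan is to transport the entire problem to the ordinary locus and to read off the weights from the monodromy of the Hodge bundle. First I would restrict $f_1,f_2$ to $X_{\univ,n}^\ord$ and reduce modulo $\wp^n$; by the $x$-expansion principle (Proposition \ref{QexpBC} (\ref{QexpBC-Q})) nothing is lost, since a form is determined by its $x$-expansion at the $\infty$-cusp and the hypothesis is exactly a statement about these expansions. The structural input is Proposition \ref{HTTExtIsom}, which identifies $(\bar{\omega}_{\univ,n}^\ord,F_{\bar{\omega}_{\univ,n}^\ord})$ with the character $r_{n,n}\colon \pi_1^\et(X_{\univ,n}^\ord)\to A_n^\times$; tensoring, $(\bar{\omega}_{\univ,n}^\ord)^{\otimes k}$ corresponds to $r_{n,n}^{k}$ under the Riemann--Hilbert equivalence of Lemma \ref{RHCorr}. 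Thus the theorem becomes the assertion that the character $r_{n,n}^{\,k_1-k_2}$ is trivial: granting this, surjectivity of $r_{n,n}$ (Lemma \ref{MonodromySurj} with $m=n$) shows $x^{k_1-k_2}=1$ for all $x\in A_n^\times$, and since $A_n^\times=(A/(\wp^n))^\times\simeq \bF_{q^d}^\times\times(1+\wp A/(\wp^n))$ has exponent $(q^d-1)p^{l_p(n)}$ (the $p$-part being $p^{l_p(n)}$ because $(1+\wp a)^{p^s}=1+\wp^{p^s}a^{p^s}$ in characteristic $p$), this reads precisely $k_1\equiv k_2 \bmod (q^d-1)p^{l_p(n)}$.

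To prove the character is trivial I would pass to the connected Galois cover $T_n\to X_{\univ,n}^\ord$ with group $A_n^\times$ cut out by $\ker r_{n,n}$, on which Lemma \ref{RHCorr} provides an $F$-fixed generator $e$ of $\bar{\omega}_{\univ,n}^\ord|_{T_n}$ with $\gamma^* e=r_{n,n}(\gamma)e$. Writing $f_i|_{T_n}=\phi_i\,e^{\otimes k_i}$ with $\phi_i\in\cO(T_n)$, Galois-invariance of $f_i$ gives $\gamma^*\phi_i=r_{n,n}(\gamma)^{-k_i}\phi_i$. Over the $\infty$-cusp the dual canonical subgroup is already trivial, so $T_n$ splits there; on the distinguished sheet one has $e=dX$ by (\ref{EqnHTTTriv}) together with the Frobenius-compatibility of Corollary \ref{TDWcanFixed}, whence $\phi_i$ restricts to the $x$-expansion $(f_i)_\infty$. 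Using that $(f_2)_\infty$ is nonzero modulo $\wp$, the non-vanishing locus $U$ of $f_2$ is a dense open of $X_{\univ,n}^\ord$ containing the cusp, and over $U$ the element $F:=f_1\cdot f_2^{-1}$ is a genuine global section of $(\bar{\omega}_{\univ,n}^\ord)^{\otimes(k_1-k_2)}$. Its pullback to the cover is $F|_{T_n}=(\phi_1/\phi_2)\,e^{\otimes(k_1-k_2)}$ with $\phi_1/\phi_2$ Galois-invariant; comparing this with $\gamma^*(\phi_1/\phi_2)=r_{n,n}(\gamma)^{k_2-k_1}(\phi_1/\phi_2)$ and using that $\phi_1/\phi_2$ is a unit near the cusp (hence a non-zero-divisor) forces $r_{n,n}(\gamma)^{k_1-k_2}=1$ on $\pi_1^\et(U)$. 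Since the reduction $U_1$ is dense open in the normal connected scheme $X_{\univ,1}^\ord$, the map $\pi_1^\et(U)\to\pi_1^\et(X_{\univ,n}^\ord)$ is surjective (equivalently, one invokes the full faithfulness of Corollary \ref{ResOpenFF}), so $r_{n,n}^{\,k_1-k_2}$ is trivial on all of $\pi_1^\et(X_{\univ,n}^\ord)$, as required.

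I expect the main obstacle to be the non-reduced base $A_n$: for $n\ge 2$ the ring $\cO(T_n)$ has nilpotents, so one cannot simply form quotients in a fraction field, and the passage $F=f_1f_2^{-1}$ must be carried out on the honest open $U$ where $f_2$ trivializes $(\bar{\omega}_{\univ,n}^\ord)^{\otimes k_2}$ and then transported back via Corollary \ref{ResOpenFF}. This is exactly the point where non-vanishing of $(f_2)_\infty$ modulo $\wp$, ensuring $U\neq\emptyset$, is indispensable; indeed it is sharp, as forms whose expansions are divisible by $\wp$ behave with lower effective precision. A secondary technical step is the identification $e=dX$ at the cusp, which ties the abstract $F$-fixed generator from Riemann--Hilbert to the concrete differential defining $x$-expansions, and rests on (\ref{EqnHTTTriv}) and Corollary \ref{TDWcanFixed}.

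Finally, I would record Gekeler's Hasse invariant $g_d$ with $(g_d)_\infty\equiv 1\bmod\wp$ from (\ref{EqnGekHasseInv}): raising it to the $p^{l_p(n)}$-th power yields an $F$-invariant unit section of weight $(q^d-1)p^{l_p(n)}$ whose $x$-expansion is $\equiv 1\bmod \wp^n$, which both exhibits the triviality of $r_{n,n}^{(q^d-1)p^{l_p(n)}}$ concretely and shows the modulus $(q^d-1)p^{l_p(n)}$ cannot be improved.
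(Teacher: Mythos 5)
Your overall route is the paper's: interpret $(\bar{\omega}_{\univ,n}^\ord,F_{\bar{\omega}_{\univ,n}^\ord})$ through Lemma \ref{RHCorr} and Proposition \ref{HTTExtIsom}, reduce the theorem to the triviality of $r_{n,n}^{k_1-k_2}$, transfer from the non-vanishing locus $U$ back to $X_{\univ,n}^\ord$ via Corollary \ref{ResOpenFF}, and conclude with Lemma \ref{MonodromySurj} and the exponent $(q^d-1)p^{l_p(n)}$ of $(A/(\wp^n))^\times$ (your computation of that exponent, and your reading of the hypothesis as non-vanishing modulo $\wp$, match what the paper's proof actually uses). But there is a genuine gap at the central step. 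On $T_n|_U$ you assert that $\psi:=\phi_1/\phi_2$ is Galois-invariant while simultaneously deriving $\gamma^*\psi=r_{n,n}(\gamma)^{k_2-k_1}\psi$; given the eigen-relation, Galois-invariance of $\psi$ is, by your own non-zero-divisor cancellation, \emph{equivalent} to the triviality of $r_{n,n}^{k_1-k_2}$, so asserting it begs the question. Nothing you have established implies it: an eigenfunction for a nontrivial character can be a unit and restrict to $1$ on a single sheet of a completely split fibre --- in a Kummer-type cover $s^m=u$ split over $u=c^m$, the function $s/c$ does exactly this. So ``$\psi=1$ on the distinguished cusp sheet'' plus ``$\psi$ a unit'' does not force invariance, and the decomposition group at the cusp is trivial (the cover splits there), so it yields nothing either.

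What is missing is precisely the pivot of the paper's proof: a \emph{propagation} of Frobenius-fixedness from the cusp to all of $U$. Since $(f_1)_\infty\equiv(f_2)_\infty\bmod \wp^n$ and $(dX)^{\otimes(k_1-k_2)}$ is fixed by the Frobenius structure (Corollary \ref{TDWcanFixed} together with (\ref{EqnHTTTriv})), the restriction of $f_1/f_2$ to $\Spec(\cO_{K,n}((x)))$ equals $(dX)^{\otimes(k_1-k_2)}$, hence is $\varphi$-fixed there; and because restriction of functions to this completion is injective --- the paper's appeal to $B_{\univ,n}^\ord\hookrightarrow \cO_{K,n}((x))$, resting on flatness over $A_n$ and integrality of the mod-$\wp$ reduction, which applies equally to $\cO(T_n|_U)$ --- the identity $\varphi_{T_n}^*\psi=\psi$ holds on all of $T_n|_U$. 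Only then does the exact sequence (\ref{EqnExactPhiTnFixed}) give $\psi\in A_n$, indeed $\psi=1$ by its cusp value, and your cancellation finally yields $r_{n,n}(\gamma)^{k_1-k_2}=1$. With this step inserted your argument closes and coincides in substance with the paper's (which phrases it as triviality of the $F$-crystal $((\bar{\omega}_{\univ,n}^\ord)^{\otimes k_1-k_2},F^{\otimes k_1-k_2})$ on $U$, extended by Corollary \ref{ResOpenFF}). One further small correction: ``$U$ containing the cusp'' is neither needed nor available when $f_2$ is cuspidal; the punctured formal neighbourhood maps to the generic point, hence into any dense open $U$, and $(f_2)_\infty$ is a unit in $\cO_{K,n}((x))$ as soon as it is nonzero modulo $\wp$.
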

\begin{proof}
By assumption, $f_1$ and $f_2$ do not vanish on a non-empty affine open subscheme $U$ of $X_{\univ,n}^\ord$ containing the $\infty$-cusp. Thus the quotient $f_1/f_2$ defines a nowhere vanishing section on $U$ of $(\bar{\omega}_{\univ,n}^\ord)^{\otimes k_1-k_2}$ with $x$-expansion at the $\infty$-cusp equal to one. Thus the restriction of $f_1/f_2$ to $\Spec(\cO_{K,n}((x)))$ around the $\infty$-cusp agrees with $(d X)^{k_1-k_2}$. By Corollary \ref{TDWcanFixed}, it is fixed by the restriction of the Frobenius map of $(\bar{\omega}_{\univ,n}^\ord)^{\otimes k_1-k_2}$. 
Since the natural map $B_{\univ,n}^\ord\to \cO_{K,n}((x))$ is injective, we see that the section $f_1/f_2$ on $U$ itself is fixed by the Frobenius map. Hence the restriction of the pair $((\bar{\omega}_{\univ,n}^\ord)^{\otimes k_1-k_2}, F_{\bar{\omega}_{\univ,n}^\ord}^{\otimes k_1-k_2})$ to $U$ is trivial. Then Corollary \ref{ResOpenFF} implies that the pair is trivial on $X_{\univ,n}^\ord$, and by Proposition \ref{HTTExtIsom} the $(k_1-k_2)$-nd tensor power of the character $r_{n,n}$ is trivial. Now Lemma \ref{MonodromySurj} shows that $k_1-k_2$ is divisible by the exponent of the group $(A/(\wp^n))^\times$, which equals $(q^d-1)p^{l_p(n)}$. This concludes the proof.
\end{proof}

Then Theorem \ref{MainWeight} follows by adding an auxiliary level of degree prime to $q-1$ and applying Theorem \ref{WeightCongr}.



\subsection{$\wp$-adic Drinfeld modular forms}

Let $\frX_{\univ}$ be the $\wp$-adic completion of $X_{\univ}=X_1^\Delta(\frn)$ and $\frX_{\univ}^\ord$ the formal open subscheme of $\frX_\univ$ on which the Gekeler's lift $g_d$ of the Hasse invariant is invertible. The latter is isomorphic to the $\wp$-adic completion of 
\begin{equation}\label{EqnOrdSym}
\Spec_{X_{\univ}}(\Sym((\bar{\omega}^\Delta_\univ)^{\otimes q^d-1})/(g_d-1)).
\end{equation}
Note that the reduction modulo $\wp^m$ of $\frX_\univ^\ord$ is equal to $X_{\univ,m}^\ord$. We see that $\frX_{\univ}^\ord$ is a Noetherian affine formal scheme by \cite[Corollaire 2.1.37]{Abbes}. 

Following \cite[Definition 3]{Goss_v}, we define the $\wp$-adic weight space $\bS$ as
\[
\bS=\bZ/(q^d-1)\bZ\times \bZ_p
\]
with the discrete topology on the first entry and the $p$-adic topology on the second entry. We embed $\bZ$ into it diagonally.

For any $\chi=(s_0,s_1)\in \bS$, we have a continuous endomorphism of $\cO_K^\times=\bF_{q^d}^\times\times (1+\wp \cO_K)$ defined by 
\[
x=(x_0,x_1)\mapsto x^\chi=x_0^{s_0}x_1^{s_1} 
\]
which preserves the subgroup $1+\wp^n\cO_K$. Composing it with the character $r_{n,n}:\pi_1^\et(X_{\univ,n}^\ord)\to A_n^\times=\cO_{K,n}^\times$, we obtain a character $r_{n,n}^{\chi}$. Let $\bar{\omega}_{\univ,n}^{\ord,\chi}$ be the associated invertible sheaf on $X_{\univ,n}^\ord$ via the correspondence of Lemma \ref{RHCorr}. Since they form a projective system with surjective transition maps, they give an invertible sheaf $\bar{\omega}_{\univ}^{\ord,\chi}$ on $\frX_{\univ}^\ord$ \cite[Proposition 2.8.9]{Abbes}. 

For any finite extension $L/K$, we put
\[
M_\chi(\Gamma_1^\Delta(\frn))_{\cO_L}:=H^0(\frX_{\univ}^\ord|_{\cO_{L}}, \bar{\omega}_{\univ}^{\ord,\chi}|_{\cO_{L}})=H^0(\frX_{\univ}^\ord, \bar{\omega}_{\univ}^{\ord,\chi}\otimes_{\cO_K}\cO_{L}).
\]
By \cite[Proposition 2.7.2.9]{Abbes}, we have
\[
M_\chi(\Gamma_1^\Delta(\frn))_{\cO_L}=\varprojlim_n H^0(X_{\univ,n}^\ord|_{\cO_{L,n}},\bar{\omega}_{\univ,n}^{\ord,\chi}|_{\cO_{L,n}})
\]
and thus it is flat over $\cO_L$.
Put 
\[
M_\chi(\Gamma_1^\Delta(\frn))_{L}=M_\chi(\Gamma_1^\Delta(\frn))_{\cO_L}[1/\wp].
\]
We refer to any element of this module as a $\wp$-adic Drinfeld modular form of tame level $\frn$ and weight $\chi$ over $L$. Since the action of $\bF_q^\times$ on $X_1^\Delta(\frn)$ via $c\mapsto \langle c\rangle_{\Delta}$ induces an action on $H^0(X_{\univ,n}^\ord|_{\cO_{L,n}},\bar{\omega}_{\univ,n}^{\ord,\chi}|_{\cO_{L,n}})$, the module $M_\chi(\Gamma_1^\Delta(\frn))_{L}$ is decomposed as
\[
M_\chi(\Gamma_1^\Delta(\frn))_{L}=\bigoplus_{m\in \bZ/(q-1)\bZ} M_{\chi,m}(\Gamma_1(\frn))_{L},
\]
where the space $M_{\chi,m}(\Gamma_1(\frn))_{L}$ of type $m$ forms is the maximal subspace on which $\langle c\rangle_{\Delta}$ acts by $c^{-m}$.

For any $\chi\in \bS$ and any positive integer $n$, we can find an integer $k$ satisfying $\chi\equiv k\bmod (q^d-1)p^{l_p(n)}$. Then we have an isomorphism $\bar{\omega}_{\univ,n}^{\ord,\chi}\simeq(\bar{\omega}_{\univ,n}^{\ord})^{\otimes k}$ compatible with Frobenius structures. Using this identification, we obtain a map of $x$-expansion
\begin{equation}\label{EqnDefVQexp}
H^0(X_{\univ,n}^\ord|_{\cO_{L,n}},\bar{\omega}_{\univ,n}^{\ord,\chi}|_{\cO_{L,n}})\to \cO_{L,n}[[x]],\quad f_n\mapsto (f_n)_\infty(x).
\end{equation}
For any such $k$ and $k'$, the correspondence of Lemma \ref{RHCorr} gives an isomorphism $(\bar{\omega}_{\univ,n}^{\ord})^{\otimes k}\simeq(\bar{\omega}_{\univ,n}^{\ord})^{\otimes k'}$ compatible with Frobenius structures. Since (\ref{EqnExactPhiTnFixed}) implies that such an isomorphism is unique up to the multiplication by an element of $A_n^\times$, by restricting to the $\infty$-cusp and using (\ref{EqnHTTTriv}) we see that it agrees with the multiplication by $g_d^{(k'-k)/(q^d-1)}$.
Since $(g_d)_\infty(x)^{p^{l_p(n)}}\equiv 1\bmod \wp^n$, the map (\ref{EqnDefVQexp}) is independent of the choice of $k$ and induces
\begin{equation}\label{EqnQexpP}
\begin{aligned}
M_{\chi,m}(\Gamma_1(\frn))_{L} &\to \cO_L[[x]][1/\wp]\\
f=(f_n)_n&\mapsto f_\infty(x):=\lim_{n\to \infty} (f_n)_\infty(x)
\end{aligned}
\end{equation}
which is an injection by Krull's intersection theorem. This map identifies our definition of $\wp$-adic Drinfeld modular forms with ``$\wp$-adic Drinfeld modular forms in the sense of Serre'' defined by Goss \cite[Definition 5]{Goss_v} and Vincent \cite[Definition 2.5]{Vincent}, by the following proposition. 

\begin{prop}\label{VsGoss}
The image of the injection (\ref{EqnQexpP}) agrees with the space of power series $F_\infty(x)\in \cO_L[[x]][1/\wp]$ which can be written as the $\wp$-adic limit of $x$-expansions $\{(h_n)_\infty(x)\}_n$, where $h_n$ is an element of $M_{k_n,m}(\Gamma_1(\frn))_{L}$ for some integer $k_n$.
\end{prop}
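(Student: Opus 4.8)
The plan is to prove the two inclusions separately, after reducing to the integral situation. Multiplying by a suitable power of $\wp$, I may assume $F_\infty(x)\in\cO_L[[x]]$ and that all forms involved have coefficients in $\cO_L$, since both the map (\ref{EqnQexpP}) and the notion of a $\wp$-adic limit commute with scaling by powers of $\wp$. Throughout I use the identification $M_\chi(\Gamma_1^\Delta(\frn))_{\cO_L}=\varprojlim_N H^0(X_{\univ,N}^\ord|_{\cO_{L,N}},\bar{\omega}_{\univ,N}^{\ord,\chi}|_{\cO_{L,N}})$ from \cite[Proposition 2.7.2.9]{Abbes}, so that an element $f$ of $M_{\chi,m}(\Gamma_1(\frn))_{\cO_L}$ is a compatible system $(f_N)_N$ of type $m$ with $f_\infty(x)=\lim_N (f_N)_\infty(x)$.

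For the inclusion of the image of (\ref{EqnQexpP}) into the Serre-style space, fix $f=(f_N)_N$ and, for each $N$, choose an integer $k_N$ with $\chi\equiv k_N\bmod (q^d-1)p^{l_p(N)}$. The Frobenius-compatible isomorphism $\bar{\omega}_{\univ,N}^{\ord,\chi}\simeq(\bar{\omega}_{\univ,N}^\ord)^{\otimes k_N}$ lets me regard $f_N$ as a section of $(\bar{\omega}_{\univ,N}^\ord)^{\otimes k_N}$ over the ordinary locus modulo $\wp^N$, whose $x$-expansion agrees with that of $f_N$ up to a power of $g_d$ that is $\equiv 1\bmod\wp^N$ by the normalization (\ref{EqnHTTTriv}). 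As in the classical case \cite[Chapter 4]{Katz_p}, after multiplying by $g_d^{a_N}$ with $a_N$ a sufficiently large multiple of $p^{l_p(N)}$, this section extends across the finitely many supersingular points to a modular form modulo $\wp^N$ of weight $k_N+a_N(q^d-1)\geq 2$ and type $m$; by the base-change isomorphism Proposition \ref{QexpBC} (\ref{QexpBC-BC}) it lifts to $h_N\in M_{k_N+a_N(q^d-1),m}(\Gamma_1(\frn))_{\cO_L}$. Since $(g_d)_\infty(x)\equiv 1\bmod\wp$ by (\ref{EqnGekHasseInv}) and hence $(g_d)_\infty(x)^{a_N}\equiv 1\bmod\wp^N$, I obtain $(h_N)_\infty(x)\equiv (f_N)_\infty(x)\equiv f_\infty(x)\bmod\wp^N$, so $f_\infty(x)$ is the $\wp$-adic limit of the $(h_N)_\infty(x)$.

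Conversely, suppose $F_\infty(x)=\lim_n (h_n)_\infty(x)$ with $h_n\in M_{k_n,m}(\Gamma_1(\frn))_{\cO_L}$. The first step is to show $\{k_n\}$ converges in $\bS$. For any $N$ and all large $n,n'$ one has $(h_n)_\infty(x)\equiv (h_{n'})_\infty(x)\not\equiv 0\bmod\wp^N$, and the weight congruence of Theorem \ref{WeightCongr} then forces $k_n\equiv k_{n'}\bmod (q^d-1)p^{l_p(N)}$; this persists over $\cO_L$, because $\cO_{L,N}$ is finite free over $A_N=\cO_{K,N}$ and $A_N^\times\hookrightarrow\cO_{L,N}^\times$, so that the triviality over $\cO_{L,N}$ of the $F$-crystal $(\bar{\omega}_{\univ,N}^\ord)^{\otimes(k_n-k_{n'})}$ produced by $h_n/h_{n'}$ descends, via Lemma \ref{RHCorr} and Corollary \ref{ResOpenFF}, to triviality of the character $r_{N,N}^{\otimes(k_n-k_{n'})}$. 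Hence $k_n\to\chi$ for some $\chi\in\bS$, with $\chi\equiv k_n\bmod (q^d-1)p^{l_p(N)}$ once $n$ is large. For each $N$ I then reduce $h_n$ to the ordinary locus modulo $\wp^N$ and transport it through the Frobenius-compatible isomorphism $(\bar{\omega}_{\univ,N}^\ord)^{\otimes k_n}\simeq\bar{\omega}_{\univ,N}^{\ord,\chi}$ to a section $f_{N,n}$ of $\bar{\omega}_{\univ,N}^{\ord,\chi}$ whose $x$-expansion is $\equiv F_\infty(x)\bmod\wp^N$ (the identifications differing by a power of $g_d$ with $x$-expansion $\equiv 1\bmod\wp^N$). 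Because the $x$-expansion map on the ordinary locus is injective — the map $B_{\univ,N}^\ord|_{\cO_{L,N}}\to\cO_{L,N}((x))$ stays injective after the finite free base change from $A_N$ — these $f_{N,n}$ stabilize in $n$ to a single section $f_N$, and the same injectivity shows the $f_N$ form a compatible system. The resulting $f=(f_N)_N\in M_{\chi,m}(\Gamma_1(\frn))_{\cO_L}$ satisfies $f_\infty(x)=F_\infty(x)$, completing the argument.

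The main obstacle is the converse direction, specifically the passage from the $\wp$-adic convergence of the $(h_n)_\infty(x)$ to a single weight $\chi\in\bS$ together with a coherent geometric form of that weight. This hinges on two delicate points: first, transporting the weight congruence Theorem \ref{WeightCongr}, which is proved over $A_n=\cO_{K,n}$ through the monodromy of $\bar{\cC}_{n,n}^D$, to forms with possibly ramified coefficients in $\cO_L$ (handled by the finite free base change $A_N\to\cO_{L,N}$ and injectivity of $A_N^\times\to\cO_{L,N}^\times$); and second, verifying that the truncations $f_{N,n}$ genuinely stabilize and glue, for which the $x$-expansion principle on the ordinary locus and the Frobenius-compatibility of the isomorphisms $\bar{\omega}_{\univ,N}^{\ord,\chi}\simeq(\bar{\omega}_{\univ,N}^\ord)^{\otimes k}$ are essential. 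The forward direction is comparatively routine, being the standard ``multiply by a power of the Hasse invariant and lift'' argument.
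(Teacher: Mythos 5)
Your proof is correct and, in both directions, follows the paper's strategy: the forward inclusion by multiplying by powers of $g_d$ with exponent divisible by $p^{l_p(N)}$ and lifting via Proposition \ref{QexpBC} (\ref{QexpBC-BC}), the converse by extracting the weight $\chi\in\bS$ from Theorem \ref{WeightCongr} and assembling the truncations on the ordinary locus. The one genuine divergence is how you extend the weight congruence to $\cO_L$-coefficients. The paper chooses an $\cO_K$-module isomorphism $\cO_L\simeq\cO_K^{\oplus[L:K]}$ compatible with $x$-expansions and applies Theorem \ref{WeightCongr} componentwise to a component whose reduction is nonzero; you instead rerun the crystal argument directly with $\cO_{L,N}$-coefficients over the $\cO_{K,N}$-scheme: $h_n/h_{n'}$ is a Frobenius-fixed generator of $(\bar{\omega}_{\univ,N}^{\ord})^{\otimes(k_n-k_{n'})}\otimes_{\cO_{K,N}}\cO_{L,N}$ on $U$, which under Lemma \ref{RHCorr} applied to this rank-$[L:K]$ object (together with the identification of Proposition \ref{HTTExtIsom}, which you use implicitly and should cite) corresponds to a $\pi_1^\et(U)$-invariant unit of $\cO_{L,N}$ on which the group acts through $r_{N,N}^{k_n-k_{n'}}$ valued in $A_N^\times\subseteq\cO_{L,N}^\times$; a unit killed by $\rho(g)-1$ forces $\rho(g)=1$, and Lemma \ref{MonodromySurj} — stated for arbitrary nonempty opens, so Corollary \ref{ResOpenFF} is not actually needed at this step — gives the congruence. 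Your variant is sound precisely because you enlarge only the coefficients while keeping the base scheme over $\cO_{K,N}$: base-changing the scheme itself to $\cO_{L,N}$ would break the hypotheses of Lemma \ref{RHCorr} when $L/K$ is ramified (then $\cO_{L,1}$ is non-reduced), which is presumably why the paper opts for the componentwise decomposition. Two small points to tighten: in the converse you assert $(h_n)_\infty(x)\notequiv 0\bmod\wp^N$ for every $N$, which requires first normalizing $F_\infty(x)$ (divide by a power of $\wp$ so that $F_\infty(x)\notequiv 0\bmod\wp$, the case $F_\infty(x)=0$ being trivial) — the paper does this normalization explicitly; and in the forward direction your claim that the section ``extends across the supersingular points'' after multiplying by $g_d^{a_N}$ is exactly the content of the isomorphism the paper derives from (\ref{EqnOrdSym}) and Corollary \ref{Ample}, so you should invoke those rather than only the classical analogy with \cite{Katz_p}.
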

\begin{proof}
This can be shown as in the proof of \cite[Theorem 4.5.1]{Katz_p}. Indeed, let $f=(f_n)_n$ be an element of $M_{\chi,m}(\Gamma_1(\frn))_{\cO_L}$. For any $n$ we choose an integer $k_n\geq 2$ satisfying $\chi\equiv k_n\bmod (q^d-1)p^{l_p(n)}$. Note that, for any integer $k\geq 2$, the description (\ref{EqnOrdSym}) and Corollary \ref{Ample} give an isomorphism
\begin{align*}
H^0(X_{\univ,n}^\ord|_{\cO_{L,n}},&\bar{\omega}_{\univ,n}^{\ord,k}|_{\cO_{L,n}}) \to \\
&\left(\bigoplus_{j\geq 0} H^0(X_{\univ,n}|_{\cO_{L,n}},(\bar{\omega}_{\univ}^{\Delta}|_{\cO_{L,n}})^{\otimes k+j(q^d-1)})\right)/(g_d-1).
\end{align*}
Therefore, by Proposition \ref{QexpBC} (\ref{QexpBC-BC}), for each $f_n$ we can find an integer $k'_n\geq 2$ and an element $h_n\in M_{k'_n,m}(\Gamma_1(\frn))_{\cO_L}$ satisfying $k'_n\equiv k_n \bmod (q^d-1)p^{l_p(n)}$ and $(f_n)_\infty(x)\equiv (h_n)_\infty(x)\bmod \wp^n$. This yields $\lim_{n\to\infty}(h_n)_\infty(x)=f_\infty(x)$.

Conversely, let $F_\infty(x)=\lim_{n\to\infty} (h_n)_\infty(x)$ be as in the proposition. By Proposition \ref{QexpBC} (\ref{QexpBC-Q}), we may assume $h_n\in M_{k_n,m}(\Gamma_1(\frn))_{\cO_L}$. Multiplying powers of $g_d$ and dividing by $\wp$, we may assume $k_{n+1}>k_n$ and
\[
(h_{n+1})_\infty(x)\equiv (h_n)_\infty(x) \notequiv 0 \bmod \wp^n
\]
for any $n$. By choosing an isomorphism of $\cO_K$-modules $\cO_L\simeq \cO_K^{\oplus [L:K]}$, we identify the $\cO_{X_{\univ}}$-module $(\bar{\omega}_{\univ}^{\Delta})^{\otimes k}\otimes_{\cO_{K}}{\cO_{L}}$ with $((\bar{\omega}_{\univ}^{\Delta})^{\otimes k})^{\oplus [L:K]}$ and $\cO_{L}[[x]]$ with $(\cO_{K}[[x]])^{\oplus [L:K]}$, which are compatible with $x$-expansions. Then Theorem \ref{WeightCongr} implies $k_{n+1}\equiv k_n \bmod (q^d-1)p^{l_p(n)}$ and thus, in the $\wp$-adic weight space $\bS$, the sequence $(k_n)_n$ converges to an element $\chi$ satisfying $\chi \equiv k_n \bmod (q^d-1)p^{l_p(n)}$. Proposition \ref{QexpBC} (\ref{QexpBC-Q}) implies $h_{n+1}\equiv h_n g_d^{(k_{n+1}-k_n)/(q^d-1)}\bmod \wp^n$ and thus $(h_n)_n$ defines an element $f$ of $M_{\chi,m}(\Gamma_1(\frn))_{\cO_L}$ satisfying $f_\infty(x)=F_\infty(x)$. This concludes the proof.
\end{proof}

\begin{thm}\label{GammaWp}
Let $f$ be a Drinfeld modular form of level $\Gamma_1(\frn)\cap \Gamma_0(\wp)$, weight $k$ and type $m$ over $\bC_\infty$ with $x$-expansion coefficients at $\infty$ in the localization $A_{(\wp)}$ of $A$ at $(\wp)$. Then $f$ is a $\wp$-adic Drinfeld modular form of tame level $\frn$, weight $k$ and type $m$. Namely, the $x$-expansion $f_\infty(x)$ at the unramified cusp over the $\infty$-cusp is in the image of the map (\ref{EqnQexpP}) for $\chi=k$.
\end{thm}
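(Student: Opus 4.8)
The plan is to imitate Katz's proof of \cite[Theorem 4.5.1]{Katz_p}: realize $f$ on the \emph{canonical component} of the ordinary locus of $X_1^\Delta(\frn,\wp)$ by pulling it back along the section defined by the canonical subgroup, and then use the $x$-expansion principle to see that this pull-back is already $\wp$-integral. First I would settle the coefficient bookkeeping. By the comparison with classical Drinfeld modular forms recalled after Proposition \ref{QexpBC} and the $x$-expansion principle (Proposition \ref{QexpBC}(\ref{QexpBC-Q})), the hypothesis $f_\infty(x)\in A_{(\wp)}[[x]]\subseteq \bF_q(t)[[x]]$ shows that $f$ descends to $M_{k,m}(\Gamma_1^\Delta(\frn,\wp))_{\bF_q(t)}$. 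Flat base change along $\bF_q(t)\hookrightarrow K$ (note $K$ is an $A_\frn[1/\wp]$-algebra, since $\wp$ is a unit in $K$ and $\frn$ a unit in $\cO_K$) then produces a section $f_K\in M_{k,m}(\Gamma_1^\Delta(\frn,\wp))_K$ with the same $x$-expansion $f_\infty(x)\in\cO_K[[x]]$.

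Next I would construct the canonical section. Over the reduced flat $\wp$-adic ring $\hat B_\univ^\ord$ the universal ordinary Drinfeld module $\cE_\univ^\ord$ carries its level-one canonical subgroup $\cC_1=\cC_1(\cE_\univ^\ord)$, which is a $\Gamma_0(\wp)$-structure; together with the universal tame level structure this defines a morphism $s:Y_\univ^\ord\to Y_1^\Delta(\frn,\wp)_{\cO_K}$ over $\cO_K$. By the cusp analysis of \S\ref{SubsecCuspP} — where the canonical $\Gamma_0(\wp)$-structure on $\TD(f_b\Lambda)$ is exactly $\cC_1^{f_b\Lambda}$, defines the unramified cusp, and whose projection to $X_1^\Delta(\frn)$ is an isomorphism on formal completions — the map $s$ extends to $\bar s:X_\univ^\ord\to X_1^\Delta(\frn,\wp)_{\cO_K}$ landing in the unramified cusps. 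The decisive point is that $\bar s$ followed by the forgetful map $X_1^\Delta(\frn,\wp)\to X_1^\Delta(\frn)$ is the inclusion $X_\univ^\ord\hookrightarrow X_\univ$; since $\bar\omega_\univ^{\Delta,\wp}$ is the pull-back of $\bar\omega_\univ^\Delta$, this yields a canonical identification $\bar s^*\bar\omega_\univ^{\Delta,\wp}\simeq\bar\omega_\univ^\ord$, so that pulling back preserves the weight: $g:=\bar s_K^*f_K\in H^0(X_\univ^\ord\otimes_{\cO_K}K,(\bar\omega_\univ^\ord)^{\otimes k})$. Because $\cC_1$ is intrinsic to $\cE_\univ^\ord$, the section $\bar s$ commutes with the $\langle c\rangle_\Delta$-actions, so $g$ is again of type $m$; and because the chosen trivializations of the Hodge bundle at the two $\infty$-cusps are both $dX$ (Lemma \ref{TDQuot} and Corollary \ref{TDWcanFixed}, together with the construction of $\bar\omega_\univ^\Delta$), the $x$-expansion of $g$ at the $\infty$-cusp equals the $x$-expansion of $f$ at the unramified cusp over $\infty$, namely $f_\infty(x)$.

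Finally I would prove integrality. Set $M=H^0(\frX_\univ^\ord,(\bar\omega_\univ^\ord)^{\otimes k})=\varprojlim_m H^0(X_{\univ,m}^\ord,(\bar\omega_{\univ,m}^\ord)^{\otimes k})$, which is flat over $\cO_K$; since $\frX_\univ^\ord$ is Noetherian, $g$ has bounded $\wp$-denominators, so $\wp^N g\in M$ for some $N$, and I take the least such $N\ge0$. If $N\ge1$, reduction modulo $\wp$ gives a section of $(\bar\omega_{\univ,1}^\ord)^{\otimes k}$ over $X_{\univ,1}^\ord$ whose $x$-expansion is $\wp^N f_\infty(x)\equiv0\bmod\wp$; as $X_{\univ,1}^\ord$ is integral (a dense open of the smooth geometrically connected curve $X_{\univ,1}$), the $x$-expansion principle forces this reduction to vanish, whence $\wp^{N-1}g\in M$ by flatness, contradicting minimality. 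Therefore $g\in M=M_k(\Gamma_1^\Delta(\frn))_{\cO_K}$ is a $\wp$-adic Drinfeld modular form of tame level $\frn$, weight $\chi=k$ and type $m$, and its image under (\ref{EqnQexpP}) — which for integer weight is the ordinary $x$-expansion — is $f_\infty(x)$. This is precisely the assertion of the theorem.

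I expect the main obstacle to be the cusp compatibility in the middle step: one must verify that the canonical section $\bar s$ genuinely lands in the unramified cusps and that the two trivializations of the Hodge bundle there agree, so that the $x$-expansion is transported unchanged and the weight is literally preserved. The Tate-Drinfeld computations of \S\ref{Sec_DMC} (especially Lemma \ref{TDQuot} and Corollary \ref{TDWcanFixed}) are exactly what makes this go through; once they are in place, the weight and type preservation and the integrality descent are formal.
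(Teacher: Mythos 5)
Your proposal is correct and follows essentially the same route as the paper's proof: descend to $M_k(\Gamma_1^\Delta(\frn,\wp))_{\bF_q(t)}$ by the $x$-expansion principle, clear denominators by flat base change, pull back along the section of $Y_1^\Delta(\frn,\wp)\to Y_1^\Delta(\frn)$ defined by the canonical subgroup $\cC_1$, and use the cusp analysis of \S\ref{SubsecCuspP} to see that the pull-back extends over the cusps with the $x$-expansion at the unramified cusp transported unchanged. Two minor remarks: the section $\bar{s}$ exists only formally (over $\hat{B}_\univ^\ord$, resp.\ modulo $\wp^n$), so the pull-back must be performed on the integral multiple $\wp^l f$ level by level as in the paper rather than on $f_K$ over the scheme-theoretic generic fiber (your flat-base-change step already supplies exactly this), and your concluding minimal-$N$ descent, while correct, is not needed for the theorem since $M_k(\Gamma_1^\Delta(\frn))_L$ is by definition $M_k(\Gamma_1^\Delta(\frn))_{\cO_L}[1/\wp]$.
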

\begin{proof}
By Proposition \ref{QexpBC} (\ref{QexpBC-Q}), we may assume $f\in M_k(\Gamma_1^\Delta(\frn,\wp))_{\bF_q(t)}$. By flat base change, we can find an element $g\in M_k(\Gamma_1^\Delta(\frn,\wp))_{A_{(\wp)}}$ such that its image $M_k(\Gamma_1^\Delta(\frn,\wp))_{\bF_q(t)}$ agrees with the element $\wp^l f$ for some non-negative integer $l$.

For any integer $n>0$, put $Y^\wp_{\univ,n}=Y_1^\Delta(\frn,\wp)\times_{A_\frn}\Spec(\cO_{K,n})$. The canonical subgroup $\cC_{1,n}$ over $Y_{\univ,n}^\ord$ gives a section of the natural projection
\[
\xymatrix{
 & Y^\wp_{\univ,n}\ar[d] \\
Y_{\univ,n}^\ord\ar[ur]\ar[r] & Y_{\univ,n}.
}
\]
Pulling back $g$ by this section, we obtain an element $g_n$ of the module $H^0(Y_{\univ,n}^\ord, (\bar{\omega}_{\univ,n}^{\ord})^{\otimes k})$. On each cusp corresponding to $\Xi=(a,b)$, the pull-back of $g_n$ along this cusp agrees with the pull-back of $g$ along the unramified cusp over $\Xi$. Hence $g_n\in  H^0(X_{\univ,n}^\ord, (\bar{\omega}_{\univ,n}^{\ord})^{\otimes k})$. Since
\[
\wp^l f_\infty(x)=g_\infty(x)=\lim_{n\to\infty} (g_n)_\infty(x),
\]
this implies that $f$ is a $\wp$-adic modular form of weight $k$.
\end{proof}







\end{document}